\documentclass[12pt,reqno]{amsart}

\usepackage[fleqn,tbtags]{mathtools}
\usepackage{amssymb}
\usepackage{amsthm}
\usepackage{iftex}
\ifPDFTeX
  \usepackage[T1]{fontenc}
  \IfFileExists{mlmodern.sty}{\usepackage{mlmodern}}{\usepackage{lmodern}}
\else
  \usepackage{fontspec}
  \usepackage{unicode-math}
\fi
\usepackage{graphicx}
\usepackage[shortlabels]{enumitem}
\usepackage{microtype}
\usepackage[nodisplayskipstretch]{setspace}
\usepackage{xcolor}
\usepackage{tikz}
\usetikzlibrary{arrows.meta,positioning,calc}
\usepackage[margin=1.03in]{geometry}
\usepackage[colorlinks=true,linkcolor=red,citecolor=blue,urlcolor=blue,
            hypertexnames=false]{hyperref}
\usepackage[nameinlink,capitalize,noabbrev]{cleveref}

\theoremstyle{plain}
\newtheorem{theorem}{Theorem}[section]
\newtheorem{proposition}[theorem]{Proposition}
\newtheorem{lemma}[theorem]{Lemma}
\newtheorem{corollary}[theorem]{Corollary}
\theoremstyle{definition}
\newtheorem{definition}[theorem]{Definition}
\theoremstyle{remark}
\newtheorem{remark}[theorem]{Remark}

\newtheorem{conjecture}[theorem]{Conjecture}
\newtheorem{problem}[theorem]{Problem}

\theoremstyle{plain}
\newtheorem{maintheorem}{Theorem}

\newtheorem{mainbandotheorem}{Theorem}

\newtheorem{mainpropernesstheorem}{Theorem}

\newtheorem{mainmatsushimatheorem}{Theorem}

\newcommand{\ddc}{dd^c}

\newenvironment{thmenumerate}
 {\begin{enumerate}
  
  \setlength{\itemsep}{0.35em}
  \setlength{\parsep}{0pt}
  \setlength{\topsep}{0.35em}}
 {\end{enumerate}}

\title[Uniqueness of KE currents]{A Solution to Berndtsson's Problem \\[0.3em]
and Uniqueness of Twisted
KE Currents}

\author[Y. Li]{Yinji Li}
\address{Yinji Li: Institute of Mathematics\\Academy of Mathematics and Systems Science\\Chinese Academy of
		Sciences\\Beijing\\100190\\P. R. China}
\email{liyinji@amss.ac.cn}
\author[H. Sun]{Haoyuan Sun}
\address{Haoyuan Sun: School of Mathematical Sciences\\ Beijing Normal University\\ Beijing 100875\\ P. R. China}
\email{202531130037@mail.bnu.edu.cn}
\author[Z. Wang]{Zhiwei Wang}
\address{Zhiwei Wang: Laboratory of Mathematics and Complex Systems (Ministry of Education)\\ School of Mathematical Sciences\\ Beijing Normal University\\ Beijing 100875\\ P. R. China}
\email{zhiwei@bnu.edu.cn}

\author[X. Zhou]{Xiangyu Zhou}
\address{Xiangyu Zhou: Institute of Mathematics\\ Academy of Mathematics and Systems Science\\
	and Hua Loo--Keng Key Laboratory of Mathematics\\ Chinese Academy of Sciences\\ Beijing 100190\\ P. R. China}
\email{xyzhou@math.ac.cn}

\begin{document}
\raggedbottom
\begin{abstract}
We prove the uniqueness conjecture on the twisted K\"ahler--Einstein currents in big cohomology classes, by solving a problem of Bo Berndtsson posed in his work on the uniqueness theorem of K\"aher--Einstein metrics.
\end{abstract}

\maketitle

\tableofcontents

\section{Introduction}

The study of canonical metrics is a central area in K\"ahler geometry. 
Different versions of the
Yau--Tian--Donaldson conjecture relate the existence of canonical metrics on Fano manifolds to suitable algebro-geometric stability conditions.  These conjectures have shaped one of the major research directions in modern K\"ahler geometry and have led to significant developments at the interface of differential geometry, geometric analysis, several complex variables, complex geometry, and algebraic geometry, see, for example
\cite{Din88,DT92,Yau93,Tia97,Don02,LX14,CDS15,Tia15,Ber16,DS16,
BHJ17,DR17,CSW18,BX19,BBJ21,LTW21,Li22,LTW22,LXZ22,Zhou24}
and the references therein.

The present paper discusses a uniqueness problem for twisted
K\"ahler--Einstein currents in transcendental big classes on compact
K\"ahler manifolds\footnote{For a concurrent small-parameter
uniqueness result for Monge--Amp\`ere mean-field equations in big
classes, obtained by a quantitative stability method, see
Dang--Zhang--Zhou \cite{DZZ26}.}. Its natural framework is the pluripotential
theory for big classes, including non-pluripolar products, finite-energy
spaces, and their metric and geodesic geometry; see, for example,
\cite{BT87,EGZ09,BEGZ10,BBGZ13,DDL18a,DDL18b,BBEGZ19}.  The threshold
invariants relevant below arise from the analytic and valuative
theories of the $\alpha$-- and $\delta$--invariants
\cite{Tia87,Fuj16,FO18,Fuj19,BJ20,Zha24}.

The role of holomorphic symmetry in the uniqueness problem is classical:
Matsushima's theorem \cite{Mat57} gives reductivity of the
automorphism Lie algebra, Bando--Mabuchi theorem \cite{BM87}
identifies K\"ahler--Einstein metrics modulo the identity component
of the automorphism group.  
An analytic approach to these uniqueness
problems emerged from the geometry of the space of K\"ahler
potentials.  Semmes \cite{Sem92} and Donaldson \cite{Don99} related
its geodesic equation to the homogeneous complex Monge--Amp\`ere
equation, and Chen \cite{Che00} constructed $C^{1,1}$ geodesics
between smooth K\"ahler potentials.  

\vspace{0.4cm}
In his important work \cite{Ber15b}, Berndtsson provided a pluripotential approach to generalize the
classical Bando--Mabuchi uniqueness theorem.
In \cite[Theorem~1.2]{Ber15b}, he assumed that
$X$ is a projective manifold, $-K_X$ carries a smooth semipositive metric whose curvature form is $\theta$,
and that
\[
 H^{0,1}(X)=0.
\]
Berndtsson established and proved a generalized Bando--Mabuchi uniqueness theorem by showing that if the subgeodesic $\phi_s\in\operatorname{PSH}(X,\theta)$ is uniformly bounded, then the affinity of $s\mapsto-\log\int_Xe^{-\phi_s}\Omega$ produces a holomorphic vector field $V$, whose flow joins $\theta+dd^c\phi_s$. 
Naturally, Berndtsson \cite[page 5-6]{Ber15b} posed the following problem:
\begin{problem}[Berndtsson]
\label{que:Bern}
\leavevmode
    Assuming only that
$$
e^{-\phi_s}\in L^1,\qquad \forall s\in I,
$$
and without assuming \(H^{0,1}(X)=0\), can one still obtain
 a holomorphic vector field with flow $G_s$ such that $G_s^*(\theta+dd^c\phi_s)=\theta+dd^c\phi_0$?
\end{problem}

Our first main theorem settles Problem~\ref{que:Bern} affirmatively.  It requires neither $H^{0,1}(X)=0$ nor any restriction on
the singularity type of the subgeodesic, and applies even when
$-K_X$ is merely pseudoeffective.
\vspace{0.5cm}

Let $X$ be a connected compact K\"ahler manifold with a smooth positive volume form $\Omega$ and
$I\subset\mathbb R$ be an open interval, set
$S_I:=I+i\mathbb R$, and denote by
$p_X:X\times S_I\to X$ the projection.  Let $\theta:=\operatorname{Ric}(\Omega)$, a smooth
closed real $(1,1)$-form. A family
$(\phi_s)_{s\in I}\subset\operatorname{PSH}(X,\theta)$ is called a
$\theta$-psh subgeodesic when
\[
 \phi(x,\tau):=\phi_{\operatorname{Re}\tau}(x)
\]
is $p_X^*\theta$-psh on $X\times S_I$.  
We suppress the
factor $2\pi$ in Chern--Weil representatives, so that
$\operatorname{Ric}(\Omega)$ represents $c_1(-K_X)$. 

\begin{maintheorem}
\label{thm:main}
Let $X$ be a connected compact K\"ahler manifold, $\Omega$ be a
smooth positive volume form with
$\theta:=\operatorname{Ric}(\Omega)\in c_1(-K_X)$.
Assume that $-K_X$ is pseudoeffective.  Let
$(\phi_s)_{s\in I}\subset\operatorname{PSH}(X,\theta)$ be a
$\theta$-psh subgeodesic such that $e^{-\phi_s}\in L^1(X,\Omega)$ for
every $s\in I$, and set
\[
 F(s):=-\log\int_Xe^{-\phi_s}\Omega.
\]
If $F$ is affine, there is a holomorphic vector field
$\mathcal V\in H^0(X,T_X^{1,0})$ so that if $(G_t)$ is the real flow of
$\operatorname{Re}\mathcal V$ and we extend $\mathcal V$ trivially to
$X\times S_I$, then
\begin{thmenumerate}
\item
\begin{equation}
 \iota_{\partial/\partial\tau+\mathcal V/2}
 \bigl(p_X^*\theta+\ddc_{x,\tau}(\phi-F)\bigr)=0
 \label{eq:main-full-nullity}
\end{equation}
as a current on $X\times S_I$;
\item for all $s,s_0\in I$,
\begin{equation}
 G_{s-s_0}^*\bigl(\theta+\ddc_X\phi_s\bigr)
 =\theta+\ddc_X\phi_{s_0};
 \label{eq:main-current-transport}
\end{equation}
\item for all $s,s_0\in I$,
\begin{equation}
 G_{s-s_0}^*\bigl(e^{F(s)-\phi_s}\Omega\bigr)
 =e^{F(s_0)-\phi_{s_0}}\Omega.
 \label{eq:main-measure-transport}
\end{equation}
\end{thmenumerate}
\end{maintheorem}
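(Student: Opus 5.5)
The plan is Berndtsson's positivity-of-direct-images method, with two changes: uniform bounds on $\phi$ are replaced by $L^2$-estimates with singular weights, and the hypothesis $H^{0,1}(X)=0$ is replaced by an argument that produces a genuinely holomorphic vector field without solving a $\bar\partial$-equation for functions.

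\emph{Step 1: reduction and the direct image.} Replace $\phi$ by $\phi-F$. Since $F$ is affine in $s=\operatorname{Re}\tau$, this is still a $p_X^*\theta$-psh subgeodesic, and now $\int_X e^{-\phi_s}\Omega=1$ for all $s$. Identify $e^{-\phi_s}\Omega$ with a singular hermitian metric on $K_X^{-1}$, so that $\Omega^{-1}\in H^0(X,K_X\otimes K_X^{-1})$ is the trivial section. The family of Bergman spaces $H^0(X,K_X\otimes L)$ with $L=-K_X$ and weight $e^{-\phi_\tau}$ forms a line bundle $E$ over $S_I$, spanned by the constant section $u\equiv 1$. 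Berndtsson's theorem on positivity of direct images, in the singular-weight form of P\u{a}un--Takayama and Hacon--Popa--Schnell, says that $E$ is Griffiths semipositive. Concretely, $-\log\|u\|^2=F$ is subharmonic, and it is harmonic by assumption.

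\emph{Step 2: the curvature formula.} In the equality case, Berndtsson's second-variation computation gives a form $v_\tau$, namely the $L^2$-minimal solution of $\bar\partial_X v=\partial_\tau\phi\,\bar\partial_X$-type data, such that the full curvature term vanishes. The vanishing yields:
\begin{itemize}
\item $\bar\partial_X v_\tau=0$;
\item $\bar\partial_\tau$-holomorphy of $v_\tau$;
\item the pointwise identity $\iota_{\partial_\tau+V}(p_X^*\theta+dd^c\phi)=0$, where $V_\tau$ is the $(1,0)$-vector field corresponding to $v_\tau$ under contraction with $\Omega$.
\end{itemize}
Because $\phi$ is only $L^1$-integrable with no boundedness, I will run this with regularizations. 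Take Demailly's quasi-psh approximations $\phi^\epsilon$, which decrease to $\phi$ and are $(p_X^*\theta+\epsilon\omega)$-psh. Then pass to the limit, using the weighted $L^2$ estimates, the strong openness/Guan--Zhou convergence $\int e^{-\phi^\epsilon}\to\int e^{-\phi}$, and the $L^2$-bounds on $v^\epsilon$ that are uniform through Step 1's equality. Since the curvature defect tends to $0$ in $L^2$, the limiting $V_\tau$ is a holomorphic section of $T_X$ on $X\setminus\{\phi=-\infty\}$ and lies in $L^2(e^{-\phi}\Omega)$. An $L^2$ holomorphic vector field extends across the pluripolar set because $T_X$-sections in $L^2$ extend (Riemann extension for $L^2$ holomorphic functions, applied locally after the $e^{-\phi}\ge c$ observation). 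This gives $V_\tau\in H^0(X,T_X)$.

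\emph{Step 3: $V$ is independent of $\tau$.} This is exactly where Berndtsson used $H^{0,1}=0$. Instead, I use that $\tau\mapsto V_\tau$ is a holomorphic map into the finite-dimensional space $H^0(X,T_X)$, and invariant under $\operatorname{Im}\tau$ translations since $\phi$ is. A holomorphic function of $\tau$ depending only on $\operatorname{Re}\tau$ is constant. Setting $\mathcal V:=2V$ gives \eqref{eq:main-full-nullity}, after the adjustment $\phi\mapsto\phi-F$ in Step 1.

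\emph{Step 4: the transport identities.} Given the nullity \eqref{eq:main-full-nullity}, the family $\Phi_s:=G_{s-s_0}^*\phi_s$ plus a suitable Hamiltonian correction satisfies $\partial_s(\theta+dd^c\Phi_s)=0$ as currents. I will verify this by testing against forms and using the Lie derivative $L_{\operatorname{Re}\mathcal V}=d\iota+\iota d$ on closed currents. This proves \eqref{eq:main-current-transport}. For \eqref{eq:main-measure-transport}, I use that $\theta=\operatorname{Ric}(\Omega)$. The measures $\mu_s=e^{F(s)-\phi_s}\Omega$ then satisfy $\operatorname{Ric}(\mu_s)=\theta+dd^c\phi_s$, so $G^*\mu_s$ and $\mu_{s_0}$ have the same Ricci current and differ by $e^{h}$ with $h$ pluriharmonic, hence constant on the compact connected $X$. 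The constant is $1$ because both measures have total mass $1$.

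\emph{Main obstacle.} I expect the hardest step to be the limiting argument in Step 2, namely extracting the exact equality from the regularized families without any bound on $\phi$. This needs a quantitative version of Berndtsson's formula in which the curvature defect controls $\|\bar\partial v^\epsilon\|^2$, and needs control that is uniform as $\epsilon\to0$. I would first try to prove this with Guan--Zhou's optimal $L^2$ extension, viewed as a variational characterization.
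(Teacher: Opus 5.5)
You have the right overall architecture: normalize by $F$, regularize, use Berndtsson's curvature identity for the $L^2$-minimal solution of the fiberwise $D'$-equation, pass to a limit, then transport. Your Step~4 derivation of \eqref{eq:main-measure-transport} from \eqref{eq:main-current-transport} via Ricci currents (pluriharmonic log-ratio, equal total mass) is also correct. But there is a genuine gap: the two steps that actually carry the theorem are not supplied.

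\emph{Regularization and the curvature loss.} Smooth $\phi^\epsilon\searrow\phi$ that are $(p_X^*\theta+\epsilon\widehat\omega)$-psh do not exist in the generality of the statement. Restricting to a fiber would make $c_1(-K_X)$ nef, whereas only pseudoeffectivity is assumed, and the big-class applications concern $-K_X$ big but not necessarily nef. Demailly's smooth regularization loses $\ell_jC_0\widehat\omega+\varepsilon_j\widehat\omega$ with $\ell_j\searrow\nu(\Theta_Q,\cdot)$, and this loss does not tend to zero on the Lelong upper-level sets. The analytic-singularity approximants do have only $\epsilon$-loss, but they are singular along essentially the same sets, so the smooth curvature computation does not apply to them and the difficulty is merely moved. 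Consequently $M_jF_j''=\mathfrak A_j+\mathfrak B_j$ only gives $\mathfrak A_j\geq-4\int R_j[\xi_j,\overline{\xi_j}]e^{-q_j}\Omega$. You must show this loss tends to $0$ before concluding $\int\|\bar\partial_Xv_j\|^2ds\to0$ or anything about the limit. The missing idea is a non-concentration estimate: the $L^2(e^{-q_j})$-mass of $v_j$ cannot accumulate near a proper analytic subset. This follows from a local H\"ormander estimate for the $\bar\partial$-part plus the mean-value inequality for the holomorphic part, combined with Siu's theorem and the verticality of $E_a(\Theta_Q)$. Also, the uniform bound on $\|v_j\|$ does not come from the equality in Step~1. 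It comes from Berndtsson's $L^2$-estimate for the $D'$-equation together with the kinetic bound $\int|\dot Q|^2e^{-Q}\Omega\,ds<\infty$. That bound is where openness ($e^{-pQ}\in L^1_{\mathrm{loc}}$ for some $p>1$) is genuinely needed; $\int e^{-\phi^\epsilon}\to\int e^{-\phi}$ is just monotone convergence.

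\emph{The replacement for $H^{0,1}(X)=0$.} Your Step~3 replaces the wrong step. Once $\tau\mapsto V_\tau$ is holomorphic, constancy from $\operatorname{Im}\tau$-invariance is immediate. The hypothesis $H^{0,1}(X)=0$ was needed to prove that holomorphy, which you list in Step~2 as a free consequence of the vanishing defect. It is not free: via Cauchy--Schwarz on the mixed curvature term, the defect only gives that $\partial_{\bar\tau}v$ is a fiberwise holomorphic form in $\ker D'_{\phi}$, and such forms need not vanish. For example, on $\mathbb P^1\times E$ with data invariant under translations of $E$, $Z=\partial_w$ satisfies $\partial_X\bigl(\iota_Z(e^{-Q_s}\Omega)\bigr)=0$. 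So without a normalization the limit can be replaced by $V_s+c(s)Z$ with $c$ arbitrary.

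What actually replaces $H^{0,1}=0$ is the gauge $i\omega\wedge v_s\in\operatorname{Ran}\bar\partial$, which is automatic for $L^2$-minimal solutions with smooth weights. It must then be carried through the weak $L^2$-limit; the paper does this with the fiberwise projection onto $\mathcal R_0^\perp$. It must also be exploited weakly, since $\partial_{\bar\tau}v$ is unavailable for the singular limit. The paper tests against $\bar\partial_X(wu)$ and proves $J(U)=0$ from $\int S_j[\xi_j,\overline{\xi_j}]e^{-q_j}\Omega\,ds\to0$. It then deduces from a Gram-matrix ODE on the finite-dimensional space of exact holomorphic fields that the coefficients of $V_s$ are constant.

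Likewise, \eqref{eq:main-full-nullity} for singular $\phi$ is not a pointwise output of the limit. The paper obtains it from the weak equation $D'_{X\times S}\bigl((u-d\tau\wedge v)e^{-Q}\bigr)=0$ and a flow-box lemma.

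Two minor points. Your $v_\tau$ solves a $D'$-equation, not a $\bar\partial$-equation. And $\{\phi=-\infty\}$ need not be closed, so no extension argument is needed or meaningful: the unweighted $L^2$ bound from $q_j\leq C$ already gives $\bar\partial_Xv=0$ on all of $X$.
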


The hypotheses in Theorem~\ref{thm:main} are essentially optimal.
$-K_X$ being pseudoeffective is the weakest assumption to produce singular positive metric, while the slice-wise
integrability is necessary to define $F$.

\vspace{0.6cm}

As applications of Theorem~\ref{thm:main}, we solve the uniqueness conjecture on the twisted K\"ahler-Einstein currents posed in Darvas--Zhang's important work
\cite{DZ24}.
Let
$(X,\omega)$ be a compact K\"ahler manifold, $\Omega$ be a smooth
volume form and $\theta$ be a smooth closed $(1,1)$-form representing
a big cohomology class
$[\theta]\in H^{1,1}(X,\mathbb{R})$.  Let
$\eta:=\operatorname{Ric}(\Omega)-\theta$ and assume there exists a
$\eta$-psh function $\psi$ so that
\[
 \eta_\psi
 :=\operatorname{Ric}(\Omega)-\theta+\ddc\psi\geq0,
 \qquad
 e^{-\psi}\in L^1(X,\Omega).
\]
Consider the $\lambda$-Ding functional $D^{\lambda}_\psi$ on the space of $\theta$-psh functions with finite energy:
\[
 D^{\lambda}_\psi(\varphi):=-\frac{1}{\lambda}\log\int_Xe^{-\lambda\varphi-\psi}\Omega-E_\theta(\varphi),\qquad \varphi\in\mathcal{E}^1(X,\theta).
\]
By strong openness theorem \cite{GZ15}, there exists $\lambda>0$ s.t. $D^{\lambda}_{\psi}$ is finite on $\mathcal{E}^1(X,\theta)$.
Darvas--Zhang proved that the delta invariant
$\delta_\psi([\theta])>1$ implies the properness of $D^1_{\psi}$. Consequently, they proved that  there exists a current
$T=\theta+dd^cu$ with minimal singularities satisfying the
$\eta_\psi$-twisted K\"ahler--Einstein equation:
\begin{equation}
 \operatorname{Ric}(T)=T+\eta_{\psi},
 \label{eq:twisted_KE}
\end{equation}
which is equivalent to the solvability of the following twisted
complex Monge--Amp\`ere equation:
\begin{equation}
 \langle(\theta+dd^cu)^n\rangle=e^{-u-\psi}\Omega.
 \label{eq:twisted_CMA}
\end{equation}
Conversely, Darvas--Zhang \cite[Proposition~5.9]{DZ24} proved that the unique solvability of
equation \eqref{eq:twisted_CMA} implies
$\delta_\psi([\theta])>1$
and raised the following conjectures going back to Berndtsson \cite[page 4]{Ber15b}.
\begin{conjecture}[Berndtsson, Darvas--Zhang]
\label{ques:BDZ-uniqueness}
\leavevmode
\begin{enumerate}[
    label=\textup{(\roman*)},
    leftmargin=*,
    itemsep=0.5em
]
    \item There exists a Bando--Mabuchi type uniqueness theorem
          for twisted K\"ahler--Einstein currents.

    \item If $\delta_{\psi}([\theta])>1$, the twisted complex
          Monge--Ampère equation \eqref{eq:twisted_CMA}
          admits a unique solution.
\end{enumerate}
\end{conjecture}

Dervan--Reboulet \cite{DR24} posed the following closely related problems.  Assume $-K_X$ is big, $\Omega$ is a smooth positive volume form with $\operatorname{Ric}(\Omega):=\theta$. 
Let $V_{\theta}:=\sup\{\varphi\in\operatorname{PSH}(X,\theta):\varphi\leq0\}$ be the psh upper envelope and assume it is klt, namely the multiplier ideal sheaf $\mathcal{I}(V_{\theta})=\mathcal{O}_X$.
\begin{problem}[Dervan-Reboulet]
    \label{ques:DR-strictly convex}
\leavevmode
\begin{enumerate}[
    label=\textup{(\roman*)},
    leftmargin=*,
    itemsep=0.5em
]
    \item The $1$-Ding functional $D^1(\varphi)=-\log\int_Xe^{-\varphi}\Omega-E_{\theta}(\varphi)$ is strictly convex along weak geodesics in $\mathcal{E}^1(X,\theta)$ in the absence of holomorphic vector fields.
    \item If $\operatorname{Aut}(X)$ is finite, the K\"ahler--Einstein current in $c_1(-K_X)$ is unique.
\end{enumerate}
\end{problem}
We observe that these problems are naturally related to the equality case of Berndtsson's log-integral convexity theorem \cite{Ber15b}. Theorem~\ref{thm:main} and its applications would give
positive answers to all of these conjectures and problems.  
\vspace{0.4cm}

We assume that
\[
 e^{-V_\theta-\psi}\in L^1(X,\Omega).
\]
A potential $\varphi\in\mathcal E^1(X,\theta)$ is called a normalized solution if it satisfies
\begin{equation}
 \left\langle(\theta+\ddc\varphi)^n\right\rangle
 =
 e^{-\varphi-\psi}\Omega.
 \label{eq:normalized-twisted-CMA}
\end{equation}
Note that the right-hand side is integrable by the strong openness theorem \cite{GZ15} (cf. \cite[Proposition 2.5]{DZ24}).
$T_\varphi:=\theta+\ddc\varphi$ is called the current of normalized solution and we denote the set of such currents by $\mathcal S(\theta,\psi)$. 

\begin{mainbandotheorem}
\label{thm:main-big-BM}
Let $\varphi_0,\varphi_1$ be two normalized solutions and $(\varphi_s)_{0\leq s\leq1}$ be the finite-energy weak geodesic joining them.
Then every $\varphi_s$ is a
normalized solution. Put $T_s:=T_{\varphi_s}=\theta+\ddc\varphi_s$, there exists
$\mathcal V\in H^0(X,T_X^{1,0})$ and its holomorphic flow
$\Lambda:\mathbb C\to\operatorname{Aut}^0(X)$, generated by
$\mathcal V/2$, such that
\begin{thmenumerate}
\item $\iota_{\mathcal V}\eta_{\psi}=0$ and
$\Lambda_z^*\eta_{\psi}=\eta_{\psi}$ for every $z\in\mathbb C$;
\item for $s,s_0\in[0,1]$,
\[
 \Lambda_{s-s_0}^*T_s=T_{s_0},
 \qquad
 \Lambda_{s-s_0}^*\langle T_s^n\rangle
 =\langle T_{s_0}^n\rangle;
\]
\item for $s\in[0,1]$ and $t\in\mathbb{R}$,
\[\Lambda_{it}^*T_s=T_s.\]
\end{thmenumerate}
\end{mainbandotheorem}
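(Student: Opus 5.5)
The plan is to reduce to Theorem~\ref{thm:main} via a variational argument along the finite-energy geodesic. Normalized solutions of \eqref{eq:normalized-twisted-CMA} are exactly the critical points, indeed minimizers, of the Ding functional $D_\psi:=D^1_\psi$ on $\mathcal E^1(X,\theta)$. Here I use that $E_\theta$ is affine along finite-energy geodesics (\cite{DDL18b}). I also use Berndtsson's log-integral convexity: since $p_X^*\eta+\ddc(\varphi+\psi)\ge0$ on $X\times S_{(0,1)}$ in the twisted form, the map $s\mapsto-\log\int_Xe^{-\varphi_s-\psi}\Omega$ is convex. Together these make $D_\psi$ convex along $(\varphi_s)$. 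The first step is to show $\varphi_0,\varphi_1$ are minimizers. By convexity along the geodesic from $\varphi_0$ to an arbitrary $u\in\mathcal E^1$, together with the first-variation formula for $D_\psi$ at $\varphi_0$ (valid in $\mathcal E^1$ by the differentiability results of \cite{BBGZ13,DZ24}), the derivative is $\int (u-\varphi_0)(e^{-\varphi_0-\psi}\Omega-\langle T_{\varphi_0}^n\rangle)=0$. Hence $D_\psi(u)\ge D_\psi(\varphi_0)$.

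Since $D_\psi(\varphi_0)=D_\psi(\varphi_1)=\min D_\psi$ and $D_\psi$ is convex along the geodesic, $D_\psi(\varphi_s)$ is constant. Because $E_\theta$ is affine, $F(s):=-\log\int_X e^{-\varphi_s-\psi}\Omega$ is affine. Each $\varphi_s$ minimizes $D_\psi$, so by the variational principle (\cite[BBGZ13, DZ24]{}, by projection onto $\mathcal E^1$ and differentiating) each $\varphi_s$ solves the equation up to the normalizing constant. Affineness of $F$ and $E_\theta$ along with the normalization at endpoints forces the constant to be $0$, so $\varphi_s$ is a normalized solution.

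Next I apply Theorem~\ref{thm:main}. The natural choice is to put $\phi_s:=\varphi_s+\psi$ with reference form $\theta+\eta=\operatorname{Ric}(\Omega)$. Then $\phi$ is $p_X^*\operatorname{Ric}(\Omega)$-psh on $X\times S_I$, because $\eta_\psi\ge0$ and $\varphi$ is a $\theta$-subgeodesic. Also $e^{-\phi_s}\in L^1$, $-K_X$ is pseudoeffective, and $F$ is affine. The geodesic lives on $[0,1]$, so I apply the theorem on $I=(0,1)$ and extend to the endpoints by continuity of weak geodesics in capacity/$d_1$. This yields $\mathcal V$ with the nullity relation $\iota_{\partial_\tau+\mathcal V/2}(p_X^*(\theta+\eta)+\ddc(\varphi+\psi-F))=0$, and the transport statements for $\theta+\eta+\ddc\phi_s$ and for $e^{F-\phi_s}\Omega$. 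The main obstacle is splitting this information into the $T_s$ part and the $\eta_\psi$ part. The nullity relation says a sum of two positive currents, $\Theta_1=p_X^*\theta+\ddc\varphi$ and $\Theta_2=p_X^*\eta_\psi$ (constant in $\tau$), is annihilated by the contraction with $W=\partial_\tau+\mathcal V/2$. For a positive current $\Theta$, $\Theta(W,\bar W)\ge0$ as a measure, and contraction vanishing for the sum gives $\Theta_j(W,\bar W)=0$ for each $j$. Cauchy--Schwarz for positive $(1,1)$-currents then gives $\iota_W\Theta_j=0$ separately. For $\Theta_2$ this reads $\iota_{\mathcal V}\eta_\psi=0$. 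Since $\eta_\psi$ is closed, Cartan's formula gives $L_{\operatorname{Re}\mathcal V}\eta_\psi=L_{\operatorname{Im}\mathcal V}\eta_\psi=0$, which is (i).

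For (ii) and (iii), $\iota_W\Theta_1=0$ with $\Theta_1$ closed gives invariance of $\Theta_1$ under the flow of $W$ and $\bar W$ on $X\times S_I$. The real part gives $\Lambda_{s-s_0}^*T_s=T_{s_0}$ (this is \eqref{eq:main-current-transport} restricted). The imaginary direction, combined with $\tau$-invariance in $\operatorname{Im}\tau$, gives $\Lambda_{it}^*T_s=T_s$. Invariance of non-pluripolar Monge--Amp\`ere measures under biholomorphisms gives the statement for $\langle T_s^n\rangle$. I expect the delicate technical point to be justifying the contraction and Lie-derivative manipulations for currents with unbounded potentials. The plan there is to work with $\Theta(W,\bar W)$ as a positive measure and pass via local regularization.
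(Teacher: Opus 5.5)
Your proposal is correct and follows the paper's proof essentially step by step. The shared steps are: flatness of the $1$-Ding functional along the geodesic, which forces every $\varphi_s$ to be a normalized solution (the paper simply cites \cite{DZ24} and \cite{BP08} for the minimizer and variational facts you sketch); Theorem~\ref{thm:main} applied to $\varphi+\psi$ with reference form $\operatorname{Ric}(\Omega)$; splitting $\mathcal T+p_X^*\eta_\psi$ by Cauchy--Schwarz for positive currents; Cartan's formula along the real and imaginary product flows; and a limiting argument at $s=0,1$. At the endpoints the paper uses the Lipschitz bound $|\varphi_t-\varphi_s|\leq C|t-s|$ where you use $d_1$-continuity, and it handles contractions purely distributionally, so no regularization is needed. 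One small slip: the one-sided derivative of $D_\psi$ along the geodesic from $\varphi_0$ to $u$ involves the initial velocity of that geodesic, not $u-\varphi_0$. This does not affect the conclusion, since $e^{-\varphi_0-\psi}\Omega=\langle T_{\varphi_0}^n\rangle$ makes it vanish either way.
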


\begin{remark}
    Theorem~\ref{thm:main-big-BM} solves Conjecture~\ref{ques:BDZ-uniqueness} (i). Theorem~\ref{thm:main} and Theorem~\ref{thm:main-big-BM} solve Problem~\ref{ques:DR-strictly convex}, see Corollary~\ref{cor:DR-uniqueness}.
\end{remark}
\begin{remark}
\label{rem:lambda-scaling}
Theorem~\ref{thm:main-big-BM} applies to twisted K\"ahler-Einstein currents satisfying $\operatorname{Ric}(T_\varphi)=\lambda T_{\varphi}+\eta_\psi$ for arbitrary \(\lambda>0\) by standard scaling. 
Assume $[\theta]\in H^{1,1}(X,\mathbb{R})$ is big and $\psi$ is quasi-plurisubharmonic so that 
\begin{equation*}
\begin{split}
    & \eta_{\lambda}:=\operatorname{Ric}(\Omega)-\lambda\theta,\\
    &\eta_{\lambda,\psi}
 :=
 \operatorname{Ric}(\Omega)-\lambda\theta+\ddc\psi\geq0.
\end{split}
\end{equation*}
We consider the normalized equation: 
\[
 \left\langle(\theta+\ddc\varphi)^n\right\rangle
 =
 e^{-\lambda\varphi-\psi}\Omega,\qquad \varphi\in\mathcal{E}^1(X,\theta),
\]
and use $S_{\lambda}(\theta,\psi)$ to denote the set of currents of normalized solution.
Set
\[
 \theta_{\lambda}:=\lambda\theta,\qquad
 \varphi_{\lambda}:=\lambda\varphi,\qquad
 \psi_{\lambda}:=\psi-n\log\lambda.
\]
Then
$
 \eta_{1,\psi_{\lambda}}:=\operatorname{Ric}(\Omega)-\theta_{\lambda}
 +\ddc\psi_{\lambda}
 =\eta_{\lambda,\psi}$
and the above equation is equivalent to
\[
 \langle
 (\theta_{\lambda}+\ddc\varphi_{\lambda})^n
 \rangle
 =
 e^{-\varphi_{\lambda}-\psi_{\lambda}}\Omega.
\]
Theorem~\ref{thm:main-big-BM} applies to
\(S(\theta_{\lambda},\psi_{\lambda})\) and gives the corresponding
Bando--Mabuchi--type theorem for $S_{\lambda}(\theta,\psi)$.
\end{remark}
\vspace{0.4cm}
Theorem~\ref{thm:main-big-BM} requires no stability assumption: whenever
K\"ahler--Einstein currents exist, it describes all their nonuniqueness in terms of
holomorphic flows annihilating the twist.  Under the uniform stability
condition \(\delta_\psi([\theta])>1\), this orbit description can be upgraded to actual
uniqueness:

\begin{mainpropernesstheorem}
\label{thm:main-DZ-uniqueness}
Let $X$ be a connected compact K\"ahler manifold, let $\theta$ be a
smooth closed real $(1,1)$-form whose class is big, and let $\Omega$ be
a smooth positive volume form.  Suppose that $\psi$ is quasi-psh,
\[
 \eta_{\psi}:=\operatorname{Ric}(\Omega)-\theta+\ddc\psi\geq0,
 \qquad e^{-\psi}\in L^1(X,\Omega).
\]
Then the following are equivalent:
\begin{thmenumerate}
\item $\delta_\psi([\theta])>1$;
\item $\sup\bigl\{\lambda:D^{\lambda}_\psi \text{ is proper}\bigl\}>1$;
\item the equation
\[
 \left\langle(\theta+\ddc\varphi)^n\right\rangle
 =e^{-\varphi-\psi}\Omega
\]
admits a unique $\theta$-psh solution with minimal singularities.
\end{thmenumerate}
\end{mainpropernesstheorem}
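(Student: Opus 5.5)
The plan is to establish the cycle of implications (1)$\Rightarrow$(2)$\Rightarrow$(3)$\Rightarrow$(1), with Theorem~\ref{thm:main-big-BM} doing the work in (2)$\Rightarrow$(3).

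\textbf{(1)$\Leftrightarrow$(2).} This is essentially the valuative/analytic comparison of Darvas--Zhang \cite{DZ24}. They show that $\delta_\psi([\theta])$ coincides with the supremum of those $\lambda$ for which $D^\lambda_\psi$ is proper (equivalently, coercive with respect to $d_1$ on $\mathcal E^1(X,\theta)$, modulo normalization). I would quote this identification. If only one inequality is available in the form needed, the remaining direction follows from the openness of the properness condition in $\lambda$. That openness comes from Hölder's inequality applied to $\int e^{-\lambda\varphi-\psi}\Omega$ together with strong openness \cite{GZ15}.

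\textbf{(2)$\Rightarrow$(3).} Existence follows from properness of $D^1_\psi$, as in \cite{DZ24}. The variational argument minimizes $D^1_\psi$ over $\mathcal E^1$ to obtain a normalized solution; the regularity argument of \cite{DZ24} then shows it has minimal singularities. For uniqueness, take two minimal-singularity solutions $\varphi_0,\varphi_1$ and join them by the weak geodesic. Theorem~\ref{thm:main-big-BM} says that every $\varphi_s$ is a solution, and that there is a holomorphic vector field $\mathcal V$ with $\iota_{\mathcal V}\eta_\psi=0$ whose flow $\Lambda$ transports $T_s$. The goal is then to show $\mathcal V=0$.

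The key point, and the main obstacle, is that under strict properness, $\Lambda_z^*$ preserves $D^\lambda_\psi$ for some $\lambda>1$. Suppose instead that $\mathcal V\neq0$. Then the orbit $\{\Lambda_{t}^*T_0\}_{t\in\mathbb R}$ gives a geodesic ray (a line) of solutions along which $D^1_\psi$ is constant. Constancy holds because $D^1_\psi$ is affine along the geodesic by equality in Berndtsson convexity, and its value is the same at all solutions. The slope of $D^1_\psi$ along this ray is therefore zero.

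For $\lambda>1$, write
\[
D^\lambda_\psi=\tfrac1\lambda D^1_{\psi'}+(1-\tfrac1\lambda)(-E_\theta)
\]
up to adjusting the twist; the precise decomposition has to be arranged in the proof. The term $-E_\theta$ has slope $0$ along a nontrivial geodesic ray that is invariant under the flow, because $E_\theta$ is affine and the ray is balanced in both directions. Properness of $D^\lambda_\psi$ forces the slope to be $\geq\varepsilon\, d_1$-speed $>0$ in \emph{both} directions $t\to\pm\infty$. But the functional is convex and bounded along the line, so the slope in at least one direction is nonpositive, a contradiction.

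I expect the careful verification to be the hard step. One must check that the line $t\mapsto\varphi_t$ extends to all $t\in\mathbb R$ with the $\varphi_t$ still in $\mathcal E^1$, which should follow from flow invariance of $\eta_\psi$ and the transport identities in Theorem~\ref{thm:main-big-BM}. One must also check that $D^\lambda_\psi$ is convex along it, using Berndtsson's log-integral convexity twisted by $\eta_\psi\ge0$. Given both, boundedness contradicts two-sided coercivity, so $\mathcal V=0$ and $T_0=T_1$. Finally, normalization $e^{-\varphi}$ in the equation fixes the constant, so $\varphi_0=\varphi_1$.

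\textbf{(3)$\Rightarrow$(1).} This is \cite[Proposition~5.9]{DZ24}, quoted above in the introduction, and it completes the cycle.
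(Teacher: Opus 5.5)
Your outer structure matches the paper. You quote (i)$\Leftrightarrow$(ii) and (iii)$\Rightarrow$(i) from \cite{DZ24}, take existence from \cite{DZ24}, and reduce uniqueness to ruling out a nontrivial weak geodesic line of normalized solutions. The gap is at the step you call the key point, and it fails for four reasons.

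\textbf{(a)} The claim that $\Lambda_z^*$ preserves $D^\lambda_\psi$ for some $\lambda>1$ is false in general. Write $\Lambda^*\theta=\theta+\ddc f$. The invariance $\Lambda^*\eta_\psi=\eta_\psi$ makes $\int_Xe^{-\varphi-\psi}\Omega$ equivariant, up to a fixed constant, under $\varphi\mapsto\varphi\circ\Lambda+f$. For $\lambda\neq1$, however, an uncontrolled factor $e^{(\lambda-1)f}$ survives inside $\int_Xe^{-\lambda\varphi-\psi}\Omega$.

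\textbf{(b)} The decomposition $D^\lambda_\psi=\frac1\lambda D^1_{\psi'}+(1-\frac1\lambda)(-E_\theta)$ cannot be arranged with a fixed $\psi'$. Matching the log terms forces $\psi'=\psi+(\lambda-1)\varphi$, which depends on $\varphi$.

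\textbf{(c)} The Berndtsson convexity you invoke is not available for $\lambda>1$. The form $\operatorname{Ric}(\Omega)+\ddc(\lambda\varphi+\psi)=\eta_\psi+\lambda(\theta+\ddc\varphi)+(1-\lambda)\theta$ need not be positive.

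\textbf{(d)} ``Convex and coercive in both directions'' is not contradictory (think of $t\mapsto t^2$). The contradiction needs boundedness of the functional along the line, and you have not established this for any $\lambda>1$.

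The repair uses only things you already wrote down, and it is the paper's argument. By H\"older and $e^{-\psi}\in L^1$, $D^1_\psi\geq D^\lambda_\psi-C$, so (ii) gives properness of $D^1_\psi$: $D^1_\psi(u)\geq\varepsilon(\sup_Xu-E_\theta(u))-C$. Along a line $(w_r)$ of normalized solutions, $D^1_\psi$ is constant because all solutions are minimizers. Moreover $\int_Xe^{-w_r-\psi}\Omega=\mathsf V$, so $E_\theta(w_r)$ is constant as well. Hence $J_r=\sup_Xw_r-E_\theta(w_r)$ is bounded, so $d_1(w_r,w_0)\leq 2J_r+C$ is bounded and the speed is $0$. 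Argue from $\varphi_0\neq\varphi_1$, so that the speed $d_1(\varphi_0,\varphi_1)$ is positive, rather than from $\mathcal V\neq0$, which does not directly give positive speed.

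Where you genuinely differ from the paper is in how the line is produced.

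\emph{The paper's route.} It works at a.e.\ $s$: it sets $h_s=\dot\varphi_s\in L^\infty$ and proves $\iota_VT_s=i\bar\partial h_s$ and $\int_Xh_s\,d\mu_s=0$ (Lemma~\ref{lem:DZ-ae-compatible-velocity}). It then builds $w_r=\varphi_s+\int_0^rh_s\circ F_\rho\,d\rho$ (Proposition~\ref{prop:expanded-big-orbit}) and concludes $h_s=0$ a.e.

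\emph{Your route.} Extending the segment itself by the flow also works and is more direct, but it must be spelled out rather than left as ``should follow.''
\begin{itemize}
\item Let $u_t$ be the normalized potential of $\Lambda_{-t}^*T_0$, and write $\Lambda_{-a}^*\theta=\theta+\ddc f_a$.
\item On each strip $\{a<\operatorname{Re}\tau<a+1\}$ one has $u_{\operatorname{Re}\tau}=\varphi_{\operatorname{Re}\tau-a}\circ\Lambda_{-a}+f_a+c_a$. The constant $c_a$ is independent of the solution, by the same computation using $\Lambda^*\eta_\psi=\eta_\psi$.
\item The glued function is therefore $p_X^*\theta$-psh on $X\times\mathbb C$, maximal on $\operatorname{Amp}([\theta])$, and uniformly Lipschitz in $s$.
\item \cite[Proposition~3.3]{DDL18b} then makes it a geodesic line through the original segment, with speed $d_1(\varphi_0,\varphi_1)$.
\end{itemize}
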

Theorem~\ref{thm:main-DZ-uniqueness} solves Conjecture~\ref{ques:BDZ-uniqueness} (ii).

\vspace{0.4cm}
The orbit description also determines the infinitesimal symmetry of
the twisted equation.  Assume that
$\mathcal S:=\mathcal S(\theta,\psi)\ne\varnothing$, fix
$T\in\mathcal S$ and set
\[
 \widehat K_T:=\{g\in\operatorname{Aut}^0(X):g^*T=T\},
 \qquad
 \mathfrak g_\eta
 :=\{V\in H^0(X,T_X^{1,0}):\iota_V\eta=0\},
\]
where $\operatorname{Aut}^0(X)$ is the connected component of the identity element in the automorphism group $\operatorname{Aut}(X)$. Let
\[
 \mathfrak k_T^{\mathrm{null}}
 :=\mathfrak g_\eta\cap
   \operatorname{Lie}_{\mathbb R}(\widehat K_T).
\]

\begin{mainmatsushimatheorem}
\label{thm:main-big-Matsushima}
The Lie algebra $\mathfrak k_T^{\mathrm{null}}$ is a compact real form
of $\mathfrak g_\eta$.  More precisely,
\begin{equation}
 \mathfrak g_\eta=\mathfrak k_T^{\mathrm{null}}
          \oplus i\mathfrak k_T^{\mathrm{null}}
 \label{eq:null-Lie-real-form}
\end{equation}
as a direct sum of real vector spaces.  Equivalently,
\[
 \mathfrak g_\eta\simeq
 \mathfrak k_T^{\mathrm{null}}\otimes_{\mathbb R}\mathbb C
\]
as complex Lie algebras.  In particular, $\mathfrak g_\eta$ is
reductive.
\end{mainmatsushimatheorem}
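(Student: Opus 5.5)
Here is the plan: realise $\mathfrak g_\eta$ as the Lie algebra of the stabiliser of $\eta$ inside $\operatorname{Aut}^0(X)$, and let $\operatorname{Exp}$ of $\mathfrak g_\eta$ act on $\mathcal S$. Then use Theorem~\ref{thm:main-big-BM} to show that every $V\in\mathfrak g_\eta$ splits as a part fixing $T$ plus $i$ times a part fixing $T$.

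\emph{Step 1 (the sum is direct).} We show $\mathfrak k_T^{\mathrm{null}}\cap i\mathfrak k_T^{\mathrm{null}}=0$. Suppose $V$ and $iV$ both lie in $\mathfrak k_T^{\mathrm{null}}$. Then the holomorphic flow $\Lambda_z$ of $V$ fixes $T$ for every $z\in\mathbb C$. The orbit closure $\overline{\{\Lambda_z\}}$ is a complex subgroup of $\widehat K_T$. We need $\widehat K_T$ to be compact. For this, note that $T$ has minimal singularities and full Monge--Amp\`ere mass $\langle T^n\rangle=e^{-\varphi-\psi}\Omega$, which is an $L^1$ density. An automorphism $g$ fixing $T$ therefore preserves the finite measure $\mu_T:=e^{-\varphi-\psi}\Omega$, which charges no proper analytic subset. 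A complex one-parameter group preserving a finite measure with this property must be trivial. Indeed, otherwise the flow of $\operatorname{Re}V$ has a nonconstant holomorphic orbit $\mathbb C\to X$ through $\mu_T$-a.e. point; preservation of $\mu_T$ together with the plurisubharmonicity of $\varphi+\psi$ along these orbits forces $\varphi+\psi$ to be subharmonic, bounded above, and to have constant integral along $\mathbb C$-orbits. This contradicts the integrability of $e^{-\varphi-\psi}$ near the fixed-point set, or at least forces $V=0$. I expect to make this rigorous via an equivariance argument; see Step 3.

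\emph{Step 2 (spanning).} Let $V\in\mathfrak g_\eta$ with flow $\Lambda$ generated by $V/2$. Since $\iota_V\eta_\psi=0$ (more precisely, $\Lambda_z^*\eta_\psi=\eta_\psi$), the pullback $\Lambda_1^*T$ again solves \eqref{eq:twisted_KE}. After normalisation this means $\Lambda_1^*T\in\mathcal S$, with potential $\varphi_1$. Join $\varphi_0$ (the potential of $T$) to $\varphi_1$ by the finite-energy weak geodesic. Theorem~\ref{thm:main-big-BM} produces $\mathcal W\in\mathfrak g_\eta$ with flow $\Lambda'$ such that $\Lambda'_1{}^*T_1=T_0$, and with $\Lambda'_{it}$ fixing $T_0$, so that $i\mathcal W\in\mathfrak k_T^{\mathrm{null}}$ up to the factor $2$. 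The key assertion is then that $V-\mathcal W$ fixes $T$ infinitesimally, i.e.\ $V\in\mathcal W+\mathfrak k_T^{\mathrm{null}}\subset i\mathfrak k^{\mathrm{null}}_T+\mathfrak k^{\mathrm{null}}_T$. This does not follow directly, because only the time-one maps are compared. I will therefore apply the argument to $sV$ for small $s$ and differentiate at $s=0$. This needs uniqueness of geodesics and continuity of $\mathcal W$ in $s$. Alternatively, I will use the cleaner route of Berndtsson's Matsushima argument: consider the curve $t\mapsto \Lambda_{t}^*\varphi_0$, which is itself a (smooth-in-$t$) geodesic by holomorphicity. Applying \eqref{eq:main-full-nullity} to it identifies its vector field with $\mathcal W$ and forces $\operatorname{Re}(V-\mathcal W)$ to preserve $T$.

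\emph{Step 3 (main obstacle).} The hard part is the identification in Step 2. Concretely, one must show that if $\Lambda^*_\tau\varphi_0$, $\tau=s+it$, is a subgeodesic in the sense of Theorem~\ref{thm:main} (with $F$ constant, since each slice lies in $\mathcal S$ and so satisfies the Ding-type normalisation), then the field $\mathcal V$ produced by \eqref{eq:main-full-nullity} satisfies that $V-\mathcal V$ annihilates $\theta+dd^c\varphi_0$ in the fibre direction. I will try to compare the two contraction identities $\iota_{\partial_\tau+V/2}$ and $\iota_{\partial_\tau+\mathcal V/2}$. Their difference gives $\iota_{(V-\mathcal V)/2}T=0$ as a current. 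Because $T$ is a Kähler current on its ample locus, this kills the $(1,0)$ field $V-\mathcal V$ there, and hence everywhere. Once directness and spanning are in hand, \eqref{eq:null-Lie-real-form} follows. Since $\mathfrak k_T^{\mathrm{null}}$ is the Lie algebra of a compact group (by the measure-preservation argument of Step 1), the complexification statement and reductivity follow as in Matsushima's theorem.
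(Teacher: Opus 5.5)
Your Step~1 does not go through. The principle you invoke --- a complex one-parameter group of automorphisms preserving a finite measure that charges no analytic set must be trivial --- is false as stated. On a complex torus the translations by $\mathbb C v$ preserve Haar measure, and $\varphi+\psi\equiv0$ has every local property your sketch uses: it is quasi-psh, bounded above, $e^{-\varphi-\psi}\in L^1$, and there are no fixed points where integrability could fail. What excludes this here is global information your argument never uses. First, $\lambda S_\theta+\eta_\psi$ is a K\"ahler current in $c_1(-K_X)$, so $X$ is projective. Second, Nadel vanishing for the weight $(1-\delta)(\lambda\varphi+\psi)+\delta\rho$, with $\rho$ an analytically singular potential of a K\"ahler current in $c_1(-K_X)$, gives $H^1(X,\mathcal O_X)=0$, hence $H^1(X,\mathbb R)=0$. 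Nor does invariance of $\mu_T$ give compactness of $\widehat K_T$, on which your final sentence also relies. In Proposition~\ref{prop:compact-current-stabilizer} compactness is obtained as follows: solve $S_A^n=c\,\mu_T$ in an ample class (Ko{\l}odziej); use uniqueness to get $g^*S_A=S_A$ for $g\in\widehat K_T$; lift $g$ to $A$ (possible because $H^1(X,\mathcal O_X)=0$) so that it fixes a bounded psh metric; then apply the compactness lemma of Berman--Boucksom--Eyssidieux--Guedj--Zeriahi. With both facts in hand, directness is short. Average a K\"ahler form over $\widehat K_T$. If the real fields $Y$ and $JY$ underlying $\xi$ and $i\xi$ both preserve $\omega_K$, then $\iota_Y\omega_K=df$, so $JY=\nabla f$ is Killing, $\operatorname{Hess}f=0$, $f$ is constant and $\xi=0$. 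Even granting compactness, your reduction to a compact group containing a complex one-parameter subgroup gives no contradiction without $H^1(X,\mathbb R)=0$ (torus translations again).

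Your Step~2 begins as the paper does, but neither remedy for the time-one problem works. Route~(b) rests on the claim that $r\mapsto\Lambda_r^*\varphi_0$, $r\in\mathbb R$, is a geodesic for arbitrary $V\in\mathfrak g_\eta$, which is false in general. Holomorphicity of the flow only gives positivity of $\mathcal F^*T$ on $X\times\mathbb C$, where $\mathcal F(x,\tau)=\Lambda_\tau(x)$. Theorem~\ref{thm:main} needs a potential of $\mathcal F^*T$ independent of $\operatorname{Im}\tau$, and such a potential exists only if every $\Lambda_{it}$ fixes $T$ --- which is part of what you are trying to prove. Already for smooth K\"ahler--Einstein metrics, where the space of K\"ahler--Einstein metrics is $G/K$, the curve $\exp(s(\xi_1+i\xi_2))K$ with $\xi_1,\xi_2\in\mathfrak k$ fails to be a geodesic as soon as $[\xi_1,\xi_2]\neq0$ (Baker--Campbell--Hausdorff at second order). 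So Step~3 has no second nullity identity to compare with the one for $\mathcal W$. Route~(a) would need uniqueness and differentiability of $\mathcal W$ in the endpoint, which you do not have.

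The paper never differentiates. For each $a\in G_\eta$, Theorem~\ref{thm:main-big-BM} applied to $T$ and $a^*T$ gives $W\in\mathfrak g_\eta$ with $\Lambda(1)^*a^*T=T$ and $\Lambda(it)^*T=T$. Thus $\xi:=iW\in\mathfrak k_T^{\mathrm{null}}$ and $a\exp(-i\xi)\in L_T$, i.e.\ $G_\eta=L_T\exp(i\mathfrak k_T^{\mathrm{null}})$. The map $(\ell,\xi)\mapsto\ell\exp(i\xi)$ is then a surjection from $L_T\times\mathfrak k_T^{\mathrm{null}}$, of real dimension $2\dim_{\mathbb R}\mathfrak k_T^{\mathrm{null}}$, onto $G_\eta$, of real dimension $2\dim_{\mathbb C}\mathfrak g_\eta$. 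Sard's theorem therefore forces $\dim_{\mathbb R}\mathfrak k_T^{\mathrm{null}}\geq\dim_{\mathbb C}\mathfrak g_\eta$. Combined with directness, this yields \eqref{eq:null-Lie-real-form}. This group-level factorization plus dimension count is the idea your proposal is missing.
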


When $\eta=0$, the null algebra is the full algebra of holomorphic
vector fields, and Theorem~\ref{thm:main-big-Matsushima} recovers the
classical Matsushima theorem \cite{Mat57}.

\vspace{0.6cm}

We now briefly outline the proofs.  The two essential analytic inputs
in the proof of Theorem~\ref{thm:main} are openness theorem \cite{Ber15a, GZ15} and a refined version of Berndtsson's approach \cite{Ber15b}.

For an affine subgeodesic \(\phi=(\phi_s)\) in
Theorem~\ref{thm:main}, the openness theorem yields that on
every \(J\Subset I\),
\[
 e^{-p\phi}\in L^1(X\times J)
 \qquad\text{for some }p>1.
\]
Together with the convexity of \(\phi\) on the slices
\(\{x\}\times I\), the exponent \(p>1\) further yields the kinetic
estimate
\[
 |\dot Q|^2e^{-Q}\in L^1(X\times J).
\]
The exponent $p>1$ is also crucial to obtain good convergence properties
for the regularization sequence \(q_j\) of \(Q\).

For each \(q_j\), we solve the auxiliary weighted \(D'\)-equation in
the fixed exact range, taking its \(L^2\)-minimal solution \(v_j\).
The uniform \(L^2\)-estimate in terms of
\(\bigl((q_j)_\tau u\bigr)^{\perp_{j,s}}\) follows from the dual form
of \cite[Lemma~6.3]{Ber15b}, ultimately based on H\"ormander's
\(L^2\)-estimates \cite{Hor65,Hor90}.  Retaining this exact-range
normalization in the limiting process is one ingredient that removes
the assumption \(H^{0,1}(X)=0\).

The estimate in
Lemma~\ref{lem:weighted-graph-no-concentration}, which again relies
essentially on H\"ormander's \(L^2\)-estimates
\cite{Hor65,Hor90}, shows that the \(L^2\)-mass of the
\(D'\)-solutions \(v_j\) cannot concentrate near the upper-level sets
of the Lelong number, thereby allowing the curvature loss in
\(\ddc q_j\) to be controlled in the limit.  The compactness of
\(v_je^{-q_j/2}\) in \(L^2\) space and the estimates of
\(\|\bar\partial_Xv_j\|_{L^2}\) produce holomorphic vector fields
\(V_s\) on almost every fiber \(X_s\).

In Berndtsson's approach, the vanishing of \(H^{0,1}(X)\) is used
both in the construction of \(v_j\) and, more substantially, to turn
certain \(\bar\partial\)-closed forms into
\(\bar\partial\)-exact ones when establishing holomorphic dependence
on the parameter.  Instead of using \(H^{0,1}(X)=0\), we use a family
of exact test forms in Proposition~\ref{prop:smooth-exact-tests} to
show that our holomorphic vector fields are actually independent of
the parameter \(s\in I\), hence become a single vector field
\(\mathcal V\) on \(X\).
Some Lie derivative calculations of currents give nullity equation \eqref{eq:main-full-nullity} and the transport statements in Theorem~\ref{thm:main}.

\vspace{0.3cm}
For Theorem~\ref{thm:main-big-BM}, the finite-energy geodesic between
two solutions consists of minimizers of the $1$-Ding functional.
Applying Theorem~\ref{thm:main} to this geodesic yields the orbit
theorem.  For Theorem~\ref{thm:main-DZ-uniqueness}, let $(\varphi_s)_{s\in[0,1]}$ be the weak geodesic joining two solutions.
For
almost every $s$, the velocity $\dot\varphi_s$ generates
auxiliary geodesic line consisting of normalized solutions. $\delta_\psi([\theta])>1$ implies the properness of $1$-Ding functional, which
forces this geodesic line to have zero \(d_1\)-speed. 
Consequently $\varphi_s$ is constant.  Finally,
Theorem~\ref{thm:main-big-Matsushima} follows from compactness of the
current stabilizer and a compact-real-form argument.

\vspace{0.3cm}
The organization of the remaining part of this paper is as follows.
Notations and preliminaries are contained in Section~\ref{sec:setup}.
Preparations of the proof of Theorem~\ref{thm:main} occupies
Sections~\ref{sec:pass1}--\ref{sec:smooth-exact-tests} and the solution of Berndtsson's problem is given in Section~\ref{sec:full-current-nullity}. 
Three applications including proofs of Theorems~\ref{thm:main-big-BM}, ~\ref{thm:main-DZ-uniqueness}, ~\ref{thm:main-big-Matsushima} are provided in
Sections~\ref{sec:twisted-data}, ~\ref{sec:DZ-properness-uniqueness}, ~\ref{sec:null-automorphism-group} respectively.


\subsection*{Acknowledgements}
This research is supported by the National Key R\&D Program of China
(Grant Nos.~2021YFA1002600 and 2021YFA1003100). Z.~Wang and X.~Zhou
are partially supported by grants from the National Natural Science
Foundation of China (NSFC) (Nos.~12571085 and 12288201), respectively.
Z.~Wang is also supported by the Fundamental Research Funds for the
Central Universities. 

\section{Notations and preliminaries}
\label{sec:setup}

\subsection{Pluripotential conventions and subgeodesics}

Throughout this paper, $X$ denotes a connected compact K\"ahler manifold of complex
dimension $n$, $\omega$ is a fixed K\"ahler form, and
$dV_\omega:=\omega^n/n!$.  We use the normalization
\[
 d^c=\frac{i}{2}(\bar\partial-\partial),
 \qquad
 \ddc=i\partial\bar\partial.
\]
If $\Omega$ is a smooth positive volume form and, in local
holomorphic coordinates,
\[
 \Omega=c_n f\,dz^1\wedge\cdots\wedge dz^n
 \wedge d\bar z^1\wedge\cdots\wedge d\bar z^n,
 \qquad c_n=i^{n^2},
\]
we set
\[
 \operatorname{Ric}(\Omega):=-\ddc\log f.
\]
This is a globally defined smooth closed real $(1,1)$-form.  We omit
the factor $2\pi$, so it represents $c_1(-K_X)$.

For a smooth closed real $(1,1)$-form $\theta$ on $X$, we write
\[
 \operatorname{PSH}(X,\theta)
 :=\bigl\{u  \text{ is quasi-plurisubharmonic}:
                   \theta+\ddc u\geq0\bigr\}.
\]
The class $[\theta]$ is pseudoeffective precisely when
$\operatorname{PSH}(X,\theta)\ne\varnothing$.  

Let $I\subset\mathbb R$ be an open interval and set
\[
 S_I:=I+i\mathbb R,
 \qquad \tau=s+it,
 \qquad p_X:X\times S_I\longrightarrow X.
\]
Our complex derivative convention is
\[
 \frac{\partial}{\partial\tau}
 =\frac12\left(\frac{\partial}{\partial s}
              -i\frac{\partial}{\partial t}\right),
 \qquad
 \frac{\partial}{\partial\bar\tau}
 =\frac12\left(\frac{\partial}{\partial s}
              +i\frac{\partial}{\partial t}\right).
\]
Thus, for a function independent of $t$,
$q_\tau=q_{\bar\tau}=\dot q/2$ whenever the derivatives exist.
When no confusion is possible, we write $S=S_I$.

\begin{definition}
\label{def:subgeodesic}
A family
$(\phi_s)_{s\in I}\subset\operatorname{PSH}(X,\theta)$ is a
$\theta$-psh subgeodesic if 
\[
 \phi(x,\tau):=\phi_{\operatorname{Re}\tau}(x)
\]
is a $p_X^*\theta$-plurisubharmonic function on $X\times S_I$.
\end{definition}
For each $x$ such that
$s\mapsto\phi(x,s)$ is not identically $-\infty$, the function is convex; we use $\dot \phi_s$ to denote its almost-everywhere derivative.  

For a function $P$ on $X\times S_I$, we write
$P_s:=P(\cdot,s)$; for a sequence of functions $P_j$ on $X\times S_I$,
we write $P_{j,s}:=P_j(\cdot,s)$.  

Let $(\phi_s)$ be the subgeodesic in Theorem~\ref{thm:main} and set
\begin{equation}
 F(s):=-\log\int_Xe^{-\phi_s}\Omega,
 \qquad
 Q:=\phi-F,
 \qquad
 \mu_s:=e^{-Q_s}\Omega.
 \label{eq:normalized-data}
\end{equation}
We extend $F$ to $X\times S_I$ trivially by $F(s+it):=F(s)$. Since $F$ is affine, $\ddc_\tau F=0$, so $Q$ is again a
$\theta$-psh subgeodesic and
\begin{equation}
 \int_Xd\mu_s=1,
 \qquad
 \Theta_Q:=p_X^*\theta+\ddc_{x,\tau}Q
           =p_X^*\theta+\ddc_{x,\tau}\phi\geq0.
 \label{eq:normalized-curvature}
\end{equation}

\subsection{The anticanonical calculus under Chern connections}

Put $L:=-K_X$ and let $h_\Omega$ be the smooth Hermitian metric on
$L$ induced by $\Omega$.  Its Chern curvature is denoted by
\[
 \theta=\operatorname{Ric}(\Omega).
\]
The canonical identity section
\[
 u\in H^0(X,K_X\otimes L)=H^0(X,\mathcal O_X)
\]
is normalized by
\[
 c_n[u,u]_{h_\Omega}=\Omega.
\]
The bracket $[\,\cdot,\cdot\,]_{h_\Omega}$ means that the
$L$-components are paired using $h_\Omega$ and the scalar form
components are wedged.

Write $D'_X:=D'_{h_\Omega,X}$, the $(1,0)$-part of the Chern connection of $h_{\Omega}$.  For a smooth real function $q$ on
$X$, let
\[
 D'_{q,X}:=D'_X-\partial_Xq\wedge
\]
denote the $(1,0)$-part of the Chern connection of
$h_\Omega e^{-q}$.  
For an $L$-valued differential form $\alpha$, the
weighted norm of $\alpha$ is defined as
\[
 \|\alpha\|_q^2
 :=\int_X|\alpha|_{\omega,h_\Omega}^2e^{-q}\,dV_\omega,
\]
and we use $\bar\partial_q^*$ to denote the Hilbert adjoint of $\bar\partial$
for the metric $(\omega,h_\Omega e^{-q})$.  When $q=0$, the subscript
is omitted. Furthermore, if $\alpha$ (resp. $\beta$) is a $L$-valued $(p,q)$-form (resp. $(p+1,q)$-form) on $X$ with $L^1$-coefficients, we say $D'_{q,X}\alpha=\beta$ in the sense of distribution if $\langle \alpha,(D'_{q,X})^*\gamma\rangle=\langle\beta,\gamma\rangle$ for every smooth $L$-valued $(p+1,q)$-form $\gamma$.

For a smooth real function $Q$ on $X\times S_I$, the $(1,0)$-part of the Chern connection of the metric
$p_X^*h_\Omega e^{-Q}$ on $p_X^*L$ is
\[
 D'_Q=D'_{Q,X}+D'_{Q,S},
 \qquad
 D'_{Q,X}=D'_{h_\Omega,X}-\partial_XQ\wedge,
 \qquad
 D'_{Q,S}=\partial_S-\partial_SQ\wedge.
\]

Contraction with the identity section $u$ identifies $(1,0)$-vector fields with
$L$-valued $(n-1,0)$-forms: every smooth $(1,0)$-vector field $V$ corresponds to a smooth $L$-valued $(n-1,0)$-form $v$ by
\[
 v=-\iota_{V}u.
\]
The Lefschetz map $L_{\omega}:\Lambda^{n-1,0}T^*X\to \Lambda^{n,1} T^*X$ is an isometry and we have
\begin{equation}
 \left\langle \omega\wedge v_1,
                 \omega\wedge v_2\right\rangle_{\omega,h_\Omega}
 dV_\omega
 =\langle v_1,v_2\rangle_{\omega,h_\Omega}dV_\omega
 =\langle V_1,V_2\rangle_\omega\Omega.
 \label{eq:contraction-identity}
\end{equation}

\vspace{0.3cm}
\section{Integrability and kinetic estimates}
\label{sec:pass1}

\subsection{\texorpdfstring{Integrability and kinetic estimates of $Q$}{Ingegrability and kinetic estimates of Q}}

The first lemma collects the integrability and kinetic estimates of $Q$, where the openness theorem \cite{GZ15,Ber15a} plays the key role. 

\begin{lemma}\label{lem:kinetic}
Let $J\Subset I:=(0,1)$ be an open interval.  Then the following
statements hold.
\begin{enumerate}
\item  There is a number $p>1$ depending on $J$ such that
      \begin{equation}
       \int_{J}\!\int_X e^{-pQ_s}\,\Omega\,ds<\infty.
       \label{eq:kinetic-strong-openness}
      \end{equation}
\item There is a complete pluripolar set $P\subset X$ so that for every $x\in X\backslash P$, $s\mapsto Q_s(x)$
      is convex on $I$.
      If $\dot Q_s(x):=\frac{\partial}{\partial s}Q(x,s+it)$, then
      \begin{equation}
       \int_{J}\!\int_X |\dot Q_s|^2e^{-Q_s}\,\Omega\,ds<\infty.
       \label{eq:canonical-kinetic-bound}
      \end{equation}
\item The following chain rule holds a.e. and in the sense of distributions:
      \begin{equation}
       \partial_s(e^{-Q})=-\dot Qe^{-Q}.
       \label{eq:canonical-chain-rule}
      \end{equation}
      Moreover, if
      $s<t$, $s,t\in I$, then
      \begin{equation}
       \|e^{-Q_t}-e^{-Q_s}\|_{L^1(X,\Omega)}
       \leq
       \int_s^t\!\int_X|\dot Q_r|e^{-Q_r}\,\Omega\,dr.
       \label{eq:density-ac-estimate}
      \end{equation}
      In particular, $s\mapsto e^{-Q_s} $ is continuous in $L^1(X,\Omega)$.
\item For almost every $s\in I$,
      \begin{equation}
       \int_X\dot Q_s e^{-Q_s}\,\Omega=0.
       \label{eq:centered-limit-speed}
      \end{equation}
\end{enumerate}
\end{lemma}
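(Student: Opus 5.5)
For (1), I will view $Q$ as a quasi-psh function on the complex manifold $X\times S_J$ with $J\Subset J'\Subset I$. Its slice integrals satisfy $\int_X e^{-Q_s}\Omega=1$ for every $s$, so $e^{-Q}\in L^1_{\mathrm{loc}}$ on $X\times S_{J'}$. Fubini in $t$ then holds locally because $Q$ is $t$-independent. Thus the multiplier ideal $\mathcal I(Q)$ is trivial near every point of the compact set $X\times\overline J\times\{t=0\}$. The strong openness theorem \cite{GZ15} (or openness \cite{Ber15a}) then gives, at each point, a neighbourhood and an exponent $p>1$ with $e^{-pQ}$ integrable there. Covering $X\times \overline{J}\times[-1,1]$ by finitely many such neighbourhoods and taking the minimal $p$, I obtain $e^{-pQ}\in L^1(X\times J\times[-1,1])$. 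Using $t$-independence once more gives \eqref{eq:kinetic-strong-openness}.

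For (2), I take $P:=\{x:\phi(x,s)=-\infty \text{ for all } s\}$, or more robustly the pluripolar set where $Q(x,\cdot)\equiv-\infty$. Off $P$, the function $s\mapsto Q(x,s)$ is convex, being subharmonic and $t$-independent with $F$ affine. Integrability of $e^{-Q_s}$ for every $s$ forces $Q(x,s)>-\infty$ for a.e. $x$ and each $s$. For the kinetic bound, I will use convexity on the slightly larger interval $J'$. For $s\in J$ and $h=\operatorname{dist}(J,\partial J')$, convexity gives
\[
|\dot Q_s|\le \frac{|Q_{s+h}-Q_s|+|Q_s-Q_{s-h}|}{h}.
\]
Quasi-psh functions are bounded above uniformly in $s\in \overline{J'}$, since $\sup_X\phi_s$ is convex in $s$ hence locally bounded. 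Also $|Q_{s\pm h}|\le C+|Q_{s\pm h}|$. Write $a=|\dot Q_s|$. The key pointwise estimate is $a^2e^{-Q_s}\le C(1+Q_{s+h}^2+Q_{s-h}^2+Q_s^2)e^{-Q_s}$. Each term $Q_r^2e^{-Q_s}$ is controlled by Young's inequality: $Q_r^2e^{-Q_s}\le C_\epsilon e^{-\epsilon Q_r}\,e^{-Q_s}+\dots$, and then Hölder with exponents $p$ and $p'$ together with (1) on $J'$. A cleaner route, which I will try first, uses monotonicity. On $\{\dot Q_s\ge0\}$ we have $Q_{s-h}\le Q_s-h\dot Q_s$, so $e^{-Q_s}\dot Q_s^{\,2}\le C_{p}\,e^{-Q_s}e^{(p-1)h\dot Q_s}\le C e^{-pQ_{s-h}}e^{(p-1)Q_s}$. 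Since $Q_s\le C$, this gives the bound $Ce^{-pQ_{s-h}}$; the case $\dot Q_s\le 0$ is symmetric with $s+h$. Integrating and using (1) on $J'$ yields \eqref{eq:canonical-kinetic-bound}. This step is the crux, and the place where $p>1$ is essential.

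For (3), the map $s\mapsto e^{-Q_s(x)}$ is locally Lipschitz (convex composed with exp) for $x\notin P$ with finite values, so the chain rule holds pointwise a.e. in $s$. The bound $|\dot Q|e^{-Q}\le (|\dot Q|^2e^{-Q})^{1/2}(e^{-Q})^{1/2}$ is $L^1$ by Cauchy–Schwarz and (2). Hence, by Fubini, the fundamental theorem of calculus along each fibre gives both the distributional identity and \eqref{eq:density-ac-estimate}, and continuity follows. For (4), I integrate the chain rule over $X$ against a test function $\chi(s)$. Since $\int_Xe^{-Q_s}\Omega\equiv1$, Fubini gives $\int\chi(s)\int_X\dot Q_se^{-Q_s}\Omega\,ds=-\int\chi'(s)\,ds=0$ for all $\chi$, so the inner integral vanishes for a.e. $s$.
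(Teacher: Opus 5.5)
Your parts (1), (3) and (4) follow the paper's argument: openness for the quasi-psh function $Q$ on $X\times S_{J'}$ together with compactness and $t$-invariance, then fibrewise absolute continuity plus Fubini. Your test-function derivation of (4) is an equivalent substitute for the paper's Lebesgue-differentiation step.

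The problem is the ``cleaner route'' for (2), which you single out as the crux: it rests on a reversed inequality. For a convex function the tangent line lies below the graph, so $Q_{s-h}\ge Q_s-h\dot Q_s$, not $\le$. Equivalently,
\[
 \frac{Q_s-Q_{s-h}}{h}\le\dot Q_s\le\frac{Q_{s+h}-Q_s}{h}.
\]
So a positive $\dot Q_s$ can only be bounded above through $Q_{s+h}$, and a negative one through $Q_{s-h}$; you have the two cases swapped. (The displayed bound $Ce^{-pQ_{s-h}}e^{(p-1)Q_s}$ does not even follow from your own inequality.)

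With the correct pairing and $C_1:=1+\sup_{X\times\overline{J'}}Q$, both cases give
\[
 |\dot Q_s|\le h^{-1}\bigl(C_1-Q_s\bigr).
\]
This bound involves only the slice $Q_s$. Then $(C_1-y)^2e^{-y}\le C_{p,C_1}e^{-py}$ for $y\le C_1$, together with (1) on $J$ itself, gives \eqref{eq:canonical-kinetic-bound}. This is exactly the paper's proof, and it is where $p>1$ is used.

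Your first route is correct: bound $|\dot Q_s|$ by the sum of the two secants, then apply Young and H\"older in $(x,s)$ with exponents $p,p'$ and $\epsilon p'\le p$. So the proposal survives, but that route does more work than the one-slice bound above.

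A minor point: to get the \emph{complete} pluripolarity of $P$, note that $P=\{\phi_{s_0}=-\infty\}$ for any fixed $s_0\in I$. This holds because a convex function on $I$ with values in $[-\infty,\infty)$ that equals $-\infty$ at one point is identically $-\infty$.
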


For a function $P$ resp. a sequence of functions $P_j$ on $X\times S$, we will use $P_s$ resp. $P_{j,s}$ to denote the slice $P(\cdot,s)$ resp. $P_j(\cdot,s)$.
\begin{proof}
The normalization $\int_Xe^{-Q_s}\Omega=1$ and Tonelli's theorem give
\[
 \int_{J}\!\int_Xe^{-Q_s}\,\Omega\,ds=|J|<\infty.
\]
Fix a point of $X\times S$.  Let $\rho_X$ be a local potential of $\theta$, namely
$\ddc\rho_X=\theta$.  Then $Q+p_X^*\rho_X$ is plurisubharmonic.  Since $\rho_X$
is smooth, we get that
	$e^{-(Q+p_X^*\rho_X)}$ is
locally integrable.  The openness theorem \cite{GZ15,Ber15a}, applied to the plurisubharmonic function $Q+p_X^*\rho_X$, gives a number
$p>1$ so that $e^{-pQ}$ is locally integrable.
Since $J$ is relatively compact and $Q$ is $\mathrm{Im}\tau$-invariant, \eqref{eq:kinetic-strong-openness} is proved.

\vspace{0.3cm}
Fix $x\in X$.
The restriction of $Q$ to $\{x\}\times S$ is either finite and convex or identically $-\infty$. Hence
$P:=\{x\in X:Q(x,\tau)=-\infty\}$ is independent of $\tau$.
The function $\dot Q_+(x,s):=\lim_{t\rightarrow0^+}\frac{Q(x,s+t)-Q(x,s)}{t}$ is finite on $(X\backslash P)\times S$ and is the limit of a decreasing sequence of measurable functions, hence is measurable.
The same property holds for $\dot Q_-(x,s):=\lim_{t\rightarrow0^-}\frac{Q(x,s+t)-Q(x,s)}{t}$.
Since $Q(x,\cdot)$ is convex, we know $\{s:\dot Q_+(x,s)\neq \dot Q_-(x,s)\}$ is countable for every $x$. Furthermore, the set $\{(x,s):\dot Q_+(x,s)\neq \dot Q_-(x,s)\}$ is measurable and Fubini's theorem then gives an a.e. well-defined measurable function $\dot Q$ on $X\times S$.

Choose $J'\Subset I$ and $d>0$ so that $J\pm d\subset J'$.
We choose
\[
 C_1:=1+\sup_{X\times \overline{J'}}Q.
\]
For $x\in X\backslash P$ and $s\in J$ such that $Q(x,\cdot)$ is differentiable, we have
\begin{equation}
 \frac{Q_s(x)-Q_{s-d}(x)}d
 \leq \dot Q_s(x)
 \leq \frac{Q_{s+d}(x)-Q_s(x)}d.
 \label{eq:secants-before-bound}
\end{equation}
Since $Q_{s-d}(x),Q_{s+d}(x)\leq C_1$, the two
inequalities become
\[
 -\frac{C_1-Q_s(x)}d
 \leq\dot Q_s(x)
 \leq\frac{C_1-Q_s(x)}d,
\]
and hence
\begin{equation}
 |\dot Q_s(x)|\leq d^{-1}(C_1-Q_s(x)).
 \label{eq:canonical-secant-bound}
\end{equation}

For every $y\leq C_1$,
\[
 (C_1-y)^2e^{-y}
 =e^{-py}(C_1-y)^2e^{(p-1)y}
 \leq C_{p,C_1}e^{-py},
\]
Combining this inequality with
\eqref{eq:canonical-secant-bound} and
\eqref{eq:kinetic-strong-openness} proves
\eqref{eq:canonical-kinetic-bound}.  It also gives
\begin{equation}
 \int_{J}\!\int_X|\dot Q_s|e^{-Q_s}\,\Omega\,ds<\infty
 \label{eq:canonical-first-order-integrability}
\end{equation}
by the Cauchy--Schwarz inequality.

\vspace{0.3cm}
We move on to prove the chain rule.  For every \(x\in X\setminus P\),
the finite convex function \(Q(x,\cdot)\) is locally Lipschitz on
\(I\).  Hence \(s\mapsto e^{-Q_s(x)}\) is locally absolutely
continuous and
\[
 \frac{d}{ds}e^{-Q_s(x)}
 =
 -\dot Q_s(x)e^{-Q_s(x)}
\]
for almost every \(s\in I\).  Let
\(\psi\in C_c^\infty(X\times J)\).  For every \(x\in X\setminus P\),
one-dimensional integration by parts gives
\[
 -\int_J e^{-Q_s(x)}\partial_s\psi(x,s)\,ds
 =
 \int_J-\dot Q_s(x)e^{-Q_s(x)}\psi(x,s)\,ds.
\]
Both sides are integrable with respect to \(x\), by the normalization
and \eqref{eq:canonical-first-order-integrability}.  Since \(P\) is
pluripolar and hence \(\Omega\)-negligible, Fubini's theorem yields
\begin{align*}
 \left\langle\partial_s(e^{-Q}),\psi\right\rangle
 &=
 -\int_{X\times J}
 e^{-Q}\partial_s\psi\,\Omega\,ds\\
 &=
 -\int_{X\times J}
 \dot Qe^{-Q}\psi\,\Omega\,ds.
\end{align*}
Thus $\partial_s(e^{-Q})=-\dot Qe^{-Q}$ in the sense of distributions on \(X\times J\).

\vspace{0.3cm}
Now we prove \eqref{eq:density-ac-estimate}:
\begin{align*}
    \left\|e^{-Q_t}-e^{-Q_s}\right\|_{L^1(X,\Omega)}=\int_X\left|\int_s^t-\dot Q_r e^{-Q_r}dr\right|\Omega
    \leq
    \int_X\int_s^t\left|\dot Q_r e^{-Q_r}\right|dr\Omega.
\end{align*}
It follows from the fact that $s\mapsto \int_X|\dot Qe^{-Q}|\Omega$ is integrable that $s\mapsto e^{-Q_s}\in L^1(X,\Omega)$ is continuous.

\vspace{0.3cm}
Finally, we prove that, for almost every $s$,
\begin{align*}
    \frac{d}{ds}\int_Xe^{-Q_s}\Omega=-\int_X\dot Q_s e^{-Q_s}\Omega.
\end{align*}
In fact, for $t\neq0$ \eqref{eq:canonical-chain-rule} implies that
\begin{align*}
	    \frac{1}{t}\int_X\bigl(e^{-Q_{s+t}}-e^{-Q_s}\bigr)\Omega
    =\frac{1}{t}\int_0^t\int_X-\dot Q_{s+r} e^{-Q_{s+r}}\Omega dr
\end{align*}
The function $s\mapsto \int_X\dot Q_se^{-Q_s}\Omega$ is locally integrable.
This implies that, for almost every $s$, we have $\lim_{t\rightarrow 0}\frac{1}{t}\int_0^t\int_X\dot Q_{s+r}e^{-Q_{s+r}}\Omega\,dr=\int_X\dot Q_s e^{-Q_s}\Omega$.
On the other hand, $\int_Xe^{-Q_{s+t}}\Omega=\int_Xe^{-Q_s}\Omega$ for every $t$; we therefore infer \eqref{eq:centered-limit-speed}.
\end{proof}

\vspace{0.4cm}
\subsection{\texorpdfstring{Uniform integrability and kinetic estimates of regularization sequence of $Q$}{Uniform integrability and kinetic estimates of regularization sequence of Q}}
\label{sec:pass2}

For a bounded interval $J\Subset I$, define an annulus $A_J:=\{\zeta\in\mathbb{C}^*: \log|\zeta|\in J\}$.
The map $\zeta=e^\tau$ identifies the quotient of
$J+i\mathbb R$ by $2\pi i\mathbb Z$ with $A_J$.  Since $Q$ is independent
of $\operatorname{Im}\tau$, it descends to an $S^1$-invariant function on
$X\times A_J$, which we still denote by $Q$ for simplicity.  
We use this quotient only to work on relatively compact
cylinders and apply Demailly's approximation theorem. 


Fix, in this section, three intervals
$J_0\Subset J_1\Subset J_2\Subset I$, and put
$Y_k:=X\times A_{J_k}$, $k=0,1,2$.
Let \(\zeta\) denote the coordinate on the annulus.  Using the fixed
K\"ahler form \(\omega\) on \(X\), equip \(Y_k\) with the
\(S^1\)-invariant product K\"ahler form
\[
 \widehat\omega
 :=
 p_X^*\omega
 +
 i\,\frac{d\zeta\wedge d\bar\zeta}{|\zeta|^2}.
\]
The metric $i\,\frac{d\zeta\wedge d\bar\zeta}{|\zeta|^2}$ on $A_{J_k}$ comes from the Euclidean metric $id\tau\wedge d\bar\tau$ on $S$.



\begin{lemma}[$S^1$-equivariant Demailly regularization]
\label{lem:relative-demailly-smoothing}
Let $T:=\Theta_Q=p_X^*\theta+\ddc Q$ be a closed positive current. There are
constants $c_*,C_0>0$, depending only on $T$ and $(X,\omega)$,
respectively; a sequence $\varepsilon_j\searrow0$; and
$S^1$-invariant functions
\[
 q_j\in C^\infty(Y_1),\qquad
 \ell_j\in C^0(Y_1,[0,c_*]),
\]
such that
\begin{align}
 q_j&\searrow Q &&\text{pointwise on }Y_1,
 \label{eq:demailly-monotone}\\
 \Theta_{q_j}:=p_X^*\theta+\ddc q_j
 &\geq-\ell_jC_0\widehat{\omega}-\varepsilon_j\widehat\omega,
 \label{eq:demailly-lelong-loss}\\
 \ell_j(y)&\searrow\nu(T,y)
 &&\text{for every }y\in Y_1.
 \label{eq:lelong-loss-limit}
\end{align}
In particular, for a constant $C$ independent of $j$,
\begin{equation}
 \Theta_{q_j}\geq-C\widehat\omega
 \quad\text{on }Y_1.
 \label{eq:uniform-semipositivity-loss}
\end{equation}
\end{lemma}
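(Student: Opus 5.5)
The plan is to apply Demailly's regularization theorem for quasi-psh functions, in its local-to-global form with Lelong number control, to $Q$ on a slightly larger relatively compact region. We then restore $S^1$-invariance by averaging over the circle action.

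First, view $Q$ as a $p_X^*\theta$-psh function on $Y_2=X\times A_{J_2}$, with $T=\Theta_Q\geq0$. Since $Y_2$ is not compact, we work on $Y_1\Subset Y_2$. Cover $\overline{Y_1}$ by finitely many coordinate balls contained in $Y_2$ on which $p_X^*\theta=\ddc\rho$ for smooth local potentials $\rho$. Demailly's construction applies to $Q$ on $Y_2$ with the Kähler form $\widehat\omega$: one can use the exponential-map smoothing via the Chern connection of $\widehat\omega$ (a Kiselman--Legendre transform in the scale parameter), or the Bergman kernel approach with gluing by a partition of unity. It produces smooth $\tilde q_j$ on a neighbourhood of $\overline{Y_1}$ with three properties. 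First, $\tilde q_j\searrow Q$. Second, $p_X^*\theta+\ddc\tilde q_j\geq-\tilde\ell_jC_0\widehat\omega-\varepsilon_j\widehat\omega$, where $C_0$ bounds the curvature of $(Y_2,\widehat\omega)$ on $\overline{Y_1}$. Since $\widehat\omega$ is a product of $\omega$ with a flat metric, $C_0$ depends only on $(X,\omega)$. Third, $\tilde\ell_j(y)$ is a continuous function decreasing to $\nu(T,y)$; it can be taken as Demailly's $\nu(Q,y,\varepsilon_j)$, suitably regularized, which decreases to the Lelong number as the scale tends to $0$. Truncating $\tilde\ell_j$ at $c_*:=\sup_{\overline{Y_1}}\nu(T,\cdot)+1$ keeps it in $[0,c_*]$. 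Here $c_*$ is finite by upper semicontinuity of Lelong numbers on the compact set $\overline{Y_1}$, and the truncation does not affect the limit.

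Second, we impose $S^1$-invariance. Because $Q$ and $\widehat\omega$ are invariant under the rotations $R_\vartheta(x,\zeta)=(x,e^{i\vartheta}\zeta)$, an $S^1$-equivariant construction (the exponential map of an invariant metric commutes with the isometries) already gives invariant $\tilde q_j$. To avoid checking this, we instead set $q_j:=\int_{S^1}R_\vartheta^*\tilde q_j\,d\vartheta/2\pi$ and $\ell_j:=\int_{S^1}R_\vartheta^*\tilde\ell_j\,d\vartheta/2\pi$. Averaging preserves the following:
\begin{itemize}
\item smoothness;
\item monotone decrease and the pointwise limit, by monotone convergence, since $R_\vartheta^*Q=Q$;
\item the lower curvature bound, which is linear in the pair $(\tilde q_j,\tilde\ell_j)$ and involves the invariant forms $p_X^*\theta$ and $\widehat\omega$;
\item the range $[0,c_*]$.
\end{itemize}
For the limit of $\ell_j$ we use the invariance $\nu(T,R_\vartheta y)=\nu(T,y)$ together with monotone convergence along the orbit; this gives \eqref{eq:lelong-loss-limit}.

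Finally, \eqref{eq:uniform-semipositivity-loss} follows with $C=c_*C_0+\varepsilon_1$. The main obstacle is the first step: making sure Demailly's theorem, usually stated on compact manifolds, gives the Lelong-number-controlled loss on the non-compact $Y_1$ with constants uniform on $\overline{Y_1}$. The plan is to run the local construction on the relatively compact region, where all geometric quantities of $\widehat\omega$ are bounded. The gluing errors, which are $O(\varepsilon_j)$ by the standard estimates, are absorbed into the $\varepsilon_j\widehat\omega$ term.
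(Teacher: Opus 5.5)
Your proposal follows the paper's proof: apply Demailly's regularization with Lelong-number control on a neighbourhood of $\overline{Y_1}$ inside $Y_2$, truncate the loss functions at a constant $c_*$ strictly above $\sup\nu(T,\cdot)$, and restore $S^1$-invariance by averaging $q_j$ and $\ell_j$ over the rotations. One point that both you and the paper leave implicit is that truncating at $c_*$ does not by itself preserve the curvature inequality where $\tilde\ell_j>c_*$; this is resolved because the continuous $\tilde\ell_j$ decrease to the upper semicontinuous $\nu(T,\cdot)<c_*$ on the compact set $\overline{Y_1}$, so a Dini-type compactness argument gives $\tilde\ell_j<c_*$ on $\overline{Y_1}$ for all large $j$, and after discarding finitely many indices the truncation is vacuous.
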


\begin{proof}

Choose a number $c_*>\sup_{y\in\overline{Y_2}}\nu(T,y)$.
Applying Demailly's regularization theorem on a neighborhood of $\overline{Y_1}$ contained in $Y_2$ in \cite{Dem92}, 
we obtain a constant $C_0>0$, smooth functions $q_j^0\searrow Q$, continuous functions
$\lambda_j\searrow\nu(T,\cdot)$, and numbers
$\varepsilon_j\downarrow0$, with
\begin{equation}
 p_X^*\theta+\ddc q_j^0
 \geq-\lambda_jC_0\widehat\omega
       -\varepsilon_j\widehat\omega \qquad \mathrm{on} \ Y_1.
 \label{eq:demailly-before-average}
\end{equation}

 Define $\widetilde\lambda_j:=\min\{\lambda_j,c_*\}$ and set
\begin{align*}
 q_j(y)
 &:=
 \frac1{2\pi}\int_0^{2\pi}
 q_j^0(e^{i\theta} y)\,d\theta,\\
 \ell_j(y)
 &:=
 \frac1{2\pi}\int_0^{2\pi}
 \widetilde\lambda_j(e^{i\theta} y)\,d\theta .
\end{align*}
Then we see \(q_j\) and \(\ell_j\) are \(S^1\)-invariant.  
It is easy to check the desired properties are valid.
\end{proof}

For later reference, set
\begin{equation}
 \Upsilon:=C_0\widehat{\omega}, \qquad
 R_j:=\ell_j\Upsilon+\varepsilon_j\widehat\omega,
 \qquad S_j:=\Theta_{q_j}+R_j\geq0.
 \label{eq:shifted-smooth-curvature}
\end{equation}

We need a simple lemma.

\begin{lemma}
\label{lem:vertical-lelong-strata}
For every $a>0$ there is a proper analytic subset $Z_a\subsetneq X$ such that
\begin{equation}
 E_a(T)\cap Y_1=Z_a\times A_{J_1}.
 \label{eq:vertical-lelong-set}
\end{equation}
Moreover, if $K\Subset A_{J_1}$ and $U\subset X$ is an open
neighborhood of $Z_{a/2}$, then, for sufficiently large $j$,
\begin{equation}
 \ell_j<a
 \quad\text{on }(X\setminus U)\times K.
 \label{eq:uniform-loss-away-from-stratum}
\end{equation}
\end{lemma}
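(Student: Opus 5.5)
The plan has two parts: the first assertion uses the translation invariance of $T$ in the annulus direction plus Siu's theorem, and the second is a compactness argument based on Dini's theorem.

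\textbf{Product structure of $E_a(T)$.} Put $E_a(T):=\{y:\nu(T,y)\geq a\}$. By Siu's theorem, $E_a(T)\cap Y_2$ is an analytic subset of $Y_2$.

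First I would show that $\nu(T,(x,\tau))$ does not depend on $\tau$. The potential $Q$ depends only on $(x,\operatorname{Re}\tau)$, so invariance under $\operatorname{Im}\tau$ is automatic. Invariance in $\operatorname{Re}\tau$ needs an argument. For $x\notin P$, the function $Q(x,\cdot)$ is finite and convex on $I$, hence locally bounded. So for $s,s'\in J_2$ the potentials $Q(\cdot,s)$ and $Q(\cdot,s')$ differ by at most a locally bounded amount along slices, but this alone does not control Lelong numbers in $X\times S$.

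A cleaner route uses the Kiselman--Demailly computation: the Lelong number at $y$ is the limit of $\sup_{|z-y|<r}Q/\log r$. Convexity in $s$ gives, for $d>0$ small and $\tau$ in a compact set,
\[
Q(x,s)\leq \tfrac12\bigl(Q(x,s-d)+Q(x,s+d)\bigr).
\]
Combining this with upper semicontinuity, an upper bound on $Q$ near $(x_0,s_0\pm d)$ transfers to a bound near $(x_0,s_0)$. Consequently $\nu(T,(x_0,s_0))\geq \min\{\nu(T,(x_0,s_0-d)),\nu(T,(x_0,s_0+d))\}$. In other words, $s\mapsto\nu(T,(x_0,s))$ is quasi-concave on $J_2$.

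Quasi-concavity alone does not give constancy, so I would combine it with analyticity. The set $E_a(T)\cap Y_2$ is analytic and $S^1$-invariant, and its fibers over the $s$-direction are intervals by quasi-concavity. Suppose the projection of $E_a(T)\cap(\{x_0\}\times A_{J_2})$ to $J_2$ were a proper subinterval with an endpoint $s_1$ inside $J_2$. Then this slice would be a nonempty closed $S^1$-invariant analytic subset of $\{x_0\}\times A_{J_2}$ of real dimension $2$ with nonempty boundary in $A_{J_2}$, namely a closed annulus. A closed annulus with boundary inside $A_{J_2}$ is not analytic, since a one-dimensional analytic subset of the annulus is either discrete or open. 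This contradiction forces $E_a(T)\cap Y_2=Z_a\times A_{J_2}$.

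Here $Z_a\subset X$ is analytic because $Z_a\times\{\zeta_0\}=E_a(T)\cap(X\times\{\zeta_0\})$. The inclusion $Z_a\subsetneq X$ is proper because otherwise $\nu(T,\cdot)\geq a$ everywhere and $Q\equiv-\infty$, contradicting $\int_X e^{-Q_s}\Omega=1$.

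\textbf{Uniform bound away from $Z_{a/2}$.} Set $L:=(X\setminus U)\times K$, which is compact in $Y_1$. By the first part, $\nu(T,\cdot)<a/2$ on $L$.

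The function $\ell_j$ is the $S^1$-average of $\widetilde\lambda_j$, where $\widetilde\lambda_j$ is continuous and decreases to $\min\{\nu,c_*\}$. The limit is upper semicontinuous, and an upper semicontinuous function $<a/2$ on a compact set has maximum $<a/2$. Dini's theorem for decreasing continuous functions converging to an upper semicontinuous limit then gives: for $j$ large, $\widetilde\lambda_j<a$ on the $S^1$-saturation of $L$, which is $L$ itself. Averaging over $S^1$ preserves this, so $\ell_j<a$ on $L$.

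\textbf{Main obstacle.} The hardest step is the $\operatorname{Re}\tau$-independence of Lelong numbers in the first part. If the annulus argument proves delicate, my fallback is the following. Since $E_a(T)$ is analytic and invariant under translations in $\operatorname{Im}\tau$, any irreducible component whose projection to $S$ has nonempty interior is of the form $Z\times(\text{open set})$. Its projection is closed in $S$ by properness over $X$-compactness, hence all of $S_{J_2}$ by connectedness. Components projecting to a real curve $\{s=c\}$ are impossible, because that curve is not complex. So every component is either $Z\times S$ or is contained in $X\times\{\text{point}\}$, and the latter is excluded by $\operatorname{Im}\tau$-invariance.
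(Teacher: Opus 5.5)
Your argument is correct and is essentially the paper's: Siu's theorem, then the fact that $E_a(T)\cap(\{x_0\}\times A_{J})$ is an $S^1$-invariant analytic subset of a connected annulus, hence either empty or the whole annulus (a nonempty invariant set contains a circle, so it is not discrete), followed by a Dini argument on the compact set $(X\setminus U)\times\overline K$, where $\nu(T,\cdot)<a/2$. That dichotomy alone already gives the product structure, so your quasi-concavity step (correct, but unnecessary) and the fallback can be dropped.

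There is one small slip in the second part. The $S^1$-saturation of $L=(X\setminus U)\times K$ is not $L$ itself when $K$ is not rotation-invariant. To fix it, replace $L$ by its saturation, which is still compact and lies in $(X\setminus U)\times A_{J_1}$, where $\nu<a/2$ by the first part. Alternatively, apply Dini directly to the continuous functions $\ell_j\searrow\nu(T,\cdot)$.
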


\begin{proof}
Siu's analyticity theorem~\cite{Siu74} gives that $E_a(T)$ is analytic.  Fix $x\in X$.  The intersection of
$E_a(T)$ with $\{x\}\times A_{J_1}$ is an analytic
subset of the annulus and is rotation-invariant.  If it is nonempty, it must be the
whole annulus. 
Fixing any $\zeta_0\in A_{J_1}$ and setting
$Z_a:=E_a(T)\cap (X\times\{\zeta_0\})$
proves \eqref{eq:vertical-lelong-set}.

On the compact set $(X\setminus U)\times K$ one has
$\nu(T,\cdot)<a/2$. Since $\ell_j\searrow\nu(T,\cdot)$, 
\eqref{eq:uniform-loss-away-from-stratum} follows immediately.
\end{proof}

\vspace{0.4cm}
The $S^1$-invariant approximation sequence on $X\times A_{J_1}$ corresponds to an $\operatorname{Im}\tau$-invariant approximation sequence on $X\times S_1$, where $S_1=J_1\times i\mathbb{R}$.
We now prove some convergence properties of this sequence $q_j$. 

\begin{proposition}
\label{prop:smooth-demailly-package}
For intervals $J_0\Subset J_1$, $q_j$ constructed in
Lemma~\ref{lem:relative-demailly-smoothing} satisfy
\begin{align}
 q_j&\longrightarrow Q
 &&\text{strongly in }L^1(X\times J_0),
 \label{eq:qj-L1}\\
 e^{-q_j}&\longrightarrow e^{-Q}
 &&\text{strongly in }L^1(X\times J_0),
 \label{eq:density-L1}\\
 e^{-q_j/2}&\longrightarrow e^{-Q/2}
 &&\text{strongly in }L^2(X\times J_0),
 \label{eq:half-density-L2}\\
 (q_j)_\tau e^{-q_j/2}&\longrightarrow
 Q_\tau e^{-Q/2}
 &&\text{strongly in }L^2(X\times J_0).
 \label{eq:smooth-kinetic-convergence}
\end{align}
We use the notation $(q_j)_\tau:=\dot q_j/2$.

Set $M_j(s):=\int_Xe^{-q_{j,s}}\Omega$ and
$F_j(s):=-\log M_j(s)$.
Then
\begin{equation}
 0\leq F_j\searrow0
 \quad\text{uniformly on } J_0.
 \label{eq:Fj-uniform}
\end{equation}
For sufficiently large $j$, $1/2\leq M_j\leq1$ and for any $q>1$,
\begin{equation}
 \dot F_j\longrightarrow0
 \quad\text{strongly in }L^{q}(J_0).
 \label{eq:Fj-speed-L2}
\end{equation}
Consequently, if $r_j:=(q_j)_\tau-(F_j)_\tau$, then
\begin{align}
 \int_Xr_{j,s}e^{-q_{j,s}}\Omega&=0, \forall s\in J_0
 \label{eq:centered-smooth-identity}\\
 r_je^{-q_j/2}&\longrightarrow Q_\tau e^{-Q/2}
 \quad\text{strongly in }L^2(X\times J_0).
 \label{eq:centered-kinetic-convergence}
\end{align}
\end{proposition}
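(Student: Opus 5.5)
The plan is to exploit the monotonicity $q_j\searrow Q$ from Lemma~\ref{lem:relative-demailly-smoothing} together with the integrability $e^{-pQ}\in L^1(X\times J)$ from Lemma~\ref{lem:kinetic}, so that everything except the behaviour of $\dot F_j$ follows from monotone or dominated convergence. Fix $J_0\Subset J'\Subset J_1$ and $d>0$ with $J_0\pm d\subset J'$. Since $q_1$ is smooth on $\overline{X\times J'}$, $C_1:=1+\sup_{X\times\overline{J'}}q_1$ is finite, and $Q\le q_j\le q_1<C_1$ there.

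\textbf{Step 1: the three zeroth-order limits.} $Q$ is quasi-psh, hence in $L^1(X\times J_0)$, and $C_1-q_j\nearrow C_1-Q$. Monotone convergence gives \eqref{eq:qj-L1}. Next, $e^{-q_j}\nearrow e^{-Q}$ with $e^{-Q}\in L^1$ by normalization, which gives \eqref{eq:density-L1}. For \eqref{eq:half-density-L2}, use $|e^{-q_j/2}-e^{-Q/2}|^2\le e^{-Q}$ and dominated convergence.

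\textbf{Step 2: the kinetic convergence \eqref{eq:smooth-kinetic-convergence}.} By \eqref{eq:uniform-semipositivity-loss}, and since $\widehat\omega$ contains $i\,d\tau\wedge d\bar\tau$, restricting to the lines $\{x\}\times S_1$ shows that $s\mapsto q_j(x,s)+Cs^2$ is convex, with $C$ independent of $j$. The secant argument of Lemma~\ref{lem:kinetic} then gives
\[
 |\dot q_{j}(x,s)|\le d^{-1}\bigl(C_1-q_j(x,s)\bigr)+C'\qquad\text{on }X\times J_0 .
\]
Hence $|\dot q_j|^2e^{-q_j}\le C''e^{-pq_j}\le C''e^{-pQ}$, which is integrable by \eqref{eq:kinetic-strong-openness}. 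The same bound holds for $|\dot Q|^2e^{-Q}$. For pointwise convergence, fix $x\notin P$. The convex functions $q_j(x,\cdot)+Cs^2$ converge pointwise to $Q(x,\cdot)+Cs^2$, so their derivatives converge at every differentiability point of the limit, hence for a.e.\ $s$. Thus $\dot q_je^{-q_j/2}\to\dot Qe^{-Q/2}$ a.e. The squared difference is dominated by $4C''e^{-pQ}$, so dominated convergence gives \eqref{eq:smooth-kinetic-convergence}.

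\textbf{Step 3: the normalizing functions $F_j$.} Since $e^{-q_j}\le e^{-Q}$, we have $M_j\le1$, hence $F_j\ge0$, and $M_j(s)\nearrow1$ for each $s$. Each $M_j$ is continuous in $s$ (indeed smooth), so Dini's theorem on $\overline{J_0}$ yields \eqref{eq:Fj-uniform} and $M_j\ge1/2$ for large $j$. The centered identity \eqref{eq:centered-smooth-identity} is just differentiation under the integral: $\dot F_j=M_j^{-1}\int_X\dot q_{j,s}e^{-q_{j,s}}\Omega$. For a.e.\ $s$, the integrand is dominated in $x$ by $\bigl(d^{-1}(C_1-Q_s)+C'\bigr)e^{-Q_s}$, which is in $L^1(X)$ for a.e.\ $s$ by Fubini and Step~2. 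So $\dot F_j(s)\to\int_X\dot Q_se^{-Q_s}\Omega=0$ for a.e.\ $s$ by \eqref{eq:centered-limit-speed}.

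\textbf{Step 4: the $L^q$ bound on $\dot F_j$ (main obstacle).} Upgrading the a.e.\ convergence $\dot F_j\to0$ to $L^q(J_0)$ convergence for every $q>1$ needs a $j$-uniform bound better than $L^1$. The crude bound from Step~2 only gives $|\dot F_j(s)|\le C\int_X(C_1-Q_s+1)e^{-Q_s}\Omega$, which lies in $L^1(J_0)$ but not obviously in $L^q$.

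My first attempt is to show that $\dot F_j$ is uniformly bounded on $J_0$ by a one-sided convexity argument. The convexity of $q_j+Cs^2$ along lines bounds $\dot q_{j,s}$ above and below by $q_j$-secants over $[s-d,s]$ and $[s,s+d]$. Integrating against the probability measure $e^{-q_{j,s}}\Omega/M_j$ and using Jensen,
\[
 \int_X\bigl(q_{j,s\pm d}-q_{j,s}\bigr)\frac{e^{-q_{j,s}}\Omega}{M_j}
 \;\gtrless\;\mp\bigl(F_j(s\pm d)-F_j(s)\bigr)\quad\text{(appropriate signs)},
\]
which should bound $|\dot F_j|$ by $d^{-1}\sup|F_j|+Cd$, uniformly bounded by \eqref{eq:Fj-uniform}. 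If this Jensen route fails in one direction, the fallback is a uniform bound on $\int_X e^{-p'Q_s}\Omega$ for a.e.\ $s$ together with Vitali's theorem. Given uniform boundedness, dominated convergence gives \eqref{eq:Fj-speed-L2} for all $q<\infty$.

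\textbf{Step 5: the centered kinetic convergence \eqref{eq:centered-kinetic-convergence}.} Write $r_je^{-q_j/2}=(q_j)_\tau e^{-q_j/2}-\tfrac12\dot F_je^{-q_j/2}$. The first term converges by Step~2. The second has squared $L^2(X\times J_0)$ norm $\tfrac14\int_{J_0}|\dot F_j|^2M_j\,ds\to0$ by Step~4 with $q=2$.
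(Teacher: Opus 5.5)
Your Steps 1--3 are correct. Step 4, however, is not proved, and you yourself flag it as the crux: \eqref{eq:Fj-speed-L2} rests on it, and so does \eqref{eq:centered-kinetic-convergence} through Step 5.

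\textbf{Your primary route fails.} Write $\mu_{j,s}:=e^{-q_{j,s}}\Omega/M_j(s)$. Since $\int_X e^{-(q_{j,s\pm d}-q_{j,s})}\,d\mu_{j,s}=M_j(s\pm d)/M_j(s)$, Jensen's inequality for the exponential gives
\[
 \int_X\bigl(q_{j,s\pm d}-q_{j,s}\bigr)\,d\mu_{j,s}\;\geq\;F_j(s\pm d)-F_j(s).
\]
This is a \emph{lower} bound in both cases. Your secant inequalities
\[
 -\frac1d\int_X\bigl(q_{j,s-d}-q_{j,s}\bigr)\,d\mu_{j,s}-Cd
 \;\leq\;\dot F_j(s)\;\leq\;
 \frac1d\int_X\bigl(q_{j,s+d}-q_{j,s}\bigr)\,d\mu_{j,s}+Cd
\]
need \emph{upper} bounds on the same two integrals. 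So the argument breaks in both directions, not just one. A bound like $|\dot F_j|\lesssim d^{-1}\sup|F_j|$ would really come from convexity of $F_j$ itself. That is not available here, because $q_j$ is not $p_X^*\theta$-psh.

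\textbf{Your fallback is only asserted.} The fallback needs a bound on $\int_Xe^{-p'Q_s}\Omega$ that is uniform in $s$. But \eqref{eq:kinetic-strong-openness} controls only the $s$-integral of such quantities, not their supremum. The bound is in fact true, but it needs an extra argument you did not give. For instance, convexity of $Q(x,\cdot)$ and $Q\le C_1$ dominate $e^{-p'Q_s}$, for $1<p'<p$, by $1+C\delta^{-1}\int_s^{s+\delta}e^{-pQ_r}\,dr$.

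\textbf{The missing observation is simpler.} Your ``crude'' bound is already in every $L^q(J_0)$, because you are integrating against a probability measure. For $j$ large, combine Jensen for $|\cdot|^q$, your secant bound, $q_j\geq Q$, and the elementary estimate $\bigl(d^{-1}(C_1-y)+C'\bigr)^qe^{-y}\le C_qe^{-py}$ for $y\le C_1$. This gives
\[
 |\dot F_j(s)|^q\le\int_X|\dot q_{j,s}|^q\,d\mu_{j,s}
 \le2\int_X\bigl(d^{-1}(C_1-Q_s)+C'\bigr)^qe^{-Q_s}\Omega
 \le2C_q\int_Xe^{-pQ_s}\Omega .
\]
The right-hand side is a $j$-independent function in $L^1(J_0)$ by \eqref{eq:kinetic-strong-openness}. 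Together with the a.e.\ convergence $\dot F_j\to0$ you proved in Step~3, dominated convergence yields \eqref{eq:Fj-speed-L2} for every $q<\infty$, and then Step~5 goes through.

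This is essentially the paper's argument. The paper gets $\dot F_j\to0$ in $L^1(J_0)$ from the strong $L^2$ convergences and Cauchy--Schwarz. It then bounds $\dot F_j$ in every $L^a(J_0)$ by exactly this Jensen inequality and concludes by interpolation.
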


\begin{proof}
Because $q_j\searrow Q$, one has
$0\leq q_j-Q\leq q_1-Q$.  Dominated convergence
therefore proves \eqref{eq:qj-L1}.  The same monotonicity gives
$0\leq e^{-q_j}\uparrow e^{-Q}$, and the latter is integrable by
Lemma~\ref{lem:kinetic}; monotone convergence proves
\eqref{eq:density-L1}.  
Since $(\sqrt a-\sqrt b)^2\leq|a-b|$ for $a,b\geq0$,
\eqref{eq:half-density-L2} follows immediately.

\vspace{0.4cm}
We now prove the kinetic convergence \eqref{eq:smooth-kinetic-convergence}.  The uniform lower bound
\eqref{eq:uniform-semipositivity-loss}
gives a constant $\kappa$ such that
$\frac{d^2}{ds^2}q_j(x,s)\geq-2\kappa$ on $X\times J_1$.  
Put $g_{j,x}(s):=q_j(x,s)+\kappa s^2$.
Then every $g_{j,x}$ is convex.  Moreover,
$g_{j,x}\searrow g_x$, where $g_x(s):=Q(x,s)+\kappa s^2$, pointwise.  Let $d>0$ be so small
that $J_0\pm d\subset J_1$.  Since $q_j\leq q_1$ is uniformly bounded from above on $X\times J_1$, the same argument as in Lemma~\ref{lem:kinetic} implies that there is a constant $B$ depending only on $\kappa,d,\sup_{X\times J_1}q_1$ such that 
\begin{equation}
 |\dot q_j(x,s)|
 \leq d^{-1}(B-q_j(x,s))
 \leq d^{-1}(B-Q(x,s))
 \label{eq:smooth-secant-bound}
\end{equation}
for all $(x,s)\in X\times J_0$; 

$g_x(s)$ is $s$-differentiable for almost every $(x,s)$. It is easy to see $\dot g_j(s)\to\dot g(s)$.  Indeed, for
$h>0$ satisfying $s\pm h\in J_1$ it holds that
\[
 \frac{g_j(s)-g_j(s-h)}h
 \leq \dot g_j(s)
 \leq\frac{g_j(s+h)-g_j(s)}h.
\]
Letting $j\to\infty$ and then $h\to0$, we obtain $\dot g_j(s)\to\dot g(s)$ as desired. Therefore $\dot q_j(x,s)\to\dot Q(x,s)$ for almost every $(x,s)$.

Let $p>1$ be supplied by \eqref{eq:kinetic-strong-openness}.
We have $|\dot q_j|^2e^{-q_j}\leq d^{-2}(B-Q)^2e^{-Q}\leq C e^{-pQ}$ on $X\times J_0$ for some uniform constant $C$.
The dominated convergence theorem implies
\begin{align*}
    \lim_{j\rightarrow\infty}\int_{X\times J_0}\Big|\dot q_je^{-\frac{q_j}{2}}-\dot Q e^{-\frac{Q}{2}}\Big|^2\Omega ds=0.
\end{align*}
This is \eqref{eq:smooth-kinetic-convergence}.

\vspace{0.3cm}
For every $s$, monotone convergence gives
$M_j(s)\nearrow1$, because $q_{j,s}\searrow Q_s$ and
$\int_Xe^{-Q_s}\Omega=1$.  By definition we get $F_j(s)\searrow0$.  Each $F_j$ is
smooth, and Dini's theorem gives the locally uniform convergence
\eqref{eq:Fj-uniform}.  

\vspace{0.3cm}
We also verify the speed statement, 
Put $z_j:=\dot q_je^{-q_j/2}$,
$h_j:=e^{-q_j/2}$, $z:=\dot Qe^{-Q/2}$, and $h:=e^{-Q/2}$.
It follows that
\begin{equation}
 \dot M_j=-\int_Xz_jh_j\,\Omega,
 \qquad
 \dot F_j=M_j^{-1}\int_Xz_jh_j\,\Omega.
 \label{eq:normalizer-derivative}
\end{equation}
By \eqref{eq:centered-limit-speed},
$\int_Xzh\,\Omega=0$ for almost every $s$.  The strong $L^2$
convergences \eqref{eq:half-density-L2} and
\eqref{eq:smooth-kinetic-convergence}, followed by Cauchy--Schwarz on
$X\times J_0$, therefore show that
\[
 \int_{J_0}\left|\int_Xz_jh_j\,\Omega\right|ds=\int_{J_0}\Big|\int_X(z_jh_j-zh)\Omega\Big|ds\longrightarrow0.
\]
This proves $\dot F_j\to0$ in $L^1(J_0)$.

By Jensen's
inequality for the probability measure
$M_j^{-1}e^{-q_{j,s}}\Omega$, we get for every $a>1$,
\[
 |\dot F_j(s)|^{a}
 =\Big|\frac{1}{M_j(s)}\int_X \dot q_{j,s} e^{-q_{j,s}}\Omega \Big|^{a}
 \leq \frac{1}{M_j(s)}\int_X|\dot q_{j,s}|^{a}e^{-q_{j,s}}\,\Omega.
\]
The right-hand side is uniformly integrable in $s$: combine
\eqref{eq:smooth-secant-bound} with
$(B-y)^{a}e^{-y}\leq C e^{-py}$ and $q_j\geq Q$.
Hence the sequence $\dot F_j$ is bounded in $L^{a}(J_0)$ and converges to zero
in $L^1(J_0)$.  Since $a$ is arbitrary, interpolation inequality gives
\eqref{eq:Fj-speed-L2}.

\vspace{0.3cm}
Finally, \eqref{eq:normalizer-derivative}, with
$\partial_\tau=\frac12\partial_s$, gives
\[
 (F_j)_\tau M_j
 =\int_X(q_j)_\tau e^{-q_j}\Omega,
\]
which is exactly \eqref{eq:centered-smooth-identity}.  Moreover,
\[
 \|(F_j)_\tau e^{-q_j/2}\|_{L^2(X\times J_0)}^2
 =\int_{J_0}|(F_j)_\tau|^2M_j\,ds\longrightarrow0.
\]
Combining this with \eqref{eq:smooth-kinetic-convergence} proves
\eqref{eq:centered-kinetic-convergence}.
\end{proof}

\vspace{0.4cm}
\section{\texorpdfstring{The $D^\prime$-equations and uniform $L^2$ estimates}{The D' equations and uniform L2 estimates}}
\label{sec:pass3}

\subsection{\texorpdfstring{Solve $D^\prime$-equations with smooth weights $q_j$}{Solve D' equations with smooth weights qj}}
In this section the weights $q_j$ are those constructed in Lemma~\ref{lem:relative-demailly-smoothing}.  We recall a useful property for readers' convenience.
\begin{equation}
 \Theta_{q_j}=p_X^*\theta+\ddc q_j\geq-R_j,
 \qquad
 R_j=\ell_j\Upsilon+\varepsilon_j\widehat\omega,
 \qquad
 0\leq\ell_j\leq c_* .
 \label{eq:middle-curvature-lower-bound}
\end{equation}
In particular, the fiber curvatures $\theta+\ddc_X q_{j,s}$ have a uniform lower bound independent of $j$ and $s$.  

Let
\[
 \mathcal R_0:=\operatorname{Ran}\left(
 \bar\partial:\operatorname{Dom}(\bar{\partial})\subset L^2_{n,0}(X,L,h_\Omega)
 \longrightarrow L^2_{n,1}(X,L,h_\Omega)\right).
\]
This is a closed subspace by classical Hodge theory on the compact manifold $X$.

\vspace{0.5cm}
The following estimate essentially follows from
\cite[Lemma~6.3]{Ber15b}.

\begin{lemma}
\label{lem:negative-exact-inverse}
Let $\chi\in C^\infty(X)$ satisfy
\begin{equation}
 \theta+\ddc\chi\geq-C_0\omega,
 \qquad \chi\leq C_1,
 \qquad \int_Xe^{-\chi}\Omega\leq C_2.
 \label{eq:negative-exact-hypotheses}
\end{equation}
If $r\in L^2(X,e^{-\chi}\Omega)$ and
\begin{equation}
 \int_Xr e^{-\chi}\Omega=0,
 \label{eq:negative-exact-mean-zero}
\end{equation}
then there is a unique $L$-valued $(n,1)$-form $\beta$ such that
\begin{equation}
 \beta\in\mathcal R_0\cap\operatorname{Dom}\bar\partial_\chi^*,
 \qquad
 \bar\partial_\chi^*\beta=-ru.
 \label{eq:negative-exact-adjoint}
\end{equation}
Moreover, we have the estimate
\begin{equation}
 \|\beta\|_\chi\leq C\|r\|_\chi.
 \label{eq:negative-exact-estimate}
\end{equation}
The constant depends only on $C_0,C_1,C_2$ and $(X,\omega)$. Let $v$ be the $L$-valued $(n-1,0)$-form such that $i\omega\wedge v=\beta$. Then
\begin{equation}
v\in \operatorname{Dom}D'_{\chi},\qquad
 D'_{\chi,X}v=ru,
 \label{eq:negative-exact-chern}
\end{equation}
and
\begin{equation}
    \|v\|_\chi\leq C\|r\|_\chi.
\end{equation}
\end{lemma}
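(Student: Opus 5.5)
The approach is a duality (Riesz representation) argument on the closed subspace $\mathcal R_0$, with the a priori estimate coming from Hörmander's $L^2$ estimate for $\bar\partial$ on $(n,0)$-forms with the weight $h_\Omega e^{-\chi}$. Since $\theta+\ddc\chi\geq -C_0\omega$ is only a lower bound with a negative part, the Hörmander step has to be taken from the version in \cite[Lemma~6.3]{Ber15b}. There, the mean-zero condition \eqref{eq:negative-exact-mean-zero} replaces positivity: $\bar\partial$-exact $(n,1)$-forms are paired only against $(n,0)$-forms orthogonal to the kernel, and the kernel of $\bar\partial$ on $L$-valued $(n,0)$-forms consists of the constant multiples of $u$.

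\textbf{Step 1 (a priori estimate).} For $\alpha\in\mathcal R_0$, let $f u$ be the $L^2(e^{-\chi})$-minimal solution of $\bar\partial(fu)=\alpha$, so that $f\perp 1$ in $L^2(e^{-\chi}\Omega)$. I would establish
\[
\|fu\|_\chi\leq C\|\alpha\|_\chi ,
\]
with $C=C(C_0,C_1,C_2,X,\omega)$. This is the dual of \cite[Lemma~6.3]{Ber15b}: a Poincaré-type inequality for the measure $e^{-\chi}\Omega$ on functions.

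I expect this to be the main obstacle, since the constant must be uniform in $\chi$. To prove it, I would argue by contradiction and compactness. Write $\chi$ as $\chi_0$ plus the $\theta$-psh part, so that $\chi+C$ lies in a compact family of $(\theta+C_0\omega)$-psh functions by the bounds $\chi\leq C_1$ and $\int_X e^{-\chi}\Omega\leq C_2$. Alternatively, I would follow Berndtsson directly: apply Hörmander to $\bar\partial(fu)=\alpha$ with the modified weight $\chi+A\rho$ and compare norms, using $\chi\leq C_1$ for the upper comparison and the integral bound for the lower one.

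\textbf{Step 2 (existence via Riesz).} On the dense subspace $\{\bar\partial(gu)\}=\mathcal R_0$, define the functional
\[
\ell(\bar\partial(gu)) := -\int_X g\,\bar r\,e^{-\chi}\Omega ,
\]
taking $r$ real or using the complex pairing as needed. It is well defined because changing $g$ by a constant does not change the value, thanks to the mean-zero condition on $r$. Replacing $g$ by its orthogonal projection off constants and using Step 1 gives $|\ell(\alpha)|\leq C\|r\|_\chi\|\alpha\|_\chi$. Riesz representation in $\mathcal R_0$ then gives $\beta\in\mathcal R_0$ with $\|\beta\|_\chi\leq C\|r\|_\chi$ and
\[
\langle \bar\partial(gu),\beta\rangle_\chi=-\langle gu, ru\rangle_\chi \quad\text{for all } g .
\]
This is exactly $\beta\in\operatorname{Dom}\bar\partial_\chi^*$ with $\bar\partial^*_\chi\beta=-ru$, since pairing against all of $\operatorname{Dom}\bar\partial$ reduces to pairing against $\mathcal R_0$ plus the kernel, and both sides vanish on the kernel. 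Uniqueness holds because two solutions differ by an element of $\mathcal R_0\cap\ker\bar\partial^*_\chi$, which is orthogonal to $\mathcal R_0$ and hence zero.

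\textbf{Step 3 (translation to $D'$).} Write $\beta=i\omega\wedge v$, using that the Lefschetz map is an isometry onto $(n,1)$-forms, so $\|v\|_\chi=\|\beta\|_\chi$. By the Kähler identity $[\bar\partial^*_\chi,L_\omega]=iD'_\chi$, we have $\bar\partial_\chi^*(\omega\wedge v)=iD'_\chi v$ up to sign on $(n-1,0)$-forms, because $\bar\partial^*_\chi v=0$ there. Interpreting this distributionally through the adjoint pairing defined in Section~2 gives $D'_{\chi,X}v=ru$ once the sign conventions $d^c$ and $c_n$ are tracked, together with the estimate $\|v\|_\chi\leq C\|r\|_\chi$.
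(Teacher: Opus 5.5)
Your proposal is correct and follows the paper's proof. The paper also takes the minimal-solution bound $\|S_\chi f\|_\chi\le C\|f\|_\chi$ directly from \cite[Lemma~6.3]{Ber15b}, so your sketched compactness and Hörmander arguments for Step~1 are not needed. It gets $\beta$ by the same duality, phrased via $\operatorname{Ran}T_\chi^*=(\ker T_\chi)^\perp$ and testing against $a=S_\chi\beta$ instead of Riesz representation on $\mathcal R_0$. It then passes to $D'_\chi$ through $\bar\partial_\chi^*(i\omega\wedge v)=-D'_\chi v$, using graph-norm approximation by smooth $\beta_j$ where you pair distributionally; the two are equivalent.
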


\begin{proof}
Let
\[
 T_\chi=\bar\partial:
 L^2_{n,0}(X,L,e^{-\chi})\longrightarrow
 L^2_{n,1}(X,L,e^{-\chi}).
\]
Since $K_X\otimes L\simeq\mathcal O_X$
and $X$ is compact and connected, $\ker T_\chi$ is one-dimensional and generated by $u$.
We also note that $\operatorname{Ran}(T_{\chi})=\mathcal{R}_0$ since $\chi$ is smooth.

By \cite[Lemma~6.3]{Ber15b}, under assumptions in
\eqref{eq:negative-exact-hypotheses}, 
every
$f\in\operatorname{Ran}T_\chi$ has a unique
$S_\chi f\in\operatorname{Dom}T_\chi\cap(\ker T_\chi)^\perp$ satisfying
\begin{equation}
 T_\chi S_\chi f=f,
 \qquad
 \|S_\chi f\|_\chi\leq C\|f\|_\chi,
 \label{eq:negative-right-inverse}
\end{equation}
with $C$ depending only on $C_0,C_1,C_2$ and $(X,\omega)$.    

Put $y=-ru$.  Condition \eqref{eq:negative-exact-mean-zero} says exactly
that $y\perp\ker T_\chi$: 
\[
 (y,u)_\chi
 =-\int_X r e^{-\chi}\Omega=0.
\]
Since $\operatorname{Ran}T_\chi$ is closed, we have
\[
 \operatorname{Ran}T_\chi^*=(\ker T_\chi)^\perp,
 \qquad
 (\ker T_\chi^*)^\perp=\operatorname{Ran}T_\chi.
\]
It therefore gives a unique
$\beta\in (\ker T^*_{\chi})^{\perp} \cap \operatorname{Dom}T^*_\chi=\operatorname{Ran}T_\chi\cap\operatorname{Dom}T_\chi^*$
such that
\[
 T_\chi^*\beta=y=-ru,
 \qquad
 \|\beta\|_\chi\leq C\|r\|_\chi.
\]
Indeed, take the $L^2$-minimal solution $a=S_\chi\beta$, so that
$\beta=T_\chi a$ and $a\perp\ker T_\chi$.  Then
\eqref{eq:negative-right-inverse} and the adjoint identity give
\[
 \|\beta\|_\chi^2
 =(T_\chi a,\beta)_\chi
 =(a,T_\chi^*\beta)_\chi
 \leq C\|\beta\|_\chi\|T_\chi^*\beta\|_\chi.
\]

\vspace{0.3cm}
 Let $L_\omega=\omega\wedge$ and $\Lambda_\omega=L_\omega^*$.  We have the K\"ahler identity
\[
 \bar\partial_\chi^*
 =-i[\Lambda_\omega,D'_{\chi}].
\]
For smooth $L$-valued $(n-1,0)$-form $v$, one has $\Lambda_\omega L_\omega v=v$.  Consequently, $\bar\partial_\chi^*(i\omega\wedge v)
=-D'_{\chi}v$.
The same identity holds for $\beta\in \operatorname{Dom}\bar\partial_\chi^*$ and $v$ defined by $i\omega\wedge v=\beta$. 
Indeed, there is a sequence of smooth forms $\beta_j$ such that $\|\beta_j-\beta\|_\chi,\|\bar\partial_\chi^*\beta_j-\bar\partial_\chi^*\beta\|_\chi\rightarrow0$. Define $v_j$ by $i\omega\wedge v_j=\beta_j$.
The Lefschetz map $v\mapsto i\omega\wedge v$ is an isometry for $(n-1,0)$-forms, so we have $\|v_j-v\|_\chi\rightarrow0$ and $\|D'_\chi v_j-(-\bar\partial_\chi^*\beta)\|_\chi\rightarrow0$.
Therefore, $v\in\operatorname{Dom}D'_\chi$ and $D'_\chi v=-\bar\partial_\chi^*\beta$.
This proves
\eqref{eq:negative-exact-chern}.  Uniqueness follows from the fact that
$\operatorname{Ran}T_\chi\cap\ker T_\chi^*=\{0\}$.
\end{proof}

\vspace{0.4cm}
Recall from Proposition~\ref{prop:smooth-demailly-package} that
$M_j(s)=\int_Xe^{-q_{j,s}}\Omega$, $F_j=-\log M_j$, and
$r_j=(q_j)_\tau-(F_j)_\tau$, with
\begin{equation}
 \int_Xr_{j,s}e^{-q_{j,s}}\Omega=0,\qquad \forall s\in J_0.
 \label{eq:middle-centered-source}
\end{equation}
This means exactly $r_{j,s}u\in (\operatorname{ker}T_{q_{j,s}})^{\perp}$.
We can apply the preceding lemma and obtain the following:

\begin{proposition}
\label{prop:smooth-chern-family}
Let $S_0=J_0\times i\mathbb{R}$.
For every $j$ there is an $\operatorname{Im}\tau$-invariant $v_j\in \mathcal{C}^{\infty}(X\times S_0,p_X^*(\Lambda^{n-1,0}T^*X\otimes L))$ such that for every $s\in J_0$
\begin{equation}
 i\omega\wedge v_{j,s}\in\mathcal R_0,
 \qquad
 -\bar\partial_{q_{j,s}}^*(i\omega\wedge v_{j,s})=D'_{q_{j,s},X}v_{j,s}=r_{j,s}u.
 \label{eq:smooth-chern-equation}
\end{equation}
Writing $v_j=-\iota_{V_j}u$ defines a smooth vertical $(1,0)$ vector
field $V_j$ on $X\times S_0$.  For every
$K\Subset J_0$,
\begin{equation}
 \sup_j\int_{K\times X}|v_j|^2_{\omega,h_{\Omega}}e^{-q_j}dV_{\omega}\,ds<\infty.
 \label{eq:smooth-v-L2}
\end{equation}
\end{proposition}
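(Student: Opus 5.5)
For each fixed $j$ and each $s\in J_0$ I would apply Lemma~\ref{lem:negative-exact-inverse} to the smooth weight $\chi:=q_{j,s}$ and the source $r:=r_{j,s}$. Its hypotheses hold uniformly in $j$ and $s$. By \eqref{eq:middle-curvature-lower-bound}, restricting to the fiber gives $\theta+\ddc_X q_{j,s}\geq -C\omega$ with $C$ independent of $j$ and $s$. Since $q_j\leq q_1$ and $q_1$ is smooth on $\overline{Y_1}$, we have $q_{j,s}\leq C_1$. Finally $\int_X e^{-q_{j,s}}\Omega=M_j(s)\leq 1$. The centering \eqref{eq:middle-centered-source} is exactly \eqref{eq:negative-exact-mean-zero}, and $r_{j,s}$ is smooth, so $r_{j,s}\in L^2(e^{-q_{j,s}}\Omega)$. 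The lemma therefore produces a unique $\beta_{j,s}\in\mathcal R_0\cap\operatorname{Dom}\bar\partial^*_{q_{j,s}}$. Writing $i\omega\wedge v_{j,s}=\beta_{j,s}$, it gives $D'_{q_{j,s},X}v_{j,s}=-\bar\partial^*_{q_{j,s}}\beta_{j,s}=r_{j,s}u$ and
\[
\|v_{j,s}\|_{q_{j,s}}\leq C\|r_{j,s}\|_{q_{j,s}},
\]
with $C$ independent of $j$ and $s$. This is \eqref{eq:smooth-chern-equation}. The $\operatorname{Im}\tau$-invariance is automatic, because $q_j$ and $r_j$ are $\operatorname{Im}\tau$-invariant and the solution is unique.

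For the $L^2$ bound \eqref{eq:smooth-v-L2}, I would integrate the slice estimate over $s\in K$. The identity \eqref{eq:contraction-identity} identifies $|v|^2_{\omega,h_\Omega}dV_\omega$ with the weighted norm, and gives
\[
\int_K\|v_{j,s}\|^2_{q_{j,s}}ds\leq C^2\int_K\int_X|r_j|^2e^{-q_j}\Omega\,ds .
\]
By \eqref{eq:centered-kinetic-convergence}, the functions $r_je^{-q_j/2}$ converge in $L^2(X\times J_0)$, so the right-hand side is bounded uniformly in $j$. Alternatively one can use the pointwise bound \eqref{eq:smooth-secant-bound} together with $\dot F_j\to0$ in $L^2$. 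The vector field $V_j$ is then obtained by the contraction isomorphism $v=-\iota_V u$, which is a smooth bundle isomorphism.

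The step I expect to be the main obstacle is the smoothness of $v_j$ in the joint variables $(x,s)$, since the lemma is applied slice by slice. I would handle it by rewriting the solution as a smooth elliptic problem on each slice. Because $\chi=q_{j,s}$ is smooth, $\mathcal R_0$ is the range of $\bar\partial$, and $\beta\in\operatorname{Ran}\bar\partial\cap\operatorname{Dom}\bar\partial^*$, we can write $\beta=\bar\partial a$ with $a\perp u$. Then $a$ solves the elliptic equation
\[
\bar\partial^*_{q_{j,s}}\bar\partial a=-r_{j,s}u
\]
on $(n,0)$-forms, which is the weighted Laplacian $\square_{q_{j,s}}$ on the complement of its one-dimensional kernel $\mathbb C u$. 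The coefficients and the data depend smoothly on $s$, and the kernel has constant dimension one with a smoothly varying orthogonal projection. By standard elliptic theory for smooth families, namely smooth dependence of the Green operator, $a$ depends smoothly on $(x,s)$. Hence $\beta=\bar\partial a$ and $v$ are smooth in $(x,s)$.
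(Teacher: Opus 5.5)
Your proposal is correct and follows the paper's proof in all three steps. First, Lemma~\ref{lem:negative-exact-inverse} is applied on each slice with constants uniform in $(j,s)$. Second, the slice estimate is integrated in $s$ using the $L^2$ kinetic convergence from Proposition~\ref{prop:smooth-demailly-package}: the paper goes through the variance bound $\int_X|r_j|^2e^{-q_j}\Omega\leq\int_X|(q_j)_\tau|^2e^{-q_j}\Omega$, while you invoke \eqref{eq:centered-kinetic-convergence} directly. Third, joint smoothness comes from the smooth dependence of Green operators for the family $\bar\partial^*_{q_{j,s}}\bar\partial_X$, whose kernel is always $\mathbb C u$. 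The only cosmetic difference is the order of that last step: the paper first builds the smooth candidate $\bar\partial_X G_{j,s}(-r_{j,s}u)$ and identifies it with $i\omega\wedge v_{j,s}$ by the uniqueness in the lemma, whereas you start from $\beta_{j,s}$ and recover its primitive as the Green-operator solution; both work.
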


\begin{proof}
Since $q_j,r_j$ are $\operatorname{Im}\tau$-invariant, we first construct $v_j$ on $X\times J_0$ and trivially extend it to $X\times S_0$.
A direct computation yields that
\begin{equation}
 \int_X|r_j|^2e^{-q_j}\Omega
 =\int_X|(q_j)_\tau|^2e^{-q_j}\Omega
  -M_j|(F_j)_\tau|^2
 \leq\int_X|(q_j)_\tau|^2e^{-q_j}\Omega.
 \label{eq:smooth-source-variance}
\end{equation}
\eqref{eq:uniform-semipositivity-loss} gives the uniform lower bound for the curvature
\[
 \theta+\ddc_Xq_{j,s}\geq-C_0\omega,\qquad \forall s\in J_0.
\]
Moreover, $q_j\leq q_1\leq C_1$ gives a common upper bound, and $\int_X e^{-q_{j,s}}\Omega\leq \int_Xe^{-Q_s}\Omega=1$.  Lemma~\ref{lem:negative-exact-inverse}, applied to each fiber $ X\times \{s\} $, gives the unique solution of
\eqref{eq:smooth-chern-equation} and a uniform constant $C=C(C_0,C_1,X,\omega)$ so that
\[
\|v_{j,s}\|_{q_{j,s}}\leq C\|r_{j,s}u\|_{q_{j,s}}.
\]
It remains to prove the smoothness of $v_j$ with respect to $s$.
If so, integrating in $s$ and using
\eqref{eq:smooth-source-variance} together with
\eqref{eq:smooth-kinetic-convergence} proves
\eqref{eq:smooth-v-L2}.

For the trivial family $p_X:X\times J_0\rightarrow J_0$, the operators $\square_{j,s}:=\bar\partial_{q_{j,s}}^*\bar\partial_X: \mathcal{C}^{\infty}(X,K_{X}\otimes L)\rightarrow\mathcal{C}^{\infty}(X,K_{X}\otimes L)$
form a smooth family of strongly elliptic operators that are formally self-adjoint with respect to the smooth metric $(p_X^*h_\Omega) e^{-q_{j}}$ on $p_X^*(K_X\otimes L)$.  
For the definition of a smooth family of strongly elliptic, formally self-adjoint operators, readers are referred to \cite[\S~7.1, Definition~7.4]{Kod86}.
Moreover,
$\ker\square_{j,s}=H^0(X,K_X\otimes L)=\mathbb C u$ for every
$s$.  Thus $\dim\ker\square_{j,s}$ is constant on $J_0$.
Let $G_{j,s}:\mathcal{C}^{\infty}(X,K_{X}\otimes L)\rightarrow \mathcal{C}^{\infty}(X,K_{X}\otimes L)$ be the Green operator of $\square_{j,s}$.
The smooth-dependence theorem \cite[\S~7.1, Theorem~7.6]{Kod86} (see also \cite[Theorem~5]{KS60}) for Green operators of a smooth family of strongly elliptic, formally self-adjoint operators says that
for a
smooth section $r_{j,s}u\in \mathcal{C}^{\infty}(X\times J_0,p_X^*(K_{X}\otimes L))$,
$g_{j,s}u:=-G_{j,s}(r_{j,s}u)$ is also a smooth section on $X\times J_0$.
Since $r_{j,s}u\perp\ker\square_{j,s}$,
$\square_{j,s}G_{j,s}(r_{j,s}u)=r_{j,s}u$ for every $s$.
This means $\bar\partial_{q_{j,s}}^* \bar\partial_X(g_{j,s}u)=-r_{j,s}u$.
Since $\bar\partial_X(g_{j,s}u)\in \operatorname{Ran}(\bar\partial_X)\cap \operatorname{Dom}\bar\partial_{q_{j,s}}^*$ and solves the equation \eqref{eq:smooth-chern-equation}, we get $i\omega\wedge v_{j,s}=\bar\partial_X(g_{j,s}u)$ for every $s$.
Consequently, $i\omega\wedge v_j$ and hence
$v_j$ are smooth on $X\times J_0$. The proof is complete.
\end{proof}

\vspace{0.4cm}
\subsection{Uniform \texorpdfstring{$L^2$}{L2} estimates}
\label{sec:pass4}

Since $i\omega\wedge v_j\in\mathcal R_0$, it is
$\bar\partial_X$-closed.  Therefore
$0=\bar\partial_X(i\omega\wedge v_j)
=i\omega\wedge\bar\partial_Xv_j$, so $\bar\partial_Xv_j$ is a
primitive $L$-valued $(n-1,1)$-form.

For any primitive $L$-valued $(n-1,1)$-form $\eta$, define its
Hodge--Riemann norm by
\begin{equation}
 |\eta|_{\mathrm{HR}}^2\Omega
 :=
 -c_n[\eta,\eta]_{h_\Omega}
 =
 |\eta|_{\omega,h_\Omega}^2\,dV_\omega.
 \label{eq:HR-norm}
\end{equation}
The last equality is the pointwise primitive Hodge--Riemann identity.
Put $\xi_j:=\frac{\partial}{\partial\tau}-V_j$, where $V_j$ is the vertical $(1,0)$-vector field defined by $-\iota_{V_j}u=v_j$.
Define real functions on the base by
\begin{align}
 \mathfrak A_j(s)
 &:=4\int_X\Theta_{q_j}
      [\xi_j,\overline{\xi_j}]e^{-q_j}\Omega,
 \label{eq:smooth-A}\\
 \mathfrak B_j(s)
 &:=4\int_X|\bar\partial_Xv_j|_{\mathrm{HR}}^2e^{-q_j}\Omega.
 \label{eq:smooth-B}
\end{align}
The factor $4$ is included because an $\mathrm{Im}\tau$-invariant scalar function satisfies
$F_{\tau\bar\tau}=F''/4$.

As in \cite[Theorem~3.1]{Ber15b}, using the curvature formula in \cite{Ber11}, we can prove the following.

\begin{lemma}
\label{lem:smooth-signed-rank-one}
For every $j$,
\begin{equation}
 M_jF_j''=\mathfrak A_j+\mathfrak B_j,
 \qquad
 \mathfrak B_j\geq0.
 \label{eq:smooth-rank-one-real}
\end{equation}
\end{lemma}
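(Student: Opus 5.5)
The plan is to follow Berndtsson's computation of the second derivative of $M_j(s)=\int_X e^{-q_{j,s}}\Omega$, treating it as the squared norm of the section $u$ in the trivial direct-image bundle with fiber $H^0(X,K_X\otimes L)=\mathbb C u$ and metric $h_\Omega e^{-q_{j,s}}$. Since $M_j$ is $\operatorname{Im}\tau$-invariant, $M_jF_j''=-4M_j\partial_\tau\partial_{\bar\tau}\log M_j$. This equals $4\bigl(\|D_\tau u\|^2-\partial_\tau\partial_{\bar\tau}\|u\|^2\bigr)/\|u\|^2\cdot M_j$ in line-bundle language, i.e.\ $M_j$ times the curvature of the rank-one direct image. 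Here $D_\tau u$ is the projection of $-(q_j)_\tau u$ onto $\mathbb C u$, namely $-(F_j)_\tau u$. So it suffices to compute $4\partial_\tau\partial_{\bar\tau}M_j-4M_j|(F_j)_\tau|^2$ and identify it with $-(\mathfrak A_j+\mathfrak B_j)$ up to sign. Concretely, $M_jF_j''=4\bigl(-\partial_\tau\partial_{\bar\tau}M_j+|\partial_\tau M_j|^2/M_j\bigr)$, using $F_j=-\log M_j$.

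First compute directly:
\[
 4\partial_\tau\partial_{\bar\tau}M_j=4\int_X\bigl(|(q_j)_\tau|^2-(q_j)_{\tau\bar\tau}\bigr)e^{-q_j}\Omega .
\]
Combined with \eqref{eq:smooth-source-variance}, this gives
\[
 M_jF_j''=4\int_X\bigl((q_j)_{\tau\bar\tau}-|r_j|^2\bigr)e^{-q_j}\Omega .
\]
Next expand $\mathfrak A_j$. Write $\Theta_{q_j}[\xi_j,\overline{\xi_j}]$ with $\xi_j=\partial_\tau-V_j$. This gives three contributions: $(q_j)_{\tau\bar\tau}$, the cross terms $-2\operatorname{Re}\,\partial_X(q_j)_\tau\cdot\overline{V_j}$, and the fiber term $(\theta+\ddc_Xq_{j,s})[V_j,\overline{V_j}]$.

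The key step is to rewrite the last two using the equation $D'_{q_j,X}v_j=r_ju$ from Proposition~\ref{prop:smooth-chern-family}. I would pair $r_ju$ with itself:
\[
 \int|r_j|^2e^{-q_j}\Omega=\langle D'_{q_j}v_j,r_ju\rangle=\langle v_j,(D'_{q_j})^*(r_ju)\rangle .
\]
Then I would compute $(D'_{q_j})^*(r_ju)$, which is essentially $-\bar\partial r_j$ contracted, hence involves $\bar\partial_X(q_j)_\tau$. This converts $\int|r_j|^2$ into the cross term $\int \partial_X(q_j)_\tau(V_j)\,e^{-q_j}\Omega$ (real, since it equals $\|r_j\|^2$). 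For the fiber term I would use the Bochner--Kodaira--Nakano identity on $(n-1,0)$-forms with the primitive-form sign convention (Berndtsson's computation in \cite{Ber11}):
\[
 \|D'_{q}v\|^2=\|\bar\partial v\|^2_{\mathrm{HR}}+\int(\theta+\ddc_Xq)[V,\bar V]e^{-q}\Omega,
\]
valid here because $\omega\wedge\bar\partial v_j=0$, which makes $\bar\partial v_j$ primitive. The precise signs and the $c_n$ conventions of \eqref{eq:HR-norm} must be tracked. Rather than doing this by hand, I would compute $i\partial\bar\partial$ of the $(n-1,n-1)$-form $c_n[v_j,v_j]e^{-q_j}$ and integrate over $X$ to get zero.

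Assembling the pieces should give $4\int\bigl((q_j)_{\tau\bar\tau}-|r_j|^2\bigr)e^{-q_j}\Omega=\mathfrak A_j+\mathfrak B_j$, where the $|r_j|^2$, cross and fiber terms cancel against each other up to the $\bar\partial v_j$ term. Here $\mathfrak B_j\geq0$ is automatic from \eqref{eq:HR-norm}. All integrations by parts are legitimate because $q_j,v_j$ are smooth and $X$ is compact.

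I expect the main obstacle to be the sign and normalization bookkeeping. This covers the factor $4$, $q_\tau=\dot q/2$, the convention $v=-\iota_Vu$, and the Hodge--Riemann sign on primitive $(n-1,1)$-forms. The places to watch are that the cross term comes out with coefficient exactly matching $2\|r_j\|^2$ and that $(F_j)_\tau$ drops out thanks to \eqref{eq:middle-centered-source}. If it does not drop out cleanly, I would replace $(q_j)_\tau$ by $r_j$ throughout using $\partial_X r_j=\partial_X(q_j)_\tau$.
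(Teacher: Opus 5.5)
Your argument is correct, and it takes a genuinely different route from the paper's.

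\textbf{The paper's route.} The paper never expands $M_jF_j''$ directly. It treats $u$ as a frame of the trivial rank-one direct image $\mathcal E=p_*(K_{X\times S_0/S_0}\otimes p_X^*L)$. It then quotes Berndtsson's curvature formula \cite[Theorem~3.1]{Ber15b} for $\widehat u_j=u-d\tau\wedge v_j$ on the total space $X\times S_0$. Finally, using a coframe adapted to $\xi_j$ and $\iota_{\xi_j}\widehat u_j=0$, it identifies the first term pointwise as $\Theta_{q_j}[\xi_j,\overline{\xi_j}]e^{-q_j}\Omega\wedge i\,d\tau\wedge d\bar\tau$.

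\textbf{Your route.} You work one fiber at a time.
\begin{enumerate}
\item Differentiating under the integral and using the variance identity give $M_jF_j''=4\int_X((q_j)_{\tau\bar\tau}-|r_j|^2)e^{-q_j}\Omega$.
\item The mixed part of $\int_X\Theta_{q_j}[\xi_j,\overline{\xi_j}]e^{-q_j}\Omega$ equals $-2\|r_j\|^2$. This follows from $D'_{q_j,X}(r_jv_j)=\partial_Xr_j\wedge v_j+r_j^2u$, together with $(D'_{q_j,X})^*u=0$ and $\alpha\wedge v_j=-\alpha(V_j)u$.
\item Bochner--Kodaira--Nakano applied to the $(n-1,0)$-form $v_{j,s}$ gives $\|r_j\|^2=\|D'_{q_j,X}v_j\|^2=\|\bar\partial_Xv_j\|^2+\int_X(\theta+\ddc_Xq_j)[V_j,\overline{V_j}]e^{-q_j}\Omega$.
\end{enumerate}
The bookkeeping then closes exactly as you predicted, with $-2\|r_j\|^2+\|r_j\|^2=-\|r_j\|^2$.

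\textbf{Points to make explicit in the write-up.}
\begin{enumerate}
\item BKN on $(n-1,0)$-forms also carries a term $\|(D'_{q_j,X})^*v_j\|^2$. It vanishes because $(D'_{q_j,X})^*v_j=i\Lambda_\omega\bar\partial_Xv_j=0$. This is precisely where primitivity of $\bar\partial_Xv_j$ is used, beyond the Hodge--Riemann sign.
\item The curvature term is $\langle[i\Theta,\Lambda_\omega]v,v\rangle=-\Theta[V,\overline V]\,|u|^2$ for $v=-\iota_Vu$.
\item In your alternative $i\partial\bar\partial$ computation, the $(n-1,n-1)$-form should carry $c_{n-1}$, not $c_n$.
\end{enumerate}

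\textbf{What each approach buys.} Your route is self-contained and purely fiberwise. For this lemma it needs neither the direct-image curvature formula nor any regularity of $v_j$ in $s$. The paper's route is shorter given that black box. It also sets up $\widehat u$ and $\xi_j$ in the total-space form that is reused in Proposition~\ref{prop:full-total-space-nullity}. The $s$-smoothness of $v_j$ is needed anyway for $\partial_{\bar\tau}v_j$ in Section~\ref{sec:smooth-exact-tests}.
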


\begin{proof}
Let $p:X\times S_0\rightarrow S_0$ be the projection. 
Consider the rank-one
direct-image bundle
\[
 \mathcal E
 :=p_*\bigl(K_{X\times S_0/S_0}\otimes p_X^*L\bigr)
 \simeq S_0\times\mathbb C u .
\]
The metric \(h_\Omega e^{-q_j}\) induces a metric $\|\cdot\|_{j,\tau}$ on
\(\mathcal E\), for which, with $s=\operatorname{Re}\tau$,
$\|u\|_{j,\tau}^2:=\int_X|u|^2_{\omega,h_{\Omega}e^{-q_{j,s}}}dV_{\omega}=\int_Xc_n[u,u]_{h_\Omega}e^{-q_{j,s}}
=\int_Xe^{-q_{j,s}}\Omega=M_j(s)$.

For a smooth \(L\)-valued \((n,0)\)-form \(\eta\), write
\(\eta^{\perp_{j,s}}\) for its component orthogonal to
\(\ker\bar\partial_X=\mathbb C u\) with respect to the metric $h_{\Omega}e^{-q_{j,s}}$.  
By \eqref{eq:middle-centered-source} and \eqref{eq:smooth-chern-equation}, we have
\begin{equation}
 D'_{q_{j,s},X}v_{j,s}
 =
 r_{j,s} u
 =
 \bigl((q_{j,s})_\tau u\bigr)^{\perp_{j,s}}.
 \label{eq:smooth-berndtsson-equation}
\end{equation}


Set \(\widehat u_j:=u-d\tau\wedge v_j\).
By equation \eqref{eq:smooth-berndtsson-equation} and the primitivity of $\bar\partial_Xv_j$, \cite[Theorem~3.1]{Ber15b} (see also \cite[\S~2]{Ber11})  
gives
\begin{equation}
 \begin{split}
 \left\langle
   \Theta^{\mathcal E,j}u,u
 \right\rangle_{j,\tau}
 =
 p_*\left(
   c_n\,\Theta_{q_j}\wedge
   [\widehat u_j,\widehat u_j]_{h_\Omega}e^{-q_j}
 \right)                                                     
+
 \left(
   \int_X
   |\bar\partial_Xv_j|_{\mathrm{HR}}^2e^{-q_j}\Omega
 \right)i\,d\tau\wedge d\bar\tau \qquad \text{on }S_0.
 \end{split}
 \label{eq:smooth-berndtsson-formula}
\end{equation}


\vspace{0.4cm}
Recall that \(u\) is a holomorphic frame of \(\mathcal E\), with
$\|u\|^2_{j,\tau}=M_j(\tau)=e^{-F_j(\tau)}$.
Thus the Chern curvature is
$\Theta^{\mathcal E,j}=i\partial\bar\partial F_j$.
Therefore $\langle\Theta^{\mathcal E,j}u,u\rangle_{j,\tau}
=M_j\,i\partial\bar\partial F_j=M_j(F_j)_{\tau\bar\tau}i\,d\tau\wedge d\bar\tau$.

For the first term on the right, recall that
\(\xi_j=\partial/\partial\tau-V_j\) and
\(v_j=-\iota_{V_j}u\).  Since $\iota_{\xi_j}u=v_j$ and
\(\iota_{V_j}v_j=-\iota_{V_j}\iota_{V_j}u=0\), one has
$\iota_{\xi_j}(d\tau\wedge v_j)=v_j$, and therefore
\(\iota_{\xi_j}\widehat u_j=0\).

Choose a local \((1,0)\)-cotangent frame
\[
 d\tau,\ \zeta_j^1,\ldots,\zeta_j^n
\]
dual to a tangent frame whose first vector is \(\xi_j\), which means $d\tau(\xi_j)=1$ and
\(\zeta_j^\alpha(\xi_j)=0\).  
Since
\(\iota_{\xi_j}\widehat u_j=0\), locally
\[
 \widehat u_j
 =f_j\,\zeta_j^1\wedge\cdots\wedge\zeta_j^n\otimes e_L
\]
for some smooth function $f_j$.
Writing
\[
 \Theta_{q_j}
 =
 i\,\Theta_{0\bar0}^{\,j}\,d\tau\wedge d\bar\tau
 +\text{terms containing some }\zeta_j^\alpha
 \text{ or }\bar\zeta_j^\beta,
\]
we have
\[
 \Theta_{0\bar0}^{\,j}
 =\Theta_{q_j}[\xi_j,\overline{\xi_j}].
\]
Therefore all the remaining terms vanish after wedging with
\([\widehat u_j,\widehat u_j]_{h_\Omega}\): $\Theta_{q_j}\wedge[\widehat{u}_j,\widehat{u}_j]_{h_\Omega}=(i\Theta_{0\bar0}^{\,j}d\tau\wedge d\bar\tau)\wedge [\widehat{u}_j,\widehat{u}_j]_{h_\Omega}$.
Since $\widehat u_j=u-d\tau\wedge v_j$, 
we can infer that $(i\Theta_{0\bar0}^{\,j}d\tau\wedge d\bar\tau)\wedge [\widehat{u}_j,\widehat{u}_j]_{h_\Omega}=(i\Theta_{0\bar0}^{\,j}d\tau\wedge d\bar\tau)\wedge [u,u]_{h_{\Omega}}$.
It follows pointwise that
\[
 c_n\,\Theta_{q_j}\wedge
 [\widehat u_j,\widehat u_j]_{h_\Omega}e^{-q_j}
 =
 \Theta_{q_j}[\xi_j,\overline{\xi_j}]
 e^{-q_j}\Omega\wedge i\,d\tau\wedge d\bar\tau.
\]
Therefore,
\[
 p_*\left(
 c_n\,\Theta_{q_j}\wedge
 [\widehat u_j,\widehat u_j]_{h_\Omega}e^{-q_j}
 \right)
 =
 \left(
 \int_X\Theta_{q_j}[\xi_j,\overline{\xi_j}]
 e^{-q_j}\Omega
 \right)i\,d\tau\wedge d\bar\tau.
\]

Substituting into \eqref{eq:smooth-berndtsson-formula}, we obtain
\begin{equation}
 \begin{split}
 M_j(F_j)_{\tau\bar\tau}
 &=
 \int_X
 \Theta_{q_j}[\xi_j,\overline{\xi_j}]
 e^{-q_j}\Omega                                      \\
 &\quad+
 \int_X
 |\bar\partial_Xv_j|_{\mathrm{HR}}^2e^{-q_j}\Omega .
 \end{split}
 \label{eq:smooth-rank-one-complex}
\end{equation}

Finally, all the quantities are invariant under translation in
\(t=\operatorname{Im}\tau\), so
\((F_j)_{\tau\bar\tau}=F_j''/4\).  Multiplying
\eqref{eq:smooth-rank-one-complex} by \(4\) and using the definitions
of \(\mathfrak A_j\) and \(\mathfrak B_j\), we conclude that
$M_jF_j''=\mathfrak A_j+\mathfrak B_j$.
\(\mathfrak B_j\geq0\) is obvious.
\end{proof}

\vspace{0.5cm}
Define
$\mathfrak D_j(s):=
4\int_XR_j[\xi_j,\overline{\xi_j}]e^{-q_j}\Omega$.
Since $\Theta_{q_j}\geq-R_j$,
\begin{equation}
 (\mathfrak A_j)^-\leq\mathfrak D_j,
 \label{eq:smooth-A-negative-bound}
\end{equation}
where $(\mathfrak A_j)^-:=\max\{-\mathfrak A_j,0\}$.
On $X\times S_0$, $R_j\leq C(\ell_j+\varepsilon_j)(\omega+id\tau\wedge d\bar\tau)$ implies
\[
 R_j[\xi_j,\overline{\xi_j}]
 \leq C(\ell_j+\varepsilon_j)(1+|V_j|_\omega^2).
\]
Since $0\leq\ell_j\leq c_*$, $|V_j|_{\omega}^2\Omega=|v_j|^2_{\omega,h_{\Omega}}dV_{\omega}$,
\eqref{eq:smooth-v-L2} consequently gives the rough estimate
\begin{equation}
 \sup_j\int_K\mathfrak D_j(s)\,ds<\infty
 \qquad(K\Subset J_0).
 \label{eq:smooth-D-rough-bound}
\end{equation}
This rough control is
enough to obtain the required local $L^2$ bound.

\begin{lemma}
\label{lem:signed-curvature-bootstrap}
For every compact interval $K\Subset J_0$,
\begin{equation}
 \sup_j\int_K\left(
 \|v_j\|_{j,s}^2+\|\bar\partial_Xv_j\|_{j,s}^2
 \right)ds<\infty.
 \label{eq:smooth-graph-bound}
\end{equation}
Here $\|\cdot\|_{j,s}$ denotes the $L^2$-norm with respect to $h_\Omega e^{-q_{j,s}}$. 
\end{lemma}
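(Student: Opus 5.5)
The bound on $\|v_j\|_{j,s}^2$ is already contained in \eqref{eq:smooth-v-L2}, since $\|v_j\|_{j,s}^2=\int_X|v_j|^2_{\omega,h_\Omega}e^{-q_{j,s}}dV_\omega$. So only the $\bar\partial_Xv_j$ term needs work. The plan is to read off $\mathfrak B_j$ from the identity $M_jF_j''=\mathfrak A_j+\mathfrak B_j$ of Lemma~\ref{lem:smooth-signed-rank-one}. Because $\bar\partial_Xv_j$ is primitive, the Hodge--Riemann identity \eqref{eq:HR-norm} gives
\[
 \mathfrak B_j(s)=4\|\bar\partial_Xv_j\|_{j,s}^2 .
\]
Hence it suffices to bound $\int_K\mathfrak B_j\,ds$ uniformly in $j$.

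Integrate the identity over $K=[a,b]\Subset J_0$. Using $\mathfrak A_j\geq-(\mathfrak A_j)^-\geq-\mathfrak D_j$ from \eqref{eq:smooth-A-negative-bound}, we get
\[
 \int_K\mathfrak B_j\,ds\leq\int_K M_jF_j''\,ds+\int_K\mathfrak D_j\,ds .
\]
The second term is uniformly bounded by the rough estimate \eqref{eq:smooth-D-rough-bound}. For the first term, integrate by parts:
\[
 \int_K M_jF_j''\,ds=\bigl[M_j\dot F_j\bigr]_a^b-\int_K\dot M_j\dot F_j\,ds .
\]
Since $\dot M_j=-M_j\dot F_j$, the last term equals $\int_KM_j\dot F_j^2\,ds\leq\int_K\dot F_j^2\,ds$. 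This tends to $0$ by \eqref{eq:Fj-speed-L2} with $q=2$, so it is uniformly bounded.

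The main obstacle is the boundary term $M_j\dot F_j$ at $a,b$, because \eqref{eq:Fj-speed-L2} controls $\dot F_j$ only in $L^q$, not pointwise. I would handle it by averaging the endpoints. Fix $K'=[a',b']$ with $K\Subset K'\Subset J_0$, and note that $\mathfrak B_j\geq0$, so enlarging the interval of integration only increases $\int\mathfrak B_j$. Apply the integrated inequality on $[a'',b'']$ and average over $a''\in[a',a]$ and $b''\in[b,b']$. The boundary terms then become $\int|\dot F_j|\,ds$ over intervals of fixed length, which are uniformly bounded (indeed tend to $0$) by \eqref{eq:Fj-speed-L2}. The remaining terms are controlled by the $K'$ versions of \eqref{eq:smooth-D-rough-bound} and of $\int\dot F_j^2$.

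Alternatively, one could multiply by a cutoff $\chi\in C_c^\infty(J_0)$ with $\chi=1$ on $K$ and integrate by parts twice, producing $\int F_j(M_j\chi)''\,ds$. This is bounded using $0\leq F_j\to0$ uniformly from \eqref{eq:Fj-uniform}, $\dot M_j=-M_j\dot F_j$, and $\ddot M_j=\int_X(\dot q_j^2-\ddot q_j)e^{-q_j}\Omega$. However, $\ddot q_j$ is controlled only from below, so the averaging version is preferable. Combining these estimates gives $\sup_j\int_K\mathfrak B_j\,ds<\infty$, which together with \eqref{eq:smooth-v-L2} yields \eqref{eq:smooth-graph-bound}.
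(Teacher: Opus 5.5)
Your proof is correct, but it controls $\int_K M_jF_j''\,ds$ by a different route from the paper.

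\textbf{The paper's route.} It first proves the total-variation bound $\sup_j\int_K|F_j''|\,ds<\infty$, i.e.\ \eqref{eq:smooth-bootstrap-output}. It splits $F_j''$ into positive and negative parts. Using $M_j\geq1/2$ it gets $(F_j'')^-\leq M_j^{-1}(\mathfrak A_j)^-\leq2\mathfrak D_j$, and it handles the positive part with a cutoff via $\int\eta(F_j'')^+=\int F_j\eta''+\int\eta(F_j'')^-$. This step uses only the uniform convergence $F_j\to0$. It then concludes from $\mathfrak B_j\leq(F_j'')^++\mathfrak D_j$.

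\textbf{Your route.} You keep the weight $M_j$ and use $M_j=e^{-F_j}$. Integration by parts then turns $\int M_jF_j''$ into boundary terms plus the nonnegative term $\int M_j(F_j')^2$. Both are controlled by the $L^q$ convergence \eqref{eq:Fj-speed-L2} together with $M_j\leq1$.

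\textbf{Trade-offs.} Your route needs a stronger input: the $L^2$ convergence of $\dot F_j$, which in the paper rests on the openness-based kinetic estimate. In exchange it needs neither $M_j\geq 1/2$ nor the sign splitting. It also gives the slightly sharper bound
\[
\int_K\mathfrak B_j\,ds\leq o(1)+\int_{K'}\mathfrak D_j\,ds .
\]
The paper's route uses less, and its by-product \eqref{eq:smooth-bootstrap-output} is reused in Proposition~\ref{prop:smooth-defect-vanishing}. With your argument that bound would have to be derived separately.

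\textbf{A simplification of your endpoint step.} Your endpoint averaging is just one integration by parts against a piecewise-linear cutoff. With a smooth cutoff $\chi\in C_c^\infty(\operatorname{int}K')$, $0\leq\chi\leq1$, $\chi=1$ on $K$, you get
\[
\int\chi M_jF_j''\,ds=-\int\chi' M_jF_j'\,ds+\int\chi M_j(F_j')^2\,ds .
\]
No second derivative of $M_j$ appears, so your concern about $\ddot q_j$ in the ``alternative'' does not arise. Integrating by parts twice is what introduced it.
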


\begin{proof}
Choose a compact interval $K_1$ with $K\Subset K_1\Subset J_0$.
Recall that $M_jF_j''=\mathfrak A_j+\mathfrak B_j$,
$\mathfrak B_j\geq0$, and
$(\mathfrak A_j)^-\leq\mathfrak D_j$.
For $j$ sufficiently large, Proposition~\ref{prop:smooth-demailly-package}
gives $M_j\geq1/2$ on $K_1$.  
Moreover
$M_j\leq1$ by \eqref{eq:Fj-uniform}.  On the other hand,
\eqref{eq:smooth-D-rough-bound} gives
$\sup_j\int_{K_1}\mathfrak D_j\,ds<\infty$.  Hence for $j$ large we have
\[
 (F_j'')^-
 \leq M_j^{-1}(\mathfrak A_j)^-
 \leq 2\mathfrak D_j,
\]
and therefore
\begin{equation}
 \sup_j\int_{K_1}(F_j'')^-\,ds<\infty.
 \label{eq:rough-negative-second-derivative}
\end{equation}

Choose $0\leq\eta\in C_c^\infty(K_1)$ with $\eta=1$ on $K$.  Then
\begin{align*}
 \int_K(F_j'')^+\,ds
 &\leq\int_{K_1}\eta(F_j'')^+\,ds \\
 &=\int_{K_1}F_j\eta''\,ds
   +\int_{K_1}\eta(F_j'')^-\,ds.
\end{align*}
The first term is uniformly bounded (and in fact tends to zero), because
$F_j\to0$ uniformly on $K_1$; the second is bounded by
\eqref{eq:rough-negative-second-derivative}.  Thus
\begin{equation}
 \sup_j\int_K|F_j''|\,ds<\infty.
 \label{eq:smooth-bootstrap-output}
\end{equation}

To estimate $\mathfrak B_j$, rewrite the identity as
$\mathfrak B_j=M_jF_j''-\mathfrak A_j$.  We have
\[
 \mathfrak B_j
 \leq M_j(F_j'')^++(\mathfrak A_j)^-
 \leq (F_j'')^++\mathfrak D_j
\]
for every $j$.  Integration over $K$ gives
\begin{equation}
 \sup_j\int_K\mathfrak B_j\,ds<\infty.
 \label{eq:smooth-B-rough-bound}
\end{equation}
By the definition of $\mathfrak B_j$ and \eqref{eq:HR-norm}, this is
exactly a uniform bound for
\[
 \int_{K\times X}|\bar\partial_Xv_j|_{\mathrm{HR}}^2
 e^{-q_j}\Omega\,ds.
\]
Combining it with the already established estimate $\sup_j\int_{K\times X}|v_j|^2_{\omega,h_{\Omega}}e^{-q_{j,s}}dV_{\omega}\,ds<+\infty$ in
\eqref{eq:smooth-v-L2} proves \eqref{eq:smooth-graph-bound}. 
\end{proof}
\vspace{0.4cm}
\section{\texorpdfstring{$L^2$ non-concentration near analytic sets and removal of curvature loss}{L2 non-concentration near analytic sets and removal of curvature loss}}
\label{sec:weighted-no-concentration}

\subsection{\texorpdfstring{$L^2$ non-concentration of $v_j$ near analytic sets}{L2 non-concentration of vj near analytic sets}}
This step is inspired by Berndtsson's non-concentration
argument near a klt divisor \cite[Lemma~6.5]{Ber15b}.  

\begin{lemma}
\label{lem:weighted-graph-no-concentration}
Let $K\Subset J_0$ and let $Z\subset X$ be a proper analytic subset.
Then
\begin{equation}
 \lim_{\rho\to0^+}\sup_j
 \Big(\int_K\!\int_{U_\rho(Z)}|v_j|_{\omega,h_\Omega}^2
 e^{-q_j}dV_\omega\,ds\Big)=0,
 \qquad
 U_\rho(Z):=\{x\in X:\operatorname{dist}_\omega(x,Z)<\rho\}.
 \label{eq:weighted-nc-conclusion}
\end{equation}
\end{lemma}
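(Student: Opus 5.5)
We will prove \eqref{eq:weighted-nc-conclusion} by a fiberwise duality argument. For each $s$, the form $v_{j,s}$ is the $L^2$-minimal solution of the equation in Proposition~\ref{prop:smooth-chern-family}. So its mass on $U_\rho(Z)$ can be tested against a cut-off form and rewritten through the equation. Concretely, fix $s$ and write $\chi=q_{j,s}$. Choose a smooth form $\gamma$ supported in $U_\rho(Z)$ with $\|\gamma\|_\chi\le 1$. We want to bound $(v_{j,s},\gamma)_\chi$. The tool is the H\"ormander-type right inverse of Lemma~\ref{lem:negative-exact-inverse}, in its dual form; this inverse has constants uniform in $j$ and $s$.

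\textbf{Representation.} Since $\beta=i\omega\wedge v_{j,s}\in\mathcal R_0$, we write $\beta=\bar\partial_X a$, where $a=S_\chi\beta\perp u$ is a section of $K_X\otimes L$. Hence $a=gu$ for a function $g$. The equation $\bar\partial^*_\chi\beta=-r u$ then says $\square_\chi(gu)=-ru$, and
\[
 \|\beta|_{U_\rho}\|^2_\chi=\int_{U_\rho}|\bar\partial g|^2_\omega e^{-\chi}\Omega .
\]
So it suffices to show that the weighted Dirichlet energy of $g$ on $U_\rho(Z)$ is small, uniformly in $j$. Take a cut-off $\kappa_\rho$ equal to $1$ on $U_\rho$ and supported in $U_{2\rho}$. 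Integrating by parts against $\kappa_\rho^2\bar g$ gives
\[
 \int\kappa_\rho^2|\bar\partial g|^2e^{-\chi}\Omega
 \le \Big|\int\kappa_\rho^2\,r\,\bar g\,e^{-\chi}\Omega\Big|
 +2\int\kappa_\rho|\bar\partial\kappa_\rho||g||\bar\partial g|e^{-\chi}\Omega .
\]

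\textbf{Estimating the two terms.} For the first term, use Cauchy--Schwarz to bound it by $\|\kappa_\rho r\|_\chi\|g\|_\chi$. We have $\|g\|_\chi\le C\|r\|_\chi$, since $g u=S_\chi\beta$ and \eqref{eq:negative-right-inverse} together with \eqref{eq:negative-exact-estimate} apply. After integrating in $s$, the quantity $\int_K\|\kappa_\rho r_j\|^2\,ds$ tends to $0$ as $\rho\to0$ uniformly in $j$. The reason is that $r_je^{-q_j/2}$ converges strongly in $L^2(X\times J_0)$ by \eqref{eq:centered-kinetic-convergence}, so the family $|r_j|^2e^{-q_j}$ is uniformly integrable, and $\Omega(U_{2\rho}(Z))\to0$.

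The second term is the main obstacle. The gradient $|\bar\partial\kappa_\rho|\sim\rho^{-1}$ must be paid for by the smallness of $g$ near $Z$. For this, replace the naive cut-off by a log-type cut-off adapted to $Z$, in the spirit of \cite[Lemma~6.5]{Ber15b}. Take $\kappa=\kappa(\log(-\log\sigma))$, where $\sigma$ is a smooth function comparable to $\operatorname{dist}^2(\cdot,Z)$ through local generators of the ideal of $Z$. Its gradient is controlled in $L^2$ by a Poincar\'e-type metric of capacity zero, so that $\int|\partial\kappa|^2_\omega\,dV_\omega\to0$. We then need a uniform $L^\infty$, or at least $L^{2p'}$, bound on $g e^{-\chi/2}$. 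To get it, apply H\"ormander's estimate once more to a twisted weight $\chi+\epsilon\log\sigma$. Then $e^{-q_j}$ is integrable against $e^{-p'\cdot}$ by the uniform openness estimate behind \eqref{eq:kinetic-strong-openness}, since $q_j\ge Q$. If a sup bound is unavailable, we fall back on H\"older's inequality with the exponent $p>1$ from Lemma~\ref{lem:kinetic}, which makes the cut-off term $o(1)$ as $\rho\to0$.

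\textbf{Conclusion.} Absorbing the term $|\bar\partial g|$ via $ab\le\epsilon a^2+\epsilon^{-1}b^2$ and integrating over $K$ with \eqref{eq:smooth-graph-bound}, we get $\sup_j\int_K\int_{U_\rho(Z)}|\beta|^2e^{-q_j}\to0$. The Lefschetz isometry \eqref{eq:contraction-identity} transfers this to $v_j$ and yields \eqref{eq:weighted-nc-conclusion}.
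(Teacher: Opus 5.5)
Your reduction works up to the Caccioppoli inequality. Writing $i\omega\wedge v_{j,s}=\bar\partial_X(g_{j,s}u)$ and testing $\bar\partial^*_{q_{j,s}}\bar\partial_X(g_{j,s}u)=-r_{j,s}u$ against $\kappa^2g_{j,s}u$ is fine. So is the treatment of the source term, via $\|g_{j,s}\|_{q_{j,s}}\le C\|r_{j,s}\|_{q_{j,s}}$ and the equi-integrability of $|r_j|^2e^{-q_j}$ that follows from \eqref{eq:centered-kinetic-convergence}.

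The gap is the step you flag yourself. After absorption you need
\[
 \sup_j\int_K\!\int_X|\bar\partial\kappa|_\omega^2\,|g_{j,s}|^2e^{-q_{j,s}}\,\Omega\,ds\longrightarrow0,
\]
and this route cannot deliver it for any choice of cut-off. The lemma must cover divisors, and the set $Z=Z_{a/2}$ it is applied to is typically a divisor along which $Q$ has positive Lelong number. So near a generic point of $Z=\{z_1=0\}$ one has $e^{-Q_s}\gtrsim|z_1|^{-2c}$ with $c>0$. On each transverse ray, Cauchy--Schwarz gives $\int_0^{1}|\partial_t\kappa|^2t^{1-2c}\,dt\geq 2c$ for every $\kappa$ with $\kappa=1$ near $t=0$ and $\kappa(1)=0$; that is, the weighted capacity of $Z$ is positive. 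Log-log cut-offs only defeat the unweighted $W^{1,2}$-capacity, and the weight $e^{-q_j}\nearrow e^{-Q}$ is exactly what destroys this. Nothing forces $g_{j,s}$, a potential normalized only by $\int_Xg_{j,s}e^{-q_{j,s}}\Omega=0$, to vanish on $Z$. Hence whenever the limiting potential is nonzero on $Z$ (the generic case), the cut-off term stays bounded below, uniformly in $\rho$, as $j\to\infty$.

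Your fallbacks do not help either. The H\"older fallback needs $|\partial\kappa|^2$ small in some $L^{p'}$ with $p'<\infty$, which is impossible in real codimension $2$, and it also needs an $L^{2p}$ bound on $g_je^{-q_j/2}$ that you do not have. H\"ormander with weight $\chi+\epsilon\log\sigma$ puts $\int|\beta|^2e^{-\chi}\sigma^{-\epsilon}\Omega$ on the right-hand side, which is stronger than the non-concentration you are trying to prove.

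The missing ingredient is the other half of \eqref{eq:smooth-graph-bound}: the uniform bound on $\int_K\|\bar\partial_Xv_j\|_{j,s}^2\,ds$, which your argument never actually uses. Near-holomorphicity of $v_j$ is what transfers the weighted mass near $Z$ onto the fixed density $e^{-Q_s}$.

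The paper works on balls $B_{2r}$. H\"ormander with the strictly psh weight $\psi+q_{j,s}+Ar^{-2}|z-z_0|^2$ gives $a_{j,s}$ with $\bar\partial a_{j,s}=\bar\partial_Xv_{j,s}$ and weighted $L^2$-norm at most $Cr^2\|\bar\partial_Xv_{j,s}\|^2$ on $B_{2r}$. Then $h_{j,s}:=v_{j,s}-a_{j,s}$ is holomorphic, so, using $q_j\leq C$, the mean-value inequality gives the \emph{unweighted} bound $\sup_{B_r}|h_{j,s}|^2\leq Cr^{-2n}E_{j,s}(B_{2r})$. Here $E_{j,s}$ is the weighted mass of $v_{j,s}$ and $\bar\partial_Xv_{j,s}$. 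Together with $e^{-q_j}\leq e^{-Q}$ this yields, for $U\subset B_r$,
\[
 \int_U|v_{j,s}|^2e^{-q_{j,s}}dV_\omega\leq Cr^2E_{j,s}(B_{2r})+Cr^{-2n}E_{j,s}(B_{2r})\int_Ue^{-Q_s}\Omega .
\]
One then sums over a cover of $Z$ with bounded overlap and integrates in $s$. Letting $\rho\to0$, using the $L^1$-continuity of $s\mapsto e^{-Q_s}$ from Lemma~\ref{lem:kinetic}, and then $r\to0$ finishes the proof. No cut-off gradient ever appears, which is how the paper avoids the capacity obstruction.
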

We remark that Lemma~\ref{lem:weighted-graph-no-concentration} actually holds for every closed subset $Z\subset X$ with zero Lebesgue measure.
\begin{proof}
The basic idea is to decompose $v_j=a_j+h_j$ locally, where $\bar\partial_X a_j=\bar\partial_X v_j$ and $h_j$ is holomorphic.
H\"ormander's $L^2$-estimates control the $L^2$-norm of $a_j$, and the sub mean-value inequality for holomorphic functions controls the $L^2$-norm of $h_j$.  

Choose a relatively compact
coordinate cover $\{U_{\alpha}\}_{\alpha=1,\ldots,N_0}$ of $X$, on which $L$ is
trivial. There exists some $r_*>0$ so that every point of $X$ is the center of a coordinate
ball $B_{2r_*}=B(z_0,2r_*)\Subset U_{\alpha}$ for some $\alpha$.  

For each chart $U_{\alpha}$ with coordinate $(z_1,\cdots,z_n)$, we fix a non-vanishing holomorphic frame $e$ of $L$ and
write
\[
 |e|_{h_\Omega}^2=e^{-\psi},
 \qquad
 v_{j,s}=\sum_{|I|=n-1}v_{j,s,I}\,dz^I\otimes e.
\]
Thus the metric
$h_\Omega e^{-q_{j,s}}$ has local weight $\psi+q_{j,s}$.  

Recall that \eqref{eq:uniform-semipositivity-loss} gives
$\theta+\ddc_Xq_{j,s}\geq-C_0\omega$, with $C_0$ independent of $j$ and
$s\in J_0$.  
There is a constant $C_1$, depending only on $C_0$ and $(X,\omega)$ such that
\[
 i\partial\bar\partial(\psi+q_{j,s})
 \geq-C_1i\partial\bar\partial|z|^2
 \quad\text{on }B_{2r}.
\]
Choose $A:=C_1r_*^2+1$ and set
\[
 \Phi_{j,s,r}
 :=\psi+q_{j,s}+A r^{-2}|z-z_0|^2.
\]
Then for $r\leq r_*$,
\begin{equation}
 i\partial\bar\partial\Phi_{j,s,r}
 \geq r^{-2}i\partial\bar\partial|z|^2
 \quad\text{on }B_{2r}.
 \label{eq:nc-strict-local-weight}
\end{equation}

Put $f_{j,s}:=\bar\partial_Xv_{j,s}$.  In the above local chart,
the coefficient forms $f_{j,s,I}:=\bar\partial v_{j,s,I}$ are
$\bar\partial$-closed $(0,1)$-forms.  H\"ormander's estimate \cite{Hor65} (see also \cite{Hor90}), with the strictly plurisubharmonic weight
$\Phi_{j,s,r}$, gives
solutions $a_{j,s,I}$ satisfying
$\bar\partial a_{j,s,I}=f_{j,s,I}$ and
\[
 \int_{B_{2r}}|a_{j,s,I}|^2e^{-\Phi_{j,s,r}}d\lambda
 \leq r^2
 \int_{B_{2r}}|f_{j,s,I}|^2e^{-\Phi_{j,s,r}}d\lambda.
\]
Note that $0\leq Ar^{-2}|z-z_0|^2\leq4A$ on $B_{2r}$, 
Define $a_{j,s}:=\sum_Ia_{j,s,I}dz^I\otimes e$ on $B_{2r}$; it satisfies
$\bar\partial_X a_{j,s}=\bar\partial_Xv_{j,s}$ and
\begin{equation}
 \int_{B_{2r}}|a_{j,s}|_{\omega,h_\Omega}^2
 e^{-q_{j,s}}dV_\omega
 \leq C_{\mathrm H}r^2
 \int_{B_{2r}}|\bar\partial_Xv_{j,s}|_{\omega,h_\Omega}^2
 e^{-q_{j,s}}dV_\omega.
 \label{eq:nc-local-hormander}
\end{equation}
The constant $C_{\mathrm H}$ depends only on $C_0,A$ and
$(X,\omega,h_\Omega)$.

\vspace{0.4cm}
Set $h_{j,s}:=v_{j,s}-a_{j,s}$.  It is holomorphic on $B_{2r}$.  Since
$q_j\leq q_1$ and $q_1$ is smooth on a neighborhood of $J_0\times X$,
there is a constant $C_{J_0}$ such that $q_{j}\leq C_{J_0}$ for all $j$.  Hence we have a simple observation:
\[
 \int_{B_{2r}}|\gamma|^2\,dV_\omega
 \leq e^{C_{J_0}}\int_{B_{2r}}|\gamma|^2e^{-q_{j,s}}\,dV_\omega.
\]
The mean-value inequality for every holomorphic
coefficient of $h_{j,s}$, together with
\eqref{eq:nc-local-hormander} yields
\begin{equation}\label{eq:nc-holomorphic-sup}
\begin{split}
\sup_{B_r}|h_{j,s}|_{\omega,h_\Omega}^2
 &\leq \frac{C}{r^{2n}}\int_{B_{2r}}|h_{j,s}|_{\omega,h_\Omega}^2dV_{\omega}\\
 &\leq \frac{C}{r^{2n}}\int_{B_{2r}}2\bigl(|v_{j,s}|_{\omega,h_\Omega}^2+|a_{j,s}|_{\omega,h_\Omega}^2\bigl)dV_{\omega}\\
 &\leq \frac{C'}{r^{2n}}\int_{B_{2r}}\bigl(|v_{j,s}|_{\omega,h_\Omega}^2+|\bar\partial_X v_{j,s}|_{\omega,h_\Omega}^2\bigl)e^{-q_{j,s}}dV_{\omega}\\
 &= \frac{C'}{r^{2n}} E_{j,s}(B_{2r}).
\end{split}
\end{equation}
where
\begin{equation}
 E_{j,s}(B):=\int_B
 \left(|v_{j,s}|_{\omega,h_\Omega}^2
       +|\bar\partial_Xv_{j,s}|_{\omega,h_\Omega}^2\right)
 e^{-q_{j,s}}dV_\omega.
\end{equation}
Therefore, for every measurable $U\subset B_r$, using
$|v_{j,s}|_{\omega,h_\Omega}^2
 \leq2|a_{j,s}|_{\omega,h_\Omega}^2
     +2|h_{j,s}|_{\omega,h_\Omega}^2$
and
$q_j\geq Q$, we obtain
\begin{align}
 \int_U|v_{j,s}|_{\omega,h_\Omega}^2
 e^{-q_{j,s}}dV_\omega
 \leq{}&Cr^2\int_{B_{2r}}
 |\bar\partial_Xv_{j,s}|_{\omega,h_\Omega}^2
 e^{-q_{j,s}}dV_\omega \notag\\
 &+\frac{C}{r^{2n}}E_{j,s}(B_{2r})\int_Ue^{-Q_s}\Omega.
 \label{eq:nc-local-final}
\end{align}

\vspace{0.5cm}
For every $r\leq r_*$, there are finitely many balls $\{B_{\beta,r}\}$ of radius $r$ covering
$Z$, their doubled balls $B_{\beta,2r}$ have overlap bounded by a
constant $N_X$ depending only on $X$. 
Their union contains $U_{\rho(r)}(Z)$ for some $\rho(r)>0$. 
For $\rho\leq \rho(r)$, apply \eqref{eq:nc-local-final} to
$U_{\rho}\cap B_{\beta,r}$ and sum over $\alpha$:
\begin{align*}
 &\int_{U_{\rho}}|v_{j,s}|_{\omega,h_\Omega}^2e^{-q_{j,s}}dV_{\omega}\\
 &\leq \sum_{\beta}  \int_{U_{\rho}\cap B_{\beta,r}}|v_{j,s}|_{\omega,h_\Omega}^2e^{-q_{j,s}}dV_{\omega}\\
 &\leq\sum_\beta Cr^2\int_{B_{\beta,2r}}
 |\bar\partial_Xv_{j,s}|_{\omega,h_\Omega}^2
 e^{-q_{j,s}}dV_\omega
 +\sum_\beta\frac{C}{r^{2n}}E_{j,s}(B_{\beta,2r})\int_{U_{\rho}\cap B_{\beta,r}}e^{-Q_s}\Omega\\
 &\leq N_XCr^2E_{j,s}(X)
 +N_X\frac{C}{r^{2n}}E_{j,s}(X)
 \int_{U_{\rho}}e^{-Q_s}\Omega.
\end{align*}
Lemma~\ref{lem:signed-curvature-bootstrap} implies $C_K:=\sup_j\int_KE_{j,s}(X)ds<+\infty$.
Integrating in $s$ therefore gives
\begin{equation}
 \sup_j\int_K\!\int_{U_{\rho}}|v_j|_{\omega,h_\Omega}^2
 e^{-q_j}dV_\omega\,ds
 \leq C'C_Kr^2+\frac{C'C_K}{r^{2n}}\sup_{s\in K}\int_{U_{\rho}}e^{-Q_s}\Omega.
 \label{eq:nc-global-estimate}
\end{equation}
Here the second term is controlled in the following way:
\[
 \int_K E_{j,s}(X)\left(\int_Ue^{-Q_s}\Omega\right)ds
 \leq
 \left(\sup_{s\in K}\int_Ue^{-Q_s}\Omega\right)
 \int_KE_{j,s}(X)\,ds.
\]

By Lemma~\ref{lem:kinetic}, the map $s\mapsto e^{-Q_s}$ is continuous
from the compact interval $K$ into $L^1(X,\Omega)$.  
Since $Z$ has zero $\Omega$-measure and the closedness of $Z$ implies that $1_{U_\rho}$ decreases pointwise to $1_Z$, it follows that
\begin{equation}
 \sup_{s\in K}\int_{U_\rho}e^{-Q_s}\Omega\longrightarrow0
 \quad\text{as }\rho\to0^+.
 \label{eq:nc-uniform-integrability}
\end{equation}
For a fixed $r$, let $\rho\to0^+$; by \eqref{eq:nc-uniform-integrability},
\[
 \limsup_{\rho\to0^+}\sup_j
 \int_K\!\int_{U_\rho(Z)}|v_j|_{\omega,h_\Omega}^2
 e^{-q_j}dV_\omega\,ds
 \leq C_Kr^2.
\]
Now let $r\to0^+$; the proof is complete.  
\end{proof}
\vspace{0.4cm}

\subsection{Removal of the curvature loss and vanishing of the equality defects}
\label{sec:lelong-loss}

The preceding no-concentration result removes
the loss of positivity of regularization sequence.

\begin{proposition}
\label{prop:lelong-loss-vanishing}
For every $K\Subset J_0$,
\begin{equation}
 \int_{K\times X}(\ell_j+\varepsilon_j)
 \bigl(1+|V_j|_\omega^2\bigr)e^{-q_j}\Omega\,ds
 \longrightarrow0.
 \label{eq:lelong-loss-vanishing}
\end{equation}
Consequently,
\begin{equation}
 \int_K\mathfrak D_j(s)ds=4\int_{K\times X}R_j[\xi_j,\overline{\xi_j}]e^{-q_j}\Omega\,ds\longrightarrow0.
 \label{eq:D-vanishing}
\end{equation}
\end{proposition}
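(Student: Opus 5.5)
We split the integrand into the two pieces $(\ell_j+\varepsilon_j)e^{-q_j}\Omega$ and $(\ell_j+\varepsilon_j)|V_j|^2_\omega e^{-q_j}\Omega$, and use $|V_j|_\omega^2\Omega=|v_j|^2_{\omega,h_\Omega}dV_\omega$ to rewrite the second piece in terms of $v_j$.

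\emph{Terms with $\varepsilon_j$.} These are immediate. Since $\int_Xe^{-q_{j,s}}\Omega\le1$, the first contributes at most $\varepsilon_j|K|$. By \eqref{eq:smooth-v-L2} the second is at most $\varepsilon_j\sup_j\int_{K\times X}|v_j|^2e^{-q_j}dV_\omega\,ds$. Both tend to $0$.

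\emph{Terms with $\ell_j$.} Fix $a>0$. Let $Z:=Z_{a/2}$ be the proper analytic set from Lemma~\ref{lem:vertical-lelong-strata}, and let $U=U_\rho(Z)$. For $j$ large, \eqref{eq:uniform-loss-away-from-stratum} gives $\ell_j<a$ on $(X\setminus U)\times K$, and $\ell_j\le c_*$ everywhere. Hence
\[
\int_{K\times X}\ell_j(1+|V_j|^2)e^{-q_j}\Omega\,ds\le a\,C_K'+c_*\int_K\!\int_{U_\rho(Z)}\bigl(e^{-q_j}\Omega+|v_j|^2e^{-q_j}dV_\omega\bigr)ds,
\]
where $C_K'$ bounds $\int_{K\times X}(1+|V_j|^2)e^{-q_j}\Omega\,ds$ uniformly in $j$, by \eqref{eq:smooth-v-L2}.

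The $v_j$-part of the last integral is made small uniformly in $j$ by choosing $\rho$ small, using Lemma~\ref{lem:weighted-graph-no-concentration}. This non-concentration is the real content of the proof; the rest is bookkeeping.

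For the density part, use $e^{-q_j}\le e^{-Q}$. Then it suffices that $\sup_{s\in K}\int_{U_\rho}e^{-Q_s}\Omega\to0$ as $\rho\to0$. This follows as in \eqref{eq:nc-uniform-integrability}: $s\mapsto e^{-Q_s}$ is continuous into $L^1$ by Lemma~\ref{lem:kinetic}, and $Z$ is closed and $\Omega$-null.

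So $\limsup_j$ of the left side is at most $aC_K'$. Letting $a\to0$ proves \eqref{eq:lelong-loss-vanishing}.

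\emph{Consequence for $\mathfrak D_j$.} The pointwise bound $R_j[\xi_j,\overline{\xi_j}]\le C(\ell_j+\varepsilon_j)(1+|V_j|_\omega^2)$ was already noted before \eqref{eq:smooth-D-rough-bound}. It holds because $\Upsilon$ and $\widehat\omega$ are fixed multiples of $\omega+i\,d\tau\wedge d\bar\tau$ and $\xi_j=\partial_\tau-V_j$. Combined with \eqref{eq:lelong-loss-vanishing}, it gives \eqref{eq:D-vanishing}.

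Ordering matters: $j$ must be large depending on $\rho$, while the non-concentration estimate is uniform in $j$. So we choose $a$ first, then $\rho$, and only then send $j\to\infty$.
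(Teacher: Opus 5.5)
Your proof is correct and follows the paper's argument essentially verbatim. The $\varepsilon_j$-terms are disposed of via \eqref{eq:smooth-v-L2} and $\int_X e^{-q_{j,s}}\Omega\le 1$. The $\ell_j$-terms are split along a neighborhood of $Z_{a/2}$ from Lemma~\ref{lem:vertical-lelong-strata}, with Lemma~\ref{lem:weighted-graph-no-concentration} and \eqref{eq:nc-uniform-integrability} controlling the part near $Z_{a/2}$, and the order of limits (first $j$, then $\rho$, then $a$) is the same as in the paper. The only cosmetic difference is that you handle the scalar and vector-field pieces in one splitting rather than two.
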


\begin{proof}
The identity
\eqref{eq:contraction-identity} and
Lemma~\ref{lem:weighted-graph-no-concentration} give
\begin{equation}
 \lim_{\rho\downarrow0}\sup_j
 \int_{K\times U_\rho(Z)}|V_j|_\omega^2e^{-q_j}\Omega\,ds=0
 \label{eq:V-no-concentration}
\end{equation}
for every proper analytic subset $Z\subset X$.

Fix $a>0$ and use the vertical analytic set $Z_{a/2}$ from
Lemma~\ref{lem:vertical-lelong-strata}.  If $U$ is any neighborhood of
$Z_{a/2}$, then \eqref{eq:uniform-loss-away-from-stratum} gives, for all
large $j$,
\[
 \ell_j<a
 \quad\text{on }(X\setminus U)\times K.
\]
Splitting the integral into $U$ and $X\setminus U$ yields
\begin{align*}
 \int_{K\times X}\ell_j|V_j|_\omega^2e^{-q_j}\Omega\,ds
 \leq{}&a\int_{K\times X}|V_j|_\omega^2e^{-q_j}\Omega\,ds\\
 &+\|\ell_1\|_{L^\infty}
   \int_{K\times U}|V_j|_\omega^2e^{-q_j}\Omega\,ds.
\end{align*}
The first integral on the right is uniformly bounded by
\eqref{eq:smooth-v-L2}.  Take the upper limit as $j\to\infty$, then shrink
$U$ to $Z_{a/2}$ and use \eqref{eq:V-no-concentration}; this gives
\[
 \limsup_{j\to\infty}
 \int_{K\times X}\ell_j|V_j|_\omega^2e^{-q_j}\Omega\,ds
 \leq Ca.
\]
Since $a>0$ is arbitrary, we obtain
\begin{equation}
 \lim_{j\rightarrow\infty}\int_{K\times X}\ell_j|V_j|_\omega^2e^{-q_j}\Omega\,ds=0.
 \label{eq:ell-V-vanishing}
\end{equation}
The scalar part is simpler.
The same decomposition gives
\begin{align*}
 \int_{K\times X}\ell_je^{-q_j}\Omega\,ds
 \leq{}&a\int_{K\times X}e^{-q_j}\Omega\,ds\\
 &+\|\ell_1\|_{L^\infty}
   \int_K\int_Ue^{-Q_s}\Omega\,ds,
\end{align*}
because $e^{-q_j}\leq e^{-Q}$.  Uniform integrability
\eqref{eq:nc-uniform-integrability}, followed by $a\downarrow0$, gives
\begin{equation}
 \int_{K\times X}\ell_je^{-q_j}\Omega\,ds\longrightarrow0.
 \label{eq:ell-scalar-vanishing}
\end{equation}
Finally,
\[
 \varepsilon_j\int_{K\times X}
 \bigl(1+|V_j|_\omega^2\bigr)e^{-q_j}\Omega\,ds\longrightarrow0
\]
is clear thanks to \eqref{eq:smooth-v-L2} and \eqref{eq:density-L1}.  This proves
\eqref{eq:lelong-loss-vanishing}; the definition of $\mathfrak D_j$ immediately
gives \eqref{eq:D-vanishing}.
\end{proof}

\vspace{0.3cm}
Now we can prove the following proposition.
\begin{proposition}
\label{prop:smooth-defect-vanishing}
For every $K\Subset J_0$,
\begin{equation}
 \int_K(\mathfrak A_j)^+\,ds\longrightarrow0,
 \qquad
 \int_K\mathfrak B_j\,ds\longrightarrow0,
 \qquad
 \int_K|F_j''|\,ds\longrightarrow0.
 \label{eq:smooth-defects-vanish}
\end{equation}
In particular,
\begin{equation}
 \int_{K\times X}|\bar\partial_Xv_j|^2e^{-q_j}\Omega\,ds
 \longrightarrow0.
 \label{eq:dbar-v-defect-vanish}
\end{equation}
Moreover, with $S_j=\Theta_{q_j}+R_j\geq0$ as in
\eqref{eq:shifted-smooth-curvature},
\begin{equation}
 \int_K\int_XS_j[\xi_j,\overline{\xi_j}]
 e^{-q_j}\Omega\,ds\longrightarrow0.
 \label{eq:shifted-curvature-defect}
\end{equation}
\end{proposition}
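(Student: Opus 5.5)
The argument is a bookkeeping of the signed identity $M_jF_j''=\mathfrak A_j+\mathfrak B_j$ from Lemma~\ref{lem:smooth-signed-rank-one}, fed by two vanishing inputs. The first is $F_j\to0$ uniformly (from \eqref{eq:Fj-uniform}), together with $\dot F_j\to0$ in $L^q$ (from \eqref{eq:Fj-speed-L2}). The second is $\int_K\mathfrak D_j\,ds\to0$, from Proposition~\ref{prop:lelong-loss-vanishing}, which controls $(\mathfrak A_j)^-$ through \eqref{eq:smooth-A-negative-bound}.

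\textbf{Step 1: $F_j''$ is small against test functions.} Fix $K\Subset K_1\Subset J_0$ and a cutoff $0\le\eta\in C_c^\infty(K_1)$ with $\eta=1$ on $K$. Since $F_j$ is smooth, integration by parts gives
\[
\int_{K_1}\eta F_j''\,ds=\int_{K_1}\eta''F_j\,ds\longrightarrow0,
\]
because $F_j\to0$ uniformly on $K_1$.

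\textbf{Step 2: the positive part of $F_j''$ vanishes in $L^1$.} For $j$ large we have $1/2\le M_j\le1$ on $K_1$. Hence
\[
(F_j'')^-\le M_j^{-1}(\mathfrak A_j)^-\le2\mathfrak D_j,
\]
and therefore $\int_{K_1}(F_j'')^-\,ds\to0$ by \eqref{eq:D-vanishing}. Writing
\[
\int\eta(F_j'')^+=\int\eta F_j''+\int\eta(F_j'')^-,
\]
both terms on the right tend to $0$. So $\int_K(F_j'')^+\to0$, and thus $\int_K|F_j''|\,ds\to0$.

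\textbf{Step 3: the defect terms vanish.} Since $\mathfrak B_j\ge0$ and $\mathfrak A_j^+\ge0$, the identity gives
\[
\mathfrak A_j^++\mathfrak B_j=M_jF_j''+\mathfrak A_j^-\le(F_j'')^++\mathfrak D_j
\]
pointwise in $s$. Integrating over $K$ yields the first two limits in \eqref{eq:smooth-defects-vanish}. For \eqref{eq:dbar-v-defect-vanish}, $\bar\partial_Xv_j$ is primitive, so the Hodge--Riemann identity \eqref{eq:HR-norm} gives $|\bar\partial_Xv_j|^2_{\omega,h_\Omega}dV_\omega=|\bar\partial_Xv_j|^2_{\mathrm{HR}}\Omega$. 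The integral in \eqref{eq:dbar-v-defect-vanish} is therefore $\frac14\int_K\mathfrak B_j\,ds$, which tends to $0$.

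\textbf{Step 4: the shifted curvature term.} By definition,
\[
4\int_XS_j[\xi_j,\overline{\xi_j}]e^{-q_j}\Omega=\mathfrak A_j+\mathfrak D_j\le\mathfrak A_j^++\mathfrak D_j.
\]
This quantity is nonnegative because $S_j\ge0$. Its integral over $K$ therefore tends to $0$ by Step 3 and \eqref{eq:D-vanishing}, which gives \eqref{eq:shifted-curvature-defect}.

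The only delicate point is Step 2. It needs the negative part to vanish, not merely stay bounded as in Lemma~\ref{lem:signed-curvature-bootstrap}, and that is exactly what Proposition~\ref{prop:lelong-loss-vanishing} supplies. Everything else is routine sign bookkeeping.
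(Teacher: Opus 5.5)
Your proof is correct and uses the same ingredients as the paper's: the identity $M_jF_j''=\mathfrak A_j+\mathfrak B_j$, the uniform convergence $F_j\to0$, the vanishing of $\int\mathfrak D_j\,ds$ from Proposition~\ref{prop:lelong-loss-vanishing}, and the same final step for $S_j$. The only difference is the order of the steps. You get $\int_K|F_j''|\,ds\to0$ first, by bounding $(F_j'')^-\le 2\mathfrak D_j$ pointwise and controlling $(F_j'')^+$ by testing against $\eta$, and then use $\mathfrak A_j^++\mathfrak B_j\le(F_j'')^++\mathfrak D_j$. The paper instead first shows $M_jF_j''\to0$ in distributions, using the uniform bound $\sup_j\int_{K_1}|F_j''|\,ds<\infty$ and $M_j\to1$; your ordering does not need that bound.
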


\begin{proof}
Fix \(K\Subset J_0\), and choose a compact interval
\(K_1\Subset J_0\) such that $K\Subset \operatorname{int}K_1$.

By \eqref{eq:smooth-A-negative-bound} and
\eqref{eq:D-vanishing}, applied on \(K_1\), we have
\begin{equation}
 \int_{K_1}(\mathfrak A_j)^-\,ds
 \leq
 \int_{K_1}\mathfrak D_j\,ds
 \longrightarrow 0.
 \label{eq:A-negative-vanish}
\end{equation}

We next show that
\begin{equation}
 M_jF_j''\longrightarrow 0
 \qquad\text{in }
 \mathcal D'\bigl(\operatorname{int}K_1\bigr).
 \label{eq:weighted-second-derivative-distributional}
\end{equation}
First, the locally uniform convergence
\(F_j\to0\) from \eqref{eq:Fj-uniform} directly implies
\[
 F_j''\longrightarrow0
 \qquad\text{in }
 \mathcal D'\bigl(\operatorname{int}K_1\bigr).
\]

In the proof of Lemma~\ref{lem:signed-curvature-bootstrap}, 
we get \eqref{eq:smooth-bootstrap-output}:
\begin{equation}
 \sup_j\int_{K_1}|F_j''|\,ds<\infty.
 \label{eq:local-F-second-total-variation}
\end{equation}
Since \(M_j=e^{-F_j}\), the locally uniform convergence \(F_j\to0\)
also gives \(M_j\to1\) uniformly on \(K_1\).  Therefore, for every
\(\varphi\in C_c^\infty(\operatorname{int}K_1)\),
\[
 \begin{split}
 \left|
 \left\langle (M_j-1)F_j'',\varphi\right\rangle
 \right|
 &\leq
 \|\varphi\|_{L^\infty(K_1)}
 \|M_j-1\|_{L^\infty(K_1)}
 \int_{K_1}|F_j''|\,ds\\
 &\longrightarrow0
 \end{split}
\]
by \eqref{eq:local-F-second-total-variation}.  Combining this with
\(F_j''\to0\) in distributions proves
\eqref{eq:weighted-second-derivative-distributional}.

We now use the identity
\eqref{eq:smooth-rank-one-real}.  Writing
\(\mathfrak A_j=(\mathfrak A_j)^+-(\mathfrak A_j)^-\), it becomes
\begin{equation}
 (\mathfrak A_j)^++\mathfrak B_j
 =
 M_jF_j''+(\mathfrak A_j)^-.
 \label{eq:positive-defect-identity}
\end{equation}
Choose
\(\eta\in C_c^\infty(\operatorname{int}K_1)\) such that
\(0\leq\eta\leq1\) and \(\eta\equiv1\) on \(K\).  Since both
\((\mathfrak A_j)^+\) and \(\mathfrak B_j\) are nonnegative,
\eqref{eq:positive-defect-identity} gives
\[
 \begin{split}
 0
 &\leq
 \int_K\bigl((\mathfrak A_j)^++\mathfrak B_j\bigr)\,ds\\
 &\leq
 \int_{K_1}
 \eta\bigl((\mathfrak A_j)^++\mathfrak B_j\bigr)\,ds\\
 &=
 \left\langle M_jF_j'',\eta\right\rangle
 +
 \int_{K_1}\eta(\mathfrak A_j)^-\,ds.
 \end{split}
\]
The first term tends to \(0\) by
\eqref{eq:weighted-second-derivative-distributional}, and the second
one tends to \(0\) by \eqref{eq:A-negative-vanish}.  Consequently,
\[
 \int_K(\mathfrak A_j)^+\,ds\longrightarrow0,
 \qquad
 \int_K\mathfrak B_j\,ds\longrightarrow0.
\]

\vspace{0.4cm}
It remains to prove the \(L^1\)-vanishing of \(F_j''\).  Since
\(M_j\to1\) uniformly on \(K_1\), for all sufficiently large \(j\) one
has \(M_j\geq\frac12\) there.  Thus, using
\(M_jF_j''=\mathfrak A_j+\mathfrak B_j\),
\[
 \begin{split}
 \int_K|F_j''|\,ds
 &\leq
 2\int_K|\mathfrak A_j+\mathfrak B_j|\,ds\\
 &\leq
 2\int_K
 \bigl(
   (\mathfrak A_j)^+
   +(\mathfrak A_j)^-
   +\mathfrak B_j
 \bigr)\,ds
 \longrightarrow0.
 \end{split}
\]
This proves \eqref{eq:smooth-defects-vanish}.

\vspace{0.3cm}
By the definition of \(\mathfrak B_j\), the convergence
\(\int_K\mathfrak B_j\,ds\to0\) is precisely the asserted \eqref{eq:dbar-v-defect-vanish}.

Finally, recall that
\(S_j=\Theta_{q_j}+R_j\geq0\).  From the definitions of
\(\mathfrak A_j\) and \(\mathfrak D_j\),
\[
 \begin{split}
 0
 &\leq
 4\int_{K\times X}
 S_j[\xi_j,\overline{\xi_j}]
 e^{-q_j}\Omega\,ds\\
 &=
 \int_K\bigl(\mathfrak A_j+\mathfrak D_j\bigr)\,ds\\
 &\leq
 \int_K(\mathfrak A_j)^+\,ds
 +
 \int_K\mathfrak D_j\,ds
 \longrightarrow0,
 \end{split}
\]
where the last limit follows from the first part of the proof and
\eqref{eq:D-vanishing}.  
The proof is complete.
\end{proof}

\vspace{0.4cm}

\section{Passage to the limit on \texorpdfstring{$X$}{X}}
\label{sec:smooth-limit}

\begin{proposition}
\label{prop:smooth-fixed-exact-limit}
After passing to a subsequence, there exists a vertical \(L\)-valued
\((n-1,0)\)-form
\[
 v\in
 L^2_{\mathrm{loc}}
 \bigl(X\times J_0,
       p_X^*(\Lambda^{n-1,0}T_X^*\otimes L),h_{\Omega},\omega\bigr)
\]
such that, for every \(K\Subset J_0\),
\begin{align}
 v_j&\rightharpoonup v
 &&\text{weakly in }L^2(X\times K),
 \label{eq:vj-unweighted-weak}\\
 v_je^{-q_j/2}&\rightharpoonup ve^{-Q/2}
 &&\text{weakly in }L^2(X\times K),
 \label{eq:vj-half-density-weak}\\
 \bar\partial_Xv&=0
 &&\text{in distributions on }X\times K,
 \label{eq:limit-fiber-holomorphic}\\
 D'_{X}(ve^{-Q})&=Q_\tau u e^{-Q}
 &&\text{in distributions on }X\times K.
 \label{eq:limit-conservative-equation}
\end{align}
Moreover, for almost every $s$, the slice $v_s$ is holomorphic and
\begin{equation}
 i\omega\wedge v_s\in\mathcal R_0.
 \label{eq:limit-fixed-exact-gauge}
\end{equation}
For such \(s\), the identity
\begin{equation}
 v_s=-\iota_{V_s}u
 \label{eq:limit-vector-field}
\end{equation}
defines a holomorphic vector field \(V_s\) on \(X\).
\end{proposition}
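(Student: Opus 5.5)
The plan is to extract weak limits from the uniform bound \eqref{eq:smooth-graph-bound} of Lemma~\ref{lem:signed-curvature-bootstrap}, together with $q_j\le C_{J_0}$ on $X\times J_0$. Since $e^{-q_j}\ge e^{-C_{J_0}}$, that bound controls the unweighted norms $\int_K\|v_j\|^2_{L^2}ds$ and $\int_K\|\bar\partial_Xv_j\|^2_{L^2}ds$. It also controls $\int_K\|v_je^{-q_j/2}\|^2ds$ directly.

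\textbf{Step 1 (weak limits).} Take a diagonal subsequence over an exhaustion $K_m\nearrow J_0$. Along it, $v_j\rightharpoonup v$ and $v_je^{-q_j/2}\rightharpoonup w$ weakly in $L^2(X\times K)$, and moreover $\bar\partial_Xv_j\to0$ strongly by \eqref{eq:dbar-v-defect-vanish}. We must identify $w=ve^{-Q/2}$. For a test form $\gamma$, write
\[
\int\langle v_j,\gamma\rangle e^{-q_j/2}=\int\langle v_j,\gamma e^{-Q/2}\rangle+\int\langle v_j e^{-q_j/2},\gamma(1-e^{-(Q-q_j)/2})\rangle .
\]
The first term converges since $\gamma e^{-Q/2}$ is bounded, as $Q\le C$. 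In the second term, $0\le 1-e^{-(Q-q_j)/2}\le1$ tends to $0$ a.e., so $\gamma(1-\cdots)\to0$ strongly in $L^2$ by dominated convergence. Since $v_je^{-q_j/2}$ is bounded in $L^2$, the second term vanishes. This gives \eqref{eq:vj-unweighted-weak} and \eqref{eq:vj-half-density-weak}. Distributional \eqref{eq:limit-fiber-holomorphic} then follows from $\bar\partial_Xv_j\to0$.

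\textbf{Step 2 (the conservative equation).} Since $D'_{q,X}v=e^{q}D'_X(ve^{-q})$, \eqref{eq:smooth-chern-equation} reads $D'_X(v_je^{-q_j})=r_jue^{-q_j}$. Pair this with a smooth test form $\gamma$, i.e.\ use $\int\langle v_je^{-q_j},(D'_X)^*\gamma\rangle=\int\langle r_jue^{-q_j},\gamma\rangle$.

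On the right, $r_je^{-q_j}=(r_je^{-q_j/2})e^{-q_j/2}$ is a product of two factors converging strongly in $L^2$, by \eqref{eq:centered-kinetic-convergence} and \eqref{eq:half-density-L2}. Hence it converges in $L^1$ to $Q_\tau e^{-Q}$.

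On the left, $v_je^{-q_j}=(v_je^{-q_j/2})e^{-q_j/2}$ is a weakly convergent factor times a strongly convergent one, so it converges weakly in $L^1$ against bounded tests. The limit is $ve^{-Q}$: this product is in $L^1$ as a product of two $L^2$ functions. This yields \eqref{eq:limit-conservative-equation}.

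\textbf{Step 3 (slices).} By Fubini, for a.e.\ $s$ the slice satisfies $v_s\in L^2(X)$ and $\bar\partial_Xv_s=0$. To see the latter, test \eqref{eq:limit-fiber-holomorphic} with $\gamma(x)\chi(s)$ for $\gamma$ ranging over a countable dense set of smooth forms; then for a.e.\ $s$ the slice identity holds for all $\gamma$. Ellipticity of $\bar\partial$ on $(n-1,0)$-forms makes $v_s$ smooth and holomorphic. Writing $v_s=-\iota_{V_s}u$ gives a holomorphic vector field, because $u$ trivializes $K_X\otimes L$.

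For \eqref{eq:limit-fixed-exact-gauge}, we know $i\omega\wedge v_{j,s}\in\mathcal R_0$ for each $s$. The set $\mathcal R_0$ is closed in $L^2_{n,1}$, so it equals the annihilator of $\mathcal R_0^\perp=\ker\bar\partial^*$, a finite-dimensional space spanned by smooth forms $\beta_1,\dots,\beta_N$. Membership is therefore equivalent to $\int_X\langle i\omega\wedge v_{j,s},\beta_k\rangle dV=0$ for all $k$. Multiply by $\chi(s)$, integrate, and pass to the weak limit in Step 1. This gives $\int_K\chi(s)\langle i\omega\wedge v_s,\beta_k\rangle ds=0$ for all $\chi$, hence the conditions hold for a.e.\ $s$.

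\textbf{Main obstacle.} The delicate point is the identification of the weak limits in Steps 1 and 2. There $q_j$ converges only pointwise and $e^{-Q}$ may be unbounded, so the weak--strong product argument is what makes it work. Everything else is soft functional analysis built on the uniform bounds already established.
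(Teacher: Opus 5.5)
Your outline follows the paper: weak compactness from the uniform bounds, identification of the weighted limit via $e^{-q_j/2}\to e^{-Q/2}$ in $L^2$, weak-times-strong products for the divergence equation, and Fubini with countable test families. However, two steps are justified by false statements.

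\textbf{Step 1.} Both of your bounds go the wrong way.
\begin{itemize}
\item $Q\le C$ gives $e^{-Q/2}\ge e^{-C/2}$, a lower bound. So $\gamma e^{-Q/2}$ is not bounded, as your own closing remark about $e^{-Q}$ concedes.
\item Since $q_j\searrow Q$, one has $q_j\ge Q$. Hence $1-e^{-(Q-q_j)/2}=1-e^{(q_j-Q)/2}\le 0$, and it is unbounded below near the poles of $Q$.
\end{itemize}
The conclusions survive, but by integrability rather than boundedness. The identity $\int_K\int_Xe^{-Q_s}\Omega\,ds=|K|$ gives $\gamma e^{-Q/2}\in L^2(X\times K)$. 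With $C_K\ge\sup_{X\times K}q_1$, the function $|1-e^{(q_j-Q)/2}|^2\le 2+2e^{C_K-Q}$ is an integrable majorant for dominated convergence. Alternatively, follow the paper: pair the unweighted-bounded $v_j$ with $(e^{-q_j/2}-e^{-Q/2})\gamma$, which tends to $0$ in $L^2$ by \eqref{eq:half-density-L2}.

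\textbf{Gauge step.} This is the more serious error. The space $\mathcal R_0^\perp=\ker\bar\partial^*$ is not finite-dimensional once $n\ge 2$. By Hodge theory it is the orthogonal sum of two pieces: the finite-dimensional space $\mathbb H^{n,1}$ of harmonic $L$-valued $(n,1)$-forms, and the range of $\bar\partial^*$ on $L$-valued $(n,2)$-forms, which is infinite-dimensional. So orthogonality to finitely many $\beta_k$ does not force membership in $\mathcal R_0$.

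The repair uses something you already proved. For a.e.\ $s$ you have $\bar\partial_Xv_s=0$. Since $\omega$ is closed, $\bar\partial(i\omega\wedge v_s)=i\omega\wedge\bar\partial_Xv_s=0$. Inside $\ker\bar\partial=\mathcal R_0\oplus\mathbb H^{n,1}$, membership in $\mathcal R_0$ is exactly orthogonality to $\mathbb H^{n,1}$. Your $\chi(s)\beta_k$ test, with $\beta_k$ a basis of $\mathbb H^{n,1}$, does pass to the limit.

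Alternatively, test against a countable dense subset of $\mathcal R_0^\perp$. This is what the paper's fiberwise projection $P_0^\perp$ on $L^2(K,\mathcal H)$ achieves at once: it is bounded, hence weakly continuous, and it annihilates every $i\omega\wedge v_j$.

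With these corrections your argument is complete.
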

Remark. 
For brevity, we write $L^2(X \times K)$ for $L^2(X\times K,p^*_X(\Lambda^{n-1,0}T^*X\otimes L),h_{\Omega},\omega)$.
Equations \eqref{eq:limit-fiber-holomorphic} and \eqref{eq:limit-conservative-equation} mean that for every $\Psi\in C^{\infty}_c(X\times K, p_X^*(\Lambda^{n-1,1}T_X^*\otimes L))$ and $\Phi\in C^{\infty}_c(X\times K, p_X^*(\Lambda^{n,0}T_X^*\otimes L))$, we have 
\begin{equation}
\begin{split}
     &\int_K\int_X\langle v,\bar\partial^*_X\Psi\rangle_{\omega,h_{\Omega}} dV_{\omega}ds=0,\\
    &\int_K\int_X\langle ve^{-Q}, (D_{X}')^*\Phi\rangle_{\omega,h_{\Omega}}dV_{\omega}ds
    =\int_K\int_X\langle Q_{\tau}ue^{-Q},\Phi \rangle_{\omega,h_{\Omega}} dV_{\omega}ds
\end{split}
   \label{eq:weak covariant derivative}
\end{equation}
where $\bar\partial_X^{*}$ and $(D'_X)^*$ are taken with respect to $(\omega,h_{\Omega})$.
\vspace{0.4cm}
\begin{proof}
Fix \(K\Subset J_0\).
There is some $C_K>0$ so that $q_j\leq C_K$ on \(X\times K\).  Hence
\(e^{-q_j}\geq e^{-C_K}\), and \eqref{eq:smooth-v-L2} implies
\[
 \sup_j\int_{K\times X}|v_j|^2\,dV_{\omega}\,ds<\infty.
\]
Thus, after passing to a subsequence,
\(v_j\rightharpoonup v\) weakly in \(L^2(K\times X)\).

The same estimate \eqref{eq:smooth-v-L2} shows that
\(v_je^{-q_j/2}\) is bounded in \(L^2(K\times X)\).  Passing to a
further subsequence, write
\[
 v_je^{-q_j/2}\rightharpoonup W
 \qquad\text{weakly in }L^2(K\times X).
\]
We prove that \(W=ve^{-Q/2}\). 
First, from $v,e^{-Q/2}\in L^2(K\times X) $ we know $ve^{-Q/2}\in L^1(K\times X) $. 
Take \(\Psi\in \mathcal{C}^{\infty}_c(K\times X,p_X^*(\Lambda^{n-1,0}T^*X\otimes L))\), then
\[
 \int_{K}\int_{X}
 \langle
   v_j,
   \bigl(e^{-q_j/2}-e^{-Q/2}\bigr)\Psi
 \rangle_{\omega,h_{\Omega}}
 dV_\omega\,ds
 \longrightarrow0,
\]
because \(v_j\) is uniformly bounded in the unweighted \(L^2\)-space
and, by \eqref{eq:half-density-L2},
\[
 e^{-q_j/2}-e^{-Q/2}\longrightarrow0
 \qquad\text{strongly in }L^2(K\times X).
\]
On the other hand, \(e^{-Q/2}\Psi\in L^2(K\times X)\), so the weak
convergence of \(v_j\) gives
\[
 \int_{K\times X}
 \langle v_j,e^{-Q/2}\Psi\rangle dV_\omega\,ds
 \longrightarrow
 \int_{K\times X}
 \langle v,e^{-Q/2}\Psi\rangle dV_\omega\,ds.
\]
It follows that
\begin{align*}
    \int_K\int_X\langle v_je^{-q_j/2},\Psi \rangle dV_{\omega}ds \longrightarrow \int_K\int_X\langle ve^{-Q/2} ,\Psi \rangle dV_{\omega}ds,
\end{align*}
In particular, we obtain that $ve^{-Q/2}=W$ in the sense of distributions. Since $ve^{-Q/2}\in L^1$ and $W\in L^2$, we therefore infer that $ve^{-Q/2}\in L^2(K\times X)$ and $v_je^{-q_j/2}\rightharpoonup ve^{-Q/2}$ weakly, proving
\eqref{eq:vj-half-density-weak}.

\vspace{0.3cm}
We next prove \eqref{eq:limit-fiber-holomorphic}.  Since
\(q_j\leq C_K\), \eqref{eq:dbar-v-defect-vanish} gives
\[
 \begin{split}
 \int_{K\times X}|\bar\partial_Xv_j|^2dV_{\omega}\,ds
 &\leq
 e^{C_K}
 \int_{K\times X}
 |\bar\partial_Xv_j|^2e^{-q_j}dV_{\omega}\,ds\\
 &\longrightarrow0.
 \end{split}
\]
Combining this strong convergence with
\(v_j\rightharpoonup v\) proves
\(\bar\partial_Xv=0\) in distributions.

\vspace{0.3cm}
We now verify that for almost every $s$, 
$i\omega\wedge v_s\in \mathcal{R}_0$.  
Put
\[\mathcal H:=L^2(X,\Lambda^{n,1}T^*X\otimes L,h_\Omega,\omega), \qquad \mathcal{R}_0:=\operatorname{Ran}\bigl(\bar\partial:\operatorname{Dom}\bar\partial\subset L^2(X,\Lambda^{n,0}T^*X\otimes L)\rightarrow \mathcal H\bigr).\]
The subspace
\(\mathcal R_0\subset\mathcal H\) is closed; let
\(P_0^\perp:\mathcal H\to\mathcal R_0^\perp\) be the orthogonal
projection. 

We say that a map $W:K\rightarrow \mathcal H$ is strongly measurable if it is the a.e. limit of a sequence of simple maps; that is, if there is a sequence $W_j:K\rightarrow \mathcal{H}$ such that $W_j=1_{E_{j,1}}h_{j,1}+\cdots +1_{E_{j,N_j}}h_{j,N_j}$ for measurable sets $E_{j,i}\subset K$ and $h_{j,i}\in \mathcal H$, with $\lim_{j\rightarrow\infty}W_j(s)=W(s)$ for a.e. $s\in K$.
Then 
\[
L^2(K,\mathcal{H}):=\Big\{W \ \text{strongly measurable}:\|W\|_{L^2}:=\Big(\int_K|W(s)|_{\mathcal{H}}^2ds\Big)^{\frac{1}{2}}<\infty\Big\}.
\]
This is a Hilbert space with inner product $(W_1,W_2)=\int_K\langle W_1(s),W_2(s)\rangle_{\mathcal H}ds$, $W_1,W_2\in L^2(K,\mathcal H)$ (see, e.g., \cite[Chapter~1]{HNVW16}).
We need the following fact:
\begin{align*}
L^2(K\times X,p^*_X(\Lambda^{n,1}T^*X\otimes L))\rightarrow L^2(K,\mathcal H)\\
w\mapsto W:K\rightarrow \mathcal H,s\mapsto w_s
\end{align*}
is an isometry between Hilbert spaces.
Indeed, simple sections (respectively, maps) are dense in both spaces. Moreover, for a simple section $w=1_{E_1}(s)s_1(x)+\cdots+1_{E_N}(s)s_N(x)$, where $E_i\subset K$ is measurable and $s_i\in L^2(X,\Lambda^{n,1}T^*X\otimes L)$, it is easy to see that
\[
\|w\|^2=\int_K \sum_i 1_{E_i}\int_X|s_i|^2_{\omega,h_{\Omega}}dV_{\omega}ds
=\sum_i\Big(\int_{E_i}ds\Big)|s_i|^2_{\mathcal{H}}
=\|W\|^2.
\]
Now, for each $W\in L^2(K,\mathcal H)$, $W$ is an a.e. limit of simple maps $W_j=1_{E_{j,1}}(s)s_{j,1}(x)+\cdots+1_{E_{j,N_j}}(s)s_{j,N_j}(x)$, and it is clear that $P_0^{\perp}W_j=1_{E_{j,1}}(s)P_0^{\perp}s_{j,1}(x)+\cdots+1_{E_{j,N_j}}(s)P_0^{\perp}s_{j,N_j}(x)$ are still simple. Therefore $P_0^{\perp}W:K\to\mathcal{H}$, $s\mapsto P_0^{\perp}W_s$, is strongly measurable and $\|P_0^{\perp}W\|_{L^2}\leq \|W\|_{L^2}$. 
In this way, the projection $P_0^\perp$ induces a bounded linear operator on $L^2(K,\mathcal H)$, acting on each $W$ fiberwise. By the isometry above, $P_0^\perp$ can also be viewed as a bounded linear operator on $L^2(K\times X)$.

Since the Lefschetz map $a\mapsto i\omega\wedge a$ from $L^2(\Lambda^{n-1,0}T^*X\otimes L)$ to $L^2(\Lambda^{n,1}T^*X\otimes L)$ is an isometry, we know
\(i\omega\wedge v_j\rightharpoonup i\omega\wedge v\) weakly in
\(L^2(K\times X)\).  By \eqref{eq:smooth-chern-equation}, $i\omega\wedge v_{j,s}\in \mathcal{R}_0$ for every $j,s$, hence
\(P_0^\perp(i\omega\wedge v_j)=0\). By the weak continuity of $P_0^{\perp}$ on $L^2(K\times X)$, we infer that 
\(P_0^\perp(i\omega\wedge v)=0\). Since $\mathcal R_0$ is closed and $\mathcal R_0=\ker P_0^{\perp}$,
this means exactly
$i\omega\wedge v_s\in \mathcal{R}_0$ for almost every \(s\in K\) and  \eqref{eq:limit-fixed-exact-gauge} is proved.

\vspace{0.3cm}
It remains to prove \eqref{eq:limit-conservative-equation}.  For each
smooth \(q_j\), the equation \(D'_{q_j,X}v_j=r_ju\) is
equivalent to
\begin{equation}
 D'_{X}(v_je^{-q_j})
 =r_ju e^{-q_j}.
 \label{eq:smooth-conservative-equation}
\end{equation}
Indeed,
\[
 D'_{X}(v_je^{-q_j})
 =
 e^{-q_j}
 \bigl(
   D'_{X}v_j-\partial_Xq_j\wedge v_j
 \bigr)
 =
 e^{-q_j}D'_{q_j,X}v_j.
\]

Both sides of \eqref{eq:smooth-conservative-equation} can be factored
into half-densities:
\[
 v_je^{-q_j}
 =
 \bigl(v_je^{-q_j/2}\bigr)e^{-q_j/2},
 \qquad
 r_je^{-q_j}
 =
 \bigl(r_je^{-q_j/2}\bigr)e^{-q_j/2}.
\]
For every smooth compactly supported test form \(\Phi\), the weak convergence
\eqref{eq:vj-half-density-weak} and the strong convergence
\eqref{eq:half-density-L2} give
\[
 \int_{K}\int_{X}
 \langle v_je^{-q_j},\Phi\rangle dV_{\omega}\,ds
 \longrightarrow
 \int_{K}\int_{X}
 \langle ve^{-Q},\Phi\rangle dV_{\omega}\,ds.
\]
Thus \(v_je^{-q_j}\to ve^{-Q}\) in distributions.  Likewise,
\eqref{eq:centered-kinetic-convergence} and
\eqref{eq:half-density-L2} imply
\(r_je^{-q_j}\to Q_\tau e^{-Q}\) strongly in \(L^1(K\times X)\).
Testing \eqref{eq:smooth-conservative-equation} against a smooth
compactly supported form $\Psi$ gives
\[
\int_K\int_X\langle D'_X(v_je^{-q_j}),\Psi\rangle dV_{\omega}ds
=
\int_K\int_X \langle v_je^{-q_j},(D^{\prime}_X)^*\Psi\rangle dV_{\omega}ds
\]
and passing to the limit proves
\eqref{eq:limit-conservative-equation}.

\vspace{0.3cm}
Finally, Fubini's theorem applied to
\eqref{eq:limit-fiber-holomorphic} shows that
\(\bar\partial_Xv_s=0\) on \(X\) for almost every
\(s\).  
Indeed, take test form $\Psi=\eta(s)w(x)$ where $\eta\in \mathcal{C}_c^{\infty}(K)$ and $w\in \mathcal{C}^{\infty}(X,\Lambda^{n-1,1}T^*X\otimes L)$. $\bar\partial_Xv=0$ reads as
\[
\int_K\eta(s)\int_X\langle v_s,\bar\partial^*_Xw\rangle dV_{\omega}ds=0.
\]
Since $\eta$ is arbitrary, for almost every $s$, $\int_X\langle v_s,\bar\partial^*_Xw\rangle dV_{\omega}=0$.
Taking a countable dense subset $\{w_j\}_{j\geq1}\subset C^{\infty}(X,\Lambda^{n-1,1}T^*X\otimes L)$ yields that, for almost every $s$, $\int_X\langle v_s,\bar\partial_X^*w\rangle dV_{\omega}=0$ for every $w\in C^{\infty}(X,\Lambda^{n-1,1}T^*X\otimes L)$.
This implies $\bar\partial_X v_s=0$, as desired.
Since contraction with the nowhere-vanishing holomorphic identity
section \(u\) gives a fiberwise linear isomorphism
\[
 T_X^{1,0}
 \longrightarrow
 \Lambda^{n-1,0}T_X^*\otimes L,
 \qquad
 V\longmapsto-\iota_Vu,
\]
equation \eqref{eq:limit-vector-field} defines a holomorphic vector
field \(V_s\).

Applying the preceding argument to a compact exhaustion of \(J_0\) and
using a diagonal subsequence gives a single \(v\) with all the asserted properties.
\end{proof}

We next prove that the limit $v$ is independent of both the choice of the subsequence $v_j$ and, more importantly, the regularization sequence $q_j$. 
The reason is that for an exhaustion $J_0\Subset J_1\Subset\cdots \Subset I$, Demailly's regularization theorem may produce different $q_j$ on each $J_k$. 
Thus, $v$ obtained on $J_0\times X$ in Proposition~\ref{prop:smooth-fixed-exact-limit} may not be patched with the one obtained on $J_1\times X$.
To exclude this concern, we give a characterization of $v$, which only involves $Q$.

\begin{lemma}
\label{lem:smooth-limit-uniqueness}
There exists a unique
\[
 v\in L^2_{\mathrm{loc}}(X\times J_0),
 \qquad
 ve^{-Q/2}\in L^2_{\mathrm{loc}}(X\times J_0),
\]
satisfying the following three conditions:
\begin{align}
 i\omega\wedge v_s&\in\mathcal R_0
 &&\text{for almost every }s,                                   
 \label{eq:limit-uniqueness-gauge}\\
 \bar\partial_Xv&=0
 &&\text{in distributions},                                     
 \label{eq:limit-uniqueness-holomorphic}\\
 D'_{X}(ve^{-Q})&=Q_\tau u e^{-Q}
 &&\text{in distributions}.                                     
 \label{eq:limit-uniqueness-conservative}
\end{align}
\end{lemma}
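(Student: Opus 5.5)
Existence is already given by Proposition~\ref{prop:smooth-fixed-exact-limit}. Any subsequential weak limit $v$ produced there lies in $L^2_{\mathrm{loc}}$, satisfies $ve^{-Q/2}\in L^2_{\mathrm{loc}}$, and satisfies \eqref{eq:limit-fixed-exact-gauge}, \eqref{eq:limit-fiber-holomorphic} and \eqref{eq:limit-conservative-equation}. So only uniqueness needs proof. The plan is to take two solutions $v,v'$, set $w:=v-v'$, and show $w=0$. The argument is a weighted Hodge orthogonality on each slice: $i\omega\wedge w_s$ will turn out to be $\bar\partial$-exact and at the same time "$\bar\partial^*_{Q_s}$-closed", which forces it to vanish. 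Since $Q$ is singular, the argument must be run fiber by fiber against smooth test forms only.

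\emph{Step 1 (slicing).} The three conditions are linear, so $w$ satisfies them with zero right-hand side in \eqref{eq:limit-uniqueness-conservative}. I will slice exactly as at the end of the proof of Proposition~\ref{prop:smooth-fixed-exact-limit}: test with $\eta(s)\Phi(x)$ and $\eta(s)\Psi(x)$, where $\Phi,\Psi$ run through countable $C^0$-dense families of smooth forms. This gives, for almost every $s$:
\begin{itemize}
\item $\bar\partial_Xw_s=0$, so $w_s$ is a holomorphic, hence smooth and bounded, $L$-valued $(n-1,0)$-form;
\item $i\omega\wedge w_s\in\mathcal R_0$;
\item $\int_X\langle w_se^{-Q_s},(D'_X)^*\Phi\rangle\,dV_\omega=0$ for all $\Phi$ in the countable family.
\end{itemize}
For a.e.\ $s$ the factor $w_se^{-Q_s}$ lies in $L^1(X)$, because $w_se^{-Q_s/2}$ and $e^{-Q_s/2}$ are both in $L^2$. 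Hence the last identity extends by $C^0$-density of $(D'_X)^*\Phi$ (take the countable family dense in $C^1$) to every smooth $L$-valued $(n,0)$-form $\Phi$.

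\emph{Step 2 (smooth exact primitive).} Fix such an $s$. Since $i\omega\wedge w_s$ is smooth and lies in $\mathcal R_0$, the $L^2$-minimal solution $gu$ of $\bar\partial(gu)=i\omega\wedge w_s$ (taken orthogonal to $\mathbb Cu$) is smooth by elliptic regularity: it equals $\bar\partial^*$ applied to the Green operator of the smooth form $i\omega\wedge w_s$. This choice is what avoids any approximation against the singular weight. Only $e^{-Q_s}\in L^p$ with $p$ close to $1$ is available, so approximating a merely $L^2$ primitive would fail.

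\emph{Step 3 (Kähler identity).} Take $\Phi:=gu$. On smooth forms the Kähler identity used in Lemma~\ref{lem:negative-exact-inverse}, in its unweighted version $\bar\partial^*=-i[\Lambda_\omega,D']$, and its adjoint give, pointwise up to a fixed sign,
\[
 \langle w_s,(D'_X)^*(gu)\rangle_{\omega,h_\Omega}
 =\langle i\omega\wedge w_s,\bar\partial(gu)\rangle_{\omega,h_\Omega}
 =|i\omega\wedge w_s|^2_{\omega,h_\Omega}.
\]
Combined with Step 1, this yields $\int_X|w_s|^2e^{-Q_s}dV_\omega=0$, since the Lefschetz map is an isometry. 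Hence $w_s=0$ for a.e.\ $s$, and so $w=0$.

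\emph{Main obstacle.} The delicate point is Step 3: checking the pointwise adjoint identity with the correct sign and constant, and keeping the pairing integrable. Integrability is fine here because $w_s$ and $g$ are smooth and only $e^{-Q_s}\in L^1$ is needed. I would verify the identity in local coordinates for $(n-1,0)$-forms, where $\Lambda_\omega\bar\partial$ on $(n,0)$-forms reduces to contraction.
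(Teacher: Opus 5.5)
Your proposal is correct and follows essentially the same route as the paper. You slice to almost every $s$, take the smooth $\bar\partial$-primitive of $i\omega\wedge w_s$ given by the Green operator, and pair it with the weak equation $D'_X(w_se^{-Q_s})=0$ via the K\"ahler identity to get $\int_X|w_s|^2e^{-Q_s}\,dV_\omega=0$. The one difference is minor: you use $\langle a,(D'_X)^*W\rangle=-\langle i\omega\wedge a,\bar\partial W\rangle$ pointwise, so you can integrate directly against the $L^1$ weight $e^{-Q_s}$, whereas the paper proves the weighted integration-by-parts formula for smooth weights and passes to $e^{-Q_s}$ through $e^{-q_{j,s}}\to e^{-Q_s}$ in $L^1$.
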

\begin{proof}
Let \(v\) and \(\widetilde v\) satisfy the three conditions in the
statement, and put \(p:=v-\widetilde v\).  Then
\begin{equation}
i\omega\wedge p_s\in\mathcal R_0
 \ \text{for almost every }s,\qquad
 \bar\partial_Xp=0,\qquad
 D'_{X}(pe^{-Q})=0.
 \label{eq:homogeneous-limit-data}
\end{equation}
The same arguments as in Proposition~\ref{prop:smooth-fixed-exact-limit} yield that for almost every $s$, $p_s$ is holomorphic and $D^{\prime}(p_se^{-Q_s})=0$ in the sense that $\int_X\langle p_se^{-Q_s},(D^{\prime})^*\Psi\rangle_{\omega,h_{\Omega}} dV_{\omega}=0$ holds for every test form $\Psi$ on $X$.

Fix such a \(s\) and set
\(\beta_s:=i\omega\wedge p_s\).
Since \(\beta_s\in\mathcal R_0=\operatorname{Ran}\bar\partial\), there is a smooth $L$-valued $(n,0)$-form $W_s$ satisfying
\begin{equation}
 \bar\partial W_s=i\omega\wedge p_s,
 \label{eq:fiberwise-smooth-primitive}
\end{equation}
where its smoothness follows from the smoothness of $p_s$ and the regularity of the Green operator.

Let
\(\rho=e^{-\chi}>0\) be smooth, and let \(\bar\partial_\chi^*\) denote the adjoint
with respect to the metric \(h_\Omega e^{-\chi}\).  
For a smooth $L$-valued $(n-1,0)$-form $a$, the K\"ahler identity $\bar\partial_{\chi}^*=-i[\Lambda_{\omega},D^{\prime}_{\chi}]$ gives
\begin{align}
 \bar\partial_\chi^*(i\omega\wedge a)
 &=
 -D'_{\chi}a.
 \label{eq:weighted-adjoint-expanded}
\end{align}
Moreover, 
\begin{align}
 D'(a\rho)
 =
 \rho D'_{\chi}a.
 \label{eq:weighted-product-expanded}
\end{align}
Combining \eqref{eq:weighted-adjoint-expanded} and
\eqref{eq:weighted-product-expanded}, for every smooth \(L\)-valued
\((n,0)\)-form \(W\) we obtain
\begin{equation}
\begin{aligned}
 \int_X
 \langle
   i\omega\wedge a,\bar\partial W
 \rangle_{\omega,h_\Omega}
 \rho\,dV_\omega& =
 \int_X\langle i\omega\wedge a,\bar\partial W\rangle_{\omega,h_{\Omega}\rho}dV_{\omega} =
 \int_X\langle -\rho^{-1}D^{\prime}(a\rho),W\rangle_{\omega,h_{\Omega}\rho} dV_{\omega} \\
 &=
 -\int_X\langle
   D'(a\rho),W\rangle_{\omega,h_{\Omega}} dV_{\omega}.
\end{aligned}
\label{eq:weighted-integration-by-parts}
\end{equation}

Formula \eqref{eq:weighted-integration-by-parts} continues to hold for
\(\rho=e^{-Q_s}\).  Indeed, the fact that
\(e^{-q_{j,s}}\rightarrow e^{-Q_s}\) in \(L^1(X)\) ensures the convergence of the left-hand side. For the right-hand side, the same convergence gives
\begin{align*}
 \int_X\langle D'(ae^{-q_{j,s}}),W\rangle dV_{\omega}
 &=\int_X\langle ae^{-q_{j,s}},(D')^*W\rangle dV_{\omega}\\
 &\longrightarrow\int_X\langle ae^{-Q_s},(D')^*W\rangle dV_{\omega}\\
 &:=\int_X\langle D'(ae^{-Q_s}),W\rangle dV_{\omega}.
\end{align*}

Apply \eqref{eq:weighted-integration-by-parts} with
\(a=p_s\), \(W=W_s\), and \(\rho=e^{-Q_s}\).
Using \eqref{eq:fiberwise-smooth-primitive}, the fact that
\(a\mapsto i\omega\wedge a\) is an isometry, and
$D^{\prime}(p_se^{-Q_s})=0$, we obtain
\begin{align}
 \int_X|p_s|_{\omega,h_\Omega}^2e^{-Q_s}\,dV_\omega
 &=
 \int_X|i\omega\wedge p_s|_{\omega,h_\Omega}^2
        e^{-Q_s}\,dV_\omega \notag\\
 &=
 \int_X
 \left\langle
   i\omega\wedge p_s,\bar\partial W_s
 \right\rangle_{\omega,h_\Omega}
 e^{-Q_s}\,dV_\omega \notag\\
 &=
 -\left\langle
   D'_{X}(p_se^{-Q_s}),W_s
 \right\rangle_X
 =0.
 \label{eq:homogeneous-limit-norm-zero}
\end{align}
Since \(e^{-Q_s}>0\) almost everywhere, this implies \(p_s=0\).
Therefore \(p=0\) almost everywhere, and hence
\(v=\widetilde v\).
\end{proof}
\vspace{0.5cm}
\section{Exact test identities and parameter independence}
\label{sec:smooth-exact-tests}
Recall that $S=\{\tau=s+it\in\mathbb{C}:0<s<1\}$ with measure \(dA=i\,d\tau\wedge d\bar\tau\) and $J_0\Subset (0,1)$ is an open interval.
$u$ is the canonical section of $K_X\otimes L=\mathcal{O}_X$.
We retain the notation \(\rho_j=e^{-q_j}\), \(\rho=e^{-Q}\),
\(\Theta_j=\Theta_{q_j}\), as well as \(\xi_j,R_j\), and
\(S_j=\Theta_j+R_j\geq0\) from the preceding sections.

Let \(U(\tau,x):=w(\tau,x)u(x)\in \mathcal{C}^{\infty}(S\times X,p_X^*(K_X\otimes L))\), where $w$ is a smooth function on $S\times X$ satisfying $w(\tau+2\pi i,x)=w(\tau,x)$ for every $(\tau,x)\in S\times X$ and $\operatorname{supp}(w)\subset K\times i\mathbb{R}\times X$, with $K\Subset J_0$ an open interval.  

Put \(B:=\bar\partial_XU\).  Define
\(D_{\tau,q_j}B:=\partial_\tau B-(q_j)_\tau B\) and
\(D_{\tau,Q}B:=\partial_\tau B-Q_\tau B\), and set
\begin{align}
 J_j(U)
 &:=
 \int_{S_K\times X}
 \left\langle
   i\omega\wedge v_j,D_{\tau,q_j}B
 \right\rangle_{\omega,h_\Omega}
 \rho_j\,dV_\omega\,dA,
 \label{eq:smooth-exact-pairing}\\
 J(U)
 &:=
 \int_{S_K\times X}
 \left\langle
   i\omega\wedge v,D_{\tau,Q}B
 \right\rangle_{\omega,h_\Omega}
 \rho\,dV_\omega\,dA
 \label{eq:limit-exact-pairing}
\end{align}
where $S_K:=K\times i(0,2\pi)$.
The integrals are well defined because $ve^{-Q/2}\in L^2(K\times X)$ and $Q_{\tau}e^{-Q/2}\in L^2(K\times X)$.

\vspace{0.3cm}
The next proposition passes the exact orthogonality from the smooth
approximants to the limiting solution.

\begin{proposition}
\label{prop:smooth-exact-tests}
For every such \(U\),
\begin{equation}
 J_j(U)\longrightarrow0,
 \qquad
 J(U)=0.
 \label{eq:limiting-exact-test}
\end{equation}
\end{proposition}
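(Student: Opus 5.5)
The plan has two halves. First, an algebraic identity at the smooth level shows that $J_j(U)$ is controlled by the three defects that Proposition~\ref{prop:smooth-defect-vanishing} sends to zero. Second, a weak--strong limiting argument transfers $J_j(U)\to0$ to $J(U)=0$.

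\textbf{The limit $J_j(U)\to J(U)$.} Split each integrand into half-densities:
\[
\langle i\omega\wedge v_j,D_{\tau,q_j}B\rangle\rho_j
=\bigl\langle i\omega\wedge v_je^{-q_j/2},\ e^{-q_j/2}\partial_\tau B-(q_j)_\tau e^{-q_j/2}B\bigr\rangle .
\]
The left factor converges weakly in $L^2(K\times X)$ by \eqref{eq:vj-half-density-weak}, since the Lefschetz map is an isometry. The right factor converges strongly in $L^2$. Indeed, $B$ and $\partial_\tau B$ are smooth and bounded, and we have \eqref{eq:half-density-L2} together with \eqref{eq:smooth-kinetic-convergence}. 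Also, $(q_j)_\tau$ and $Q_\tau$ are $\operatorname{Im}\tau$-invariant, so integrating over $S_K$ is, up to the factor $2\pi$, integration over $K\times X$, with $w$ only adding $t$-dependence through bounded smooth coefficients. Hence $J_j(U)\to J(U)$, and it suffices to prove $J_j(U)\to0$.

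\textbf{The smooth identity.} Fix $j$; everything is smooth. Use $D_{\tau,q_j}B\,\rho_j=\partial_\tau(B\rho_j)$ and integrate by parts in $\tau$. The function $w$ is compactly supported in $s$ and $2\pi i$-periodic, so there are no boundary terms. This moves the derivative onto $v_j$ as $\partial_{\bar\tau}(i\omega\wedge v_j)$, giving
\[
J_j(U)=-\int\langle i\omega\wedge\partial_{\bar\tau}v_j,\bar\partial_XU\rangle\rho_j .
\]
Next, differentiate in $\bar\tau$ the fiberwise equation
\[
-\bar\partial^*_{q_{j,s}}(i\omega\wedge v_j)=D'_{q_j,X}v_j=r_ju .
\]
Alternatively, pair directly with $\bar\partial_XU$ and use the adjoint relation on each fiber. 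The terms pairing $r_ju$ and its $\bar\tau$-derivative against the exact form $\bar\partial_XU$ cancel via $\bar\partial^*_{q_j}$. Commuting $\partial_{\bar\tau}$ past the weight produces $\bar\partial_X(q_j)_{\bar\tau}$-type terms. Combining these with the $\partial_{\bar\tau}v_j$ terms, I expect
\[
J_j(U)=\int\Theta_{q_j}\bigl(\xi_j,\overline{\,\cdot\,}\bigr)\text{-terms paired with }\bar\partial_X w
\;+\;\int\langle\bar\partial_Xv_j,\text{(smooth in }U)\rangle\rho_j ,
\]
where the first integral contains only the mixed components $\Theta_{q_j}[\xi_j,\bar\eta]$ with $\eta$ vertical and built from $\bar\partial_X w$. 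This is the same bookkeeping as in Berndtsson's curvature formula used in Lemma~\ref{lem:smooth-signed-rank-one}, with $\widehat u_j=u-d\tau\wedge v_j$.

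\textbf{Estimating and concluding.} Replace $\Theta_{q_j}$ by $S_j-R_j$. For the $S_j$ part, use the Cauchy--Schwarz inequality for the semipositive form $S_j$:
\[
|S_j[\xi_j,\bar\eta]|^2\le S_j[\xi_j,\bar\xi_j]\,S_j[\eta,\bar\eta].
\]
Then
\[
\Bigl|\int S_j[\xi_j,\bar\eta]\rho_j\Bigr|\le\Bigl(\int S_j[\xi_j,\bar\xi_j]\rho_j\Bigr)^{1/2}\Bigl(\int S_j[\eta,\bar\eta]\rho_j\Bigr)^{1/2}.
\]
The first factor tends to $0$ by \eqref{eq:shifted-curvature-defect}. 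The second factor is bounded because $\eta$ is smooth with compact support and $\int S_j[\eta,\bar\eta]\rho_j$ is controlled by the total mass of $S_j$, which is uniformly bounded in cohomology since $\rho_j\le e^{-Q}$. If mass control against $\rho_j$ is awkward, one can instead integrate by parts once more to put $\ddc$ on $\eta$-coefficients. The $R_j$ part is bounded via
\[
|R_j[\xi_j,\bar\eta]|\le R_j[\xi_j,\bar\xi_j]^{1/2}R_j[\eta,\bar\eta]^{1/2},
\]
and it vanishes by \eqref{eq:D-vanishing}, \eqref{eq:lelong-loss-vanishing} and $\ell_j\le c_*$. The $\bar\partial_Xv_j$ term vanishes by \eqref{eq:dbar-v-defect-vanish} and Cauchy--Schwarz, since $\int|U|^2\rho_j\le C$. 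Hence $J_j(U)\to0$, and with the first half, $J(U)=0$.

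\textbf{Main obstacle.} The hardest step is deriving the smooth identity: showing that, after the $\tau$-integration by parts, everything except $\Theta_{q_j}[\xi_j,\cdot\,]$ and $\bar\partial_Xv_j$ terms cancels. That cancellation is precisely where the exactness of both $i\omega\wedge v_j\in\mathcal R_0$ and $B=\bar\partial_XU$ is used in place of $H^{0,1}(X)=0$. I would first carry out this computation in local coordinates with $U=wu$, mirroring the proof of Lemma~\ref{lem:smooth-signed-rank-one}.
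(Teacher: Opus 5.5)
Your passage from $J_j(U)\to0$ to $J(U)=0$, by weak--strong convergence of half-densities, is correct and is the paper's Step~2. So is your first move in the smooth computation: writing $(D_{\tau,q_j}B)\rho_j=\partial_\tau(B\rho_j)$ and integrating by parts in $\tau$. The gap is the step you leave as ``I expect''. The smooth identity is never derived, and the structure you predict is not what the computation produces.

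The missing idea is to apply the fiberwise K\"ahler identity $\bar\partial_{q_j}^*(i\omega\wedge a)=-D'_{q_j,X}a$ to $a=\partial_{\bar\tau}v_j$. Since $B=\bar\partial_XU$ is exact, this gives
\[
 J_j(U)=\int_{S_K\times X}\bigl\langle D'_{q_j,X}(\partial_{\bar\tau}v_j),U\bigr\rangle_{\omega,h_\Omega}\rho_j\,dV_\omega\,dA .
\]
Only the exactness of $B$ is used here, not $i\omega\wedge v_j\in\mathcal R_0$. Next, differentiate $D'_{q_j,X}v_j=r_ju$ in $\bar\tau$. Using $D'_{q_j,X}=D'_X-\partial_Xq_j\wedge$ and $\alpha\wedge v_j=-\alpha(V_j)u$ for $(1,0)$-forms $\alpha$, one gets
\[
 D'_{q_j,X}(\partial_{\bar\tau}v_j)=(\partial_{\bar\tau}r_j)\,u+\partial_X(q_j)_{\bar\tau}\wedge v_j=\bigl(\Theta_{q_j}[\xi_j,\overline{\partial_\tau}]-(F_j)_{\tau\bar\tau}\bigr)u .
\]
Hence $J_j(U)=\int\bigl(\Theta_{q_j}[\xi_j,\overline{\partial_\tau}]-(F_j)_{\tau\bar\tau}\bigr)\overline{w}\,\rho_j\Omega\,dA$. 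This is a scalar identity containing neither $\bar\partial_Xv_j$ nor any vertical mixed component. The mixed curvature is taken against the horizontal direction $\partial_\tau$. There is also an extra $F_j''$ term, which you did not anticipate; it is killed by $\int_K|F_j''|\,ds\to0$ from \eqref{eq:smooth-defects-vanish}.

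The direction matters, because your bound on the second Cauchy--Schwarz factor fails for vertical $\eta$. The weight $\rho_j=e^{-q_j}$ is not uniformly bounded. The inequality $\rho_j\le e^{-Q}$ does not help, since $e^{-Q}$ is unbounded. So cohomological mass bounds on $S_j$ give no control of $\int S_j[\eta,\bar\eta]\rho_j$.

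A local model shows the failure: take $q_j=c\log(|z_1|^2+\varepsilon_j)$ with $0<c<1$ and $\eta=\partial/\partial z_1$. Then the integral is of order $\int_{|z_1|<1}c\,\varepsilon_j(|z_1|^2+\varepsilon_j)^{-2-c}\,d\lambda(z_1)\sim\varepsilon_j^{-c}\to\infty$. Integrating by parts once more only trades this for $\int|\partial_Xq_j|^2\rho_j$, which is equally uncontrolled.

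The only direction with a uniform weighted bound is $\partial_\tau$. The one-variable Bochner--Kodaira identity on the strip, for the weight $\rho_j(\cdot,x)$, gives
\[
 \int_{S_K}|w|^2(q_j)_{\tau\bar\tau}\rho_j\,dA\le\int_{S_K}|D_{\tau,q_j}w|^2\rho_j\,dA\le2\int_{S_K}\bigl(|\partial_\tau w|^2+|w|^2|(q_j)_\tau|^2\bigr)\rho_j\,dA .
\]
Integrated against $\Omega$, this is uniformly bounded by the kinetic estimate \eqref{eq:smooth-kinetic-convergence}, i.e.\ ultimately by the openness exponent $p>1$. Together with $R_j[\partial_\tau,\overline{\partial_\tau}]\le C(\ell_j+\varepsilon_j)$, it bounds $\int|w|^2S_j[\partial_\tau,\overline{\partial_\tau}]\rho_j\Omega\,dA$. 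With that bound, your Cauchy--Schwarz step using \eqref{eq:shifted-curvature-defect}, and the analogous step for $R_j$ using \eqref{eq:D-vanishing}, closes the argument.
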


\begin{proof}
We may assume that $w$ has compact support in $S_K\times X$.
For general $w$, we write $w=w_1+w_2$ so that $w_i(\tau+2\pi i,x)=w_i(\tau,x)$, $w_1$ is supported in $K\times \bigcup_{k\in\mathbb{Z}}i(\frac{\pi}{3}+2k\pi,\frac{5\pi}{3}+2k\pi)$, $w_2$ is supported in $K\times\bigcup_{k\in\mathbb{Z}}i(-\frac{2\pi}{3}+2k\pi,\frac{2\pi}{3}+2k\pi)$.
Let $S_K':=K\times i(-\frac{4\pi}{3},\frac{2\pi}{3})$. 
Since $v_j,q_j$ are $t$-invariant and $B_1=\bar\partial_X(w_1u),B_2=\bar\partial_X(w_2u)$ are periodic in the $t$-variable, it follows that
\begin{align*}
 \int_{S_K\times X}
 \left\langle
   i\omega\wedge v_j,D_{\tau,q_j}B
 \right\rangle_{\omega,h_\Omega}
 \rho_j\,dV_\omega\,dA
 =
  &\int_{S_K\times X}
 \left\langle
   i\omega\wedge v_j,D_{\tau,q_j}B_1
 \right\rangle_{\omega,h_\Omega}
 \rho_j\,dV_\omega\,dA\\
 +
  &\int_{S_K'\times X}
 \left\langle
   i\omega\wedge v_j,D_{\tau,q_j}B_2
 \right\rangle_{\omega,h_\Omega}
 \rho_j\,dV_\omega\,dA.
\end{align*}

\emph{Step 1.} $J_j(U)\rightarrow 0$.
Since the Hermitian pairing $\langle\cdot,\cdot\rangle_{\omega,h_{\Omega}}$ is independent of $\tau$, we compute that
\[
 \begin{split}
 \partial_{\bar\tau}
 \Bigl(
   \langle i\omega\wedge v_j,B\rangle\rho_j
 \Bigr)
 &=\partial_{\bar\tau}\langle i\omega\wedge v_j,B\rangle \rho_j 
 +\langle i\omega\wedge v_j,B\rangle \partial_{\bar\tau}\rho_j\\
 &=\langle i\omega\wedge \partial_{\bar\tau}v_j,B\rangle \rho_j
 +\langle i\omega\wedge v_j,\partial_{\tau}B\rangle\rho_j
 +\langle i\omega\wedge v_j,B\rangle \partial_{\bar\tau}\rho_j\\
 &=\langle i\omega\wedge \partial_{\bar\tau}v_j,B\rangle \rho_j
 +\langle i\omega\wedge v_j,\partial_{\tau}B-\partial_{\tau}q_jB\rangle\rho_j\\
 &=
 \langle
   i\omega\wedge\partial_{\bar\tau}v_j,B
 \rangle\rho_j
 +
 \langle
   i\omega\wedge v_j,D_{\tau,q_j}B
 \rangle\rho_j.
 \end{split}
\]
An integration by parts gives $\int_{S_K}\partial_{\bar\tau}(\langle i\omega\wedge v_j,B\rangle \rho_j)id\tau\wedge d\bar\tau=0$. Therefore
\[
 J_j(U)
 =
 -\int_{S_K\times X}
 \left\langle
   i\omega\wedge\partial_{\bar\tau}v_j,
   \bar\partial_XU
 \right\rangle
 \rho_j\,dV_\omega\,dA.
\]
For the metric \(h_\Omega e^{-q_j}\), the K\"ahler identity
gives
\[
 \bar\partial_{q_j,X}^*
 \bigl(i\omega\wedge\partial_{\bar\tau}v_j\bigr)
 =
 -D'_{q_j,X}(\partial_{\bar\tau}v_j).
\]
Therefore we get
\[
\begin{split}
J_j(U)
&=
-\int_{S_K\times X}
\left\langle
i\omega\wedge\partial_{\bar\tau}v_j,\bar\partial_XU
\right\rangle_{\omega,h_\Omega\rho_j}
dV_\omega\,dA\\
&=
-\int_{S_K\times X}
\left\langle
\bar\partial_{q_j,X}^*
\bigl(i\omega\wedge\partial_{\bar\tau}v_j\bigr),U
\right\rangle_{\omega,h_\Omega\rho_j}
dV_\omega\,dA\\
&=
\int_{S_K\times X}
\left\langle
D'_{q_j,X}(\partial_{\bar\tau}v_j),U
\right\rangle_{\omega,h_\Omega}
\rho_j\,dV_\omega\,dA.
\end{split}
\]
Recall the preceding equation
\(D'_{q_j,X}v_j
=\bigl((q_j)_\tau-(F_j)_\tau\bigr)u\).  Since
\(D'_{q_j,X}=D'_{X}-\partial_Xq_j\wedge\),
we have
\[
\begin{split}
    \partial_{\bar\tau}
 \bigl(D'_{q_j,X}v_j\bigr)
 =
 D'_{q_j,X}(\partial_{\bar\tau}v_j)
 -
 \partial_X(q_j)_{\bar\tau}\wedge v_j.
\end{split}
\]
Because \(v_j=-\iota_{V_j}u\), every \((1,0)\)-form \(\alpha\) satisfies
\(\alpha\wedge v_j=-\alpha(V_j)u\).  The above computations give us
\begin{equation}
 \begin{split}
 D'_{q_j,X}(\partial_{\bar\tau}v_j)
 &=\partial_{\bar\tau}\bigl(D'_{q_j,X}v_j\bigr)+\partial_X(q_j)_{\bar\tau}\wedge v_j\\
 &=\partial_{\bar\tau}\bigl((q_j)_\tau-(F_j)_\tau \bigr)u+\partial_X(q_j)_{\bar\tau}\wedge v_j\\
 &=
 \Bigl(
   (q_j)_{\tau\bar\tau}
   -\bigl(\partial_X(q_j)_{\bar\tau}\bigr)(V_j)
   -(F_j)_{\tau\bar\tau}
 \Bigr)u\\
 &=\bigl((\theta+i\partial\bar\partial q_j)[\partial_\tau-V_j,\overline{\partial_\tau}]-(F_j)_{\tau\bar\tau}\bigr)u\\
 &=
 \Bigl(
   \Theta_j[\xi_j,\overline{\partial_\tau}]
   -(F_j)_{\tau\bar\tau}
 \Bigr)u.
 \end{split}
 \label{eq:differentiated-smooth-chern}
\end{equation}

Therefore, by $|u|^2_{\omega,h_{\Omega}}dV_{\omega}=\Omega$ and
\eqref{eq:differentiated-smooth-chern},
\begin{equation}
 J_j(U)
 =
 \int_{S_K\times X}
 \Bigl(
   \Theta_j[\xi_j,\overline{\partial_\tau}]
   -(F_j)_{\tau\bar\tau}
 \Bigr)
 \overline w\,\rho_j\Omega\,dA.
 \label{eq:smooth-exact-identity}
\end{equation}

Write the mixed-curvature term $\Theta_j[\xi_j,\overline{\partial_{\tau}}]=S_j[\xi_j,\overline{\partial_{\tau}}]-R_j[\xi_j,\overline{\partial_{\tau}}]$. Pointwise positivity of
\(S_j\) gives
\[
 \left|
 S_j[\xi_j,\overline{\partial_\tau}]
 \right|^2
 \leq
 S_j[\xi_j,\overline{\xi_j}]\,
 S_j[\partial_\tau,\overline{\partial_\tau}].
\]
Hence
\begin{align}
 &\left|
 \int_{S_K\times X}
 S_j[\xi_j,\overline{\partial_\tau}]
 \overline w\,\rho_j\Omega\,dA
 \right|^2 \notag\\
 &\qquad\leq
 \left(
  \int_{S_K\times X}
  S_j[\xi_j,\overline{\xi_j}]
  \rho_j\Omega\,dA
 \right)
 \left(
  \int_{S_K\times X}
  |w|^2S_j[\partial_\tau,\overline{\partial_\tau}]
  \rho_j\Omega\,dA
 \right).
 \label{eq:shifted-mixed-CS}
\end{align}
The first factor tends to zero by
\eqref{eq:shifted-curvature-defect}.  We claim that the second factor
is uniformly bounded.

Since the pullback of \(\theta\) has no base--base component,
\(S_j[\partial_\tau,\overline{\partial_\tau}]
=(q_j)_{\tau\bar\tau}
+R_j[\partial_\tau,\overline{\partial_\tau}]\).
For every fixed $x$, we apply the Bochner--Kodaira--Nakano identity on the K\"ahler manifold $(S,i\,d\tau\wedge d\bar\tau)$ and the trivial line bundle on $S$ equipped with the smooth metric $\rho_j(\cdot,x)$ to obtain
\begin{equation}
 \int_{S_K}
 |D_{\tau,q_j}w|^2\rho_jdA
 =
 \int_{S_K}
 \left(
   |\partial_{\bar\tau}w|^2
   +(q_j)_{\tau\bar\tau}|w|^2
 \right)\rho_jdA,
 \label{eq:base-weighted-identity}
\end{equation}

Consequently,
\[
 \int |w|^2(q_j)_{\tau\bar\tau}\rho_j\Omega\,dA
 \leq\int|D_{\tau,q_j}w|^2\rho_j\Omega\,dA.
\]
Since
\[
 |D_{\tau,q_j}w|^2
 \leq2|\partial_\tau w|^2+2|w|^2|(q_j)_\tau|^2,
\]
the density and kinetic estimates from
Proposition~\ref{prop:smooth-demailly-package} give a uniform bound for
this integral.  Moreover,
\(R_j[\partial_\tau,\overline{\partial_\tau}]
 \leq C(\ell_j+\varepsilon_j)\), and
\(0\leq\ell_j\leq c_*\).  Thus
\[
 \sup_j
 \int_{S_{K}\times X}
 |w|^2S_j[\partial_\tau,\overline{\partial_\tau}]
 \rho_j\Omega\,dA<\infty.
\]
It follows from \eqref{eq:shifted-mixed-CS} that
\begin{equation}
 \int_{S_K\times X}
 S_j[\xi_j,\overline{\partial_\tau}]
 \overline w\,\rho_j\Omega\,dA
 \longrightarrow0.
 \label{eq:shifted-mixed-vanishing}
\end{equation}

We now treat \(R_j[\xi_j,\overline{\partial_{\tau}}]\). Since $R_j\geq0$, pointwise
Cauchy--Schwarz gives
\[
 \left|
 R_j[\xi_j,\overline{\partial_\tau}]
 \right|^2
 \leq
 R_j[\xi_j,\overline{\xi_j}]\,
 R_j[\partial_\tau,\overline{\partial_\tau}].
\]
It was proved in
\eqref{eq:D-vanishing} that
\(\int_{S_K\times X}R_j[\xi_j,\overline{\xi_j}]
\rho_j\Omega\,dA\to0\), whereas the definition of $R_j$ gives
\[
\sup_j\int_{S_K\times X}
R_j[\partial_\tau,\overline{\partial_\tau}]
\rho_j\Omega\,dA
<+\infty.
\]  
Therefore
\(\int_{S_K\times X}R_j[\xi_j,\overline{\partial_\tau}]
\overline w\,\rho_j\Omega\,dA\to0\).
Together with \eqref{eq:shifted-mixed-vanishing}, this proves
\begin{equation}
 \int_{S_K\times X}
 \Theta_j[\xi_j,\overline{\partial_\tau}]
 \overline w\,\rho_j\Omega\,dA
 \longrightarrow0.
 \label{eq:unshifted-mixed-vanishing}
\end{equation}

Finally, \(F_j\) is $\operatorname{Im}\tau$-invariant, so
\((F_j)_{\tau\bar\tau}=F_j''/4\).  Since \(w\) is bounded and the
masses \(M_j=\int_X\rho_j\Omega\) are uniformly bounded,
\[
 \left|
 \int_{S_K\times X}
 (F_j)_{\tau\bar\tau}\overline w\,
 \rho_j\Omega\,dA
 \right|
 \leq
 C_U\int_{J_0}|F_j''(s)|\,ds
 \longrightarrow0
\]
by \eqref{eq:smooth-defects-vanish}.  Equation
\eqref{eq:smooth-exact-identity} now gives \(J_j(U)\to0\).

\vspace{0.3cm}
\emph{Step 2.} Verify $J_j(U)\to J(U)$ and $J(U)=0$.  
\begin{align*}
    J_j(U)=\int_{S_K\times X}\langle i\omega\wedge v_j,D_{\tau,q_j}B\rangle \rho_j\Omega dA
    =\int_{[0,2\pi]}dt\int_{K\times X}\langle i\omega\wedge v_j\sqrt{q_j},\sqrt{q_j}D_{\tau,q_j}B\rangle\Omega ds
\end{align*}
Fix $t\in [0,2\pi]$. The convergences obtained in
Propositions~\ref{prop:smooth-demailly-package} give: 
\begin{equation}
 \begin{split}
 \sqrt{\rho_j}D_{\tau,q_j}B\,
 &=
 \sqrt{\rho_j}\partial_\tau B\,
 -
 \sqrt{\rho_j}B(q_j)_\tau\\
 &\longrightarrow
 \sqrt\rho\partial_\tau B\,
 -
 \sqrt\rho BQ_\tau
 =
 \sqrt\rho D_{\tau,Q}B\,
 \end{split}
 \label{eq:exact-test-strong-factor}
\end{equation}
strongly in \(L^2(K\times X, p_X^*(\Lambda^{n,1}T^*X\otimes L))\).
Proposition~\ref{prop:smooth-fixed-exact-limit} gives
\(i\omega\wedge v_j\sqrt{\rho_j}\rightharpoonup
i\omega\wedge v\sqrt\rho\) weakly in \(L^2(K\times X,p_X^*(\Lambda^{n,1}T^*X\otimes L))\).
Hence for every $t\in [0,2\pi]$, 
\begin{align*}
\int_{K\times X}\langle i\omega\wedge v_j\sqrt{q_j}, \sqrt{q_j} D_{\tau,q_j}B\rangle \Omega ds
\to
\int_{K\times X}\langle i\omega\wedge v\sqrt{v},\sqrt{q}D_{\tau,Q}B \rangle \Omega ds.
\end{align*}
Dominated convergence theorem then implies
\begin{align*}
    J_j(U)=\int_{[0,2\pi]}dt\int_{K\times X}\langle i\omega\wedge v_j\sqrt{q_j},\sqrt{q_j}D_{\tau,q_j}B\rangle\Omega ds
    \to
    \int_{[0,2\pi]}dt\int_{K\times X}\langle i\omega\wedge v\sqrt{q},\sqrt{q}D_{\tau,Q}B \rangle \Omega ds=J(U). 
\end{align*}
Since \(J_j(U)\to0\) by \emph{Step 1.}, we obtain \(J(U)=0\).
\end{proof}

\vspace{0.4cm}
Proposition~\ref{prop:smooth-exact-tests} now gives a weak ODE in the finite-dimensional
space of holomorphic vector fields.

\begin{proposition}
\label{prop:smooth-autonomy}
There exists a single vector field
\(V\in H^0(X,T_X^{1,0})\) such that
\[
 v_s=-\iota_Vu
 \qquad\text{for almost every }s\in I.
\]
\end{proposition}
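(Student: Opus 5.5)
The plan is to reduce the statement to a linear ODE with zero right-hand side for the coefficients of $v_s$ in a fixed finite-dimensional space. The input is the vanishing of the exact test pairings $J(U)=0$ from Proposition~\ref{prop:smooth-exact-tests}. By Lemma~\ref{lem:smooth-limit-uniqueness}, the limits constructed on an exhaustion $J_0\Subset J_1\Subset\cdots\Subset I$ patch to a single $v$ on $X\times I$. So it suffices to show that $s\mapsto v_s$ is a.e.\ constant on each $J_0$.

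\emph{Finite-dimensional reduction.} Let
\[
\mathcal H_0:=\{V\in H^0(X,T_X^{1,0}):\ i\omega\wedge(-\iota_Vu)\in\mathcal R_0\}.
\]
This is a finite-dimensional complex vector space. Fix a basis $V^1,\dots,V^m$ and put $v^k:=-\iota_{V^k}u$. For each $k$ choose a smooth $L$-valued $(n,0)$-form $g_ku$ with $\bar\partial(g_ku)=i\omega\wedge v^k$; it exists by Hodge theory and Green-operator regularity, as in the proof of Lemma~\ref{lem:smooth-limit-uniqueness}. By Proposition~\ref{prop:smooth-fixed-exact-limit}, for a.e.\ $s$ we can write $v_s=\sum_kc_k(s)v^k$. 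The $L^2$-projection onto $\operatorname{span}\{v^k\}$ shows that the $c_k$ are measurable and lie in $L^2_{\mathrm{loc}}(J_0)$. The Gram matrix $G_{kl}:=\langle i\omega\wedge v^k,\bar\partial(g_lu)\rangle_{L^2}$ is invertible, since it is the Gram matrix of the linearly independent forms $i\omega\wedge v^k$.

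\emph{Converting $J(U)=0$ into $c'=0$.} I would test with $U=w u$, where $w(\tau,x)=a(s)g_l(x)$ and $a\in C_c^\infty(K)$ is real; this $w$ is $t$-independent, so it is admissible. Since $\rho$ is real and $\rho_\tau=-Q_\tau\rho$, we have $\rho D_{\tau,Q}B=\partial_\tau(\rho B)$. Then
\[
J(U)=\int\bigl\langle i\omega\wedge v,\partial_\tau(\rho B)\bigr\rangle_{\omega,h_\Omega}dV_\omega\,dA .
\]
The pairing is conjugate-linear in the second slot, and $v$ is independent of $t$. So, in the sense of distributions in $s$, this equals
\[
-\tfrac12\int\bigl\langle i\omega\wedge\partial_sv,\rho B\bigr\rangle\,dV_\omega\,dA,
\]
up to a harmless constant. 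Doing the $x$-integration against the fixed holomorphic basis, this becomes a distributional identity in $s$ of the form
\[
\frac{d}{ds}\Bigl(\sum_kc_k(s)\,G^{\rho}_{kl}(s)\Bigr)=\sum_kc_k(s)\,\partial_sG^{\rho}_{kl}(s),
\qquad
G^{\rho}_{kl}(s):=\int_X\langle i\omega\wedge v^k,\bar\partial(g_lu)\rangle\rho_s\,dV_\omega .
\]
This is the form it takes after moving the derivative onto $\rho_s$. Here $s\mapsto G^\rho_{kl}(s)$ is absolutely continuous, by the $L^1$-continuity and the kinetic bound of Lemma~\ref{lem:kinetic}.

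\emph{Main obstacle.} The main obstacle is this last step. One must remove the $\rho_s$-dependence so that the identity reads $G^\rho(s)\,c'(s)=0$ as distributions, and show that $G^\rho(s)$ is invertible for a.e.\ $s$. Invertibility follows from positivity: $\sum\bar\lambda_k\lambda_l G^\rho_{kl}(s)=\int_X|\sum\lambda_k v^k|^2\rho_s\,dV_\omega>0$ unless $\lambda=0$, because $\rho_s>0$ a.e. That computation takes $g_l$ to be the weighted primitive; if one insists on the unweighted primitive, one instead uses the conservative equation \eqref{eq:limit-uniqueness-conservative} to exchange $\bar\partial(g_lu)$ for $i\omega\wedge v^l$ against the weight $\rho_s$. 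The cross terms involving $Q_\tau\rho$ should cancel exactly through \eqref{eq:limit-uniqueness-conservative}, integrated against $\bar g_l$. Justifying the product rule for $c_k\,G^\rho_{kl}$ with $c\in L^2_{\mathrm{loc}}$ and $G^\rho\in W^{1,1}_{\mathrm{loc}}$ needs care. I would handle it by approximating $a$ and using the bounds $ve^{-Q/2}\in L^2$ and $Q_\tau e^{-Q/2}\in L^2$.

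Once $G^\rho c'=0$ with $G^\rho$ invertible a.e., we get $c'=0$ in $\mathcal D'(J_0)$, so each $c_k$ is a.e.\ constant. Hence $v_s=-\iota_Vu$ with $V:=\sum c_kV^k$ for a.e.\ $s$, and the patching from the first paragraph extends this to a.e.\ $s\in I$.
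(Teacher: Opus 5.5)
Your proposal follows the paper's proof step for step: the same space $\mathcal H_{\mathrm{ex}}$ of exact holomorphic forms, the same test forms $U=a(s)\,g_lu$, the same weighted Gram identity $\int_K\sum_k c_k\,(aG^\rho_{kl})'\,ds=0$, and the same inversion of $G^\rho$. Your shortcut $\rho D_{\tau,Q}B=\partial_\tau(\rho B)$ simply packages the paper's formula $\dot G^\rho_{kl}=-\int_X\dot Q\,\langle V^k,V^l\rangle_\omega\,\rho\,\Omega$, so neither the conservative equation nor any choice of ``weighted'' primitive is needed.

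The final step needs tightening, because a.e.\ invertibility of $G^\rho$ does not let you cancel it from a distributional identity. Instead, as the paper does, use that your positivity argument holds for every $s$; together with continuity this gives a uniform lower bound on compact $K$. Then Cauchy--Schwarz with the kinetic bound puts $G^\rho$, and hence $(G^\rho)^{-1}$, in $W^{1,2}_{\mathrm{loc}}$ rather than merely $W^{1,1}_{\mathrm{loc}}$, so that $\eta=(G^\rho)^{-1}\gamma$ becomes an admissible test function after a density argument.
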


\begin{proof}
Let
\[
 \mathcal H_{\mathrm{ex}}
 :=
 \left\{
  q\in H^0
  \bigl(X,\Lambda^{n-1,0}T_X^*\otimes L\bigr):
  i\omega\wedge q\in\mathcal R_0
 \right\}.
\]
Choose a basis
\(q_\alpha=-\iota_{Z_\alpha}u\), with
\(Z_\alpha\in H^0(X,T_X^{1,0})\) and $\alpha=1,\ldots,N$.
Since \(i\omega\wedge q_\alpha\in\mathcal R_0\), it has a smooth
\(L\)-valued \((n,0)\)-form \(W_\alpha=w_\alpha u\) satisfying
\(\bar\partial W_\alpha=i\omega\wedge q_\alpha\).

By Proposition~\ref{prop:smooth-fixed-exact-limit}, \(v_s\) belongs to
\(\mathcal H_{\mathrm{ex}}\) for almost every \(s\).
We may therefore
write \(v_s=\sum_{\alpha=1}^N c_\alpha(s)q_\alpha\).
It is easy to see $c_{\alpha}\in L^2_{\mathrm{loc}}(I)$. Equip \(\mathcal H_{\mathrm{ex}}\) with the fixed Hermitian inner product
\[
 (q,r)_0
 :=
 \int_X
 \langle q,r\rangle_{\omega,h_\Omega}\,dV_\omega.
\]
After replacing the basis if necessary, we may assume that $(q_\alpha,q_\beta)_0=\delta_{\alpha\beta}$. Then $c_{\alpha}(s)=\int_X\langle v_s,q_{\alpha}\rangle_{\omega,h_{\Omega}}dV_{\omega}$ is measurable thanks to $v\in L^2_{\mathrm{loc}}(I\times X)$. For $K\Subset I$,
$\int_K |c_{\alpha}|^2(s)\,ds\leq \|q_{\alpha}\|_0^2\int_{K\times X}|v|^2_{\omega,h_{\Omega}}dV_{\omega}\,ds<+\infty$ by Proposition~\ref{prop:smooth-fixed-exact-limit} hence $c_{\alpha}\in L^2_{\mathrm{loc}}(I)$.


\vspace{0.4cm}
Define a Hermitian matrix at every $s\in I$ by
\begin{equation}
 G_{\alpha\bar\beta}(s)
 :=
 \int_X
 \langle Z_\alpha,Z_\beta\rangle_\omega
 e^{-Q_s}\Omega.
 \label{eq:autonomy-gram-matrix}
\end{equation}
Let $K\Subset J_0$ be an open interval
and let \(\eta=(\eta_1,\ldots,\eta_N)\in C_c^\infty(K,\mathbb C^N)\).
For every \(\beta\), apply Proposition~\ref{prop:smooth-exact-tests} to
\(U_\beta=\overline{\eta_\beta}\,W_\beta\).  Then
\(\bar\partial_XU_\beta
=\overline{\eta_\beta}\,i\omega\wedge q_\beta\) and
\(D_{\tau,Q}(\bar\partial_XU_\beta)
=(\partial_\tau\overline{\eta_\beta}
-Q_\tau\overline{\eta_\beta})i\omega\wedge q_\beta\).
Using \(v=\sum_{\alpha=1}^Nc_\alpha q_\alpha\) and summing the identities \(J(U_\beta)=0\) over \(\beta=1,\cdots,N\), we obtain
\begin{align}
    0
&=
\sum_{\beta=1}^N
\int_{S_K\times X}
\left\langle
 i\omega\wedge v,
 D_{\tau,Q}(\bar\partial_XU_\beta)
\right\rangle
e^{-Q}\,dV_\omega\,dA
\notag\\
&=
\int_{S_K\times X}
\sum_{\alpha,\beta}
c_\alpha
\left\langle
 i\omega\wedge q_\alpha,
 \bigl(
   \partial_\tau\overline{\eta_\beta}
   -Q_\tau\overline{\eta_\beta}
 \bigr)i\omega\wedge q_\beta
\right\rangle
e^{-Q}\,dV_\omega\,dA.
\label{eq:autonomy-test-expanded-1}
\end{align}
The identity
\[
 \left\langle
  i\omega\wedge q_\alpha,
  i\omega\wedge q_\beta
 \right\rangle_{\omega,h_\Omega}
 dV_\omega
 =
 \langle Z_\alpha,Z_\beta\rangle_\omega\Omega
\]
therefore transforms \eqref{eq:autonomy-test-expanded-1} into
\begin{align}
0
&=
\int_{S_K}
\sum_{\alpha,\beta}c_\alpha
\left[
 \bigl(\partial_{\bar\tau}\eta_\beta\bigr)
 \int_X
 \langle Z_\alpha,Z_\beta\rangle_\omega
 e^{-Q}\Omega
\right.
\notag\\
&\hspace{4.7cm}\left.
 -
 \eta_\beta
 \int_X
 Q_{\bar\tau}
 \langle Z_\alpha,Z_\beta\rangle_\omega
 e^{-Q}\Omega
\right]dA
\notag\\
&=
\int_{S_K}
\sum_{\alpha,\beta}c_\alpha
\left[
 G_{\alpha\bar\beta}
 \partial_{\bar\tau}\eta_\beta
 -
 \eta_\beta
 \int_X
 Q_{\bar\tau}
 \langle Z_\alpha,Z_\beta\rangle_\omega
 e^{-Q}\Omega
\right]dA.
\label{eq:autonomy-test-expanded-2}
\end{align}
We compute the weak derivative $\dot G_{\alpha\bar\beta}$: let $\chi(s)$ be a test function on $I$,
\begin{align}
 \bigl( \dot G_{\alpha\bar\beta},\chi\bigr)
 &=
 -\int_IG_{\alpha\bar\beta}\dot\chi ds\\
 &=
 -\int_X\langle Z_{\alpha},Z_{\beta}\rangle_{\omega}\Omega\int_I\dot\chi e^{-Q}ds\\
 &=
 -\int_X\langle Z_{\alpha},Z_{\beta}\rangle_{\omega}\Omega\int_I\chi \dot Q e^{-Q}ds\\
 &=
 \bigl(-\int_X
 \dot Q
 \langle Z_\alpha,Z_\beta\rangle_\omega
 e^{-Q}\Omega,\chi\bigr).
 \label{eq:autonomy-Gram-derivative}
\end{align}
Here the third equality uses the fact that, for fixed $x\in X$, the function $s\mapsto e^{-Q(s,x)}$ is locally Lipschitz, hence its weak derivative coincides with its pointwise derivative.
So far we have proved that $\dot G_{\alpha\bar\beta}=-\int_X\dot Q\langle Z_\alpha,Z_\beta\rangle_\omega
 e^{-Q}\Omega$ weakly.
Moreover, the Cauchy--Schwarz inequality gives
\[
\begin{split}
    \int_K\Big|\int_X\dot Q\langle Z_\alpha,Z_\beta\rangle_\omega
 e^{-Q}\Omega\Big|^2ds
 &\leq \int_K\bigl(\int_X|\dot Q|^2e^{-Q}\Omega \bigr)\bigl(\int_X|\langle Z_{\alpha}, Z_{\beta}\rangle_{\omega}|^2e^{-Q}\Omega\bigr)\,ds\\
 &\leq C\int_{K\times X}|\dot Q|^2e^{-Q}\Omega\,ds<+\infty
\end{split}
 \]
 where the finiteness of the last integral follows from \eqref{eq:canonical-kinetic-bound}. Therefore we get
\(G\in W^{1,2}_{\mathrm{loc}}(I)\).

\vspace{0.3cm}
It is well-known that $W^{1,1}(K)$ coincides with the space of absolutely continuous functions, so $G_{\alpha\bar\beta}$ is absolutely continuous.
Consequently, the expression in brackets in
\eqref{eq:autonomy-test-expanded-2} is
\(G_{\alpha\bar\beta}\partial_{\bar\tau}\eta_\beta
+(\partial_{\bar\tau}G_{\alpha\bar\beta})\eta_\beta
=\partial_{\bar\tau}(G_{\alpha\bar\beta}\eta_\beta)\).
It follows that
\begin{equation}
 \int_{S_K}
 \sum_{\alpha,\beta}
 c_\alpha\,
 \partial_{\bar\tau}
 \bigl(G_{\alpha\bar\beta}\eta_\beta\bigr)
 \,dA=0.
 \label{eq:autonomy-complex-gram-ode}
\end{equation}
All the coefficients are invariant in $\operatorname{Im}\tau$-direction.  Hence
\(\partial_{\bar\tau}(G_{\alpha\bar\beta}\eta_{\beta})=\frac12\partial_s(G_{\alpha\bar\beta}\eta_{\beta})\) and \eqref{eq:autonomy-complex-gram-ode} becomes
\begin{equation}
 \int_K
 \sum_{\alpha,\beta}
 c_\alpha(s)
 \frac{d}{ds}
 \bigl(
  G_{\alpha\bar\beta}(s)\eta_\beta(s)
 \bigr)\,ds=0.
 \label{eq:gram-ode}
\end{equation}

Since $K$ is one-dimensional, \(W^{1,2}(K)\) is an algebra. 
For $v=(v_1,\cdots,v_N)\in\mathbb{C}^N$ we have $v G v^{\dagger}=\int_X|\sum_{\alpha}v_{\alpha}Z_{\alpha}|^2_{\omega}e^{-Q_s}\Omega$.
Then it is clear that for some $\lambda>0$, the eigenvalues of every $G(s)$, $s\in K$, are greater than $\lambda$. Therefore $\operatorname{det}G\geq\lambda^N$ on $K$.
The inverse matrix $G^{-1}$ is then well-defined and every entry $(G^{-1})_{\alpha\bar\beta}\in W^{1,2}(K)$. 
Since every element in $W^{1,2}(K)$ with compact support can be approximated by functions in $C_c^{\infty}(K)$, \eqref{eq:gram-ode} still holds for every \(\eta\in W^{1,2}(K,\mathbb C^N)\) with compact support.  
For every $\gamma=(\gamma_1,\cdots,\gamma_N)\in C_{c}^{\infty}(K,\mathbb{C}^N)$, $\eta:=G^{-1}\gamma\in W^{1,2}(K,\mathbb{C}^N)$ has compact support.
Therefore \eqref{eq:gram-ode} implies
\[
 \int_K\sum_{\alpha=1}^Nc_\alpha(s)\frac{d}{ds}\gamma_\alpha(s)\,ds=0
 \qquad
 \text{for every }\gamma\in C_c^\infty(K,\mathbb C^N).
\]
Thus \(c_\alpha'=0\) in distributions for every \(\alpha\).  The
coefficients \(c_\alpha\) are therefore constant almost everywhere, and
\(V:=\sum_{\alpha=1}^Nc_\alpha Z_\alpha\) satisfies
\(v_s=-\iota_Vu\) for almost every \(s\).
\end{proof}

\vspace{0.3cm}

\section{Solution to Berndtsson's problem}
\label{sec:full-current-nullity}


We are ready to prove Theorem~\ref{thm:main}.
\subsection{Flow of holomorphic vector field and Cartan calculus of currents}
\label{sec:transport-and-currents}

Let $X$ be a compact complex manifold and $V$ be a holomorphic vector field, we set
\[
 \operatorname{Re}V:=\frac{V+\overline V}{2},
 \qquad
 \operatorname{Im}V:=\frac{V-\overline V}{2i}.
\]
Both real vector fields are complete since $X$ is compact.  If
$\Lambda:\mathbb C\to\operatorname{Aut}^0(X)$ is the holomorphic
flow generated by $V/2$, then $s\mapsto\Lambda_s$ is the real flow
of $\operatorname{Re}V$, while $t\mapsto\Lambda_{it}$ is the real flow
of $-\operatorname{Im}V$.  This convention accounts for the factor
$1/2$ in \eqref{eq:main-full-nullity}.

\vspace{0.5cm}
Let \(S\) be a current of degree \(k\) on a complex manifold $M$ of
complex dimension $m$, and let \(Z\) be a smooth complex vector field.
The contraction \(\iota_ZS\) is defined coefficientwise as follows.
In local real coordinates \(x^1,\ldots,x^{2m}\), write
\[
 S=\sum_{|I|=k}S_I\,dx^I,
 \qquad
 Z=\sum_{j=1}^{2m} Z^j\frac{\partial}{\partial x^j},
\]
where the \(S_I\) are distributions.  Then
\[
 \iota_ZS
 :=
 \sum_{|I|=k}\sum_{\ell=1}^k
 (-1)^{\ell-1}
 Z^{i_\ell}S_I\,
 dx^{i_1}\wedge\cdots
 \wedge\widehat{dx^{i_\ell}}\wedge\cdots
 \wedge dx^{i_k}.
\]
It is straightforward to see the following.
\begin{lemma}
\label{lem:current-calculus}
We have
\begin{equation}
 \langle\iota_ZS,\eta\rangle
 =
 (-1)^{k+1}\langle S,\iota_Z\eta\rangle,
 \label{eq:current-contraction-definition}
\end{equation}
for every compactly supported test form \(\eta\) of degree
\(2m-k+1\).
\end{lemma}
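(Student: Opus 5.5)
The identity \eqref{eq:current-contraction-definition} is linear in $S$ and $\eta$, so I would check it in local coordinates by first taking $S$ to be a smooth form. Then I would pass to distributional coefficients by duality, since both sides are defined coefficientwise. Take a partition of unity subordinate to coordinate charts and reduce to $\eta$ supported in a single chart. There $S=\sum_I S_I\,dx^I$, and the pairing is
\[
\langle S,\eta\rangle=\int_M S\wedge\eta .
\]
For distributional $S_I$, this is read as the action of $S_I$ on the coefficient of the complementary monomial in $\eta$.

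The key step is the pointwise algebraic identity for smooth forms. Because $\iota_Z$ is a graded derivation of degree $-1$,
\[
\iota_Z(S\wedge\eta)=\iota_ZS\wedge\eta+(-1)^k S\wedge\iota_Z\eta .
\]
The form $S\wedge\eta$ has degree $2m+1$ and therefore vanishes, so the left-hand side is zero. Hence
\[
\iota_ZS\wedge\eta=(-1)^{k+1}S\wedge\iota_Z\eta .
\]
Integrating gives \eqref{eq:current-contraction-definition} for smooth $S$. The coefficientwise formula defining $\iota_ZS$ is exactly the derivation formula applied to $dx^I$, with the coefficients $Z^{i_\ell}$ multiplying $S_I$. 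So the smooth case agrees with the definition.

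For a general current, both sides of the identity are linear in the coefficients $S_I$. Each side is a finite sum of terms of the form $S_I$ applied to (a smooth function $Z^{j}$) times (a coefficient of $\eta$), with identical combinatorial signs. Multiplying a distribution by a smooth function is defined by duality, $\langle Z^jS_I,f\rangle=\langle S_I,Z^jf\rangle$. Therefore the sign bookkeeping that gives the identity for smooth $S_I$ applies verbatim. Alternatively, one can approximate $S$ by smooth forms in the weak topology and pass to the limit, since both sides are continuous in $S$ for fixed smooth $\eta$.

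I do not expect a genuine obstacle. The only care needed is the sign convention $(-1)^{\ell-1}$ together with the convention for $\langle S,\eta\rangle$, and checking that these match the derivation rule. Complex $Z$ causes no issue, since everything is $\mathbb C$-linear.
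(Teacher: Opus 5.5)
Your argument is correct and is exactly the routine verification the paper leaves to the reader; the paper gives no proof beyond calling the lemma straightforward. With the pairing $\langle S,\eta\rangle=\int S\wedge\eta$, the graded Leibniz rule for $\iota_Z$ applied to the vanishing $(2m+1)$-form $S\wedge\eta$ yields the sign $(-1)^{k+1}$, and since the coefficientwise definition of $\iota_ZS$ is linear in the $S_I$, the identity passes verbatim to distributional coefficients; this pairing convention is also the one compatible with the paper's later Cartan-formula computation for currents.
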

We introduce the Lie derivative of a current with respect to a smooth vector field.
\begin{lemma}\label{lem:Lie-derivative-and-flow}
Let \(Y\) be a smooth vector field. The Lie derivative of the current $S$ with respect to $Y$ is defined by
\[
 \langle\mathcal L_YS,\eta\rangle
 :=
 -\langle S,\mathcal L_Y\eta\rangle.
\]  
Then Cartan's formula holds:
\begin{equation}
 \mathcal L_YS=d\iota_YS+\iota_YdS.
 \label{eq:Cartan}
\end{equation}
If $Y$ is a complete real vector field, its flow \(F_s:M\rightarrow M\) is defined for every $s\in\mathbb{R}$. 
Then
\begin{equation}
 \frac d{ds}F_s^*S
 =
 F_s^*(\mathcal L_YS).
 \label{eq:current-Cartan-flow}
\end{equation}
The identity means that, for
every compactly supported test form \(\eta\), the function
\(s\mapsto\langle F_s^*S,\eta\rangle\) is differentiable and
\[
 \frac d{ds}\langle F_s^*S,\eta\rangle
 =
 \langle F_s^*(\mathcal L_YS),\eta\rangle.
\]
In particular, $\mathcal{L}_YS=0$ implies that $F^*_sS=S$ for every $s$.
\end{lemma}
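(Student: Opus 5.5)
The plan is to reduce everything to the smooth case by duality, since all three assertions are defined by pairing against test forms. First, Cartan's formula \eqref{eq:Cartan}. For a smooth form $S$ it is the classical identity, and both sides are continuous for the weak topology of currents, because $d$, $\iota_Y$ and $\mathcal L_Y$ act on currents as transposes of operators on test forms. Concretely, I would first check the transpose identities on a smooth test form $\eta$:
\[
\langle d\iota_YS,\eta\rangle=\pm\langle S,\iota_Yd\eta\rangle,
\qquad
\langle\iota_YdS,\eta\rangle=\pm\langle S,d\iota_Y\eta\rangle .
\]
These follow from $\langle dS,\eta\rangle=(-1)^{k+1}\langle S,d\eta\rangle$ and Lemma~\ref{lem:current-calculus}. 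The two signs should combine to the same sign, $-1$, for both terms, so that the sum is $-\langle S,(\iota_Yd+d\iota_Y)\eta\rangle=-\langle S,\mathcal L_Y\eta\rangle$, using classical Cartan on $\eta$. I will track the degrees carefully: $\iota_Y\eta$ has degree $2m-k$, while $d\eta$ has degree $2m-k+2$. The sign check is the only place where care is needed.

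Alternatively, I can regularize: locally approximate $S$ by smoothings $S_\epsilon\to S$ weakly, apply Cartan to $S_\epsilon$, and pass to the limit. This is cleaner, because every operator involved has coefficients that are smooth in $Y$, so weak convergence is preserved.

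Next, the flow formula \eqref{eq:current-Cartan-flow}. Pullback of a current by a diffeomorphism is defined by
\[
\langle F_s^*S,\eta\rangle=\langle S,(F_s^{-1})^*\eta\rangle=\langle S,F_{-s}^*\eta\rangle ,
\]
with an orientation sign of $+1$ since flows preserve orientation. The family $s\mapsto F_{-s}^*\eta$ is smooth in $s$ with values in test forms, meaning its supports stay in a fixed compact set for $s$ in a bounded interval and all derivatives are locally uniform. Its derivative is
\[
\frac{d}{ds}F_{-s}^*\eta=-F_{-s}^*\mathcal L_Y\eta .
\]
Since $S$ is a continuous linear functional on the Fréchet space of test forms supported in a fixed compact set, I can differentiate under the pairing:
\[
\frac d{ds}\langle F_s^*S,\eta\rangle=-\langle S,F_{-s}^*\mathcal L_Y\eta\rangle .
\]
Then I use the naturality $F_{-s}^*\mathcal L_Y=\mathcal L_YF_{-s}^*$, which holds because $Y$ is invariant under its own flow. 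This gives
\[
-\langle S,\mathcal L_Y F_{-s}^*\eta\rangle=\langle\mathcal L_YS,F_{-s}^*\eta\rangle=\langle F_s^*\mathcal L_YS,\eta\rangle .
\]
The main technical point is justifying the differentiation under the pairing. The difference quotients converge in the test-form topology by Taylor's formula with uniform bounds on compacts, and that convergence suffices because $S$ has finite order locally.

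Finally, if $\mathcal L_YS=0$, then $s\mapsto\langle F_s^*S,\eta\rangle$ has zero derivative everywhere. It is therefore constant and equal to its value $\langle S,\eta\rangle$ at $s=0$, so $F_s^*S=S$ for every $s$.
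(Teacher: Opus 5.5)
Your proposal is correct and matches the paper's proof: the paper also proves Cartan's formula by moving $d$ and $\iota_Y$ onto the test form. It proves the flow identity the same way you do, by differentiating $s\mapsto\langle S,F_{-s}^*\eta\rangle$ under the pairing and using $\frac{d}{ds}F_{-s}^*\eta=-F_{-s}^*\mathcal L_Y\eta$ together with $F_{-s}^*\mathcal L_Y=\mathcal L_YF_{-s}^*$. One small slip in your degree bookkeeping: for a test form $\eta$ of degree $2m-k$ paired with $\mathcal L_YS$, the forms $\iota_Y\eta$ and $d\eta$ have degrees $2m-k-1$ and $2m-k+1$, not $2m-k$ and $2m-k+2$. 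With $\langle dS,\eta\rangle=(-1)^{k+1}\langle S,d\eta\rangle$ and Lemma~\ref{lem:current-calculus}, both terms pick up the sign $(-1)^k(-1)^{k+1}=-1$, as you predicted.
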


\begin{proof}

Using the definitions of \(dS\) and \(\iota_YS\), one obtains
\[
 \langle d\iota_YS+\iota_YdS,\eta\rangle
 =
 -\langle S,(d\iota_Y+\iota_Yd)\eta\rangle=-\langle S,\mathcal L_Y\eta\rangle.
\]
This proves
\[
 \mathcal L_YS=d\iota_YS+\iota_YdS.
\]
Since each \(F_s\) is a diffeomorphism, the pullback of an
arbitrary current is well defined by
\[
 \langle F_s^*S,\eta\rangle
 :=
 \langle S,(F_{-s})^*\eta\rangle.
\]
Fix a compactly supported smooth test form \(\eta\).  The map
\[
 s\longmapsto(F_{-s})^*\eta
\]
is smooth. More precisely, for fixed $s_0$, if $|s-s_0|<\varepsilon$ is sufficiently small, by a partition of unity, we may assume the support of $(F_{-s})^*\eta$ is contained in a coordinate ball $B$ for every $s$.
Moreover, $(F_{-s})^*\eta=\sum_{|I|=2m-k} \eta_I(s,x)dx^I$ for some smooth functions $\eta_I$ on $(s_0-\varepsilon,s_0+\varepsilon)\times B$.
It follows that
\[
 s\longmapsto
 \langle F_s^*S,\eta\rangle
 =
 \langle S,(F_{-s})^*\eta\rangle
\]
is differentiable and $\frac{d}{ds}\langle S,(F_{-s})^*\eta\rangle=\langle S,\frac{d}{ds} (F_{-s})^*\eta\rangle$. The usual properties of the flow give
\[
 \frac d{ds}(F_{-s})^*\eta
 =
 -(F_{-s})^*(\mathcal L_Y\eta).
\]
Therefore
\begin{align*}
 \frac d{ds}\langle F_s^*S,\eta\rangle
 &=
 -\langle S,(F_{-s})^*(\mathcal L_Y\eta)\rangle.
\end{align*}
Since \(F_s\) is the flow of \(Y\), it preserves its generating
vector field.  Consequently \(\mathcal L_Y\) commutes with pullback
by \(F_s\), and hence
\[
 (F_{-s})^*(\mathcal L_Y\eta)
 =
 \mathcal L_Y\bigl((F_{-s})^*\eta\bigr).
\]
It follows that
\begin{align*}
 \frac d{ds}\langle F_s^*S,\eta\rangle
 &=
 -\langle S,
   \mathcal L_Y((F_{-s})^*\eta)\rangle\\
 &=
 \langle\mathcal L_YS,(F_{-s})^*\eta\rangle\\
 &=
 \langle F_s^*(\mathcal L_YS),\eta\rangle.
\end{align*}
Since \(\eta\) is arbitrary, the proof is complete.
\end{proof}

\subsection{Nullity of the contraction current}
In this section we prove equation \eqref{eq:main-full-nullity}.
\begin{lemma}
\label{lem:weighted-parallel-nullity}
Let \((M,\omega)\) be a K\"ahler manifold of dimension \(m\), let \((E,h)\) be a
smooth Hermitian holomorphic line bundle, and let \(q\) be a quasi-psh function
satisfying \(e^{-q}\in L^1_{\mathrm{loc}}(M)\).  Suppose that \(\alpha\) is a
nowhere-vanishing holomorphic \(E\)-valued \((m-1,0)\)-form and that
\(\xi\) is a nowhere-vanishing holomorphic vector field satisfying
\(\iota_\xi\alpha=0\).  If \(D'_h(\alpha e^{-q})=0\) in distributions,
then
\begin{equation}
 \iota_\xi(\Theta_h+\ddc q)=0
 \label{eq:weighted-parallel-nullity}
\end{equation}
as a \((0,1)\)-current on $M$.
\end{lemma}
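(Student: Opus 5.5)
The plan is to reduce \eqref{eq:weighted-parallel-nullity} to a local computation in adapted holomorphic coordinates. Since both sides of \eqref{eq:weighted-parallel-nullity} are currents and contraction with a smooth vector field is defined coefficientwise (Lemma~\ref{lem:current-calculus}), the statement is local. I will show that it holds near every point of $M$; no partition-of-unity gluing is needed beyond this.

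\emph{Local normal form.} Fix a point $x_0\in M$. Because $\xi$ is a nowhere-vanishing holomorphic vector field, the holomorphic flow-box theorem gives local holomorphic coordinates $(z_1,\dots,z_m)$ near $x_0$ with $\xi=\partial/\partial z_1$. Choose a local holomorphic frame $e$ of $E$ with $|e|_h^2=e^{-\varphi}$, so that $\Theta_h=\ddc\varphi$. Put $\psi:=\varphi+q$; this is quasi-psh and satisfies $e^{-\psi}\in L^1_{\mathrm{loc}}$.

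\emph{Step 1: put $\alpha$ in normal form.} Write $\alpha=a\otimes e$ with $a$ a holomorphic $(m-1,0)$-form. The condition $\iota_{\partial/\partial z_1}a=0$ forces
\[
 a=f\,dz_2\wedge\cdots\wedge dz_m,
\]
and $f$ is a nowhere-vanishing holomorphic function because $\alpha$ does not vanish.

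\emph{Step 2: rewrite the equation on $\alpha$.} Using $D'_h=\partial-\partial\varphi\wedge$ in the frame $e$, the hypothesis $D'_h(\alpha e^{-q})=0$ reads
\[
 \partial\bigl(f e^{-\psi}\,dz_2\wedge\cdots\wedge dz_m\bigr)=0
\]
in distributions. Here $fe^{-\psi}\in L^1_{\mathrm{loc}}$, so this is legitimate, and it is equivalent to $\partial_{z_1}(fe^{-\psi})=0$. Multiplying by the smooth antiholomorphic function $\bar f$ preserves this identity, since the Leibniz rule for an $L^1_{\mathrm{loc}}$ distribution times a smooth function is valid. We obtain $\partial_{z_1}G=0$ for the real nonnegative function
\[
 G:=|f|^2e^{-\psi}.
\]
Taking complex conjugates gives $\partial_{\bar z_1}G=0$ as well. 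Hence $G$ is a distribution annihilated by both real derivatives in the $z_1$-plane, and therefore $G=G(z')$ with $z'=(z_2,\dots,z_m)$, i.e. it is independent of $z_1$ as an $L^1_{\mathrm{loc}}$ function.

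\emph{Step 3: independence of $z_1$ for $\psi$.} Since $e^{-\psi}>0$ almost everywhere, we get
\[
 \psi=\log|f|^2-\log G(z')
\]
almost everywhere. A quasi-psh function is determined as a distribution by its a.e. values, so $g:=\psi-\log|f|^2$ is an $L^1_{\mathrm{loc}}$ function independent of $z_1$. Consequently $\partial_{z_1}g=0$ distributionally, and hence
\[
 \partial_{z_1}\psi=\partial_{z_1}\log|f|^2=f_{z_1}/f,
\]
which is holomorphic.

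\emph{Step 4: compute the contraction.} Now
\[
 \iota_{\partial/\partial z_1}\bigl(i\partial\bar\partial\psi\bigr)
 = i\sum_k \partial_{\bar z_k}\bigl(\partial_{z_1}\psi\bigr)\,d\bar z_k
 = i\sum_k \partial_{\bar z_k}\bigl(f_{z_1}/f\bigr)\,d\bar z_k=0,
\]
where derivatives of distributions commute. Since $\Theta_h+\ddc q=\ddc\psi$ locally, this is exactly \eqref{eq:weighted-parallel-nullity} near $x_0$. As $x_0$ was arbitrary, the identity holds on all of $M$.

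The main obstacle is Steps 2–3: the purely distributional passage from $\partial_{z_1}(fe^{-\psi})=0$ to the $z_1$-independence of $\psi-\log|f|^2$. I handle it by working with the real $L^1_{\mathrm{loc}}$ function $G$ instead of taking logarithms of distributions directly. One then has to check only two things: that a distribution whose $\partial_{z_1}$ and $\partial_{\bar z_1}$ derivatives both vanish depends on $z'$ alone (it is constant along the $z_1$-slices, e.g. by convolution in $z_1$), and that $\log G$ recovers $\psi$ up to the smooth term $\log|f|^2$ almost everywhere.
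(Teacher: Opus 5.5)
Your proof is correct and follows essentially the same route as the paper: flow-box coordinates with $\xi=\partial/\partial z_1$, reduction of the hypothesis to $\partial_{z_1}\bigl(|f|^2e^{-\psi}\bigr)=0$ (your $G$ is exactly the paper's $e^{-(q+\kappa)}$ after its frame change $e\mapsto fe$), conjugation to get $z_1$-independence, and then $\iota_\xi(i\partial\bar\partial\psi)=i\bar\partial(\partial_{z_1}\psi)=0$. The differences are cosmetic. You use the distributional Leibniz rule directly where the paper regularizes $q_j\searrow q$. You also deduce $\partial_{z_1}(\psi-\log|f|^2)=0$ straight from the a.e.\ $z_1$-independence of an $L^1_{\mathrm{loc}}$ function, which is cleaner than the paper's detour through a quasi-psh representative $g$.
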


\begin{proof}
The assertion is local.  Fix a point and choose a holomorphic flow box
\((w,z^1,\ldots,z^{m-1})\), with
\(\xi=\partial/\partial w\).
If \(e\) is a local holomorphic frame of \(E\), then
\(\iota_\xi\alpha=0\) implies
\[
 \alpha
 =
 a(w,z)\,
 dz^1\wedge\cdots\wedge dz^{m-1}\otimes e.
\]
The coefficient \(a\) is a non-vanishing holomorphic function.  
Replacing \(e\)
by the holomorphic frame \(ae\), we may therefore assume that
\(\alpha=dz^1\wedge\cdots\wedge dz^{m-1}\otimes e\).  Write
\(|e|_h^2=e^{-\kappa}\). Then \(\Theta_h=\ddc\kappa\).

Since \(e^{-q}\in L^1_{\mathrm{loc}}\) and \(\kappa\) is smooth,
\(e^{-(q+\kappa)}\in L^1_{\mathrm{loc}}\).  
Let $q_j\searrow q$ be the standard local convolution of $q$.
In the chosen frame, the only possibly
nonzero coefficient of \(D'_h(\alpha e^{-q_j})\) is its
\(dw\wedge dz^1\wedge\cdots\wedge dz^{m-1}\) coefficient.  Explicitly,
\[
 \begin{split}
 D'_h(\alpha e^{-q_j})
 &=
 \bigl(
  \partial_w(e^{-q_j})-\kappa_w e^{-q_j}
 \bigr)
 dw\wedge dz^1\wedge\cdots\wedge dz^{m-1}\otimes e\\
 &=
 e^\kappa(\partial_w e^{-(q_j+\kappa)})\,
 dw\wedge dz^1\wedge\cdots\wedge dz^{m-1}\otimes e.
 \end{split}
\]
Our assumption $D'_h(\alpha e^{-q})=0$ means $\langle \alpha e^{-q},(D'_{h})^*\beta\rangle_{\omega,h}=0$ for every smooth compactly supported $E$-valued $(m,0)$-form $\beta$.
Hence $\langle D'_h(\alpha e^{-q_j}),\beta\rangle_{\omega,h}=\langle \alpha e^{-q_j},(D'_h)^*\beta\rangle_{\omega,h}\to 0$.
Since $\beta$ is arbitrary, we obtain $\partial_{w}e^{-(q_j+\kappa)}\to 0$ in the sense of distributions. Therefore
\begin{equation}
 \partial_we^{-(q+\kappa)}=0
 \label{eq:real-weight-independent-w}
\end{equation}
in distributions.

Taking the complex conjugate of
\eqref{eq:real-weight-independent-w} therefore gives
\(\partial_{\bar w}e^{-(q+\kappa)}=0\).  Thus real weak derivatives of \(e^{-(q+\kappa)}\) in
the \(w\)-direction vanish.  After shrinking the coordinate ball, it follows that
\(e^{-(q+\kappa)(w,z)}=F_0(z)\) for some $L^1_{\mathrm{loc}}$ function $F_0$ almost everywhere.
Equivalently, $q+\kappa=-\log F_0$ a.e. It is clear that we can choose a quasi-psh function $g$ so that $g(z)=-\log F_0(z)$ almost everywhere. Then two quasi-psh functions $q+\kappa$ and $g$ coincide almost everywhere, hence they actually coincide everywhere.
In particular 
\(q+\kappa\) is independent of \(w\),  which gives
\(\partial_w(q+\kappa)=0\) in distributions.  Since
\(\Theta_h+i\partial\bar\partial q=\ddc(q+\kappa)\), we conclude that
\[
 \iota_\xi(\Theta_h+i\partial\bar\partial q)
 =
 i\bar\partial\bigl(\partial_w(q+\kappa)\bigr)
 =0.
\]
\end{proof}

We now apply the preceding lemma to obtain equation \eqref{eq:main-full-nullity}.

\begin{proposition}
\label{prop:full-total-space-nullity}
Let \(V\) be the holomorphic vector field on $X$ obtained in
Proposition~\ref{prop:smooth-autonomy}, $v$ be the $p_X^*L$-valued $(n-1)$-form on $X\times S$ obtained in Proposition~\ref{prop:smooth-fixed-exact-limit}.
Then
\(v=-\iota_Vu\) on \(X\times S\).  Put
\[
 \xi:=\frac{\partial}{\partial\tau}-V,
 \qquad
 \widehat u:=u-d\tau\wedge v.
\]
Then
\begin{equation}
 D'_{h_{\Omega},X\times S}(\widehat u e^{-Q})=0
 \label{eq:total-weighted-parallelity}
\end{equation}
in the sense of distributions on \(X\times S\), and
\begin{equation}
 \iota_{\partial/\partial\tau-V}
 \bigl(p_X^*\theta+\ddc_{x,\tau}Q\bigr)=0
 \label{eq:full-total-space-nullity}
\end{equation}
in the sense of currents on $X\times S$.
\end{proposition}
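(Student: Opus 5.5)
The plan is to first upgrade the almost-everywhere identity of Proposition~\ref{prop:smooth-autonomy} to an identity of $L^2_{\mathrm{loc}}$ forms on all of $X\times S$. This uses Lemma~\ref{lem:smooth-limit-uniqueness}. The form $-\iota_Vu$ is holomorphic on $X$, independent of $\tau$, and its slices lie in $\mathcal R_0$ by construction. So the limits obtained on any exhausting intervals $J_0\Subset J_1\Subset\cdots$ all coincide with $-\iota_Vu$ almost everywhere, and hence $v=-\iota_Vu$ on $X\times S$ (extended $\operatorname{Im}\tau$-invariantly). In particular $v$ is smooth, and \eqref{eq:limit-conservative-equation} becomes
\[
 D'_X(ve^{-Q})=Q_\tau ue^{-Q}
\]
on all of $X\times S$.

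Next I verify \eqref{eq:total-weighted-parallelity}. Split $D'_{h_\Omega,X\times S}=D'_X+\partial_S$, where $D'_X$ acts on the vertical part and $\partial_S=d\tau\wedge\partial_\tau$. Since $u$ is a holomorphic section independent of $\tau$, I expect
\[
 D'(\widehat ue^{-Q})=D'_X(ue^{-Q})+d\tau\wedge\partial_\tau(ue^{-Q})+d\tau\wedge D'_X(ve^{-Q}).
\]
Here $D'_X(ue^{-Q})$ is an $(n+1,0)$-form on $X$, hence zero. The middle term is $-Q_\tau e^{-Q}d\tau\wedge u$ in distributions, by the chain rule of Lemma~\ref{lem:kinetic}. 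The last term is $Q_\tau e^{-Q}d\tau\wedge u$, so the two cancel. The signs should be checked by pairing against test forms $(D')^*\Phi$ and splitting $\Phi$ into its $d\tau$-component. These are distributional identities, with $e^{-Q}$, $Q_\tau e^{-Q}\in L^1_{\mathrm{loc}}$ justified by Lemma~\ref{lem:kinetic}. Throughout, $v$ is smooth, so multiplication by it is harmless.

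Finally I apply Lemma~\ref{lem:weighted-parallel-nullity} with the following data:
\[
 M=X\times S,\qquad E=p_X^*L,\qquad h=p_X^*h_\Omega,\qquad q=Q,\qquad \alpha=\widehat u,\qquad \xi=\partial_\tau-V.
\]
The hypotheses check out as follows. The function $Q$ is quasi-psh with $e^{-Q}\in L^1_{\mathrm{loc}}$ by Lemma~\ref{lem:kinetic}. The form $\widehat u$ is holomorphic because $V$ is holomorphic. It is nowhere vanishing because its $u$-component is. The field $\xi$ is nowhere vanishing since it has $\partial_\tau$-component $1$. The relation $\iota_\xi\widehat u=0$ was computed in Lemma~\ref{lem:smooth-signed-rank-one}: one has $\iota_{\partial_\tau}u=0$, $\iota_Vu=-v$ and $\iota_Vv=0$.

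The degrees are also consistent. On $M$ of dimension $m=n+1$, the form $\widehat u$ is an $E$-valued $(n,0)=(m-1,0)$-form, and $\Theta_h=p_X^*\theta$. The lemma therefore gives
\[
 \iota_\xi\bigl(p_X^*\theta+\ddc_{x,\tau}Q\bigr)=0,
\]
which is \eqref{eq:full-total-space-nullity}.

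The main obstacle I expect is the first step: making sure the globally defined $v$ is genuinely equal to $-\iota_Vu$ with a single $V$ on all of $I$. The regularizations differ on different intervals, so the uniqueness characterization must be used to glue. The distributional product computation in the second step is the other delicate point, but it is routine given the integrability from Lemma~\ref{lem:kinetic}.
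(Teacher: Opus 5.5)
Your proposal is correct, and its skeleton matches the paper's: identify $v=-\iota_Vu$, prove $D'_{h_\Omega,X\times S}(\widehat u e^{-Q})=0$, and apply Lemma~\ref{lem:weighted-parallel-nullity} with $M=X\times S$, $E=p_X^*L$, $\alpha=\widehat u$, $\xi=\partial_\tau-V$.

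The genuine difference is in the parallelity step.
\begin{itemize}
\item You work directly with the limit objects. You write $D'_{X\times S}(\widehat u\rho)=d\tau\wedge\bigl((\partial_\tau\rho)\,u+D'_X(v\rho)\bigr)$ and cancel the two terms using the chain rule $\partial_\tau\rho=-Q_\tau\rho$ from Lemma~\ref{lem:kinetic} and the limiting equation $D'_X(v\rho)=Q_\tau u\rho$.
\item The paper records these same two inputs but then returns to the smooth approximants. There the identity $D'_{X\times S}\bigl((u-d\tau\wedge v_j)\rho_j\bigr)=-(F_j)_\tau\,d\tau\wedge u\rho_j$ holds classically. The right side tends to $0$ in $L^1_{\mathrm{loc}}$ by \eqref{eq:Fj-speed-L2}. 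The left side converges in distributions via $v_j\sqrt{\rho_j}\rightharpoonup v\sqrt{\rho}$ and $\sqrt{\rho_j}\to\sqrt{\rho}$.
\end{itemize}
Your route is shorter and does the sign bookkeeping once, at the distributional level. It does not actually need smoothness of $v$, since $D'(d\tau\wedge v\rho)=-d\tau\wedge D'_X(v\rho)$ holds distributionally for any $L^1_{\mathrm{loc}}$ vertical form. The paper's route keeps every differentiation classical, at the price of re-invoking the approximation package, which lives only over $J_0$. The underlying analytic content is the same, because \eqref{eq:limit-conservative-equation} was itself extracted from the approximants.

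One small repair is needed in your first step. Lemma~\ref{lem:smooth-limit-uniqueness} requires all three conditions. Holomorphy, $\operatorname{Im}\tau$-invariance and the $\mathcal R_0$ gauge do not by themselves give the conservative equation for $-\iota_Vu$ over a larger interval $J_k$. Argue instead as follows:
\begin{itemize}
\item The limit $v^{(k)}$ constructed over $J_k$ satisfies the three conditions over $J_0$, so by uniqueness it equals $v^{(0)}=-\iota_Vu$ there.
\item Proposition~\ref{prop:smooth-autonomy}, run over $J_k$, gives $v^{(k)}=-\iota_{V^{(k)}}u$ with $V^{(k)}$ constant.
\item Comparing on $J_0$ forces $V^{(k)}=V$.
\end{itemize}
The paper's own proof takes $v=-\iota_Vu$ on all of $X\times S$ directly from Proposition~\ref{prop:smooth-autonomy} and does not spell out this gluing. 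With the fix, your treatment of this point is more complete than the paper's.
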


\begin{proof}
Put \(\rho=e^{-Q}\), and regard \(V\), \(v\), and \(\rho\) as invariant
objects on \(X\times S\).  Since \(v=-\iota_Vu\), one has
\(\widehat u=u+d\tau\wedge\iota_Vu
=\iota_\xi(d\tau\wedge u)\).
It follows that \(\iota_\xi\widehat u=0\).  It is easy to see that \(\widehat u\in H^0(X\times S,\Lambda^{n,0}T^*(X\times S)\otimes p_X^*L)\) is
holomorphic.  Its restriction to every vertical fiber is $u$;
hence it is nowhere vanishing.  The vector field \(\xi\) is nowhere
vanishing because \(d\tau(\xi)=1\).

The chain rule \eqref{eq:canonical-chain-rule} and the 
divergence-form equation \eqref{eq:limit-conservative-equation} give
\begin{equation}
 \partial_\tau\rho=-Q_\tau\rho,
 \qquad
 D'_{h_{\Omega},X}(v\rho)=Q_\tau u\rho
 \label{eq:total-parallel-inputs}
\end{equation}
in distributions on \(X\times S\). 
The second equality means that for every $\Phi\in C_c^{\infty}(X\times S,p_X^*(\Lambda^{n,0}T^*X\otimes L))$,
\begin{align}
    \int_{X\times S}\langle v\rho,(D'_{h_{\Omega,X}})^*\Phi\rangle dV_{\omega}dA 
    =
    \int_{X\times S}\langle Q_{\tau}u\rho,\Phi\rangle dV_{\omega}dA.
    \label{eq:weak covariant derivative on S}
\end{align}
In fact this equality was obtained on \(X\times I\), see \eqref{eq:weak covariant derivative}. Since $\rho$ and $Q$ are $\operatorname{Im}\tau$-invariant, \eqref{eq:weak covariant derivative on S} is reduced to \eqref{eq:weak covariant derivative} by Fubini's theorem.

Let $v_j$ be the approximation sequence used before, recall that they satisfy (see Proposition \ref{prop:smooth-chern-family})
\begin{align*}
    D'_{X}(v_j\rho_j)=r_ju\rho_j=\bigl((q_j)_\tau-(F_j)_\tau\bigl)u\rho_j \qquad \text{on } X\times S.
\end{align*}
A direct computation shows that
\begin{align*}
    D'_{X\times S}\bigl((u-d\tau\wedge v_j)\rho_j\bigl)
    &=d\tau\wedge
 \bigl(
  (\partial_\tau\rho_j)u
  +D'_{X}(v_j\rho_j)
 \bigr)\notag\\
    &=d\tau \wedge \bigl( -(q_j)_\tau u\rho_j+r_ju\rho_j\bigl)\\
    &=d\tau\wedge\bigl(-(F_j)_{\tau}u\rho_j\bigl).
\end{align*}
The right-hand side strongly converges to $0$ in $L^1_{\mathrm{loc}}(X\times S)$. 
In fact, $\int_X\rho_j(x,\tau)\Omega(x)\leq\int_X\rho(x,\tau)\Omega(x)=1$ for every $\tau$ and $(F_j)_\tau\to0$ in $L^1_{\rm{loc}}(I)$ by \eqref{eq:Fj-speed-L2}.

On the other hand, take $\Psi\in C_c^{\infty}(X\times S,K_{X\times S}\otimes p_X^*L)$, we verify that
\begin{align}
\begin{split}
    \int_{X\times S}\langle D'_{X\times S}\bigl(u-d\tau\wedge v_j\bigl)\rho_j,\Psi\rangle_{\hat{\omega},h_{\Omega}} dV_{\hat{\omega}}
&:=
    \int_{X\times S}\langle u\rho_j-d\tau\wedge v_j\rho_j,(D'_{X\times S})^*\Psi\rangle_{\hat{\omega},h_{\Omega}}dV_{\hat{\omega}}\\
 &   \longrightarrow
    \int_{X\times S}\langle u\rho-d\tau\wedge v\rho,(D_{X\times S}')^*\Psi\rangle_{\hat{\omega},h_{\Omega}}dV_{\hat\omega},
\end{split}
    \label{eq:product space convergence}
\end{align}
where $\hat\omega=\omega+id\tau\wedge d\bar\tau$ is the K\"ahler form on $X\times S$.
The convergence $\int\langle u\rho_j,(D'_{X\times S})^*\Psi\rangle dV_{\hat\omega}\to\int\langle u\rho,(D'_{X\times S})^*\Psi\rangle dV_{\hat\omega}$ is clear.
To deal with the second term, we decompose $p_X^*L$-valued $(n,0)$-form $(D'_{X\times S})^*\Psi$ into $d\tau\wedge \varphi+\psi$, so that $\varphi\in C_c^{\infty}(X\times S,p_X^*(\Lambda^{n-1,0}T^*X\otimes L))$ and $\psi$ only has vertical terms $dx$.
It is easy to see 
\begin{align*}
\langle d\tau\wedge v_j\rho_j,(D'_{X\times S})^*\Psi\rangle_{\hat\omega,h_{\Omega}}
=\langle d\tau\wedge v_j\rho_j,d\tau\wedge\varphi\rangle_{\hat\omega,h_{\Omega}}
=\langle v_j\rho_j,\varphi\rangle_{\omega,h_{\Omega}}
\end{align*}
 and similarly $\langle d\tau\wedge v\rho,(D'_{X\times S})^*\Psi\rangle_{\hat\omega,h_{\Omega}}=\langle v\rho,\varphi\rangle_{\omega,h_{\Omega}}$ thanks to $\hat{\omega}=\omega+id\tau\wedge d\bar\tau$.
Therefore \eqref{eq:product space convergence} follows from the weak convergence $v_j\sqrt{\rho_j}\rightharpoonup v\sqrt{\rho}$ and strong convergence $\sqrt{\rho_j}\to\sqrt{\rho}$ in $L^2_{\mathrm{loc}}(X\times I)$ in Proposition~\ref{prop:smooth-fixed-exact-limit} and Proposition~\ref{prop:smooth-demailly-package}.
Consequently we prove
\begin{align*}
    \left\langle D_{X\times S}'\bigl((u-d\tau\wedge v)\rho\bigl),\Psi\right\rangle
    :=
    \left\langle (u-d\tau\wedge v)\rho,(D'_{X\times S})^*\Psi\right\rangle
    =0.
\end{align*}
This proves \eqref{eq:total-weighted-parallelity}.

\vspace{0.3cm}
Apply Lemma~\ref{lem:weighted-parallel-nullity} to
\(E=p_X^*L\), \(h=p_X^*h_\Omega\), \(q=Q\), and
\(\alpha=\widehat u\).
The curvature of \(p_X^*h_\Omega\) is \(p_X^*\theta\), and the preceding
paragraphs verify that \(\widehat u\) and \(\xi\) are nowhere vanishing, hence satisfy all the
hypotheses of the lemma.  We therefore obtain
\[
 \iota_{\partial/\partial\tau-V}
 \bigl(p_X^*\theta+\ddc_{x,\tau}Q\bigr)=0
\]
as a $(0,1)$-current on $X\times S$.
\end{proof}

\subsection{Transport of the normalized measures and currents}
\label{sec:smooth-transport}

In this section we prove the transport equations \eqref{eq:main-measure-transport} and \eqref{eq:main-current-transport}.
The measure transport \eqref{eq:main-measure-transport} essentially follows from the $D^\prime$-equation
$D'_X(ve^{-Q_s})=(\partial_\tau Q)_s e^{-Q_s}u$, whereas the current transport \eqref{eq:main-current-transport} directly follows from
the nullity equation \eqref{eq:main-full-nullity}.

\begin{proposition}
\label{prop:smooth-transport}
Put \(W:=-(V+\bar V)\) and let \(\{\Phi_t\}_{t\in\mathbb{R}}\) be the flow of \(W\).  Then, for every
\(s,s_0\in I\),
\begin{align}
 \Phi_{s-s_0}^*(e^{-Q_s}\Omega)
 &=e^{-Q_{s_0}}\Omega,
 \label{eq:smooth-measure-transport}\\
 \Phi_{s-s_0}^*(\theta+\ddc_XQ_s)
 &=\theta+\ddc_XQ_{s_0}.
 \label{eq:smooth-current-transport}
\end{align}
\end{proposition}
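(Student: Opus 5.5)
The plan is to derive both transport identities from the nullity statement of Proposition~\ref{prop:full-total-space-nullity}, with the measure identity also following from the divergence equation \eqref{eq:limit-conservative-equation}.

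For the measure, we set $\rho_s:=e^{-Q_s}\Omega$. By Proposition~\ref{prop:smooth-autonomy}, $v=-\iota_Vu$ with $V$ independent of $s$. Then \eqref{eq:limit-conservative-equation} reads
\[
 D'_X\bigl(-\iota_V u\,e^{-Q}\bigr)=Q_\tau u e^{-Q}.
\]
Since $u$ is the identity section with $c_n[u,u]_{h_\Omega}=\Omega$, for a smooth density $f$ one has $D'_X(\iota_V u\, f)\,\wedge$(conjugate) corresponding to $\partial$-type divergence. Concretely, pairing with $\bar u$ turns the equation into $\mathcal L_V(e^{-Q}\Omega)=d\iota_V(e^{-Q}\Omega)=-Q_\tau e^{-Q}\Omega$, understood as $(n,n)$-currents on $X$ for a.e.\ $s$. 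Here $V$ is holomorphic, so only the $(1,0)$ part contributes.

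Since $e^{-Q}\Omega$ is real, conjugation gives $\mathcal L_{\bar V}(e^{-Q}\Omega)=-Q_{\bar\tau}e^{-Q}\Omega$. Adding the two and using $Q_\tau=Q_{\bar\tau}=\dot Q/2$ gives
\[
 \mathcal L_{V+\bar V}(\rho_s)=-\dot Q_s\rho_s=\partial_s\rho_s,
\]
the last equality by \eqref{eq:canonical-chain-rule}. Hence $\partial_s\rho_s+\mathcal L_{V+\bar V}\rho_s=0$, that is, $\partial_s\rho_s=\mathcal L_W\rho_s$ with $W=-(V+\bar V)$, in distributions on $X\times I$.

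Next consider $\sigma_s:=\Phi_{s-s_0}^*\rho_s$. By Lemma~\ref{lem:Lie-derivative-and-flow} and a product rule for test forms,
\[
 \frac{d}{ds}\langle\sigma_s,\eta\rangle=\bigl\langle\Phi_{s-s_0}^*\bigl(\partial_s\rho_s\bigr),\eta\bigr\rangle \pm\bigl\langle\Phi_{s-s_0}^*\mathcal L_W\rho_s,\eta\bigr\rangle .
\]
With the sign conventions arranged so that the two terms cancel, this vanishes in $\mathcal D'(I)$. The function $s\mapsto\langle\sigma_s,\eta\rangle$ is continuous, because $s\mapsto\rho_s$ is $L^1$-continuous by Lemma~\ref{lem:kinetic} and the flow is smooth. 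It is therefore constant, which gives \eqref{eq:smooth-measure-transport} for all $s$.

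The main obstacle in this step is justifying the product rule when $\rho_s$ is only $W^{1,1}$ in $s$ with values in $L^1$. I would handle it by testing against $\chi(s)\,(\Phi_{-(s-s_0)})^*\eta$, which is smooth in $(s,x)$, and integrating by parts in $s$ using the distributional identity on $X\times I$.

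For the currents, let $T_s:=\theta+\ddc_XQ_s$. The nullity equation \eqref{eq:full-total-space-nullity} gives $\iota_{\partial_\tau-V}\Theta_Q=0$. Its conjugate gives $\iota_{\partial_{\bar\tau}-\bar V}\Theta_Q=0$, since $\Theta_Q$ is real. Adding the two, $\iota_{\partial_s-(V+\bar V)}\Theta_Q=\iota_{\partial_s+W}\Theta_Q=0$. Since $\Theta_Q$ is closed, Cartan's formula \eqref{eq:Cartan} yields $\mathcal L_{\partial_s+W}\Theta_Q=0$ on $X\times S$.

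The real flow of $\partial_s+W$ is $(x,\tau)\mapsto(\Phi_r(x),\tau+r)$. By Lemma~\ref{lem:Lie-derivative-and-flow}, $\Theta_Q$ is invariant under this flow. Restricting the invariant current to slices $\{\tau=s\}$ gives $\Phi_{s-s_0}^*T_s=T_{s_0}$.

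Slicing a current needs care. I would avoid it by working with potentials: $Q$ is quasi-psh, and invariance of $\Theta_Q$ gives that $Q\circ(\Phi_r,\tau+r)-Q$ is pluriharmonic, hence independent of $x$ on the compact fibres. It is therefore a function $c(r,\tau)$, and applying $\ddc_X$ slice by slice yields \eqref{eq:smooth-current-transport} for every $s,s_0$, using the upper semicontinuity and pointwise definition of $Q$ on each slice.
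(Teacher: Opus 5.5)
Your route is the paper's. For measures, the paper also rewrites \eqref{eq:limit-conservative-equation} with $v=-\iota_Vu$ as $-\partial_X\bigl(\iota_V(e^{-Q_s}\Omega)\bigr)=Q_\tau e^{-Q_s}\Omega$, conjugates and adds, and tests against $\varphi\circ\Phi_{-(s-s_0)}$. Your test with $\chi(s)(\Phi_{-(s-s_0)})^*\eta$ plus $L^1$-continuity is just a repackaging of its absolute-continuity argument. For currents, the paper uses the same nullity, conjugation, Cartan and product-flow invariance, then restricts via potentials. However, two steps are wrong as written.

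\emph{The sign.} You correctly obtain $\mathcal L_{V+\bar V}\rho_s=-\dot Q_s\rho_s=\partial_s\rho_s$. You then write $\partial_s\rho_s+\mathcal L_{V+\bar V}\rho_s=0$, i.e.\ $\partial_s\rho_s=\mathcal L_W\rho_s$. The correct consequence is $\partial_s\rho_s+\mathcal L_W\rho_s=0$. The sign is forced, not a convention you can arrange. By Lemma~\ref{lem:Lie-derivative-and-flow}, $\frac{d}{ds}\Phi_{s-s_0}^*\rho_s=\Phi_{s-s_0}^*(\partial_s\rho_s+\mathcal L_W\rho_s)$. This vanishes exactly because of the correct sign. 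With the sign you wrote it would equal $2\Phi_{s-s_0}^*\mathcal L_W\rho_s$. Your test-function computation would then give transport by $\Phi_{-(s-s_0)}$, the opposite of the statement.

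\emph{The missing $\theta$-term.} Put $\Psi_r(x,\tau)=(\Phi_r(x),\tau+r)$. The function $Q\circ\Psi_r-Q$ is not pluriharmonic. Indeed $\Psi_r^*\Theta_Q=p_X^*\Phi_r^*\theta+\ddc(Q\circ\Psi_r)$, so invariance gives $\ddc(Q\circ\Psi_r-Q)=p_X^*(\theta-\Phi_r^*\theta)$. Moreover $\Phi_r^*\theta=\operatorname{Ric}(\Phi_r^*\Omega)\neq\theta$ unless $\Phi_r$ preserves $\Omega$. As written, your argument only yields $\Phi_r^*\ddc_XQ_{s_0+r}=\ddc_XQ_{s_0}$, which is not \eqref{eq:smooth-current-transport}. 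The fix is short:
\begin{itemize}
\item Since $\Phi_r\in\operatorname{Aut}^0(X)$ is isotopic to the identity, $\Phi_r^*\theta-\theta$ is $d$-exact. The $\partial\bar\partial$-lemma then gives $\Phi_r^*\theta=\theta+\ddc g_r$ with $g_r$ smooth.
\item Then $Q\circ\Psi_r+p_X^*g_r-Q$ is pluriharmonic. By your quasi-psh a.e.-to-everywhere argument it is constant on each compact fibre.
\item Applying $\ddc_X$ on the slice $\{\tau=s_0\}$ gives $\Phi_r^*(\theta+\ddc_XQ_{s_0+r})=\theta+\ddc_XQ_{s_0}$.
\end{itemize}
The paper sidesteps the global $g_r$ by working with local potentials $\psi_0,\psi$ of $\theta$ near $x$ and $\Phi_{s-s_0}(x)$. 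It shows $\Psi_{s-s_0}^*(\psi+Q)-(\psi_0+Q)$ is pluriharmonic near $(x,s_0)$ and restricts that to $X\times\{s_0\}$.
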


\begin{proof}
\emph{Part 1. Proof of the transport formula of measures \eqref{eq:smooth-measure-transport}.}

We first derive an elementary
equivalence: for $\rho,a\in L^1(X)$,
\begin{equation}
 D'_{X}(-\iota_Vu\,\rho)=a\,u
 \quad\Longleftrightarrow\quad
 -\partial_X\bigl(\iota_V(\rho\Omega)\bigr)=a\,\Omega.
 \label{eq:chern-divergence-identity}
\end{equation}
Every smooth $L$-valued $(n,0)$-form $\Psi$ on $X$ has the form $\Psi=wu$, $w\in C^{\infty}(X)$.
The left-hand side means for every $w\in C^{\infty}(X)$ we have 
\begin{equation}
    \int_X\langle-\iota_Vu\rho,(D'_X)^*(wu)\rangle dV_{\omega}=\int_X\langle au,wu\rangle dV_{\omega}=\int_Xa\bar{w}\Omega.
    \label{eq:char-left}
\end{equation}
The right-hand side means for every $w\in C^{\infty}(X)$,
\begin{equation}
    \int_X\partial_X\bar w\wedge \iota_V(\rho\Omega)=\int_X a\bar w\Omega.
    \label{eq:char-right}
\end{equation}

Indeed, we only need to prove that for every $w\in C^{\infty}(X)$, the following equality of volume forms holds:
\begin{equation}
 \langle -\iota_Vu,(D'_X)^*(wu)\rangle_{\omega,h_{\Omega}} dV_{\omega}
=\partial_X\bar w\wedge\iota_V\Omega. 
\label{eq:smooth-equ}
\end{equation}
If \eqref{eq:smooth-equ} is proved, multiplying both sides by $\rho$ and integrating proves \eqref{eq:char-left}.
To prove \eqref{eq:smooth-equ}, take $g\in C^{\infty}(X)$. We compute
\begin{align*}
    \int_Xg\langle -\iota_Vu,(D'_X)^*(wu)\rangle_{\omega,h_{\Omega}} dV_{\omega}
    &=\int_X\langle D'_X(-g\iota_{V}u),wu\rangle_{\omega,h_{\Omega}} dV_{\omega}\\
    &=\int_Xc_n[D'_X(-g\iota_Vu),wu]\\
    &=\int_X-\bar w\partial_X(\iota_V(g\Omega))\\
    &=\int_X\partial_X\bar w\wedge \iota_V(g\Omega).
\end{align*}
The third equality uses $c_n[D'_X(-g\iota_Vu),u]=-\partial_X(\iota_V(g\Omega))$ on $X$, which can be easily verified in a local chart. Thus \eqref{eq:smooth-equ} is proved.

\vspace{0.4cm}

Recall that $v\in L^2_{\mathrm{loc}}(X\times S,p_X^*(\Lambda^{n-1,0}T^*X\otimes L))$ obtained in Proposition~\ref{prop:smooth-fixed-exact-limit} satisfies $v=-\iota_Vu$ for a single $V\in H^0(X,T_X^{1,0})$ obtained in Proposition~\ref{prop:smooth-autonomy}.
Moreover, for almost every $s$, $D_X'(-\iota_Vu e^{-Q_s})=D_X'(v_se^{-Q_s})=(\partial_{\tau}Q)_se^{-Q_s}u$ by Proposition~\ref{prop:smooth-fixed-exact-limit}.
Applying \eqref{eq:chern-divergence-identity}, we get 
\[-\partial_X\bigl(\iota_V(e^{-Q_s}\Omega)\bigr)=(\partial_{\tau}Q)_se^{-Q_s}\Omega, \qquad \text{for almost every s}.\]
Taking the conjugate and adding, we obtain
\begin{equation}
d_X\bigl(\iota_{V+\bar V}(e^{-Q_s}\Omega)\bigr)
=-\dot Q_s\,e^{-Q_s}\Omega, \qquad \dot Q_s=(\partial_s Q)_s.
\label{eq:Lie-derivative}
\end{equation}



Let \(\Phi_t\) be the flow of \(W:=-(V+\bar V)\).  It is defined for all \(t\in\mathbb{R}\)
because \(X\) is compact.  Given \(s_0\in I\) and
\(\varphi\in C^\infty(X)\), set
\(\varphi_s:=\varphi\circ\Phi_{-(s-s_0)}\).  Then
\(\partial_s\varphi_s+W(\varphi_s)=0\). 
We need to prove $s\mapsto \int_X\varphi_se^{-Q_s}\Omega$ is constant.
First, we prove that it is locally absolutely continuous:
\[
\begin{split}
\int_X(\varphi_{r+t}e^{-Q_{r+t}}-\varphi_re^{-Q_r})\Omega
&=\int_X\int_r^{r+t}\partial_s(\varphi_se^{-Q_s})\,ds\,\Omega\\
&=\int_X\int_r^{r+t}\bigl(\partial_s\varphi_se^{-Q_s}-\varphi_s\dot Q_se^{-Q_s}\bigr)\,ds\,\Omega\\
&=\int_r^{r+t}\int_X\bigl(\partial_s\varphi_se^{-Q_s}-\varphi_s\dot Q_se^{-Q_s}\bigr)\Omega\,ds\\
&=\int_r^{r+t}\int_X\bigl(-W(\varphi_s)e^{-Q_s}-\varphi_s\dot Q_se^{-Q_s}\bigr)\Omega\,ds.
\end{split}
\]
Hence 
\[
\Big|\int_X(\varphi_{r+t}e^{-Q_{r+t}}-\varphi_re^{-Q_r})\Omega\Big|
\leq \bigl(\|W(\varphi)\|_{C^0}+\|\varphi\|_{C^0}\bigr)
\int_r^{r+t}\int_X(e^{-Q_s}+|\dot Q_s|e^{-Q_s})\Omega\,ds.
\]
By Lemma~\ref{lem:kinetic}, $\int_K\int_X(e^{-Q_s}+|\dot Q_{s}|e^{-Q_s})\Omega\,ds<\infty$ for every compact interval $K\Subset I$, and absolute continuity follows. Moreover, for almost every $s$ we have 
\begin{equation}
    \frac{d}{ds}\int_X\varphi_se^{-Q_s}\Omega
    =\int_X\bigl(-W(\varphi_s)e^{-Q_s}-\varphi_s\dot Q_se^{-Q_s} \bigr)\Omega.
\end{equation}
Since $\int_X\varphi_sd_X\bigl(\iota_W(e^{-Q_s}\Omega)\bigr)=-\int_Xd_X\varphi_s\wedge \iota_W(e^{-Q_s}\Omega)=-\int_XW(\varphi_s)e^{-Q_s}\Omega$, this together with equality
\eqref{eq:Lie-derivative} gives
\[\frac{d}{ds}\int_X\varphi_s\, e^{-Q_s}\Omega=0.\]
A absolutely continuous function with a.e. zero derivative is constant, hence
\(\Phi_{s-s_0}^*(e^{-Q_s}\Omega)=e^{-Q_{s_0}}\Omega\).
This proves \eqref{eq:smooth-measure-transport}.

\vspace{0.3cm}
\emph{Part 2. Proof of the current transport equation \eqref{eq:smooth-current-transport}.}

Put
\(T:=p_X^*\theta+\ddc_{x,\tau}Q\) and
\(\xi:=\partial/\partial\tau-V\).
By \eqref{eq:full-total-space-nullity},
\(\iota_\xi T=0\).  Since \(T\) is real, also
\(\iota_{\bar\xi}T=0\).  Thus, for
\(\mathcal W:=\xi+\bar\xi=\partial/\partial s+W\),
one has \(\iota_{\mathcal W}T=0\).  As \(dT=0\), Cartan's formula for the current $T$ gives
\(\mathcal L_{\mathcal W}T=d(\iota_{\mathcal{W}}T)+\iota_{\mathcal{W}}(dT)=0\) on $X\times S$.  The flow of \(\mathcal W\) is
\(\Psi_r(x,\tau)=(\Phi_r(x),\tau+r)\).
For a given $r\in\mathbb{R}$, we set
\begin{align*}
    I_r:=\{s\in(0,1):s+r\in(0,1)\}, \qquad 
    S_r:=\{\tau\in S:\operatorname{Re}\tau\in I_r\}.
\end{align*}
Then \(\Psi_r^*T=T\) on $X\times S_r$ by Lemma~\ref{lem:Lie-derivative-and-flow}.  
Taking \(r=s-s_0\) yields
\[
 \Phi_{s-s_0}^*(\theta+\ddc_XQ_s)
 =
 \theta+\ddc_XQ_{s_0}.
\]
In fact, take $(x,s_0)\in X\times\{s_0\}$, and let $\psi_0$ and $\psi$ be local potentials of $\theta$ near $x$ and $\Phi_{s-s_0}(x)$, respectively.
$\Psi_{s-s_0}^*T=T$ means that $\Psi_{s-s_0}^*(\psi+Q)-(\psi_0+Q)$ is pluriharmonic in a neighborhood of $(x,s_0)$, where $\Psi_{s-s_0}(x,s_0)=(\Phi_{s-s_0}(x),s)$.
Restricting to $X\times\{s_0\}$, we get $\ddc_X\bigl[\Phi_{s-s_0}^*(\psi+Q_s)-(\psi_0+Q_{s_0})\bigr]=0$.
This proves \eqref{eq:smooth-current-transport}.
\end{proof}

\subsection{Conclusion of the proof of Theorem~\ref{thm:main}}\label{sec:theorem}

\begin{proof}[Proof of Theorem~\ref{thm:main}]
Proposition~\ref{prop:smooth-fixed-exact-limit}, 
Proposition~\ref{prop:smooth-exact-tests} 
and Proposition~\ref{prop:smooth-autonomy} produce a single
$V\in H^0(X,T_X^{1,0})$. Let $v=-\iota_Vu$, equation $D^{\prime}_X(ve^{-Q_s})=(\partial_\tau Q_s) e^{-Q_s}u$ holds for a.e. $s\in I$.

Proposition~\ref{prop:full-total-space-nullity} proves
\[
 \iota_{\partial/\partial\tau-V}
 \bigl(p_X^*\theta+\ddc_{x,\tau}Q\bigr)=0.
\]
Put $W=-(V+\bar V)$ and $\mathcal V=-2V$. This nullity equation is exactly \eqref{eq:main-full-nullity}.  
By above $D'$-equation, Proposition~\ref{prop:smooth-transport} gives
\[
 \Phi_{s-s_0}^*(e^{-Q_s}\Omega)=e^{-Q_{s_0}}\Omega,
 \qquad
 \Phi_{s-s_0}^*(\theta+\ddc_X Q_s)=\theta+\ddc_X Q_{s_0},
\]
where $\Phi_t$ is the flow of $W$.
Since
$\operatorname{Re}\mathcal V=-(V+\bar V)=W$, so the flow $G_t$ required in the
statement of Theorem~\ref{thm:main} is nothing but $\Phi_t$, 
Finally,
$e^{-Q_s}=e^{F(s)-\phi_s}$ and
$\ddc_XQ_s=\ddc_X\phi_s$.
The two transport equations are exactly
\eqref{eq:main-measure-transport} and
\eqref{eq:main-current-transport}.
\end{proof}

\vspace{0.4cm}
\section{Application I of Theorem~\ref{thm:main}: proof of Theorem~\ref{thm:main-big-BM}}
\label{sec:twisted-data}

\subsection{Preliminaries of pluripotential theory in big classes and finite-energy geometry}

In this section, we assume $(X,\omega)$ is a compact K\"ahler manifold, $\theta$ is a smooth closed real
$(1,1)$-form so that the class $[\theta]\in H^{1,1}(X,\mathbb{R})$ is big.  The ample locus
$\operatorname{Amp}([\theta])$ consists of the points $x\in X$ for
which some K\"ahler current in $[\theta]$ with analytic singularities
is smooth near $x$.
Set
\begin{equation}
 V_\theta
 :=\sup\Big\{v\in\operatorname{PSH}(X,\theta):v\leq0\Big\}.
 \label{eq:BM-minimal-envelope}
\end{equation}
Thus $\sup_XV_\theta=0$.  We say $u\in\operatorname{PSH}(X,\theta)$ has minimal singularities if there is some $C>0$ so that
\begin{align*}
    u\leq V_{\theta}+C,\qquad u\geq V_\theta-C.
\end{align*}
For $u\in\operatorname{PSH}(X,\theta)$ we set
$u_k:=\max\{u,V_\theta-k\}$. Recall from \cite{BEGZ10} that the non-pluripolar Monge--Amp\`ere
measure is defined as
\[
 \left\langle(\theta+\ddc u)^n\right\rangle
 :=\lim_{k\to\infty}{\bf1}_{\{u>V_\theta-k\}}
       (\theta+\ddc u_k)^n.
\]
We set
\[\mathsf V:=\operatorname{Vol}([\theta])
 =\int_X\left\langle(\theta+\ddc V_\theta)^n\right\rangle\]
and
\[\mathcal E(X,\theta)=\{u\in\operatorname{PSH}(X,\theta):\int_X\langle(\theta+dd^cu)^n\rangle=\mathsf{V}\}.\]  
For potential $u\in\operatorname{PSH}(X,\theta)$ with minimal
singularities we set
\begin{equation}
 E_\theta(u)=\frac1{(n+1)\mathsf V}
 \sum_{j=0}^n\int_X(u-V_\theta)
 \left\langle(\theta+\ddc u)^j\wedge
 (\theta+\ddc V_\theta)^{n-j}\right\rangle .
 \label{eq:BM-energy-definition}
\end{equation}
This functional is extended to $\mathcal{E}(X,\theta)$ by setting
\begin{equation}
 E_\theta(u):=\lim_{k\to\infty}E_\theta(u_k),
 \qquad
 \mathcal E^1(X,\theta)
 :=\{u\in\mathcal E(X,\theta):E_\theta(u)>-\infty\}.
 \label{eq:BM-finite-energy-definition}
\end{equation}
It satisfies
\begin{equation}
 E_\theta(u+c)=E_\theta(u)+c.
 \label{eq:BM-energy-translation}
\end{equation}
For $u,v\in\mathcal E^1(X,\theta)$, let
\[
 P_\theta(u,v)
 :=\left(\sup\{w\in\operatorname{PSH}(X,\theta):
                      w\leq\min(u,v)\}\right)^*
\]
be the rooftop envelope.
With the help of these envelopes one can define a complete metric on $\mathcal{E}^1(X,\theta)$ as follows.
For $u,v\in\mathcal{E}^1(X,\theta)$ we have $P_\theta(u,v)\in\mathcal{E}^1(X,\theta)$ (see \cite[Theorem 3.2]{DDL18b}) and let
\[
 d_1(u,v)
 :=E_\theta(u)+E_\theta(v)-2E_\theta(P_\theta(u,v)).
\]

\vspace{0.5cm}
Fix a smooth positive volume form $\Omega$, and a
quasi-psh function $\psi$ such that
\begin{equation}
\begin{split}
     \eta:&=\operatorname{Ric}(\Omega)-\theta,\\
     \eta_\psi:&=\operatorname{Ric}(\Omega)-\theta+\ddc\psi\geq0,\\
 \qquad e^{-V_{\theta}-\psi}&\in L^1(X,\Omega).
\end{split}
 \label{eq:BM-twisted-data}
\end{equation}
Note that by \cite[Proposition 2.5]{DZ24}, $e^{-\varphi-\psi}\in L^1(X,\Omega)$ holds for every $\varphi\in\mathcal{E}(X,\theta)$.

\begin{definition}
\label{def:BM-normalized-solution}
A function
$\varphi\in\mathcal E^1(X,\theta)$ is called a normalized finite-energy solution of the
$\eta_{\psi}$-twisted Monge--Am\`ere equation if it satisfies
\begin{equation}
 \left\langle(\theta+\ddc\varphi)^n\right\rangle
 =e^{-\varphi-\psi}\Omega.
 \label{eq:BM-normalized-equation}
\end{equation}
We write $T_\varphi:=\theta+\ddc\varphi$ and denote the set of such
resulting currents by $\mathcal S(\theta,\psi)$.  
\end{definition}

By openness theorem \cite{Ber15a,GZ15}, the right-hand side of \eqref{eq:BM-normalized-equation} belongs to $L^p(X)$ for some $p>1$.
Then \cite[Theorem~B]{BEGZ10} implies that $\varphi$ has minimal singularities.
For such a solution, the Ricci current of its non-pluripolar measure satisfies
\begin{equation}
 \operatorname{Ric}(T_\varphi)
 :=\operatorname{Ric}( T_\varphi^n\rangle)
 =\operatorname{Ric}(\Omega)+\ddc(\varphi+\psi)
 = T_\varphi+\eta.
 \label{eq:BM-Ricci-current-equation}
\end{equation}
As in \cite{DZ24}, because of the strong openness theorem \cite{GZ15}, for $\lambda>0$ we define the $\lambda$-Ding functional:
\begin{equation}
 \mathcal D_{\psi}^{\lambda}(u)
 :=-\frac1\lambda\log\left(
       \frac1{\mathsf V}\int_Xe^{-\lambda u-\psi}\Omega
      \right)-E_\theta(u), \qquad u\in \mathcal{E}^1(X,\theta).
 \label{eq:BM-lambda-Ding}
\end{equation}
It is invariant under addition of constants by
\eqref{eq:BM-energy-translation}. 
We write
\[
 d\mu_\psi:=e^{-\psi}\Omega,
 \qquad
 \mathcal L_{\mu_\psi}^{\lambda}(u)
 :=-\log\left(\frac1{\mathsf V}\int_Xe^{-\lambda u}\,d\mu_\psi\right).
\]
If $\lambda=1$ we may omit the superscript $\lambda$.

We first recall the following property, saying that the $1$-Ding functional is constant along the weak geodesic connecting two normalized solutions:

\begin{lemma}
\label{lem:BM-Ding-flat}
Let $\varphi_0,\varphi_1$ be two normalized solutions and let
$(\varphi_s)_{0\leq s\leq1}$ be their finite-energy weak geodesic.
Then every $\varphi_s$ is a normalized solution of \eqref{eq:BM-normalized-equation}, and
\begin{equation}
 E_\theta(\varphi_s)=\textup{constant},\qquad
 \mathcal D_{\psi}(\varphi_s)=\textup{constant},\qquad
 \int_Xe^{-\varphi_s-\psi}\Omega=\mathsf V.
 \label{eq:BM-flat-identities}
\end{equation}
Moreover, there is a constant $C>0$ such that, for all $s,t\in[0,1]$,
\[
 |\varphi_t-\varphi_s|\leq C|s-t|.
\]
\end{lemma}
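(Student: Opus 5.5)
We follow the standard variational argument of Berman--Berndtsson and Darvas--Zhang, adapted to big classes. Normalized solutions of \eqref{eq:BM-normalized-equation} are exactly the minimizers of the $1$-Ding functional $\mathcal D_\psi$ on $\mathcal E^1(X,\theta)$ satisfying the normalization $\int_Xe^{-\varphi-\psi}\Omega=\mathsf V$. The key steps are as follows.

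\textbf{Step 1: solutions are minimizers.} For a solution $\varphi$ and any $u\in\mathcal E^1(X,\theta)$, write $\mu:=\langle T_\varphi^n\rangle/\mathsf V=e^{-\varphi-\psi}\Omega/\mathsf V$, a probability measure. Jensen's inequality gives
\[
 -\log\frac1{\mathsf V}\int_Xe^{-u-\psi}\Omega\le \int_X(u-\varphi)\,d\mu .
\]
Concavity of $E_\theta$ on $\mathcal E^1$ gives $E_\theta(u)-E_\theta(\varphi)\le\int_X(u-\varphi)\,d\mu$. These two inequalities are not in the right direction to combine directly, so we use the variational characterization instead: by \cite{BBGZ13,DZ24}, finite-energy solutions are exactly the critical points, and hence minimizers, of $\mathcal D_\psi$. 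In particular $\mathcal D_\psi(\varphi_0)=\mathcal D_\psi(\varphi_1)=\inf\mathcal D_\psi$.

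\textbf{Step 2: flatness along the geodesic.} Along the finite-energy weak geodesic, $E_\theta$ is affine; this is the Darvas--Di Nezza--Lu result in big classes \cite{DDL18b}. Berndtsson's convexity \cite{Ber15b}, applied to the subgeodesic $\varphi_s+\psi$ (which is $\operatorname{Ric}(\Omega)$-psh, since $\eta_\psi\ge0$), shows that $s\mapsto-\log\int e^{-\varphi_s-\psi}\Omega$ is convex. Hence $\mathcal D_\psi(\varphi_s)$ is convex, and since it equals the minimum at both endpoints it is constant and equal to the minimum. So every $\varphi_s$ is a minimizer. Moreover, the function $s\mapsto-\log\int e^{-\varphi_s-\psi}$ is convex, and it equals $E_\theta$ plus a constant, which is affine. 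At the endpoints it takes the value $-\log\mathsf V$, which forces $E_\theta(\varphi_0)=E_\theta(\varphi_1)$. Hence both quantities are constant in $s$, which gives \eqref{eq:BM-flat-identities}.

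\textbf{Step 3: minimizers solve the equation.} We show each minimizer satisfies \eqref{eq:BM-normalized-equation} by the projection argument of \cite{BBGZ13,DZ24}. Perturb $\varphi_s$ by $P_\theta(\varphi_s+tf)$ for $f\in C^0$. Differentiability of $E_\theta\circ P_\theta$ (in the big-class setting of \cite{BEGZ10,DDL18a}) gives derivative $\int f\langle T^n\rangle/\mathsf V$. Differentiating the log term gives $\int fe^{-\varphi_s-\psi}\Omega/\int e^{-\varphi_s-\psi}\Omega$. Vanishing of the first variation, together with the normalization from Step 2, yields the equation.

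\textbf{Step 4: Lipschitz bound.} Since the endpoints have minimal singularities, $|\varphi_0-\varphi_1|\le C$. The weak geodesic is the envelope of subgeodesics below the endpoints. Hence $\max(\varphi_0-Cs,\varphi_1-C(1-s))\le\varphi_s$, and convexity in $s$ then gives $|\dot\varphi_s|\le\sup|\varphi_1-\varphi_0|$. This yields $|\varphi_t-\varphi_s|\le C|s-t|$.

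The main obstacle is Step 3: differentiating $E_\theta\circ P_\theta$ in big classes, and making sure the minimizers lie in the right finite-energy class. We would rely on the established differentiability theorem of \cite{BEGZ10} and on \cite[Prop.~2.5]{DZ24} to guarantee integrability.
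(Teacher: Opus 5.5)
Your proposal is correct and follows essentially the same route as the paper. The endpoint solutions are global minimizers of $\mathcal D_\psi$ by \cite{DZ24}. Affinity of $E_\theta$ together with Berndtsson-type convexity of the log term forces $\mathcal D_\psi(\varphi_s)$, $E_\theta(\varphi_s)$ and $\int_Xe^{-\varphi_s-\psi}\Omega$ to be constant. Minimizers solve the equation by the projection argument of \cite[Proposition~5.2]{DZ24}, and your Step~4 reproves \cite[Lemma~3.1]{DDL18b}.

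The one thing to tidy is the order: as the paper does, prove the Lipschitz bound first. The resulting uniform convergence $\varphi_s\to\varphi_0,\varphi_1$, and the minimal singularities of each $\varphi_s$, give integrability and continuity of $s\mapsto\mathcal D_\psi(\varphi_s)$ at the endpoints. You need that continuity to pass from convexity on $(0,1)$ to constancy on $[0,1]$.
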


\begin{proof}
Since \(\varphi_0\) and \(\varphi_1\) have minimal singularities, their
difference is bounded.  Hence
\cite[Lemma~3.1]{DDL18b} gives
\begin{equation}
 |\varphi_t-\varphi_s|
 \leq C|t-s|,
 \qquad s,t\in[0,1],
 \label{eq:BM-time-Lipschitz}
\end{equation}
for some \(C>0\).  In particular, every \(\varphi_s\) has minimal
singularities.

By \cite[Proposition~5.7]{DZ24} and \cite[Theorem~0.1]{BP08}, 
the endpoint
solutions $\varphi_0,\varphi_1$ are global minimizers of \(\mathcal D_{\psi}\), and
\(s\mapsto\mathcal D_{\psi}(\varphi_s)\) is continuous and convex.  If we set \(m:=\inf_{\mathcal E^1(X,\theta)}\mathcal D_{\psi}\), then we clearly have
\begin{equation}
 \mathcal D_{\psi}(\varphi_s)\equiv m,
 \qquad 0\leq s\leq1.
 \label{eq:BM-Ding-constant}
\end{equation}

The Monge--Amp\`ere energy $E_\theta$ is affine along the weak geodesic by
\cite[Theorem~3.12]{DDL18b}, which implies $s\mapsto \mathcal L_{\mu_\psi}(\varphi_s)$ is also affine. The endpoint
equations give
\[
 \int_Xe^{-\varphi_i}\,d\mu_\psi=\mathsf V,
 \qquad i=0,1,
\]
and by definition we get
\[
 \mathcal L_{\mu_\psi}(\varphi_0)
 =\mathcal L_{\mu_\psi}(\varphi_1)=0.
\]
Since $1$-Ding functional take values \(m\) at both endpoints, we have
\[
 E_\theta(\varphi_0)
 =E_\theta(\varphi_1)=-m.
\]
Affinity therefore yields
\[
 E_\theta(\varphi_s)=-m,
 \qquad
 \mathcal L_{\mu_\psi}(\varphi_s)=0,
 \qquad 0\leq s\leq1.
\]
Equivalently,
\begin{equation}
 \int_Xe^{-\varphi_s-\psi}\Omega=\mathsf V,
 \qquad 0\leq s\leq1.
 \label{eq:BM-geodesic-mass}
\end{equation}
This proves \eqref{eq:BM-flat-identities}.

Each $\varphi_s$ is a global minimizer of the $1$-Ding functional
$\mathcal D_{\mu_\psi}$, hence $\varphi_s$ is a solution of
\eqref{eq:BM-normalized-equation} by \cite[Proposition~5.2]{DZ24}.
The proof is complete.
\end{proof}

The following
elementary observation is useful.
\begin{lemma}
\label{lem:BM-positive-splitting}
Let $S_1,S_2$ be positive $(1,1)$-currents and let $\xi$ be a smooth
$(1,0)$-vector field.  If $\iota_\xi(S_1+S_2)=0$, then
$\iota_\xi S_1=\iota_\xi S_2=0$.
\end{lemma}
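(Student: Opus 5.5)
The plan is to reduce the statement to a pointwise Cauchy--Schwarz inequality for positive currents, tested against arbitrary test functions. A positive $(1,1)$-current $S$ has measure coefficients. Locally, $S=i\sum S_{j\bar k}\,dz^j\wedge d\bar z^k$ with $(S_{j\bar k})$ a Hermitian-matrix-valued measure that is positive semidefinite. Write $\xi=\sum\xi^j\partial_j$. Then
\[
 \iota_\xi S = i\sum_{j,k}\xi^jS_{j\bar k}\,d\bar z^k ,
\]
so $\iota_\xi S=0$ means exactly that the $(0,1)$-current with measure coefficients $\sum_j\xi^jS_{j\bar k}$ vanishes for every $k$.

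First, from $\iota_\xi(S_1+S_2)=0$, pair with $\bar\xi^k$ and sum over $k$. This gives that the positive measure
\[
 S_1[\xi,\bar\xi]+S_2[\xi,\bar\xi]:=\sum_{j,k}\xi^j\bar\xi^k\bigl(S_{1,j\bar k}+S_{2,j\bar k}\bigr)
\]
vanishes. Multiplying a measure by the smooth functions $\bar\xi^k$ is legitimate. Since each $S_i[\xi,\bar\xi]$ is a positive measure, both must vanish: $S_1[\xi,\bar\xi]=S_2[\xi,\bar\xi]=0$.

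Second, show that $S[\xi,\bar\xi]=0$ for a positive current $S$ forces $\iota_\xi S=0$. The tool is the Cauchy--Schwarz inequality for positive Hermitian measure matrices. Let $\mu:=\operatorname{tr}S$ be the trace measure. The coefficients $S_{j\bar k}$ are absolutely continuous with respect to $\mu$, with densities $f_{j\bar k}$ forming a positive semidefinite Hermitian matrix $\mu$-a.e. For any smooth $(1,0)$ field $\zeta$ one has pointwise
\[
 \bigl|f[\xi,\bar\zeta]\bigr|^2\le f[\xi,\bar\xi]\,f[\zeta,\bar\zeta]\quad \mu\text{-a.e.}
\]
Since $f[\xi,\bar\xi]=0$ $\mu$-a.e., it follows that $f[\xi,\bar\zeta]=0$ $\mu$-a.e. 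Taking $\zeta=\partial_k$ gives $\sum_j\xi^jS_{j\bar k}=0$ as measures, hence $\iota_\xi S=0$. Apply this to $S_1$ and to $S_2$.

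The only point needing care is the Radon--Nikodym step: the coefficients of a positive $(1,1)$-current are complex measures dominated by the trace measure, with a positive semidefinite density matrix a.e. This is standard (cf. Demailly's treatment of positive currents). Alternatively, one can regularize locally by convolution, $S_i*\chi_\epsilon\ge0$, and apply the inequality in integrated form against a test function $g\ge0$:
\[
 \Bigl|\int g\,S[\xi,\bar\zeta]\Bigr|^2\le\int g\,S[\xi,\bar\xi]\int g\,S[\zeta,\bar\zeta].
\]
This holds for smooth positive forms and passes to the limit, since $\xi,\zeta$ are smooth. That limit passage is the main, if mild, technical step; complex-valued test functions are handled by splitting into real and imaginary parts, and these into positive and negative parts.
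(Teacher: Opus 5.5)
Your proposal is correct and follows essentially the same route as the paper: contract with $\bar\xi$ to get $S_1[\xi,\bar\xi]+S_2[\xi,\bar\xi]=0$, conclude by positivity that each term vanishes, and then apply a Cauchy--Schwarz argument with $\zeta=\partial/\partial z^k$ to show the coefficients of $\iota_\xi S_i$ vanish. The only difference is technical: the paper needs neither Radon--Nikodym densities nor mollification, because it tests the positive measure $S_i[\xi+t\zeta,\overline{\xi+t\zeta}]$ against $\chi\geq0$ and lets $t\in\mathbb C$ vary, which forces the linear term $\langle S_i[\xi,\bar\zeta],\chi\rangle$ to vanish directly.
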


\begin{proof}
The assertion is local.  On a coordinate ball \(U\subset X\), write
\[
 S_j=i\sum_{k,\ell=1}^n
       \mu^{(j)}_{k\bar\ell}\,
       dz^k\wedge d\bar z^\ell ,
\]
where \(\bigl(\mu^{(j)}_{k\bar\ell}\bigr)\) is a Hermitian matrix of
complex Radon measures.  Positivity of \(S_j\) means that, for every
smooth \((1,0)\)-vector field
\(V=\sum_k V^k\partial/\partial z^k\), the measure
\[
 \mu_j[V,V]
 :=\sum_{k,\ell}V^k\overline{V^\ell}\,
       \mu^{(j)}_{k\bar\ell}
\]
is nonnegative.

Write
\[
 \xi=\sum_k\xi^k\frac{\partial}{\partial z^k}.
\]
Contracting the hypothesis once more with \(\bar\xi\) gives
\[
 0=-i\,\iota_{\bar\xi}\iota_\xi(S_1+S_2)
   =\mu_1[\xi,\xi]+\mu_2[\xi,\xi].
\]
Since both measures on the
right-hand side are nonnegative, it follows that
\[
 \mu_1[\xi,\xi]=\mu_2[\xi,\xi]=0.
\]

Fix \(j\in\{1,2\}\) and a smooth \((1,0)\)-vector field \(V\) on \(U\).
Set
\[
 \nu_j[\xi,V]
 :=\sum_{k,\ell}\xi^k\overline{V^\ell}\,
       \mu^{(j)}_{k\bar\ell}.
\]
For every \(\chi\in C_c^\infty(U)\) with \(\chi\geq0\) and every
\(t\in\mathbb C\), positivity gives
\begin{align*}
 0
 &\leq \bigl\langle\mu_j[\xi+tV,\xi+tV],\chi\bigr\rangle\\
 &=\langle\mu_j[\xi,\xi],\chi \rangle+2\operatorname{Re}\!\left(
      \bar t\,\bigl\langle\nu_j[\xi,V],\chi\bigr\rangle
    \right)
   +|t|^2\bigl\langle\mu_j[V],\chi\bigr\rangle\\
 &=2\operatorname{Re}\!\left(
      \bar t\,\bigl\langle\nu_j[\xi,V],\chi\bigr\rangle
    \right)
   +|t|^2\bigl\langle\mu_j[V],\chi\bigr\rangle,
\end{align*}
Since this inequality holds for every \(t\in\mathbb C\), its linear
term must vanish.  Hence
\[
 \bigl\langle\nu_j[\xi,V],\chi\bigr\rangle=0.
\]
Therefore
\(\nu_j[\xi,V]=0\) as a complex Radon measure.

Taking
\(V=\partial/\partial z^\ell\) shows that
\[
 \sum_k\xi^k\mu^{(j)}_{k\bar\ell}=0
 \qquad (\ell=1,\ldots,n).
\]
These are precisely the coefficients of \(\iota_\xi S_j\).
Thus \(\iota_\xi S_j=0\).
\end{proof}

\vspace{0.4cm}
\subsection{Proof of Theorem~\ref{thm:main-big-BM} and solution to Problem~\ref{ques:DR-strictly convex}}
\label{sec:big-BM}

Our arguments follow Berndtsson's variational proof of the
Bando--Mabuchi theorem and its twisted extension
\cite[Sections~5--7]{Ber15b}, with \cite[Theorem B]{Ber15b} replaced
by Theorem~\ref{thm:main}.

\begin{proof}[Proof of Theorem~\ref{thm:main-big-BM}]
Let $(\varphi_s)_{0\leq s\leq1}$ be the finite-energy weak geodesic
joining the two solutions.  On the open strip
$S=\{0<\operatorname{Re}\tau<1\}$ set
\begin{align*}
 \mathcal T&:=p_X^*\theta+\ddc_{x,\tau}\varphi,
 &q(x,\tau)&:=\varphi_{\operatorname{Re}\tau}(x)+\psi(x).
\end{align*}
Then $\mathcal T\geq0$, and
\begin{equation}
 \mathcal R:=p_X^*\operatorname{Ric}(\Omega)+\ddc_{x,\tau}q
 =\mathcal T+p_X^*\eta_{\psi}\geq0.
 \label{eq:BM-total-current-splitting}
\end{equation}
Lemma~\ref{lem:BM-Ding-flat} shows that every $\varphi_s$ is a
normalized solution with
\begin{equation}
 -\log\int_Xe^{-q_s}\Omega=-\log\mathsf V, \qquad\forall s\in(0,1)
 \label{eq:BM-anticanonical-mass}
\end{equation} 
Thus Theorem~\ref{thm:main}, applied to the
$\operatorname{Ric}(\Omega)$-psh subgeodesic $q$, gives
$\mathcal V\in H^0(X,T_X^{1,0})$ such that, with
\[
 \xi:=\frac{\partial}{\partial\tau}+\frac12\mathcal V,
\]
one has $\iota_\xi\mathcal R=0$ on $X\times S$.  Positive-current splitting in
\eqref{eq:BM-total-current-splitting} yields
\begin{equation}
 \iota_\xi\mathcal T=0,
 \qquad \iota_{\mathcal V}\eta_\psi=0
 \label{eq:BM-split-nullities}
\end{equation}
as currents on $X\times S$ and $X$ respectively.

Let $\Lambda:\mathbb C\to\operatorname{Aut}^0(X)$ be the holomorphic flow generated
by $\mathcal V/2$. 
If $F_s$, $G_t$ are the real flows generated by $\operatorname{Re}\mathcal V$, $-\operatorname{Im}\mathcal V$, respectively, then $\Lambda_{s+it}=G_t\circ F_s=F_s\circ G_t$.
Since $\eta$ is real and closed, the second identity
in \eqref{eq:BM-split-nullities} implies $\iota_{\operatorname{Re}\mathcal{V}}\eta_\psi=0$, $\iota_{\operatorname{Im}\mathcal{V}}\eta_\psi=0$. Cartan's formula for currents \eqref{eq:Cartan} shows that $\mathcal{L}_{\operatorname{Re}\mathcal{V}}(\eta_\psi)=d(\iota_{\operatorname{Re}\mathcal{V}}\eta_\psi)+\iota_{\operatorname{Re}\mathcal{V}}(d\eta_\psi)=0$ and similarly $\mathcal{L}_{\operatorname{Im}\mathcal{V}}\eta_\psi=0$.
Therefore by Lemma~\ref{lem:Lie-derivative-and-flow}, $\eta_\psi$ is invariant under $G_t,F_s$. As a consequence,
\begin{equation}
 \Lambda_{z}^*(\eta_\psi)=\eta_\psi\qquad \forall z\in\mathbb C.
 \label{eq:BM-full-twist-invariance}
\end{equation}

\vspace{0.3cm}
The first identity in \eqref{eq:BM-split-nullities} and its conjugate
annihilate $\mathcal T$ along the real vector fields
\[
 \xi+\bar\xi=\frac{\partial}{\partial s}
                 +\operatorname{Re}\mathcal V,
 \qquad
 i(\xi-\bar\xi)=\frac{\partial}{\partial t}
                 -\operatorname{Im}\mathcal V.
\]
The flow generated by $\xi+\bar\xi$ is
\[
\Phi_r:(x,\tau)\longmapsto(\Lambda_r(x),\tau+r), \qquad r\in\mathbb{R},
\]
The flow generated by $i(\xi-\bar\xi)$ is
\[ 
\Psi_s:(x,\tau)\longmapsto(\Lambda_{is}(x),\tau+is), \qquad s\in\mathbb{R}.
\]
Because $\mathcal T$ is real and closed, the first identity
in \eqref{eq:BM-split-nullities} and Cartan's formula \eqref{eq:Cartan} show that $\mathcal{L}_{\xi+\bar{\xi}}\mathcal{T}=0$, $\mathcal{L}_{i(\xi-\bar{\xi})}\mathcal{T}=0$ on $X\times S$. The first equality implies that, for \(r\in\mathbb R\) we set
\[
 I_r:=\{s\in(0,1):s+r\in(0,1)\},
 \qquad
 S_r:=\{\tau\in S:\operatorname{Re}\tau\in I_r\}.
\]
then
\begin{equation}
 \Phi_r^*(\mathcal T)=\mathcal T
 \qquad\text{on }X\times S_r.
 \label{eq:BM-product-flow-invariance}
\end{equation}
Take \(r=s-s_0\). Note that both sides are closed positive currents of bidegree $(1,1)$ in a neighborhood of $X\times\{s_0\}$. So we can restrict both sides (indeed, restrict potentials of both sides) on $X\times \{s_0\}$ and obtain
\begin{equation}
 \Lambda_{s-s_0}^*(T_s)=T_{s_0}
 \qquad (s,s_0\in(0,1)).
 \label{eq:BM-current-transport}
\end{equation}

\vspace{0.3cm}
A completely similar argument applied to the imaginary product flow
\[
 \Psi_s:(x,\tau)\longmapsto(\Lambda_{is}(x),\tau+is)
\]
gives $\Psi_s^*(\mathcal{T})=\mathcal{T}$ on $X\times S$. For any $r\in(0,1)$, restricting both sides on $X\times \{r\}$ we obtain:
\begin{equation}
 \Lambda_{is}^*(T_r)=T_r.
 \label{eq:BM-imaginary-stabilizer}
\end{equation}
We extend \eqref{eq:BM-current-transport} and
\eqref{eq:BM-imaginary-stabilizer} to \(s,s_0\in[0,1]\) by
Lemma~\ref{lem:BM-Ding-flat}.
If $s_0=0$, then $\Lambda_{s}^*(T_{s+\varepsilon})=T_{\varepsilon}$ for $\varepsilon>0$ small enough.
Lemma~\ref{lem:BM-Ding-flat} implies $|\varphi_s-\varphi_{s+\varepsilon}|,|\varphi_{\varepsilon}-\varphi_0|\leq C\varepsilon$. 
Taking the weak limit of both sides, we get $\Lambda_{s}^*(T_{s})=T_0$.
The same arguments prove $\Lambda_{ir}^*(T_0)=T_0$.
Finally, 
\eqref{eq:BM-current-transport} yields that
\begin{equation}
 \Lambda_{s-s_0}^*\langle T_s^n\rangle
 =\langle \bigl(\Lambda_{s-s_0}^*(T_s)\bigl)^n\rangle
 =\langle T_{s_0}^n\rangle.
 \label{eq:BM-MA-transport}
\end{equation}
The proof is therefore complete.
\end{proof}

\vspace{0.4cm}
As an application of Theorem \ref{thm:main} and Theorem \ref{thm:main-big-BM}, we settle Problem \ref{ques:DR-strictly convex} raised
by Dervan--Reboulet \cite[Remarks~3.6 and~3.14]{DR24}.

\begin{corollary}
\label{cor:DR-uniqueness}
Let $X$ be a smooth projective manifold with $-K_X$ big and klt in
the sense of the multiplier ideal sheaf $\mathcal{I}(V_{\theta})=\mathcal{O}_X$, and let $\theta\in c_1(-K_X)$ be a smooth
representative.

\begin{enumerate}[\upshape(i)]
\item If $H^0(X,T_X^{1,0})=0$, then the $1$-Ding functional is strictly
convex along finite-energy weak geodesics. More precisely, if the $1$-Ding functional
is affine along a weak geodesic $(\varphi_s)\in\mathcal{E}^1(X,\theta)$, then
\[
 \theta+\ddc\varphi_s
 =
 \theta+\ddc\varphi_0
 \qquad\text{for all }s.
\]

\item If $\operatorname{Aut}(X)$ is finite, then there is at most one
weak K\"ahler--Einstein current in $c_1(-K_X)$.
\end{enumerate}
\end{corollary}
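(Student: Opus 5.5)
For (i), I would apply Theorem~\ref{thm:main} directly, with no twist. Take $\theta=\operatorname{Ric}(\Omega)\in c_1(-K_X)$, which is big and hence pseudoeffective. Let $(\varphi_s)$ be a finite-energy weak geodesic in $\mathcal E^1(X,\theta)$ along which the $1$-Ding functional $D^1(\varphi)=-\log\int_Xe^{-\varphi}\Omega-E_\theta(\varphi)$ is affine. By \cite[Theorem~3.12]{DDL18b}, $E_\theta$ is affine along weak geodesics, so $F(s):=-\log\int_Xe^{-\varphi_s}\Omega$ is affine as well. The integrability $e^{-\varphi_s}\in L^1(X,\Omega)$ comes from the klt assumption. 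Indeed, $\varphi_s\in\mathcal E(X,\theta)$ and $\mathcal I(V_\theta)=\mathcal O_X$, so \cite[Proposition~2.5]{DZ24}, applied with $\psi=0$ (equivalently, the fact that full-mass potentials have zero Lelong numbers relative to $V_\theta$ together with strong openness), gives $e^{-\varphi_s}\in L^1$ for every $s$.

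The weak geodesic is defined on $(0,1)$, and $\phi(x,\tau)=\varphi_{\operatorname{Re}\tau}(x)$ is $p_X^*\theta$-psh on $X\times S$. Hence all hypotheses of Theorem~\ref{thm:main} hold on $I=(0,1)$, and it produces $\mathcal V\in H^0(X,T_X^{1,0})$. Since $H^0(X,T_X^{1,0})=0$, we have $\mathcal V=0$ and $G_t=\mathrm{id}$. Then \eqref{eq:main-current-transport} gives $\theta+\ddc\varphi_s=\theta+\ddc\varphi_{s_0}$ for all $s,s_0\in(0,1)$. To reach the endpoints, I would use the $d_1$-continuity of the weak geodesic: $\varphi_s\to\varphi_0$ in $L^1$ as $s\to0$, so the currents converge weakly, and the identity extends to $s\in[0,1]$. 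This proves that affinity forces the current to be constant, which is the asserted strict convexity.

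For (ii), suppose $T_0=\theta+\ddc\varphi_0$ and $T_1=\theta+\ddc\varphi_1$ are weak K\"ahler--Einstein currents, i.e.\ normalized solutions of \eqref{eq:BM-normalized-equation} with $\psi=0$, after adding constants. Since $\eta=\operatorname{Ric}(\Omega)-\theta=0$, the data \eqref{eq:BM-twisted-data} are satisfied, and the klt hypothesis gives $e^{-V_\theta}\in L^1$. Theorem~\ref{thm:main-big-BM} then yields $\mathcal V$ and a holomorphic flow $\Lambda$ with $\Lambda_1^*T_1=T_0$. Because $\operatorname{Aut}(X)$ is finite, $\operatorname{Aut}^0(X)$ is trivial, so $H^0(X,T_X^{1,0})=\operatorname{Lie}(\operatorname{Aut}(X))=0$. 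Therefore $\mathcal V=0$, $\Lambda_1=\mathrm{id}$, and $T_1=T_0$.

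I expect the main obstacle to be the slice-wise integrability $e^{-\varphi_s}\in L^1$ for arbitrary finite-energy potentials, which is where the klt condition $\mathcal I(V_\theta)=\mathcal O_X$ has to be invoked through \cite[Proposition~2.5]{DZ24}. The extension from the open interval $(0,1)$ to the endpoints is only a routine limiting step.
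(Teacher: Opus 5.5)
Your proposal is correct and follows essentially the same route as the paper: for (i), affinity of $E_\theta$ along weak geodesics makes $s\mapsto-\log\int_Xe^{-\varphi_s}\Omega$ affine, and then Theorem~\ref{thm:main} with $H^0(X,T_X^{1,0})=0$ gives constant currents; for (ii), Theorem~\ref{thm:main-big-BM} with $\eta=0$ together with triviality of $\operatorname{Aut}^0(X)$ gives uniqueness. You additionally spell out two steps the paper leaves implicit: the slice-wise integrability $e^{-\varphi_s}\in L^1$, derived from the klt hypothesis via \cite[Proposition~2.5]{DZ24}, and the extension of the current identity to the endpoints via $d_1$-continuity of the weak geodesic.
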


\begin{proof}
By $\partial\bar\partial$-lemma, we may assume there is a smooth volume form $\Omega$ so that
$\psi$ such that
\[
 \operatorname{Ric}(\Omega)=\theta.
\]
Thus the $1$-Ding functional is
$\mathcal D^1(u):=-\log(\frac{1}{\mathsf{V}}\int_{X}e^{-u}\Omega)$.

For (i), let $(\varphi_s)$ be a finite-energy weak geodesic along which the $1$-Ding functional is affine.
The Monge--Amp\`ere energy is affine along weak geodesic in $\mathcal{E}^1(X,\theta)$,
hence
\[
 s\longmapsto
 -\log\int_Xe^{-\varphi_s}\Omega
\]
is affine as well. Applying Theorem~\ref{thm:main} to $\theta$-plurisubharmonic subgeodesic
$(\varphi_s)_{s\in(0,1)}$ produces a holomorphic vector field
$\mathcal V\in H^0(X,T_X^{1,0})$ whose flow transports the currents
$\theta+\ddc\varphi_s$. Since
$H^0(X,T_X^{1,0})=0$, one has $\mathcal V=0$, and therefore
\[
 \theta+\ddc\varphi_s
 =
 \theta+\ddc\varphi_0
\]
for all $s$.

\vspace{0.3cm}
For (ii), let $T_{\varphi_0}$ and $T_{\varphi_1}$ be two
K\"ahler--Einstein currents. Theorem~\ref{thm:main-big-BM} with $\eta=0$, gives a holomorphic one-parameter subgroup
\[
 \Lambda:\mathbb C\longrightarrow\operatorname{Aut}^0(X)
\]
such that
\[
 \Lambda(1)^*T_{\varphi_1}=T_{\varphi_0}.
\]
If $\operatorname{Aut}(X)$ is finite, then
$\operatorname{Aut}^0(X)=\{\operatorname{id}_X\}$, so $\Lambda$ is
trivial and $T_{\varphi_0}=T_{\varphi_1}$.
\end{proof}

Combining the preceding corollary with
\cite[Theorem~1.1(ii)]{DR24}, we obtain in particular that if
$\operatorname{Aut}(X)$ is a finite discrete group, then the existence of a
K\"ahler--Einstein current in $c_1(-K_X)$ implies that
$(X,-K_X)$ is uniformly $1$-Ding stable.

\vspace{0.3cm}
\section{Application II of Theorem~\ref{thm:main}: proof of Theorem~\ref{thm:main-DZ-uniqueness}}
\label{sec:DZ-properness-uniqueness}

Throughout this section we use notations in
Section~\ref{sec:twisted-data}:
\[
 \eta=\operatorname{Ric}(\Omega)-\theta,\qquad \eta_\psi=\operatorname{Ric}(\Omega)-\theta+\ddc\psi\geq0,
 \qquad d\mu_\psi:=e^{-\psi}\Omega.
\]
In this section $\eta_\psi$ always denotes the above twisting current as in
\cite{DZ24}.  We use
$\mathcal L_{\mu_\psi}$, $\mathcal D_{\mu_\psi}$ and $d_1$ as defined in
Section~\ref{sec:twisted-data}.

We also recall the divisorial definition of the stability threshold
from Darvas--Zhang \cite[Equation~(3) and Definition~1.1]{DZ24}.
A prime divisor over $X$ is a prime divisor $E$ on a proper modification
$\pi:Y\to X$.  Its log discrepancy with respect to $\psi$ is
\[
 A_\psi(E):=A_X(E)-\nu(\psi,E),
 \qquad
 A_X(E):=1+\operatorname{ord}_E(K_Y-\pi^*K_X),
\]
where $\nu(\psi,E)$ is the generic Lelong number of $\pi^*\psi$ along
$E$.  Put
\[
 \tau_\theta(E)
 :=\sup\{t\geq0:\pi^*[\theta]-t[E]\text{ is big}\}
\]
and
\[
 S_\theta(E)
 :=\frac1{\mathsf V}\int_0^{\tau_\theta(E)}
       \operatorname{Vol}(\pi^*[\theta]-t[E])\,dt.
\]
Then
\[
 \delta_\psi([\theta])
 :=\inf_E\frac{A_\psi(E)}{S_\theta(E)},
\]
where the infimum runs over all prime divisors over $X$.

Darvas--Zhang \cite{DZ24} proved that $\delta_\psi([\theta])>1$ implies $\int_Xe^{-u-\psi}\Omega<\infty$, $\forall u\in \mathcal{E}^1(X,\theta)$ and the $1$-Ding functional $\mathcal{D}_{\mu_\psi}$ is proper on $\mathcal{E}^1(X,\theta)$.
As a result, they prove that there exists $u\in \mathcal{E}^1(X,\theta)$ of twisted Monge-Amp\`ere equation
\begin{equation}
    \langle (\theta+dd^cu)^n\rangle=e^{-u-\psi}\Omega.
    \label{eq:t CMA}
\end{equation}

\vspace{0.4cm}
The following proposition plays a key role in the uniqueness part of equation \eqref{eq:t CMA}.
\begin{proposition}
\label{prop:expanded-big-orbit}
Let $u\in\mathcal{E}^1(X,\theta)$ have minimal singularities and \(T:=\theta+\ddc u\) satisfy
\[
 \mu:=\langle T^n\rangle=e^{-u-\psi}\Omega.
\]
Suppose there is a holomorphic vector field \(V\in H^0(X,T_X^{1,0})\) and a bounded real-valued Borel function
\(h\) satisfying
\begin{equation}
 \iota_VT=i\bar\partial h,
 \qquad
 \iota_V\eta=0,
 \qquad
 \int_Xh\,d\mu=0.
 \label{eq:expanded-orbit-hypotheses}
\end{equation}
Then the real part of \(V\) generates a weak geodesic
line \((w_s)_{s\in\mathbb{R}}\) with minimal singularities such that
\begin{align}
 \langle(\theta+\ddc w_s)^n\rangle
 &=e^{-w_s-\psi}\Omega,
 \label{eq:expanded-orbit-solution}\\
 \mathcal D_{\mu_\psi}(w_s)
 &=\mathcal D_{\mu_\psi}(w_0),
 \label{eq:expanded-orbit-Ding}\\
 d_1(w_r,w_s)
 &=\sigma|s-r|,
 \label{eq:expanded-orbit-speed}
\end{align}
moreover, $ \sigma=0\Longleftrightarrow h=0$.
\end{proposition}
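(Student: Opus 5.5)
Let $\Lambda:\mathbb C\to\operatorname{Aut}^0(X)$ be the holomorphic flow generated by $V/2$, and write $F_s:=\Lambda_s$ for the real flow of $\operatorname{Re}V$. The first step is to produce the potentials. Since $\iota_V\eta=0$ and $\eta$ is closed, Cartan's formula \eqref{eq:Cartan} together with Lemma~\ref{lem:Lie-derivative-and-flow} gives $F_s^*\eta=\eta$. Next, from $\iota_VT=i\bar\partial h$ and the reality of $T$ we get $\iota_{\operatorname{Re}V}T=\tfrac12(i\bar\partial h-i\partial h)=d^c h$ up to the fixed normalization. Hence
\[
\mathcal L_{\operatorname{Re}V}T=d\,\iota_{\operatorname{Re}V}T=\ddc h .
\]
Therefore $\frac{d}{ds}F_s^*T=F_s^*\ddc h=\ddc(h\circ F_s)$, and integrating gives $F_s^*T=T+\ddc\int_0^s h\circ F_r\,dr$. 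I then define
\[
w_s:=u+\int_0^sh\circ F_r\,dr+c(s),
\]
so that $\theta+\ddc w_s=F_s^*T$. Because $h$ is bounded, $|w_s-u|\le C|s|+|c(s)|$, so each $w_s$ has minimal singularities and the family is Lipschitz in $s$. The constant $c(s)$ is fixed by the normalization in the next step.

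The second step is to show that $w_s$ solves \eqref{eq:expanded-orbit-solution}. Non-pluripolar products commute with biholomorphisms, so $\langle(\theta+\ddc w_s)^n\rangle=F_s^*\mu=F_s^*(e^{-u-\psi}\Omega)$. Since $F_s^*\eta=\eta$, the forms $F_s^*\operatorname{Ric}(\Omega)$ and $\operatorname{Ric}(\Omega)$ differ by $\ddc$ of a function, and comparing Ricci currents shows that $F_s^*(e^{-u-\psi}\Omega)=e^{-w_s-\psi}\Omega$ up to a multiplicative constant. I use $c(s)$ to absorb that constant, which forces $\int e^{-w_s-\psi}\Omega=\mathsf V$. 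A cleaner route is to differentiate both sides in $s$. The condition $\int h\,d\mu=0$, together with the invariance of the total mass of $F_s^*\mu$, should give $c\equiv0$, i.e.\ $\dot w_s=h\circ F_s$ is already normalized. This is the step where $\int_X h\,d\mu=0$ is used.

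The third step is to prove that $(w_s)$ is a weak geodesic line. The function $W(x,\tau):=w_{\operatorname{Re}\tau}(x)$ should satisfy $p_X^*\theta+\ddc_{x,\tau}W=\Phi^*(p_X^*T)$, where $\Phi(x,\tau)=(\Lambda_\tau(x),\tau)$ after accounting for the imaginary direction. Then $\Lambda_{it}^*T=T$ follows from $\iota_{\operatorname{Im}V}T=$ (a real part of $\iota_V T$, which is $\bar\partial$-exact) combined with reality. The pulled-back current has rank at most $n$ on the $(n+1)$-dimensional space $X\times S$, so the top non-pluripolar power vanishes. A Lipschitz family with minimal singularities whose total Monge--Amp\`ere on each strip vanishes is the weak geodesic, by the characterization in \cite{DDL18b}. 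The main obstacle here is handling the imaginary flow, since $\operatorname{Im}V$ preserves $T$ only if $h$ is real; this is exactly where the realness of $h$ enters.

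The last step is to read off the remaining conclusions. Each $w_s$ solves the equation, hence minimizes $\mathcal D_{\mu_\psi}$ by \cite{DZ24}, which gives \eqref{eq:expanded-orbit-Ding}. Along a geodesic, $d_1$ is computed by the speed, $d_1(w_r,w_s)=|s-r|\int_X|\dot w_0|\,\langle(\theta+\ddc w_0)^n\rangle/\mathsf V$ (cf.\ \cite{DDL18b}), so $\sigma=\mathsf V^{-1}\int_X|h|\,d\mu$. Since $\mu=e^{-u-\psi}\Omega$ has a positive density almost everywhere, $\sigma=0$ exactly when $h=0$ almost everywhere.
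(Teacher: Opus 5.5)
Your outline agrees with the paper's proof in its first three steps. The paper also derives $\mathcal L_{\operatorname{Re}V}T=\ddc h$ from $\iota_{\operatorname{Re}V}T=d^ch$, and integrates this to $F_s^*T=T+\ddc a_s$ with $a_s=\int_0^sh\circ F_r\,dr$. It fixes the normalization by your ``cleaner route'': with $e^{-a_s}\mu=c_s^{-1}F_s^*\mu$ one gets $\frac{d}{ds}\int_Xe^{-a_s}\,d\mu=-c_s^{-1}\int_X h\,d\mu=0$, hence $c_s=1$. This step is needed, since absorbing a non-affine $c(s)$ would break the geodesic property. The paper then proves the geodesic property via $p_X^*\theta+\ddc_{x,\tau}\tilde U=\mathcal F^*T$, $(\mathcal F^*T)^{n+1}=0$ and \cite[Proposition~3.3]{DDL18b}.

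You gloss over two points in that step. First, matching the mixed $d\tau\wedge d\bar x$ components of the product identity also requires $h\circ\Lambda_{it}=h$; this follows by contracting $\iota_{\operatorname{Im}V}T=\tfrac12dh$ once more with $\operatorname{Im}V$. Second, $u+a_s$ is a priori only Borel, so you must take its $\theta$-psh representative and check that $w_{\operatorname{Re}\tau}(x)$ is genuinely $p_X^*\theta$-psh. As in the paper, the Ricci comparison actually uses $F_s^*\eta_\psi=\eta_\psi$, i.e.\ $\iota_V\eta_\psi=0$ as supplied by Theorem~\ref{thm:main-big-BM}, because $\operatorname{Ric}(e^{-u-\psi}\Omega)=T+\eta_\psi$.

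The genuinely different step is the last one. You compute $\sigma=\mathsf V^{-1}\int_X|h|\,d\mu$ from a $d_1$-speed formula along geodesics. The paper uses only that $d_1$ is a metric: $\sigma=0$ forces $w_s=u$ for all $s$, and then $a_s/s=\int_0^1h\circ F_{ts}\,dt\to h$ in $L^1(X,\Omega)$ gives $h=0$. Your route yields the explicit value of $\sigma$. However, it needs a speed formula in big classes, which you would have to cite precisely, and the identification $\dot w_0=h$ a.e., which itself rests on the same convergence $a_s/s\to h$. For the bare equivalence $\sigma=0\Leftrightarrow h=0$, the paper's argument is strictly lighter.
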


\begin{proof}

\smallskip
\noindent\emph{Step 1: Construction of the real orbit.}
Set
\[
 Y:=\frac{V+\bar V}{2},
 \qquad
 K:=\frac{V-\bar V}{2i}.
\]
Taking the conjugate of the first identity in
\eqref{eq:expanded-orbit-hypotheses} gives
\[
 \iota_YT=d^ch,
 \qquad
 \iota_KT=\frac12dh.
\]
Since \(T\) is closed, Cartan's formula for currents yields
\begin{equation}
 \mathcal L_YT=\ddc h,
 \qquad
 \mathcal L_KT=0.
 \label{eq:expanded-orbit-Lie-identities}
\end{equation}
Contracting \(\iota_KT=\tfrac12dh\) once more with \(K\) gives
\begin{equation}
\mathcal{L}_Kh=\iota_Kdh=0
 \label{eq:expanded-orbit-Kh-zero}
\end{equation}
in the sense of distributions.

\vspace{0.3cm}
Let $F_s$ be the flow of $Y$. Then $F:X\times\mathbb{R}\to X$, $(x,s)\mapsto F_s(x)$, is a submersion. This implies that $H:=F^*h$ is a bounded Borel function on $X\times\mathbb{R}$. 
Now we define 
\begin{equation}
 a(x,s):=\int_0^sH(x,r)\,dr,\qquad a_s(x):=a(x,s)=\int_0^s h\circ F_r(x)\,dr,
 \qquad
 u_s:=u+a_s.
 \label{eq:expanded-orbit-potential-definition}
\end{equation}
Then $a(x,s)\in L^{\infty}_{\mathrm{loc}}(X\times\mathbb{R})$, and $a_s\in L^{\infty}(X)$ for every $s$. 
It is also easy to see that
\begin{equation}
    \|a_s-a_t\|_{L^{\infty}(X)}\leq |s-t|\|h\|_{L^{\infty}(X)}.
    \label{eq:Lip-cont}
\end{equation}
Moreover we have
\begin{equation}
\frac{\partial}{\partial s}a(x,s):=\lim_{\varepsilon\to 0}\frac{a(x,s+\varepsilon)-a(x,s)}{\varepsilon}=H(x,s)\ \textrm{for a.e.}\  (x,s)\in X\times \mathbb{R}.
\label{eq:ab-cont}
\end{equation}
In fact, for every $x\in X$, $s\mapsto H(x,s)$ is a bounded Borel function. As a result, $s\mapsto a(x,s)=\int_0^sH(x,r)\,dr$ is absolutely continuous (in fact Lipschitz) and $\frac{d}{ds}a(x,s)=H(x,s)$ for a.e. $s\in\mathbb{R}$. \eqref{eq:ab-cont} is therefore proved.

We will also need the following:
\begin{equation}
    \partial_su=H(x,s)
    \label{eq:ac-derivative}
\end{equation} 
in the sense of distributions on $X\times\mathbb{R}$.
In fact, take test form $\chi(x,s)ds\Omega$ and compute that
\begin{align*}
    \langle \partial_su,\chi ds\Omega\rangle
    &:=-\int_{\mathbb{R}\times X}u\frac{\partial\chi}{\partial s}\,ds\,\Omega\\
    &=-\int_X\left(\int_{\mathbb{R}}u_s(x)\frac{\partial\chi}{\partial s}(x,s)\,ds\right)\Omega(x)\\
    &=\int_X\left(\int_{\mathbb{R}}H(x,s)\chi\,ds\right)\Omega\\
    &=\left\langle H,\chi ds\Omega\right\rangle.
\end{align*}
Here the third equality holds because, for $x\in \operatorname{Amp}(\theta)$, $s\mapsto u_s(x)=u(x)+a(x,s)$ is absolutely continuous and its derivative is $H(x,s)$.

\vspace{0.3cm}
Finally we prove that
\begin{align*}
 F_s^*T-T
 =\ddc a_s
\end{align*}
in the sense of currents.
In fact, take a test form $\beta$ on $X$. Using \eqref{eq:current-Cartan-flow}, we get
\begin{align*}
    \langle F_s^*T-T,\beta \rangle
    &=\int_0^s \frac{d}{dt}\langle F^*_tT,\beta \rangle\,dt=\int_0^s \langle F_t^*(\mathcal{L}_YT),\beta\rangle\,dt\\
    &=\int_0^s \langle F_t^*(dd^ch),\beta\rangle\,dt=\int_0^s \langle dd^c(F_t^*h),\beta\rangle\,dt\\
    &=\int_0^s \langle F_t^*h,dd^c\beta\rangle\,dt=\left\langle \int_0^sF_t^*h\,dt,dd^c\beta\right\rangle\\ 
  &  =\langle a_s,dd^c\beta\rangle
    =\langle dd^ca_s,\beta \rangle,
\end{align*}
Thus \(u_s:=u+a_s\), initially understood as a Borel function on $X$, has a
canonical \(\theta\)-psh representative $w_s$ and satisfies
\begin{equation}
 T_s:=\theta+\ddc w_s=F_s^*T
 \label{eq:expanded-orbit-derivative}
\end{equation}
in the sense of currents. 
Furthermore, it follows easily from \eqref{eq:Lip-cont} and the property of quasi-psh functions that
\begin{equation}
    \|w_s-w_t\|_{L^{\infty}(X)}\leq |s-t|\|h\|_{L^{\infty}(X)}.
    \label{eq:Lip-ws}
\end{equation}

\vspace{0.3cm}
\smallskip
\noindent\emph{Step 2: Gluing $w_s$ to a subgeodesic.}
Let \(G_t\) be the real flow of \(-K:=-\frac{V-\bar{V}}{2i}\).  Since \(X\) is compact and
\(V\) is holomorphic, the vector fields \(Y\) and \(-K\) are complete and
commute.  The holomorphic flow generated by \(V/2\) is
\begin{equation}
 \mathbb{C}\to \operatorname{Aut}^0(X),s+it\mapsto\mathfrak F_{s+it}:=F_s\circ G_t=G_t\circ F_s.
 \label{eq:expanded-complex-flow-definition}
\end{equation}
In particular, \(\mathfrak F_s=F_s\) is the real flow of $Y$, while
\(\mathfrak F_{it}=G_t\) is the real flow of \(-K\).  
Since the Lie derivatives of $T$ and $h$ with respect to $K$ vanish by \eqref{eq:expanded-orbit-Lie-identities} and
\eqref{eq:expanded-orbit-Kh-zero}, Lemma~\ref{lem:Lie-derivative-and-flow}
implies that, in the sense of currents,
\begin{equation}
 G_t^*T=T,
 \qquad
 h\circ G_t=h.
 \label{eq:expanded-imaginary-flow-invariance}
\end{equation}
Set
\[
 \mathcal F:X\times \mathbb{C}\longrightarrow X,
 \qquad
 \mathcal F(x,\tau):=\mathfrak F_\tau(x),
 \qquad
 U(x,s+it):=u_s(x).
\]
Since \(\mathcal F\) is a holomorphic submersion, \(\mathcal F^*T\) is a well-defined positive closed $(1,1)$-current. We claim that, in the sense of currents,
\begin{equation}
 p_X^*\theta+\ddc_{x,\tau}U=\mathcal F^*T.
 \label{eq:expanded-product-current-identity}
\end{equation}

\vspace{0.3cm}
The problem is local.
A test form $\alpha $ has the form $\alpha_0\wedge d\tau\wedge d\bar{\tau}+\alpha_1+\alpha_2$ where $\alpha_0$ only has $dx$ and $d\bar{x}$, $\alpha_1$ does not contain $d\tau$ and $\alpha_2$ does not contain $d\bar{\tau}$.
By Fubini's theorem,
\begin{align*}
    \langle p_X^*\theta+dd^c_{x,\tau}U,\alpha_0\wedge d\tau\wedge d\bar{\tau}\rangle
    &=\int_{\mathbb{C}}d\tau\wedge d\bar{\tau}\int_X\bigl(\theta+dd^c_xU(\cdot,\tau)\bigr)\wedge (\alpha_0|_{X_\tau}),\\
   \langle \mathcal{F}^*T,\alpha_0\wedge d\tau\wedge d\bar{\tau}\rangle
   &=\int_{X\times\mathbb{C}}\bigl(\mathcal{F}^*\theta+dd^c(u\circ\mathcal F)\bigr)\wedge\alpha_0\wedge d\tau\wedge d\bar{\tau}\\
   &=\int_{\mathbb{C}}d\tau\wedge d\bar{\tau}\int_X\mathfrak{F}_{\tau}^*(T)\wedge(\alpha_0|_{X_{\tau}}).
\end{align*}
For each $s$, $\theta+dd^c_xU(\cdot,s)=T_s=F_s^*T$ by  \eqref{eq:expanded-orbit-derivative}. $\mathfrak{F}_{\tau}^*T=(G_t\circ F_s)^*T=F_s^*(G_t^*T)=F_s^*T$ by \eqref{eq:expanded-complex-flow-definition} and
\eqref{eq:expanded-imaginary-flow-invariance}.
It remains to deal with $\alpha_1,\alpha_2$, which is equivalent to showing that
\[\iota_{\partial_{\tau}}(p_X^*\theta+dd^cU)=\iota_{\partial_{\tau}}(\mathcal{F}^*T),\qquad \iota_{\partial_{\bar \tau}}(p_X^*\theta+dd^cU)=\iota_{\partial_{\bar \tau}}(\mathcal{F}^*T).
\]
Since both currents are real, we only need to check the first identity.

The holomorphic flow $\mathcal{F}$ is generated by $\frac{V}{2}$, so we have
\[
 d\mathcal F_{(x,\tau)}\bigl(\frac{\partial}{\partial\tau}\bigr)=\frac12V_{\mathcal{F}(x,\tau)}.
\]
The second identity in \eqref{eq:expanded-imaginary-flow-invariance} means $G_t^*h=h$ a.e. on $X$ for every $t$. 
So for every $\tau\in\mathbb{C}$,
\[
 \mathcal{F}_{\tau}^*h=F_s^*G_t^*h=F_s^*h  \textrm{ a.e. on  $X$}.
\]
Naturality of contraction gives $\iota_{\partial_{\tau}}(\mathcal{F}^*\xi)=\mathcal{F}^*(\iota_{\frac{V}{2}}\xi)$ for every smooth differential form $\xi$ on $X$. This equality extends to currents on $X$ by smooth approximation. As a result, using \eqref{eq:expanded-orbit-hypotheses}, we obtain
\begin{align}
 \iota_{\partial_{\tau}}(\mathcal F^*T)
 &=\mathcal F^*\!\left(\iota_{V/2}T\right) \notag=\frac i2\,\mathcal F^*(\bar\partial_Xh) \notag\\
 &=\frac i2\,\bar\partial_{X\times \mathbb{C}}
       (\mathcal F^*h) \notag=\frac i2\,\bar\partial_{X\times \mathbb{C}}
       (h\circ F_s).
 \label{eq:expanded-pullback-contraction}
\end{align}
On the right-hand side, \(\iota_{\partial_{\tau}}\bigl(p_X^*\theta\bigr)=0\), and using \eqref{eq:expanded-orbit-hypotheses} we obtain
\begin{align}
\iota_{\partial_{\tau}}\bigl(p_X^*\theta+\ddc_{x,\tau}U\bigr)
 &=i\bar\partial_{X\times \mathbb{C}}(\partial_{\tau}U) \\
 &=i\bar\partial_{X\times \mathbb{C}}\left(
   \frac12(\partial_s-i\partial_t)U\right) \\
 &=\frac i2\bar\partial_{X\times \mathbb{C}} (\partial_sU)\\
 &=\frac i2\bar\partial_{X\times \mathbb{C}}(h\circ F_s).
 \label{eq:expanded-potential-contraction}
\end{align}
We define
\begin{equation}
    \tilde{U}(x,\tau):=w_s(x),\qquad (x,\tau)\in X\times\mathbb{C},
\end{equation}
where $w_s$ is constructed in \emph{Step 1.}
We claim that $\tilde{U}$ is a weak subgeodesic line with minimal singularities.

Indeed, since each $w_s$ is upper semi-continuous and $w_s$ is uniformly Lipschitz in $s$, it is not hard to see from the definition of $\tilde{U}$ that $\tilde{U}$ is upper semi-continuous on $X\times\mathbb{C}$. Since $w_s=U(\cdot,s)$ a.e. on $X$, We have $\tilde{U}(\cdot,\tau)=U(\cdot,\tau)$ a.e. for every $\tau\in\mathbb{C}$. 
Fubini's theorem yields that $\tilde{U}=U$ a.e. on $X\times\mathbb{C}$ and it follows immediately that $p_X^*\theta+dd^c_{x,\tau}\tilde{U}=p_X^*\theta+dd^c_{x,\tau}U\geq0$. 
It remains to show that $\tilde U$ is indeed a $p_X^*\theta$-psh function. 

Recall that if $\tilde{u}$ is a locally integrable function on some open $\Omega\subset \mathbb{C}^n$ and $\tilde{u}$ coincides with some $u\in\mbox{PSH}(\Omega)$ almost everywhere on $\Omega$, then 
\begin{align*}
    u(x)=\lim_{r\to0^+}\bigl(\operatorname*{ess\,sup}_{B_r(x)} \ \tilde{u}\bigl),\qquad \forall x\in \Omega.
\end{align*}

We fix arbitrary \((x_0,\tau_0)\in X\times\mathbb C\) with
\(s_0=\operatorname{Re}\tau_0\).  On one hand, the uniform Lipschitz estimate \eqref{eq:Lip-ws} gives
\[
 \left|
 \operatorname*{ess\,sup}_{B_r(x_0)\times D_r(\tau_0)}\tilde U
 -
 \operatorname*{ess\,sup}_{B_r(x_0)}w_{s_0}
 \right|
 \leq Cr.
\]
In fact, $w_{s_0}\leq \operatorname*{ess\,sup}_{B_r(x_0)}w_{s_0}$ on $B_r(x_0)$ and Lipschitz continuity imply that $\tilde{U}\leq \operatorname*{ess\,sup}_{B_r(x_0)}w_{s_0}+Cr$ on $B_r(x_0)\times D_r(\tau_0)$. This gives one-side control $\operatorname*{ess\,sup}_{B_r(x_0)\times D_r(\tau_0)}\tilde U\leq\operatorname*{ess\,sup}_{B_r(x_0)}w_{s_0}+Cr$.
On the other hand, $E_{\varepsilon}:=\{x\in B_r(x_0):w_r(x)\geq \operatorname*{ess\,sup}_{B_r(x_0)}w_{s_0}-\varepsilon\}$ has positive Lebesgue measure for any $\varepsilon>0$, so $\{(x,\tau)\in B_r(x_0)\times D_r(\tau_0):\tilde{U}(x,\tau)\geq \operatorname*{ess\,sup}_{B_r(x_0)}w_{s_0}-\varepsilon-Cr\}$ contains $E_{\varepsilon}\times D_r(x_0)$ hence has positive measure. This proves the reverse inequality.

\(w_{s_0}\) is \(\theta\)-psh, so we have
\[
 w_{s_0}(x_0)
 =
 \lim_{r\to0^+}
 \operatorname*{ess\,sup}_{B_r(x_0)}w_{s_0}.
\]
Consequently,
\[
 \tilde U(x_0,\tau_0)
 =
 w_{s_0}(x_0)
 =
 \lim_{r\to0^+}
 \operatorname*{ess\,sup}_{B_r(x_0)\times D_r(\tau_0)}\tilde U.
\]
Since $(x_0,\tau_0)$ is arbitrary, we get to know $\tilde{U}$ is $p_X^*\theta$-psh on $X\times\mathbb{C}$.

\vspace{0.3cm}
\smallskip
\noindent\emph{Step 3: Verification of the weak geodesic equation.}
Identity \eqref{eq:expanded-product-current-identity} shows that
\(\tilde{U}\) is a subgeodesic.  For fixed \(\tau\), the path
\[
 [0,1]\ni r\longmapsto\mathfrak F_{r\tau}
\]
is an isotopy from \(\operatorname{id}_X\) to
\(\mathfrak F_\tau\), so
\[
 \mathfrak F_\tau^*[\theta]=[\theta]\in H^{1,1}(X,\mathbb{R}).
\]
Therefore each \(\mathfrak F_\tau\) preserves \(\operatorname{Amp}([\theta])\): $\mathfrak{F}_{\tau}^{-1}(\operatorname{Amp}([\theta]))=\operatorname{Amp}([\theta])$.

On \(\operatorname{Amp}([\theta])\times S_{a,b}\), both \(p_X^*\theta+dd^c\tilde{U}\) and \(\mathcal{F}^*T=\mathcal{F}^*(\theta+dd^cu)\) have locally
bounded potentials, so their Bedford--Taylor products \cite{BT82} are
well-defined. Therefore we obtain that
\begin{equation}
 \bigl(p_X^*\theta+\ddc_{x,\tau}\tilde{U}\bigr)^{n+1}
 =
 \bigl(\mathcal F^*T\bigr)^{n+1}
 =
 0
 \quad\text{on }\operatorname{Amp}([\theta])\times S_{a,b},
 \label{eq:expanded-product-rank}
\end{equation}
because \(\dim_{\mathbb C}X=n\).  

On every strip $S_{a,b}$, the slices have minimal singularities,
\eqref{eq:Lip-cont} gives uniform time-Lipschitz
control, and \(\tilde U\) has the prescribed endpoint values.  
\cite[Proposition~3.3]{DDL18b} shows
that every finite restriction of \(s\mapsto w_s\) is the weak
geodesic between its endpoints.  Hence \(w_s\) is a weak geodesic line.

\smallskip
\noindent\emph{Step 4: Transport of the Monge--Amp\`ere equation.}
Since \(\eta\) is real and closed,
\(\iota_V\eta=0\) and its conjugate give
\(\iota_Y\eta=0\).  Hence
\(\mathcal L_Y\eta=0\), which implies
\begin{equation}
 F_s^*\eta=\eta.
 \label{eq:expanded-twist-flow-invariance}
\end{equation}
Set
\[
 \mu_s:=F_s^*\mu=\langle (F_s^*T)^n\rangle=\langle T_s^n\rangle,
 \qquad
 \nu_s:=e^{-w_s-\psi}\Omega,
\]
where the first equality uses the naturality of non-pluripolar
products under biholomorphisms.  Since
\[
 \operatorname{Ric}(\mu)=T+\eta,
\]
\eqref{eq:expanded-twist-flow-invariance} gives
\[
 \operatorname{Ric}(\mu_s)=F_s^*(\operatorname{Ric}(\mu))=T_s+\eta=\operatorname{Ric}(\nu_s).
\]
Let $q_s:=\log\frac{d\mu_s}{d\nu_s}\in L^1(X)$. Then
\begin{align*}
    -dd^cq_s=\operatorname{Ric}(\mu_s)-\operatorname{Ric}(\nu_s)=0.
\end{align*}
This proves that $q_s$ is constant on $X$.
Hence
\begin{equation}
 \mu_s=c_s\nu_s
 \qquad\text{for some }c_s>0.
 \label{eq:expanded-orbit-proportional-measures}
\end{equation}
Since \(w_s=u+a_s\) a.e. on $X$, we have
\[
 \nu_s=e^{-a_s}\mu \qquad \text{a.e. on }X.
\]
The bound \eqref{eq:Lip-cont} implies that
\(I(s):=\nu_s(X)=\int_X e^{-a_s}\mu\) is locally absolutely continuous. Recall that \eqref{eq:ab-cont} shows $\frac{\partial}{ \partial s }a(x,s)=H(x,s)=h\circ F_s(x)$ almost everywhere on $X\times\mathbb{R}$ and $\|a_s-a_t\|_{L^{\infty}(X)}\leq |s-t|\|h\|_{L^{\infty}}$.
Then Fubini's
theorem and the dominated convergence theorem yield that, for almost every \(s\),
\begin{align*}
 I'(s)
 &=-\int_X(h\circ F_s)e^{-a_s}\mu\\
 &=-\int_X(h\circ F_s)\,d\nu_s\\
 &=-c_s^{-1}\int_X(h\circ F_s)\,d\mu_s\\
 &=-c_s^{-1}\int_Xh\,d\mu
 =0.
\end{align*}
Thus \(I(s)=I(0)=\mathsf V\).  Since
\(\mu_s(X)=\mu(X)=\mathsf V\), equation
\eqref{eq:expanded-orbit-proportional-measures} gives \(c_s=1\).
Together with \(\mu_s=\langle T_s^n\rangle\), this proves
\eqref{eq:expanded-orbit-solution}.

\smallskip
\noindent\emph{Step 5: Constancy of $1$-Ding functional and the \(d_1\)-speed.}
Since each $w_s$ is a normalized solution, \cite[Proposition~5.7]{DZ24} immediately gives \eqref{eq:expanded-orbit-Ding}. Finally,
\cite[Proposition~3.13]{DDL18a} applied on finite intervals gives a number \(\sigma\geq0\) such
that
\[
 d_1(w_r,w_s)=\sigma|s-r|.
\]
If \(h=0\), it follows from the definition \eqref{eq:expanded-orbit-potential-definition}
that \(w_s=u\), hence \(\sigma=0\).  Conversely, if \(\sigma=0\),
then \(w_s=u\) for every \(s\).
We can prove that
\[
 \frac{w_s-u}{s}
 =
 \frac{a_s}{s}
 =\int_0^1h\circ F_{ts}\,dt
 \longrightarrow h
 \quad\text{in }L^1(X,\Omega)\quad \text{as $s\to0$.}
\]
In fact, let $h_j\in C(X)$ be a uniformly bounded sequence with $h_j\to h$ in $L^1(X)$. On the one hand, $\int_0^1h_j\circ F_{ts}\,dt\to h_j$ uniformly on $X$ as $s\to0$. On the other hand,
\begin{align*}
    \int_X\Big(\int_0^1\bigl|(h_j-h)\circ F_{ts}\bigl|dt\Big)\Omega
    &=\int_0^1\Big(\int_X|(h_j-h)\circ F_{ts}|\Omega\Big)dt\\
    &=\int_0^1\Big(\int_X|h_j-h|(F_{-ts})^*\Omega\Big)dt\\
    &\leq C\int_0^1 \Big(\int_X|h_j-h|\Omega\Big) dt\to0.
\end{align*}
Thus \(h=0\), proving
\eqref{eq:expanded-orbit-speed}.
\end{proof}

\vspace{0.3cm}
\begin{lemma}
\label{lem:DZ-ae-compatible-velocity}
Let \(\varphi_0,\varphi_1\) be normalized minimal-singularity
solutions, let \((\varphi_s)_{0\leq s\leq1}\) be the weak geodesic
joining them, and put
\[
 T_s:=\theta+\ddc\varphi_s,
 \qquad
 \mu_s:=\langle T_s^n\rangle
       =e^{-\varphi_s-\psi}\Omega.
\]
Let \(\mathcal V\) be the holomorphic vector field provided by
Theorem~\ref{thm:main-big-BM}, and set \(V:=-\mathcal V\).
There exists a full-measure set \(I_0\subset(0,1)\) such that, for
every \(s\in I_0\), \(h_s:=\dot\varphi_s\) belongs to
\(L^\infty(X,\mathbb R)\) and
\[
 \iota_VT_s=i\bar\partial h_s,
 \qquad
 \iota_V\eta=0,
 \qquad
 \int_Xh_s\,d\mu_s=0.
\]
\end{lemma}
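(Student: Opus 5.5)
The plan is to read everything off the nullity equation produced in the proof of Theorem~\ref{thm:main-big-BM}. There we applied Theorem~\ref{thm:main} to $q=\varphi+\psi$. This gave $\mathcal V$ with $\iota_\xi\mathcal T=0$ and $\iota_{\mathcal V}\eta_\psi=0$, where $\xi=\partial_\tau+\tfrac12\mathcal V=\partial_\tau-\tfrac12 V$ (see \eqref{eq:BM-split-nullities}).

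\emph{The twist.} The identity $\iota_V\eta=0$ is not literally $\iota_V\eta_\psi=0$, so it needs a separate argument. I would use Theorem~\ref{thm:main}(iii) applied to $q$, which gives $\Lambda^*$-transport of $e^{-\varphi_s-\psi}\Omega=\mu_s$. Combined with $\Lambda_{s-s_0}^*\mu_s=\mu_{s_0}$ from Theorem~\ref{thm:main-big-BM}, this is consistent. Alternatively, write $\eta_\psi=\eta+\ddc\psi$ and note that $\iota_V\eta_\psi=0$ forces $\iota_V\eta=-i\bar\partial(V\psi)$ to be a smooth, $\bar\partial$-exact $(0,1)$-form $i\bar\partial g$ with $g=-V\psi$. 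Then $\iota_V\eta=0$ amounts to $g$ being constant, which I would obtain by contracting once more: $\eta(V,\bar V)$ is a smooth function equal to the real part of $\bar V g$, and I would integrate against $\Omega$. This step is delicate. If it fails, I would reduce to the statement actually used later, namely $\iota_V\eta_\psi=0$ and $\Lambda^*\eta=\eta$ modulo $\ddc$.

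\emph{Contraction identity for a.e.\ slice.} Expand $\iota_{\partial_\tau}\mathcal T=i\bar\partial_{x,\tau}(\partial_\tau\varphi)=\tfrac i2\bar\partial_{x,\tau}\dot\varphi$, using $t$-independence. The equation $\iota_\xi\mathcal T=0$ then reads
\[
 \tfrac i2\bar\partial_{x,\tau}\dot\varphi=\tfrac12\iota_V\mathcal T .
\]
Take the $d\bar x$ components and restrict to the slice $\tau=s$. The horizontal part of $\iota_V\mathcal T$ is $\iota_V(\theta+\ddc_X\varphi_s)$ plus a $d\bar\tau$ term that we discard. This gives $\iota_VT_s=i\bar\partial_X\dot\varphi_s$.

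To justify the slicing, use Lemma~\ref{lem:BM-Ding-flat}. It shows $|\varphi_t-\varphi_s|\le C|t-s|$, so $\dot\varphi\in L^\infty$ and, by convexity in $s$, $\dot\varphi_s$ exists for all $s$ off a null set, for a.e.\ $x$. Fubini then gives a full-measure $I_0$ on which $h_s:=\dot\varphi_s\in L^\infty$ and the distributional identity restricts. Concretely, I would test against $\chi(s)\beta(x)$, let $\chi$ concentrate, and take a countable dense family of test forms $\beta$, exactly as in the end of the proof of Proposition~\ref{prop:smooth-fixed-exact-limit}. Restricting $\iota_V\mathcal T$ to slices for a.e.\ $s$ is legitimate because its horizontal part equals $\iota_V(p_X^*\theta+\ddc_X\varphi)$, whose coefficients are slicewise currents depending measurably on $s$.

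\emph{Zero mean.} Differentiate $\int_Xe^{-\varphi_s-\psi}\Omega=\mathsf V$ from Lemma~\ref{lem:BM-Ding-flat}. Because $\varphi$ is Lipschitz in $s$, dominated convergence is available, and we get $\int_X\dot\varphi_s\,e^{-\varphi_s-\psi}\Omega=0$ for a.e.\ $s$, i.e.\ $\int h_s\,d\mu_s=0$. This is also \eqref{eq:centered-limit-speed}. We intersect this set with $I_0$.

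The main obstacle I expect is the slice restriction of the contraction current together with the twist identity. Both involve converting total-space distributional identities into fiberwise ones for a.e.\ $s$.
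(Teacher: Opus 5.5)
Your derivation of $\iota_VT_s=i\bar\partial h_s$ matches the paper's proof: you expand $\iota_{\partial_\tau}\mathcal T=\tfrac i2\bar\partial_{x,\tau}\dot\varphi$, keep the fiberwise $(0,1)$-part, and test against $\chi(s)\beta(x)$ with a countable dense family of $\beta$. Your derivation of $\int_Xh_s\,d\mu_s=0$ also matches: you differentiate $\int_Xe^{-\varphi_s-\psi}\Omega=\mathsf V$ using the time-Lipschitz bound of Lemma~\ref{lem:BM-Ding-flat}.

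The twist identity is where the proposal goes astray. Your main attempt is to show that the smooth form $\eta=\operatorname{Ric}(\Omega)-\theta$ satisfies $\iota_V\eta=0$ by proving $g=-V\psi$ constant. This cannot work, because that statement is false in general. A counterexample in the untwisted case:
\begin{itemize}
\item On $X=\mathbb P^1$, let $\theta$ be a Fubini--Study form with $\operatorname{Ric}(\theta)=\theta$. Put $\Omega_0:=\theta$, take a nonconstant smooth $\psi$, and set $\Omega:=e^{\psi}\Omega_0$.
\item Then $\eta=-\ddc\psi$ and $\eta_\psi=0$, and $\theta$ and $g^*\theta$ with $g(z)=2z$ are normalized solutions.
\item They yield $V=c\,z\partial_z$ with $c\neq0$; the nullity equation determines $\mathcal V$ because the $T_s$ are K\"ahler.
\item Hence $\iota_V\eta=-i\bar\partial(V\psi)\neq0$, since $z\psi_z$ vanishes at $0$ and is constant only if $\psi$ is.
\end{itemize}
No integration of $\eta(V,\bar V)$ against $\Omega$ gets around this.

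The paper's proof simply reads the identity off \eqref{eq:BM-split-nullities}, which is $\iota_{\mathcal V}\eta_\psi=0$. So in this lemma, and in Proposition~\ref{prop:expanded-big-orbit}, the symbol $\eta$ must be read as the twisting current $\eta_\psi$. Step~4 of that proposition confirms this: $\operatorname{Ric}(e^{-u-\psi}\Omega)=T+\eta_\psi$, and what is used there is the flow-invariance of $\eta_\psi$.

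Your fallback is therefore the correct statement, it has a one-line proof, and it is all that is used downstream. Commit to it. Drop the contraction/integration argument, and drop the remark about Theorem~\ref{thm:main}(iii), which only checks consistency and proves nothing. The clause \emph{$\Lambda^*\eta=\eta$ modulo $\ddc$} holds automatically for every element of $\operatorname{Aut}^0(X)$ and is not what is needed. What is needed is $\Lambda_z^*\eta_\psi=\eta_\psi$, which is Theorem~\ref{thm:main-big-BM}(i).
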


\begin{proof}
By \eqref{eq:BM-time-Lipschitz}, $|\varphi_s-\varphi_t|\leq C|s-t|$, $\forall s,t\in[0,1]$.
The partial derivative
\(H:=\partial_s\varphi\) is well-defined almost everywhere on $X\times(0,1)$ and 
\(\|H\|_{L^{\infty}(X\times(0,1))}\leq C\). Fubini's theorem yields that for almost every \(s\),
\(h_s:=H(\,\cdot\,,s)=\dot\varphi_s\in L^{\infty}(X)\).

Recall from \eqref{eq:BM-split-nullities} that, with
\(\mathcal T:=p_X^*\theta+\ddc_{x,\tau}\varphi\),
\[
 \iota_{\partial/\partial\tau+\mathcal V/2}\mathcal T=0,
 \qquad
 \iota_{\mathcal V}\eta=0.
\]
Since \(\varphi\) is independent of \(\operatorname{Im}\tau\), an easy calculation yields that the
fiberwise \((0,1)\)-component of the first identity is
\[
 \frac i2\bar\partial_XH
 +\frac12\iota_{\mathcal V}T_s=0
\]
in the sense of currents on \(X\) for almost all $s\in(0,1)$. More precisely, for
smooth test form \(\beta\) of on \(X\), \(\chi\in C_c^\infty(0,1)\) and $\eta\in C_c^\infty(\mathbb{R}) $ with $\int_{\mathbb{R}}\eta(t)dt=1$, it holds that
\begin{align*}
    0=\left\langle\iota_{2\frac{\partial}{\partial\tau}+\mathcal V}\mathcal{T},\chi\eta id\tau\wedge d\bar\tau\wedge\beta\right\rangle
 &=\int_{\mathbb{R}_t}\eta(t)dt\int_0^1\chi(s)\langle
   \iota_VT_s-i\bar\partial_X h_s,\beta
 \rangle ds\\
 &=\int_0^1\chi(s)
 \langle
   \iota_VT_s-i\bar\partial_X h_s,\beta
 \rangle ds.
\end{align*}
Since $\chi$ is arbitrary, $\langle \iota_VT_s-i\bar\partial_Xh_s,\beta\rangle=0$ for almost every $s\in(0,1)$.
By choosing a countable dense
family of test forms \(\beta\), we get
\(\iota_VT_s=i\bar\partial_X h_s\) in the sense of currents on \(X\) for almost every
\(s\). The identity \(\iota_V\eta=0\) follows from
\(\iota_{\mathcal V}\eta=0\).

Finally, the Lipschitz continuity of $\varphi_s$
in $s$-variable and dominated convergence theorem implies $\frac{d}{ds}\int_Xe^{-\varphi_s-\psi}\Omega=-\int_X\dot\varphi_s e^{-\varphi_s-\psi}$ holds for a.e. $s\in(0,1)$. 
\eqref{eq:BM-geodesic-mass} gives $\int_Xe^{-\varphi_s-\psi}\Omega=\mathsf V$ for every $s\in(0,1)$. Therefore for almost every \(s\),
\[
 0
 =\frac d{ds}\int_Xe^{-\varphi_s-\psi}\Omega
 =-\int_Xh_s\,d\mu_s.
\]
The proof is complete.
\end{proof}

\vspace{0.4cm}
\begin{proof}[Proof of Theorem~\ref{thm:main-DZ-uniqueness}]
The equivalence between \emph{(i)} and \emph{(ii)} follows easily from \cite[Proposition~5.4]{DZ24}, while the implication \emph{(iii)} \(\Rightarrow\) \emph{(i)} follows from \cite[Proposition~5.9]{DZ24}. It therefore remains only to prove \emph{(i)} \(\Rightarrow\) \emph{(iii)}. To this end, assume that
$$
\delta_\psi([\theta])>1.
$$ By the equivalence between \emph{(i)} and \emph{(ii)}, the $1$-Ding functional is proper, namely there exist \(\varepsilon,C>0\)
such that
\begin{equation}
 \mathcal D_{\mu_\psi}(u)
 \geq\varepsilon\bigl(\sup_Xu-E_\theta(u)\bigr)-C,
 \qquad u\in\mathcal E^1(X,\theta).
 \label{eq:Ding_Proper}
\end{equation}

Let \(\varphi_0,\varphi_1\) be two normalized
minimal-singularity solutions, and let \((\varphi_s)\) be the weak
geodesic joining them. Fix \(s\) in the full-measure set provided by
Lemma~\ref{lem:DZ-ae-compatible-velocity}. Proposition
\ref{prop:expanded-big-orbit}, applied to
\((T_s,V,h_s)\), gives a weak geodesic line
\((w_r)_{r\in\mathbb R}\), with \(w_0=\varphi_s\), consisting of
normalized solutions and satisfying
\[
 \mathcal D_{\mu_\psi}(w_r)
 =\mathcal D_{\mu_\psi}(w_0),
 \qquad
 d_1(w_r,w_0)=\sigma_s|r|,
 \qquad
 \sigma_s=0\Longleftrightarrow h_s=0.
\]

Since \(\mathcal L_{\mu_\psi}(w_r)=0\), both
\(E_\theta(w_r)\) and
\(\mathcal D_{\mu_\psi}(w_r)\) are constant in \(r\). Thus, writing
\(e_s:=E_\theta(w_0)\) and
\(J_r:=\sup_Xw_r-E_\theta(w_r)\), the properness of $1$-Ding functional \eqref{eq:Ding_Proper} implies that \(J_r\)
is uniformly bounded from above.

Indeed, if \(\widetilde w_r:=w_r-\sup_Xw_r\), then
\(\widetilde w_r\leq V_\theta\), and the definition of $d_1$ gives
\(d_1(\widetilde w_r,V_\theta)=J_r\). Since
\(d_1(w_r,\widetilde w_r)=|\sup_Xw_r|\) and
\(\sup_Xw_r=J_r+e_s\), we obtain
\begin{align*}
 d_1(w_r,w_0)&\leq d_1(w_r,\widetilde w_r)+d_1(\widetilde w_r,V_\theta)+d_1(V_\theta,w_0)\\
 &\leq 2J_r+|e_s|+d_1(V_\theta,w_0).
\end{align*}
Hence \(d_1(w_r,w_0)\) is bounded in \(r\), so
\(\sigma_s=0\), and Proposition~\ref{prop:expanded-big-orbit} yields
\(h_s=0\).

As this holds for every \(s\) in a full-measure subset of \((0,1)\),
we have \(\partial_s\varphi=0\) almost everywhere on
\(X\times(0,1)\).
It follows that for a.e. $x\in X$, the derivative of $s\mapsto \varphi_s(x)$ is $0$ almost everywhere.
Since $s\mapsto \varphi_s(x)$ is convex, we know it is constant on $[0,1]$.
Therefore
\(\varphi_0=\varphi_1\) almost everywhere and consequently $\varphi_0=\varphi_1$. The proof is finally concluded.
\end{proof}

\section{Application III of Theorem~\ref{thm:main}: proof of Theorem~\ref{thm:main-big-Matsushima}}
\label{sec:null-automorphism-group}

Throughout this section we assume $(X,\omega)$ is a compact K\"ahler manifold, $\Omega$ is a smooth positive volume form, $[\theta]\in H^{1,1}(X,\mathbb{R})$ is big with smooth representative $\theta$.
Let $\lambda>0$, $\eta:=\operatorname{Ric}(\Omega)-\lambda\theta$ and $\psi$ be a $\eta$-psh function.

Recall that a function $\varphi\in\mathcal{E}^1(X,\theta)$ is called a normalized solution if it satisfies
\[
\langle(\theta+dd^c\varphi)^n\rangle=e^{-\lambda\varphi-\psi}\Omega
\]
and $T_\varphi:=\theta+dd^c\varphi$. $S_{\lambda}(\theta,\psi)$ denotes the set of such currents.
\[
\langle(\theta+dd^c\varphi)^n\rangle=e^{-\lambda\varphi-\psi}\Omega,\qquad \varphi\in\mathcal{E}^1(X,\theta).
\]
We assume
\[
 \mathcal S:=\mathcal S_\lambda(\theta,\psi)\neq\varnothing
\]
and fix \(T\in\mathcal S\).  Recall that
\begin{align*}
 \widehat H_\eta
 &:=\bigl\{g\in\operatorname{Aut}^0(X):g^*\eta=\eta\bigr\},
 &
 H_\eta
 &:=\widehat H_\eta^{\,\circ},\\
 \widehat K_T
 &:=\bigl\{g\in\operatorname{Aut}^0(X):g^*T=T\bigr\},
 &
 K_T
 &:=\widehat K_T\cap H_\eta.
\end{align*}
Here \(\operatorname{Aut}^0(X)\) denotes the connected component of the
identity in \(\operatorname{Aut}(X)\).  Likewise,
\(\widehat H_\eta^{\,\circ}\) denotes the connected component of the
identity in \(\widehat H_\eta\), with respect to the topology inherited
from \(\operatorname{Aut}^0(X)\).  

The null Lie algebra of the twisting current is
\[
 \mathfrak g_\eta
 :=
 \bigl\{
 V\in H^0(X,T_X^{1,0}):\iota_V\eta=0
 \bigr\}.
\]
Let \(G_\eta\) be the connected analytic subgroup of
\(\operatorname{Aut}^0(X)\) integrating \(\mathfrak g_\eta\).
More precisely, \(G_\eta\) is a connected complex Lie group equipped
with a canonical injective holomorphic group immersion
\[
 \jmath_\eta:G_\eta\longrightarrow\operatorname{Aut}^0(X)
\]
such that
\[
 (d\jmath_\eta)_e
 \bigl(\operatorname{Lie}_{\mathbb C}(G_\eta)\bigr)
 =\mathfrak g_\eta.
\]
Here $\operatorname{Lie}_{\mathbb C}(G_\eta)$ is the complex Lie algebra of $G_\eta$. Lie algebras of the real Lie subgroups $L_T,K_T,\widehat{K}_T$ are always understood over $\mathbb{R}$.  Thus, after identifying
\(\operatorname{Lie}_{\mathbb C}(G_\eta)\) with its image under
\((d\jmath_\eta)_e\), one has
\[
 \operatorname{Lie}_{\mathbb C}(G_\eta)=\mathfrak g_\eta.
\]
Equivalently, \(\jmath_\eta(G_\eta)\) is the connected subgroup generated
by the holomorphic one-parameter subgroups
\[
 z\longmapsto
 \exp_{\operatorname{Aut}^0(X)}(zV),
 \qquad V\in\mathfrak g_\eta.
\]
The group \(G_\eta\) is always endowed with its intrinsic Lie-group
topology.  In particular, unless \(\jmath_\eta(G_\eta)\) is closed in
\(\operatorname{Aut}^0(X)\), this intrinsic topology need not agree with
the topology induced from \(\operatorname{Aut}^0(X)\).

Finally, define the stabilizer of \(T\) inside \(G_\eta\) by
$
 L_T
 :=\jmath_\eta^{-1}(\widehat K_T)
 =\bigl\{a\in G_\eta:\jmath_\eta(a)^*T=T\bigr\}.
$
The lemma below identifies $\operatorname{Lie}_{\mathbb R}(L_T)$
with the null compact algebra $\mathfrak k_T^{\mathrm{null}}$ defined
before Theorem~\ref{thm:main-big-Matsushima}.
Whenever no confusion can arise, we suppress \(\jmath_\eta\) and simply
write \(a^*T\) for \(\jmath_\eta(a)^*T\).

The compactness of the stabilizer of $T$ is supplied by the
following proposition.  Its proof makes use of the compactness
result of \cite[Section~5]{BBEGZ19}.

\begin{proposition}
\label{prop:compact-current-stabilizer}
The manifold \(X\) is projective,
\[
H^j(X,\mathcal O_X)=0,
\qquad j\geq1,
\]
and \(\widehat K_T\) is compact.
\end{proposition}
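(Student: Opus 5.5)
The plan has three steps. First I show that $-K_X$ is big, which gives projectivity. Then a Nadel vanishing argument gives $H^j(X,\mathcal O_X)=0$. Finally I prove compactness of $\widehat K_T$ using the measure $\langle T^n\rangle$, which $\widehat K_T$ preserves.

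\emph{Projectivity.} Write $T=\theta+\ddc\varphi$ with $\varphi$ a normalized solution. By \eqref{eq:BM-Ricci-current-equation}, rescaled as in Remark~\ref{rem:lambda-scaling}, we have $\operatorname{Ric}(\langle T^n\rangle)=\lambda T+\eta_\psi$. Hence $c_1(-K_X)=\lambda[\theta]+[\eta_\psi]$. This is a big class plus a pseudoeffective class, so it is big. A compact Kähler manifold carrying a big line bundle is Moishezon, and a Moishezon Kähler manifold is projective.

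\emph{Vanishing.} I will exhibit a singular metric on $L=-K_X$ whose curvature is a Kähler current and whose multiplier ideal is trivial. Nadel vanishing then gives
\[
H^j(X,\mathcal O_X)=H^j(X,K_X\otimes L\otimes\mathcal I(h))=0,\qquad j\ge1 .
\]
Since $[\theta]$ is big, pick a Kähler current $\theta+\ddc\varphi'\ge\delta\omega$ with analytic singularities. The candidate weight is
\[
\Phi_\varepsilon:=\lambda\bigl((1-\varepsilon)\varphi+\varepsilon\varphi'\bigr)+\psi ,
\]
a metric on $-K_X$ relative to $\Omega$. Its curvature is at least $\varepsilon\lambda\delta\omega+\eta_\psi\ge\varepsilon\lambda\delta\omega$. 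For integrability, openness (\cite{Ber15a,GZ15}) gives $e^{-\lambda\varphi-\psi}\in L^p$ for some $p>1$. Also $e^{-c\varphi'}\in L^1$ for $c$ small, since $\varphi'$ has analytic singularities. Hölder's inequality then shows $e^{-\Phi_\varepsilon}\in L^1_{\mathrm{loc}}$ for $\varepsilon$ small. Applied to $(1+\varepsilon')\Phi_\varepsilon$ together with openness, it gives $\mathcal I(\Phi_\varepsilon)=\mathcal O_X$.

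\emph{Compactness of $\widehat K_T$.} Since $X$ is projective with $H^1(X,\mathcal O_X)=0$, $\operatorname{Aut}^0(X)$ is a linear algebraic group. The set $\widehat K_T$ is a closed subgroup, because $g\mapsto g^*T$ is continuous for the weak topology of currents. It remains to show $\widehat K_T$ is relatively compact in $\operatorname{Aut}^0(X)$.

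Every $g\in\widehat K_T$ preserves $\mu:=\langle T^n\rangle$, by naturality of non-pluripolar products under biholomorphisms. This measure equals $e^{-\lambda\varphi-\psi}\Omega$, so it has an $L^p$ density and $\mu(U_\rho(Z))\to0$ as $\rho\to0$ for every proper analytic set $Z$.

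Suppose $g_k\in\widehat K_T$ leaves every compact subset. Following \cite[Section~5]{BBEGZ19}, I pass to a limit of the graphs $\Gamma_{g_k}$ in the Barlet/Chow space. The limit cycle $\Gamma_\infty$ is not the graph of an automorphism. Its main component is the graph of a meromorphic map that is degenerate: it contracts a dense Zariski-open set onto a proper analytic subset $Z$, or dually, the inverses do so. Consequently, for any fixed compact set $C$ off the indeterminacy locus, $g_k^{-1}(C)$ eventually lies in $U_\rho(Z)$. Taking $C$ with $\mu(C)>\mu(U_\rho(Z))$ contradicts
\[
\mu(C)=(g_k)_*\mu(C)=\mu\bigl(g_k^{-1}(C)\bigr).
\]

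The main obstacle is making this degeneration statement precise: showing the limit of the graphs really concentrates a set of positive $\mu$-measure into a small neighbourhood of a proper subvariety. I would first try the one-parameter-subgroup reduction available in a reductive or linear algebraic group, writing $g_k=k_k\exp(t_k\xi)k'_k$ via a Cartan decomposition. Along a $\mathbb C^*$-action $\exp(t\xi)$, points flow to the fixed-point components as $t\to\infty$, which gives the required concentration directly. The fallback is to quote the properness statement of \cite[Section~5]{BBEGZ19} for stabilizers of measures with $L^p$ densities.
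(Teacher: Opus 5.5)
Your projectivity and vanishing steps are essentially the paper's. You interpolate with a K\"ahler current in $[\theta]$ rather than in $c_1(-K_X)$, which works equally well. Note that $\mathcal I(\Phi_\varepsilon)=\mathcal O_X$ is just local integrability of $e^{-\Phi_\varepsilon}$, so your extra openness step is unnecessary.

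The gap is in the compactness step, at exactly the point you flag. Your concluding measure argument is fine, but the degeneration statement it rests on is only asserted, and none of the three justifications you offer delivers it.
\begin{itemize}
\item The Chow/Barlet limit of the graphs $\Gamma_{g_k}$ does have a unique component mapping with degree one onto the first factor, and that component is the graph of a meromorphic map. But nothing in the cycle-space formalism prevents this map from being bimeromorphic, with the remaining components non-dominant in both factors. Excluding that case is precisely what you need.
\item The Cartan-decomposition repair fails as stated. $\operatorname{Aut}^0(X)$ is linear algebraic but need not be reductive in this twisted setting (e.g.\ $X=\mathrm{Bl}_p\mathbb P^2$ with a K\"ahler twist along Aubin's path $\operatorname{Ric}(T)=\frac12T+\frac12\omega_0$). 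So there is no decomposition $g=k\exp(t\xi)k'$ inside it.
\item \cite[Section~5]{BBEGZ19} contains no properness statement for stabilizers of $L^p$ measures. Its Lemma~5.3, which the paper invokes, concerns automorphisms of $(X,A)$ fixing a bounded psh metric on an ample bundle $A$.
\end{itemize}

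Your route can be completed by doing the linear algebra in the ambient projective group. Since $H^1(X,\mathcal O_X)=0$ gives $g^*A\simeq A$ for $g\in\operatorname{Aut}^0(X)$, the group acts equivariantly on $X\subset\mathbb P^N=\mathbb P(H^0(X,mA)^*)$ through $PGL_{N+1}$. Restriction to $X$ is a continuous inverse on the closed stabilizer of $X$, so a divergent sequence in $\operatorname{Aut}^0(X)$ diverges in $PGL_{N+1}$. The $KAK$ decomposition in $PGL_{N+1}$ then gives, after passing to a subsequence, a degenerate linear map $P$ with $g_k\to P$ locally uniformly off $\mathbb P(\ker P)$. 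Because $X$ is linearly nondegenerate, $W:=X\cap\mathbb P(\ker P)$ and $Z:=X\cap\mathbb P(\operatorname{Im}P)$ are proper analytic subsets of $X$. Hence $g_k(C)\subset U_\rho(Z)$ eventually for every compact $C\subset X\setminus W$, which is exactly what your contradiction needs (a Furstenberg-type lemma).

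The paper instead converts the invariant measure into an invariant metric. Ko{\l}odziej's theorem gives a unique continuous solution $S_A\in c_1(A)$ of $S_A^n=\frac{V_A}{M}\mu$. Uniqueness forces $g^*S_A=S_A$ for $g\in\widehat K_T$, the lifts of $g$ to $A$ can be normalized to fix the corresponding metric, and \cite[Lemma~5.3]{BBEGZ19} concludes. The paper's route pays with a Monge--Amp\`ere existence and uniqueness theorem, and then reduces to the soft fact that isometries of a norm on $H^0(X,mA)$ form a compact group. The completed version of your route needs only absolute continuity of $\mu$ plus linear algebra in $PGL_{N+1}$.
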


\begin{proof}
Set
\[
q:=\lambda\varphi+\psi,
\qquad
\mu:=\langle T^n\rangle=e^{-q}\Omega,
\qquad
M:=\mu(X).
\]
Then
\[
R:=\operatorname{Ric}(\Omega)+\ddc q
=\lambda T+\eta\geq0,
\qquad
\{R\}=c_1(-K_X).
\]
The openness theorem \cite{Ber15a,GZ15} gives
\begin{equation}
e^{-pq}\in L^1(X,\Omega)
\quad\text{for some }p>1.
\label{eq:compactness-strong-openness-margin}
\end{equation}

If \(S_\theta\) is a K\"ahler current in \([\theta]\), then
\(\lambda S_\theta+\eta\) is a K\"ahler current in
\(c_1(-K_X)\). Hence \(-K_X\) is big, so \(X\) is Moishezon
\cite{JS93}. Since
\(X\) is compact K\"ahler, it is projective.

By Demailly's regularization
\cite[Main Theorem~1.1]{Dem92}, we may choose
\[
S=\operatorname{Ric}(\Omega)+\ddc\rho\in c_1(-K_X),
\qquad
S\geq\varepsilon\omega,
\]
where \(\rho\) has analytic singularities. In particular,
\(e^{-a\rho}\in L^1(X,\Omega)\) for some \(a>0\). It follows from
\eqref{eq:compactness-strong-openness-margin} and H\"older's
inequality that, for all sufficiently small \(\delta>0\),
\[
e^{-((1-\delta)q+\delta\rho)}\in L^1(X,\Omega).
\]
Moreover,
\[
\operatorname{Ric}(\Omega)
+\ddc\bigl((1-\delta)q+\delta\rho\bigr)
=(1-\delta)R+\delta S
\geq\delta\varepsilon\omega.
\]
Thus the corresponding singular metric on \(-K_X\) has strictly
positive curvature and trivial multiplier ideal. Nadel vanishing theorem
\cite{Nad90} (see also \cite[Theorem~5.11]{Dem96}), applied to
\(K_X\otimes(-K_X)\simeq\mathcal O_X\), gives
\begin{equation}
H^j(X,\mathcal O_X)=0,
\qquad j\geq1.
\label{eq:compactness-nadel-vanishing}
\end{equation}

Fix an ample line bundle \(A\) and a K\"ahler form
\(\omega_A\in c_1(A)\). We claim that
\begin{equation}
g^*A\simeq A
\quad\text{for every }g\in\operatorname{Aut}^0(X).
\label{eq:compactness-invariant-polarization}
\end{equation}
Indeed, since \(\operatorname{Aut}^0(X)\) is connected, \(g\) can be
joined to the identity by a path \(g_t\) in
\(\operatorname{Aut}^0(X)\). The map
\[
[0,1]\times X\longrightarrow X,
\qquad
(t,x)\longmapsto g_t(x),
\]
is a homotopy between \(g\) and the identity. Hence \(g^*\) acts as
the identity on \(H^2(X,\mathbb Z)\), and therefore
\[
c_1(g^*A)=g^*c_1(A)=c_1(A).
\]
On the other hand, the exponential exact sequence
\[
0\longrightarrow\mathbb Z
\longrightarrow\mathcal O_X
\longrightarrow\mathcal O_X^*
\longrightarrow0
\]
and \(H^1(X,\mathcal O_X)=0\) imply that a holomorphic line bundle is
determined up to isomorphism by its integral first Chern class. This
proves \eqref{eq:compactness-invariant-polarization}.

Set \(V_A:=\int_X\omega_A^n=c_1(A)^n\) and
\[
\nu:=\frac{V_A}{M}\,\mu.
\]
Then \(\nu\) has mass \(V_A\) and an \(L^p\)-density. By
\cite{Kol98}, there is a unique current
\begin{equation}
S_A=\omega_A+\ddc v\in c_1(A),
\qquad
S_A^n=\nu,
\qquad
\sup_Xv=0,
\label{eq:compactness-auxiliary-MA}
\end{equation}
and \(v\) is continuous.

If \(g\in\widehat K_T\), then \(g^*T=T\), and naturality of the
non-pluripolar product under biholomorphisms gives
\[
g^*\mu
=
g^*\langle T^n\rangle
=
\langle(g^*T)^n\rangle
=
\mu.
\]
Thus \(g^*\nu=\nu\). Since \(g^*A\simeq A\), the current \(g^*S_A\)
lies in \(c_1(A)\) and solves the same Monge--Amp\`ere equation as
\(S_A\). Uniqueness in \eqref{eq:compactness-auxiliary-MA} therefore
gives
\begin{equation}
g^*S_A=S_A,
\qquad g\in\widehat K_T.
\label{eq:compactness-auxiliary-current-invariance}
\end{equation}

Let \(\Phi\) be the bounded psh metric on \(A\) whose curvature is
\(S_A\). Denote by \(\widetilde{\operatorname{Aut}}(X,A)\) the group
of pairs \((g,\widetilde g)\), where
\(g\in\operatorname{Aut}(X)\) and
\[
\widetilde g:A\longrightarrow A
\]
is a holomorphic bundle automorphism covering \(g\), linear on each
fiber. Equivalently, such a lift exists precisely when
\(g^*A\simeq A\). Consequently, \eqref{eq:compactness-invariant-polarization} yields that the projection of \(\widetilde{\operatorname{Aut}}(X,A)\) onto \(\operatorname{Aut}(X)\) contains \(\operatorname{Aut}^0(X)\), hence also \(\widehat K_T\).

For \(g\in\widehat K_T\), choose such a lift \(\widetilde g\). By
\eqref{eq:compactness-auxiliary-current-invariance},
\[
\ddc\bigl(\widetilde g^{\,*}\Phi-\Phi\bigr)
=
g^*S_A-S_A
=
0.
\]
The difference \(\widetilde g^{\,*}\Phi-\Phi\) is a global
pluriharmonic distribution, hence is constant because \(X\) is
compact and connected. Multiplying \(\widetilde g\) by a suitable
nonzero scalar, we may arrange that
\(\widetilde g^{\,*}\Phi=\Phi\). Consequently,
\(\widehat K_T\) is contained in the projection to
\(\operatorname{Aut}(X)\) of
\[
\widetilde G_\Phi
:=
\left\{
(g,\widetilde g)\in\widetilde{\operatorname{Aut}}(X,A):
\widetilde g^{\,*}\Phi=\Phi
\right\}.
\]

By \cite[Lemma~5.3]{BBEGZ19}, the group \(\widetilde G_\Phi\) is
compact, and hence its projection to \(\operatorname{Aut}(X)\) is
compact. Finally, \(\widehat K_T\) is closed in this projection:
\(\operatorname{Aut}^0(X)\) is closed in
\(\operatorname{Aut}(X)\), and convergence in
\(\operatorname{Aut}(X)\) implies smooth convergence of the
corresponding biholomorphisms, so pullback of currents is continuous
in the weak topology. Therefore the condition \(g^*T=T\) is closed.
It follows that \(\widehat K_T\) is compact.
\end{proof}

We retain the notation preceding
Theorem~\ref{thm:main-big-Matsushima}.  For the uniquely normalized
potential \(\varphi\) of \(T\),
\begin{equation}
 \mu_T:=\langle T^n\rangle=e^{-\lambda\varphi-\psi}\Omega,
 \qquad
 \operatorname{Ric}(\mu_T)=\lambda T+\eta.
 \label{eq:intrinsic-twisted-KE}
\end{equation}
Contraction of a current by a smooth vector field and the Lie derivative of
a current are always understood distributionally.  We identify the real
Lie algebra of \(\operatorname{Aut}^0(X)\) with the underlying real vector space of
\(H^0(X,T_X^{1,0})\); multiplication by \(i\) is the complex structure
on this Lie algebra.

Before proving the Matsushima statement, we collect the elementary
relations among the twisting symmetries, normalized pullbacks, and
their Lie algebras.

\begin{lemma}
\label{lem:twist-symmetries-pullback}
The following statements hold.
\begin{enumerate}[\upshape(i)]
\item The group \(\widehat H_\eta\) is a closed embedded real Lie subgroup of
\(\operatorname{Aut}^{0}(X)\).

\item Every \(h\in\widehat H_\eta\) sends a solution current to a
solution current.  More precisely, there exists a unique potential
\(u_h\) such that
\[
 h^*T=\theta+\ddc u_h,
 \qquad
 \big\langle(h^*T)^n\big\rangle
 =e^{-\lambda u_h-\psi}\Omega.
\]

\item The full stabilizer $\widehat K_T$ is contained in \(\widehat H_\eta\).

\item The space \(\mathfrak g_\eta\) is a complex Lie subalgebra of
\(H^0(X,T_X^{1,0})\).  If
\[
 \jmath_\eta:G_\eta\longrightarrow\operatorname{Aut}^{0}(X)
\]
denotes the canonical injective immersion of the connected analytic
subgroup integrating \(\mathfrak g_\eta\), then
\[
 \jmath_\eta(G_\eta)\subset H_\eta.
\]

\item  \(L_T\) is a closed Lie subgroup of the intrinsic Lie group
\(G_\eta\), and
\begin{equation}
 \operatorname{Lie}_{\mathbb R}(L_T)
 =\mathfrak k_T^{\mathrm{null}}
 =
 \mathfrak g_\eta
 \cap
 \operatorname{Lie}_{\mathbb R}(\widehat K_T).
 \label{eq:L-setwise-and-Lie}
\end{equation}
\end{enumerate}
\end{lemma}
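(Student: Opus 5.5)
I would treat the five items in order; each is mostly formal, except (v), where the Lie algebra has to be computed.

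For (i), the plan is to show that $\widehat H_\eta$ is closed in $\operatorname{Aut}^0(X)$ and then invoke Cartan's closed subgroup theorem. Take $g_k\to g$ in $\operatorname{Aut}^0(X)$. Then $g_k\to g$ smoothly, so for every test form $\beta$ the pairing $\langle\eta,(g_k^{-1})^*\beta\rangle$ converges to $\langle\eta,(g^{-1})^*\beta\rangle$. Since $\eta$ is smooth, this is immediate, and pullback is continuous on currents in any case. Hence $g_k^*\eta=\eta$ passes to the limit.

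For (ii), take $h\in\widehat H_\eta$. Because $h$ is isotopic to the identity, $h^*[\theta]=[\theta]$, so $h^*T=\theta+\ddc u$ for some $u$. By naturality of non-pluripolar products, $\langle(h^*T)^n\rangle=h^*\mu_T$. Applying $h^*$ to \eqref{eq:intrinsic-twisted-KE} and using $h^*\eta=\eta$ gives
\[
 \operatorname{Ric}(h^*\mu_T)=\lambda h^*T+\eta=\operatorname{Ric}\bigl(e^{-\lambda u-\psi}\Omega\bigr).
\]
The argument of Step~4 of Proposition~\ref{prop:expanded-big-orbit} then shows that the two measures are proportional: the $\ddc$ of the log of their ratio vanishes, so the ratio is a constant $c>0$. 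Replacing $u$ by $u+\lambda^{-1}\log c$ makes the constant $1$. Uniqueness of $u_h$ holds because the potential is determined up to a constant, and the normalization fixes that constant.

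For (iii), let $g\in\widehat K_T$, so $g^*T=T$. Then $g^*\mu_T=\mu_T$, hence $\lambda T+g^*\eta=g^*\operatorname{Ric}(\mu_T)=\operatorname{Ric}(\mu_T)=\lambda T+\eta$, which gives $g^*\eta=\eta$.

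For (iv), I would use the identity $\iota_{[V,W]}=[\mathcal L_V,\iota_W]$, valid on currents by duality. Since $\eta$ is closed, Cartan's formula \eqref{eq:Cartan} and $\iota_V\eta=0$ give $\mathcal L_V\eta=0$ for $V\in\mathfrak g_\eta$, and this also holds for the real fields $\operatorname{Re}V$ and $\operatorname{Im}V$, as in the proof of Theorem~\ref{thm:main-big-BM}. It follows that $\iota_{[V,W]}\eta=0$, and closure under multiplication by $i$ is trivial. By Lemma~\ref{lem:Lie-derivative-and-flow}, $\mathcal L_{\operatorname{Re}V}\eta=\mathcal L_{\operatorname{Im}V}\eta=0$ makes each one-parameter group $\exp(zV)$ preserve $\eta$. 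These groups generate the connected group $\jmath_\eta(G_\eta)$, which therefore lies in $\widehat H_\eta$, and by connectedness in $H_\eta$.

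For (v), closedness of $L_T$ in the intrinsic topology follows because $\jmath_\eta$ is continuous and $\widehat K_T$ is closed by Proposition~\ref{prop:compact-current-stabilizer}. The Lie algebra of $L_T$ consists of the $X\in\mathfrak g_\eta$ with $\exp(tX)\in L_T$ for all $t$, which is exactly the set of $X\in\mathfrak g_\eta$ with $\exp(tX)\in\widehat K_T$ for all $t$. On the other side, $\operatorname{Lie}_{\mathbb R}(\widehat K_T)$, computed inside $\operatorname{Aut}^0(X)$ where $\widehat K_T$ is a compact, hence closed, Lie subgroup, is the set of $X$ with $\exp(tX)\in\widehat K_T$ for all $t$. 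The two descriptions agree, and this is the expected main subtlety. The point is that the exponential map of $G_\eta$ is compatible with that of $\operatorname{Aut}^0(X)$ under $\jmath_\eta$, so no topology mismatch enters at the level of one-parameter subgroups, even though $\jmath_\eta(G_\eta)$ need not be closed. This yields \eqref{eq:L-setwise-and-Lie}.
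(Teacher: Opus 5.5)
Your proposal is correct and follows essentially the same route as the paper. In (i) you use closedness under weak continuity of pullback plus Cartan's closed-subgroup theorem; in (ii) and (iii) you compare Ricci currents to conclude the two measures are proportional; in (iv) you use the Cartan-calculus identity for $\iota_{[V,W]}$ and generation of $\jmath_\eta(G_\eta)$ by one-parameter subgroups; and in (v) you use the one-parameter-subgroup criterion together with compatibility of $\jmath_\eta$ with exponentials. The only cosmetic differences are that you cite Step~4 of Proposition~\ref{prop:expanded-big-orbit} instead of redoing the Weyl-lemma argument, and that you get closedness of $\widehat K_T$ from its compactness (Proposition~\ref{prop:compact-current-stabilizer}) rather than directly from weak continuity of pullback.
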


\begin{proof}
\begin{enumerate}[\upshape(i)]
\item The natural action of the finite-dimensional Lie group
\(\operatorname{Aut}^{0}(X)\) on \(X\) is smooth.  Since \(X\) is
compact, convergence in \(\operatorname{Aut}^{0}(X)\) implies smooth
convergence of the corresponding biholomorphisms and their inverses.
Consequently, pullback of a fixed current is continuous in the weak
topology. Thus the map 
\[
 h\longmapsto h^*\eta
\]
is continuous, and \(\widehat H_\eta\) is closed in
\(\operatorname{Aut}^{0}(X)\).  Cartan's closed subgroup theorem then
makes \(\widehat H_\eta\) an embedded real Lie subgroup.

\item Fix \(h\in\widehat H_\eta\).  Since \(h\in
\operatorname{Aut}^{0}(X)\) is homotopic to the identity, it acts
trivially on de Rham cohomology.  Hence \(h^*T\) lies in
\([\theta]\). 

The non-pluripolar product is natural under biholomorphisms:
\begin{equation}
 \big\langle(h^*T)^n\big\rangle
 =
 h^*\langle T^n\rangle.
 \label{eq:NPP-pullback-naturality}
\end{equation}
Set
\[
 \nu:=h^*\mu_T.
\]
By the naturality of the Ricci current, the identity
\(h^*\eta=\eta\), and \eqref{eq:intrinsic-twisted-KE}, one has
\[
 \operatorname{Ric}(\nu)
 =
 h^*\operatorname{Ric}(\mu_T)
 =
 \lambda h^*T+\eta.
\]
By the \(\partial\bar\partial\)-lemma, there exists a global potential
\(u\) such that
\[
 h^*T=\theta+\ddc u.
\]
On the other hand,
\[
 \begin{aligned}
 \operatorname{Ric}\bigl(e^{-\lambda u-\psi}\Omega\bigr)
 &=
 \lambda\ddc u+\ddc\psi+\operatorname{Ric}(\Omega)\\
 &=
 \lambda(\theta+\ddc u)
 +\operatorname{Ric}(\Omega)-\lambda\theta+\ddc\psi\\
 &=
 \lambda h^*T+\eta.
 \end{aligned}
\]
Write
\[
 \nu=F\Omega,
 \qquad
 e^{-\lambda u-\psi}\Omega=G\Omega.
\]
Since \(F,G\in(0,+\infty)\) almost everywhere and
\(\log F,\log G\in L^1_{\mathrm{loc}}(X)\), the function
\[
 \chi:=\log F-\log G\in L^1_{\mathrm{loc}}(X)
\]
is well defined up to a set of measure zero; equivalently,
\(\chi=\log(F/G)\) almost everywhere.  Equality of the preceding Ricci
currents gives
\[
 \ddc \chi=0
\]
in the sense of distributions.  On each holomorphic coordinate chart,
taking the Euclidean trace yields \(\Delta \chi=0\) distributionally.
Weyl's lemma therefore shows that \(\chi\) admits a smooth harmonic
representative. 
Hence \(\chi\) is smooth and pluriharmonic.  Since \(X\) is
compact and connected, it is constant.  Hence
\[
 \nu=C e^{-\lambda u-\psi}\Omega
\]
for some \(C>0\).  Setting
\[
 u_h:=u-\lambda^{-1}\log C
\]
gives
\[
 h^*T=\theta+\ddc u_h,
 \qquad
 \big\langle(h^*T)^n\big\rangle
 =\nu=e^{-\lambda u_h-\psi}\Omega.
\]
Finally, two potentials of \(h^*T\) differ by a constant, and the
normalized equation determines that constant uniquely because
\(\lambda>0\).

\item Let \(g\in\widehat K_T\), where \(\widehat K_T\) is the full
stabilizer in \(\operatorname{Aut}^{0}(X)\).  Naturality of the
non-pluripolar product gives
\[
 g^*\mu_T
 =
 g^*\langle T^n\rangle
 =
 \big\langle(g^*T)^n\big\rangle
 =
 \langle T^n\rangle
 =
 \mu_T.
\]
Therefore,
\[
 \begin{aligned}
 \lambda T+\eta
 &=
 \operatorname{Ric}(\mu_T)=
 \operatorname{Ric}(g^*\mu_T)\\
 &=
 g^*\operatorname{Ric}(\mu_T)=
 \lambda g^*T+g^*\eta\\
 &=
 \lambda T+g^*\eta.
 \end{aligned}
\]
It follows that \(g^*\eta=\eta\), and hence
\[
 \widehat K_T\subset\widehat H_\eta.
\]

\item The space \(\mathfrak g_\eta\) is complex linear.  If
\(V,W\in\mathfrak g_\eta\), then distributional Cartan calculus and
\(d\eta=0\) give
\[
 \mathcal L_V\eta
 =
 d(\iota_V\eta)+\iota_Vd\eta
 =
 0
\]
and
\[
 \iota_{[V,W]}\eta
 =
 \mathcal L_V(\iota_W\eta)
 -
 \iota_W(\mathcal L_V\eta)
 =
 0.
\]
Thus \(\mathfrak g_\eta\) is a complex Lie subalgebra.

Since \(\eta\) is real, \(\iota_V\eta=0\) also implies
\(\iota_{\overline V}\eta=0\).  Hence the real vector fields underlying
\(V\) and \(iV\) both have zero contraction with \(\eta\).  They are
complete because \(X\) is compact, and their real flows preserve
\(\eta\) by Cartan's formula.  Thus, regarding
\(\mathfrak g_\eta\) as a real Lie subalgebra (under this identification $V$ is identified with $V+\bar V$ as a real vector field),
\[
 \exp_{\operatorname{Aut}^{0}(X)}(tV)\in H_\eta
 \qquad
 (V\in\mathfrak g_\eta,\ t\in\mathbb R).
\]
Compatibility of \(\jmath_\eta\) with exponential maps gives
\[
 \jmath_\eta\bigl(\exp_{G_\eta}(tV)\bigr)
 =
 \exp_{\operatorname{Aut}^{0}(X)}
 \bigl(t(d\jmath_\eta)_eV\bigr).
\]
After identifying $(d\jmath_\eta)_e
 \bigl(\operatorname{Lie}_{\mathbb R}(G_\eta)\bigr)
 =
 \mathfrak g_\eta$,
the right-hand side lies in \(H_\eta\).  Since the connected Lie group
\(G_\eta\) is generated by a connected neighborhood of its identity and the implicit function theorem yields that $\exp_{G_\eta}$ is a local diffeomorphism from a neighborhood of $0\in \mathfrak g_\eta$ to a neighborhood of the identity $e\in G_\eta$, it follows that
\[
 \jmath_\eta(G_\eta)\subset H_\eta.
\]

\item The same weak-continuity argument used in the first part shows that
\(\widehat K_T\) is closed in \(\operatorname{Aut}^{0}(X)\).
Since \(\jmath_\eta\) is continuous for the intrinsic Lie-group
topology of \(G_\eta\), the subgroup \(L_T\) is closed in \(G_\eta\).
The closed subgroup theorem therefore makes \(L_T\) an embedded real
Lie subgroup of the intrinsic group \(G_\eta\).

Finally, the one-parameter subgroup criterion in Lie theory and compatibility of
\(\jmath_\eta\) with exponential maps give
\[
 \operatorname{Lie}_{\mathbb R}(L_T)
 =
 (d\jmath_\eta)_e^{-1}
 \bigl(
   \operatorname{Lie}_{\mathbb R}(\widehat K_T)
 \bigr).
\]
Under the identification $(d\jmath_\eta)_e
 \bigl(\operatorname{Lie}_{\mathbb R}(G_\eta)\bigr) = \mathfrak g_\eta$, this becomes
\[
 \operatorname{Lie}_{\mathbb R}(L_T)
 =
 \mathfrak g_\eta
 \cap
 \operatorname{Lie}_{\mathbb R}(\widehat K_T).
\]
Together with the definition of $\mathfrak k_T^{\mathrm{null}}$, this
proves \eqref{eq:L-setwise-and-Lie}.
\end{enumerate}
\end{proof}

We now prove the main reductivity theorem.  The transitivity and
polar factorization used below are intermediate consequences of Theorem \ref{thm:main-big-BM}.
The final real-form argument follows the classical
Bando--Mabuchi--Matsushima strategy \cite{BM87,Mat57}.

\begin{proof}[Proof of Theorem~\ref{thm:main-big-Matsushima}]
Let \(T_0,T_1\in\mathcal S\). By Theorem \ref{thm:main-big-BM},
there is a holomorphic vector field \(W\in\mathfrak g_\eta\) such
that the time-one map of its complex flow sends \(T_1\) to \(T_0\).
Since the analytic subgroup integrating \(\mathfrak g_\eta\) is
\(G_\eta\), this proves the transitivity of \(G_\eta\) on
\(\mathcal S\).

We next prove the first factorization in
\eqref{eq:intrinsic-polar-factorization}. Let \(a\in G_\eta\). By the
preceding lemma, \(a^*T\) is again a solution. Applying Theorem \ref{thm:main-big-BM} to \(T\) and \(a^*T\), we obtain
\(W\in\mathfrak g_\eta\) whose complex one-parameter subgroup
$\Lambda(z):=\exp_{\operatorname{Aut}^{0}(X)}(zW)$
satisfies
\begin{equation}
\Lambda(1)^*a^*T=T,
\qquad
\Lambda(it)^*T=T
\quad\text{for every }t\in\mathbb R.
\label{eq:BM-polar-flow}
\end{equation}
Compatibility of the exponential maps with \(\jmath_\eta\) shows
that \(\Lambda(\mathbb C)\subset\jmath_\eta(G_\eta)\). Indeed, $\Lambda(z)=\exp_{\operatorname{Aut}^0(X)}(zW)=\jmath_\eta(\exp_{G_\eta}(zW))\in \jmath_\eta(G_\eta)$.

Set $\xi:=iW$.
The second identity in \eqref{eq:BM-polar-flow} says that the
one-parameter subgroup generated by \(iW\) lies in \(L_T\). Hence
$\xi=iW\in\operatorname{Lie}_{\mathbb R}(L_T)
=\mathfrak k_T^{\mathrm{null}}$.  Moreover,
$\Lambda(1)=\exp(W)=\exp(-i\xi)$.  Therefore
$\ell:=a\Lambda(1)=a\exp(-i\xi)$ belongs to \(G_\eta\), and
$\ell^*T=\Lambda(1)^*a^*T=T$.  Thus \(\ell\in L_T\) and
$a=\ell\exp(i\xi)$.
Applying this factorization to \(a^{-1}\) and taking inverses gives
the opposite order.  Thus
\begin{equation}
 G_\eta
 =L_T\exp\bigl(i\mathfrak k_T^{\mathrm{null}}\bigr)
 =\exp\bigl(i\mathfrak k_T^{\mathrm{null}}\bigr)L_T.
 \label{eq:intrinsic-polar-factorization}
\end{equation}

It remains to identify the compact real form.  We first prove that
\begin{equation}
\mathfrak k_T^{\mathrm{null}}
\cap
i\mathfrak k_T^{\mathrm{null}}
=
\{0\}.
\label{eq:k-null-totally-real}
\end{equation}
Choose a K\"ahler form \(\omega_0\) on \(X\). Since \(\widehat K_T\) is
compact, averaging over its normalized Haar measure gives a
\(\widehat K_T\)-invariant K\"ahler form
\[
\omega_K
:=
\int_{\widehat K_T}g^*\omega_0\,d\chi(g).
\]
Let \(g_K\) be its associated K\"ahler metric,
$g_K(U,V):=\omega_K(U,JV)$.

Suppose that
$\xi\in\mathfrak k_T^{\mathrm{null}}
\cap i\mathfrak k_T^{\mathrm{null}}$.
Then both \(\xi\) and \(i\xi\) belong to
\(\mathfrak k_T^{\mathrm{null}}\): indeed, if
\(\xi=i\zeta\) with
\(\zeta\in\mathfrak k_T^{\mathrm{null}}\), then
\(i\xi=-\zeta\in\mathfrak k_T^{\mathrm{null}}\).

Let \(Y\) be the real holomorphic vector field corresponding to
\(\xi\) (namely, $Y=\xi+\bar\xi$). The real holomorphic vector field corresponding to \(i\xi\)
is \(JY\). By
\eqref{eq:L-setwise-and-Lie}, both \(Y\) and \(JY\) belong to the Lie
algebra of \(\widehat K_T\). The invariance of \(\omega_K\) therefore
gives
\[
\mathcal L_Y\omega_K=0,
\qquad
\mathcal L_{JY}\omega_K=0.
\]
Since these vector fields are holomorphic, they preserve \(J\);
hence they are also Killing fields for \(g_K\).

Cartan's formula gives
$d(\iota_Y\omega_K)=\mathcal L_Y\omega_K=0$.
By \eqref{eq:compactness-nadel-vanishing},
\(H^1(X,\mathcal O_X)=0\). Since \(X\) is compact K\"ahler, it follows from the Hodge decomposition
that \(H^1(X,\mathbb R)=0\). Consequently, there exists a smooth real
function \(f\) such that $\iota_Y\omega_K=df$.  With the above
convention for \(g_K\), this is equivalent to
$\nabla^{g_K}f=JY$.
But \(JY\) is Killing, so
\[
0
=
\mathcal L_{JY}g_K
=
\mathcal L_{\nabla^{g_K}f}g_K
=
2\operatorname{Hess}_{g_K}f.
\]
Taking the trace gives \(\Delta_{g_K}f=0\). Since \(X\) is compact and
connected, \(f\) is constant. Hence \(JY=0\), and therefore
\(\xi=0\). This proves \eqref{eq:k-null-totally-real}.

Set $r:=\dim_{\mathbb R}\mathfrak k_T^{\mathrm{null}}$ and
$m:=\dim_{\mathbb C}\mathfrak g_\eta$.
Equation \eqref{eq:k-null-totally-real} implies \(r\leq m\). On the
other hand, the polar factorization \eqref{eq:intrinsic-polar-factorization} gives a surjective smooth map
\[
F:
L_T\times\mathfrak k_T^{\mathrm{null}}
\longrightarrow G_\eta,
\qquad
F(\ell,\xi)=\ell\exp(i\xi).
\]
Each connected component of its domain has real dimension \(2r\),
whereas \(G_\eta\) has real dimension \(2m\). If \(r<m\), Sard's
theorem would imply that the image of each such component has measure
zero in \(G_\eta\). Since \(L_T\) is second countable and hence has at
most countably many connected components, this would contradict the
surjectivity of \(F\). Thus \(r\geq m\), and consequently \(r=m\).

Together with \eqref{eq:k-null-totally-real}, this proves
\eqref{eq:null-Lie-real-form}. Furthermore,
\(\mathfrak k_T^{\mathrm{null}}\) is a Lie subalgebra of the Lie
algebra of the compact group \(\widehat K_T\), so it is a compact real
Lie algebra.  Since
$\mathfrak g_\eta\simeq
\mathfrak k_T^{\mathrm{null}}\otimes_{\mathbb R}\mathbb C$,
the complex Lie algebra \(\mathfrak g_\eta\) is reductive.
\end{proof}

\end{document}